\documentclass[11pt, reqno]{amsart}
\usepackage{graphics,verbatim, tikz}
\textwidth16cm \textheight22.7cm \headheight12pt
\oddsidemargin.4cm \evensidemargin.4cm \topmargin0cm

\usetikzlibrary{matrix,arrows,patterns, snakes, fadings}
\linespread{1.2}

\usepackage{amsmath,amstext,amsbsy,amssymb,amscd,amsxtra,amsfonts,amsthm,amscd}
\usepackage{adjustbox,enumitem}
\usepackage{graphicx}%
\usepackage{amssymb}
\usepackage{eucal}
\usepackage{color}
\usepackage{multirow}
\usepackage[enableskew,vcentermath]{youngtab}
 \usepackage[colorlinks=true, citecolor=blue]{hyperref}

\newtheorem{thm}{Theorem}[section]
\theoremstyle{plain}
\newtheorem{lem}[thm]{Lemma}
\newtheorem{prop}[thm]{Proposition}
\newtheorem{prp}[thm]{Proposition}
\newtheorem{cor}[thm]{Corollary}

\theoremstyle{definition}
\newtheorem{dfn}[thm]{Definition}
\newtheorem{example}[thm]{Example}

\newtheorem{rmk}[thm]{Remark}

\newtheorem*{thm*}{{\bf Theorem}}
\newtheorem*{thma}{{\bf Theorem A}}
\newtheorem*{thmb}{{\bf Theorem B}}

\theoremstyle{remark}

\definecolor{A}{rgb}{.75,1,.75}

\numberwithin{equation}{section}

\newcommand{\cA}{{\mathcal{A}}}

\newcommand{\bbA}{{\mathbb{A}}}

\newcommand{\bB}{{\mathbf{B}}}
\newcommand{\bi}{{\mathbf{i}}}
\newcommand{\bI}{{\mathbf{I}}}
\newcommand{\bj}{{\mathbf{j}}}
\newcommand{\cB}{{\mathcal{B}}}

\newcommand{\BB}{{\mathbf{B}}}
\newcommand{\cL}{{\mathcal L}}

\newcommand{\catOint}{{\mathcal O_{\rm int}}}

\newcommand{\Q}{{\mathbb{Q}}}
\newcommand{\N}{{\mathbb{N}}}
\newcommand{\Z}{{\mathbb{Z}}}
\newcommand{\Qq}{{\Q(q)}}


\newcommand{\UU}{{\mathbf U}}
\newcommand{\VV}{{\mathbf V}}

\newcommand{\Uint}{ { {}_\bbA\! \UU } }

\newcommand{\Up}{\UU^+}

\newcommand{\Um}{\UU^-}
\newcommand{\Umint}{{}_\bbA\!\Um}
\newcommand{\Uz}{\UU^0}

\newcommand{\osp}{{\mathfrak{osp}}}
\newcommand{\fsl}{{\mathfrak{sl}}}



\newcommand{\te}{{\tilde{e}}}
\newcommand{\tf}{{\tilde{f}}}
\newcommand{\bt}{{\mathbf t}}


\newcommand{\height}{{\mathrm{ht}}}

\newcommand{\End}{{\rm End}}

\newcommand{\barmap}{{\bar{\phantom{x}}}}


\newcommand{\set}[1]{\left\{#1\right\}}
\newcommand{\parens}[1]{\left(#1\right)}
\newcommand{\ang}[1]{\left\langle#1\right\rangle}

\newcommand{\bbinom}[2]{{\begin{bmatrix}#1 \\ #2\end{bmatrix}}}

\renewcommand{\bar}[1]{\overline{#1}}

\newcommand{\gl}{{\mathfrak{gl}}}

\newcommand{\epsp}{{\epsilon}}
\newcommand{\tP}{{\tilde P}}
\newcommand{\Pt}{{P_{\rm typ}}}

\newcommand{\tPp}{{\tP^+}}
\newcommand{\tPt}{{\tP_{\rm typ}}}

\newcommand{\PBW}{{\mathcal{PBW}}}
\newcommand{\Lint}{L_{\bbA}}
\newcommand{\Vint}{V_{\bbA}}
\newcommand{\Kint}{K_{\bbA}}

\begin{document}

\title
[Canonical bases for $\gl(m|1)$]{Canonical bases for the quantum enveloping algebra of $\mathfrak{gl}(m|1)$
	and its modules}
\author{Sean Clark}
\address{Department of Mathematics\\ Northeastern University\\ Boston, MA 02115\\USA}
\curraddr{Max Planck Institute for Mathematics\\ 53119 Bonn\\ Germany}
\email{se.clark@neu.edu\\se.clark@mpim-bonn.mpg.de}
\begin{abstract}
	We construct a crystal basis for the negative half of the quantum enveloping algebra $\UU$ associated to
	the standard super Cartan datum of $\gl(m|1)$,  which is compatible with known crystals 
	on Kac modules and simple modules.
	We show that these crystals admit globalization which
	produce compatible canonical bases. We then define a braid group action
	on a family of quantum enveloping algebras including $\UU$, and use this action
	to show that our canonical basis agrees with those constructed from PBW bases.
	Finally, we explicitly compute some small-rank examples.
\end{abstract}
\maketitle
\tableofcontents
\section{Introduction}
\addtocontents{toc}{\protect\setcounter{tocdepth}{0}}

\subsection{}
A feature of quantum algebras which has taken on increasing importance
is the construction of canonical bases; that is, bases which arise
from the algebra in a natural way and which have
desirable compatibilities with other features of the algebra.
Two standard (and related) examples of such bases are the Kazhdan-Lusztig basis 
of a Iwahori-Hecke algebra \cite{KL} 
and the Lusztig-Kashiwara canonical bases of quantum enveloping
algebra of a Kac-Moody algebra \cite{Ka, LuPaper}.

These canonical bases are remarkable in many ways, but one of the most striking
is their connection to categorification: canonical bases
tend to predict categorifications, and vice versa.
Moreover, such categorifications often reveal the canonical basis elements to
encode a wealth of geometric data (e.g. Lusztig's construction of
the canonical basis of a quantum enveloping algebra via perverse sheaves, cf. \cite[Part 2]{LuBook}) and 
representation-theoretic data (e.g. as the decategorification of an indecomposable projective
module over a KLR algebra, cf. \cite{VV}).

One natural setting to look to construct further examples of canonical bases is the quantum enveloping
algebras of Lie superalgebras.
These quantum enveloping Lie superalgebras have been gaining significant interest in the
years since the foundational work of Yamane \cite{Ya}. For instance,
crystal structures have been constructed and studied on their modules 
\cite{BKK, Kw09, Kw14, Kw15, MZ} and some
examples of categorifications have been constructed \cite{Kh, KS, Sar, Tia2}.
However, despite this attention, it is not known if canonical bases can be defined for many
of these quantum algebras or their modules. 

Recently, there has been some success in constructing canonical bases, at least
on a case-by-case level.
Indeed, crystal and canonical bases for some quantum enveloping
superalgebras associated to a family of Kac-Moody Lie superalgebras, of which the only
finite-type Lie superalgebra is $\mathfrak{osp}(1|2n)$, have been constructed
by the author in joint work with Hill and Wang \cite{CHW1,CHW2};
this construction is expected to play a role in a higher-representation-theoretic
construction of ``odd'' knot homologies \cite{HW,EL, C}.

In another direction, using an approach \`a la \cite{Lec}, it was shown
in \cite{CHW3} that any quantum enveloping Lie superalgebra of basic type
admits a construction of PBW bases using the combinatorics
of Lyndon words and shuffle products. For a few families of Lie superalgebras, including 
the quantum enveloping algebra of the general linear Lie superalgebra $\UU_q(\gl(m|n))$, 
these PBW bases are shown to lead to bar-invariant bases of the half-quantum enveloping algebra 
$\Um_q(\gl(m|n))$ associate to the standard positive root system by a standard argument. 
(We note that \cite{DG} also constructed a bar-invariant basis in the general linear case,
using connections to quantum Schur superalgebras.)

\subsection{} 

It is natural to want to construct canonical bases for $\UU_q(\gl(m|n))$, as it is, in many
ways, the fundamental example of quantum enveloping superalgebras. In particular, one already
encounters many of the structural features complicating such a construction.

Arguably the most fundamental change from the non-super theory 
is the existence of several non-conjugate bases for the root system in general.
This, in turn, leads to non-isomorphic half-quantum enveloping algebras (compare 
Sections \ref{subsec:CB(1|1|1)} and \ref{subsec:CB(1|1|1)}) with sometimes
complicated defining relations; see \cite[Proposition 2.7]{CHW3} for a description of these relations.
While the full quantum enveloping algebras are isomorphic (which is not obvious a priori, but, for
example, follows from the results of Section \ref{sec: braid}),
one must be careful to indicate which positive root system, and thus half-quantum enveloping algebra, 
associated to $\gl(m|n)$ is being considered.
For the most part, we will follow the literature (cf. \cite{BKK,Kw14, KS})
and work with the standard positive system
associated to the Borel subalgebra of upper-triangular matrices.
Henceforth, unless otherwise stated, we write $\UU_q(\gl(m|n))$ (respectively,  $\UU_q^-(\gl(m|n))$) 
to mean the  (negative part of the) quantum enveloping algebra defined
with respect to the standard simple system.

Working over the standard simple system, the results of \cite{CHW3} imply the existence
of a bar-invariant basis of $\Um_q(\gl(m|n))$ associated to the standard PBW basis
(and indeed, there is a such a bar-invariant basis associated to any PBW basis 
by a slight generalization of the arguments therein).
While these bases satisfies one of the characteristic
properties of canonical bases, they fundamentally depend on the choice of PBW basis in general.
In essence, this dependence arises because the underlying super-space structure induces 
a chirality in the quantum parameter $q$, in the sense that the even subspace having
the parameter $q$ means the odd subspace should have the parameter $q^{-1}$.
In particular, the even subalgebra of $\UU_q(\gl(m|n))$ is
\[\UU_q(\gl(m|0)\oplus \gl(0|n))\cong \UU_q(\gl(m))\otimes\UU_{q^{-1}}(\gl(n)).\] 
This chirality is of fundamental
importance when considering the bilinear form and PBW basis
(see Example \ref{ex:CB(2|2)} for a brief discussion of this phenomenon).
This chirality essentially disappears in the case $m=1$ or $n=1$,
in which case the bar-invariant bases are also almost-orthogonal (in the sense of Lusztig)
and thus the bases agree up to possible sign changes.

The last main complication 
is that the category of finite-dimensional representations is not semisimple;
this causes difficulties with constructing crystal bases of representations,
which is one way to approach constructing canonical bases. This can be resolved
by restricting to the subcategory of polynomial representations, which is semisimple (cf. \cite[Section 3.2.6]{CW} for a proof in the classical case).
In fact, the highest weights of irreducible polynomial modules correspond to hook partitions, which means there is a natural
combinatorial model for constructing crystals on these modules;
see \cite{BKK}. One can also
produce crystals on Kac modules \cite{Kw14}; that is, modules
induced from the simple modules of the even subalgebra. However, one should
note that in both of these papers, it is crucial that one works with the standard system
of simple roots. Indeed, it is observed in \cite{Kw09} that the combinatorial
crystals associated to indecomposable modules are, in general, no longer connected when there is 
more than one isotropic simple root, which raises substantial difficulties in constructing
a crystal basis for the module.

\subsection{}

The main goals of this paper is as follows. We want to construct a canonical (unsigned) basis for the 
half-quantum enveloping algebra $\Um$, and show that this canonical basis induces compatible 
canonical bases on the Kac modules and simple polynomial modules,
resolving a conjecture in \cite{CHW3}. Furthermore, we want to show that this canonical basis is in fact precisely
equal to the bar-invariant basis of $\Um$ constructed in {\em loc. cit.} as described above. Our last objective is to get
a sense of to what extent the canonical basis is compatible with the simple modules whose highest weight is not polynomial.

To accomplish these goals, we begin by constructing a crystal basis on $\Um$.
A version of a crystal lattice for $\Um$ has been constructed before by Zou \cite{Zou},
which is essentially defined using only the $\gl(m)$-crystal operators and the odd root
vectors. However, such a lattice is not closed under
all the Kashiwara operators. We will show that under the usual definition of the
crystal lattice (i.e. the lattice generated from $1$ by all the $\tf_i$), the lattice is indeed
closed under all the Kashiwara operators. See Section \ref{subsec:CB(2|1)} 
for a comparison of our lattice and the one in {\em loc. cit.} in the $m=2$ case.

Moreover, we show that this crystal structure
is compatible with the crystals on the Kac modules (defined in \cite{Kw14}) and 
simple polynomial modules (defined in \cite{BKK}). The latter statement follows
naturally from the construction of the lattice, which is a version
of Kashiwara's ``Grand Loop'' restricted to the statements involving $\Um$.
Compatibility with Kac modules then follows from the $\gl(m)$-crystal
structure on $\Um$ and the definition of the crystal on a Kac module.

We then study some of the implications of this crystal structure, deducing analogues of
Kashiwara's characterizations of the crystal lattice and crystal basis,
as well as defining an integral version of the lattice.
It follows from these statements and the construction in \cite{CHW3} 
that the PBW basis, and thus the associated bar-invariant basis, lies in the integral lattice,
and that each PBW element (and thus associated bar-invariant basis element) 
is equivalent modulo $q$ to a crystal basis element, up to a sign.
This immediately implies the existence of a globalization as in \cite[\S 7]{Ka},
from which we deduce the following theorem.

\begin{thma}
The algebra $\Um$ admits a crystal basis $(L(\infty), B(\infty))$.
Furthermore, there exists a globalization $G:L(\infty)/qL(\infty)\rightarrow 
\Umint\cap L(\infty) \cap \bar{L(\infty)}$ such that $\cB=G(B(\infty))$
is a canonical bar-invariant basis of $\Um$.
Moreover, let $V$ be a simple finite-dimensional polynomial module or a finite-dimensional 
Kac module with highest weight vector $v$. Then $\cB(V)=\set{b\in \cB\mid bv\neq 0}$ maps
bijectively to a bar-invariant basis of $V$.
\end{thma}

 As a consequence of the proof of this theorem, we see that $\cB$ is equal (up to signs)
 with the bar-invariant bases constructed in \cite{CHW3}. In fact, these bases coincide.
 The key to proving this in the non-super setting is to utilize Lusztig's braid automorphisms
 of the quantum enveloping algebra \cite{LuPaper,Sai}. 
 These automorphisms provide a systematic way to construct and compare different PBW bases,
 and then it is straightforward to show that 
 the PBW bases coincide modulo $q$ to the crystal basis; see \cite{Tin} for a concise
 exposition of these ideas.
 
 To employ this strategy in our setting, we need braid automorphisms for each simple root.
 However, one quickly runs into an obvious obstruction for isotropic roots: any reasonable
 definition of the automorphism would send a non-nilpotent generator to a nilpotent root
 vector. That such an obstruction occurs is not surprising, 
 as the braid automorphisms are essentially a lift of the Weyl
 group action on the Cartan part, and there is no reflection in the Weyl group corresponding
 to isotropic roots. Instead, we should attempt to lift 
 the Weyl groupoid \cite{Se, HY}: an enlargement
 of the Weyl group which allows for ``odd reflections'' at the cost of no longer
 being a group. This idea has been utilized in \cite{He},
 to produce braid operators for a wide variety of quantum algebras.
 
 In our case, we consider the family of quantum enveloping algebras associated
 to each possible generalized Cartan matrix of $\gl(m|1)$.
 These quantum enveloping algebras have braid automorphisms corresponding to each of their
 even simple roots, whereas the odd simple roots yield isomorphisms between pairs
 of these algebras.
 Despite this extra layer of complexity, these maps still satisfy the type $A$ braid relations,
 and we can use them to define PBW bases on $\Um$ which
 coincide with the bases defined in \cite{CHW3}. Then we can use the standard arguments
 to prove the following theorem.

\begin{thmb}
	Let $<$ be a total order on the simple roots of $\gl(m|1)$, and let $\cB(<)$
	be the associated canonical basis on $\Um$ as defined in \cite{CHW3}. 
	Then $\cB(<)=\cB$.
\end{thmb}

As a consequence, we note that Theorem A and Theorem B proves \cite[Conjecture 8.9]{CHW3}.
This conjecture was already proven in the polynomial module case by Du and Gu \cite{DG}. 
We note that our proof improves on the conjecture: not only does the canonical
basis of $\Um$ induce canonical bases on the polynomial and Kac modules, 
but in fact this canonical basis also agrees with the crystal bases of \cite{BKK,Kw14}.

We finish this paper with a few small examples. These examples are motivated by a desire
to better understand two questions which arise naturally from Theorems A and B. 
First, there are finite-dimensional
simple modules which are not included in Theorem A, and thus we would like to understand their compatibility,
or lack thereof, with $\cB$. 
Second, since Theorems A and B are only proven for $\UU^-_q(\gl(m|1))$ associated to the standard positive root
system, a natural question is whether or not these arguments can be similarly applied
to the half-quantum enveloping algebras associated to other positive root systems. 

To gain insight into the first question, we consider
the canonical basis constructions for $\UU^-_q(\gl(2|1))$ and its finite-dimensional
irreducible modules from \cite[Section 7]{CHW3}
from the viewpoint of crystal bases.
We also explicitly construct the canonical basis for $\UU^-_q(\gl(3|1))$,
and compute the images of this basis in some examples of atypical simple modules.
In all examples we compute, we observe that there is indeed an induced canonical basis, despite
some cases having linear dependencies in the image of $\cB$.

For the second question, we consider the case of the non-standard Cartan datum for $\gl(2|1)$ with two
isotropic roots in some detail. This case is not included in the statements
of Theorems A and B, and we comment on how in particular the results going into Theorem B
fail in this setting. Nevertheless, we observe that the negative part of the quantum enveloping
algebra still has a sensible definition for a canonical basis which satisfies some compatibility
with its representations.

\subsection{}

Given the results of our paper in combination with those of \cite{C, CHW2, CHW3, Kw09},
there are many obvious and interesting lines of study to be pursued.
We hope that our results and explicit examples can help to develop further categorification
results along the lines of \cite{KS}. In particular, it would be interesting
to interpret our canonical basis in terms of the categorification of {\em loc. cit.},
and our canonical basis on modules suggests that one look for categorical representations
as well.

We also expect that our construction can be extended to the entire (idempotented) 
quantum enveloping algebra in the manner of Lusztig (cf. \cite[Part IV]{LuBook},
and see \cite{EK} for a Schur superalgebras prototype of $\dot\UU_q(\gl(m|n))$), 
and plan to pursue
this in subsequent work. In particular, this should admit a diagrammatic categorification
in the sense of Khovanov-Lauda \cite{KL} by building on the construction in \cite{KS}.
We note that a geometrically motivated categorification of the idempotented
algebra for $\gl(1|1)$ was constructed in \cite{Tia2}.

Unfortunately, it is still unclear to us how to uniformly approach the construction of canonical bases
for Lie superalgebras at this time. Nevertheless, an ad-hoc approach
to constructing canonical bases in specific examples certainly seems viable and should
help to clarify the general situation. To that end, the fundamental case to study
is the standard datum associated to $\gl(2|2)$; 
we make some comments on this in Example \ref{ex:CB(2|2)},
but it deserves further study. 
Another further direction would be to try and apply similar
techniques in other quantum enveloping algebras of basic Lie superalgebras; for instance,
the family $\osp(2|2n)$ is one of the families
for which a signed canonical basis was produced in \cite{CHW3}, and one should
be able to remove the signs in a similar fashion. 

Alternatively, one can hope to find
canonical bases associated to non-standard Cartan data for $\gl(m|n)$.
For instance, Section \ref{subsec:CB(1|1|1)} addresses this
for the non-standard datum for $\gl(2|1)$, but we  plan to extend this 
work to some higher rank cases in the future. We also expect that these
algebras should admit categorifications.

\subsection{}

This paper is organized as follows.

In Section \ref{sec:gl(m|1)}, we set our notations and conventions, and recall the
definition of the quantum enveloping algebra $\UU$. We also recall, and 
in some cases elaborate on, the results on crystal bases of 
various $\UU$-modules.

In Section \ref{sec:crystal}, we introduce our definition of the crystal
lattice and basis for $\Um$. We then state and prove a truncated
version of Kashiwara's ``grand loop'' induction argument
to construct the crystal basis for $\Um$ by using the crystals
for modules described in \cite{BKK}. Subsequently,
we prove that the crystal is characterized by the bilinear form
on $\Um$, and satisfies compatibility with Kwon's crystal
bases of Kac modules. Finally, we introduce an integral form
of the lattice on each level ($\Um$, the Kac modules, and the simple
modules), and use a (signed) canonical basis from \cite{CHW3} to
quickly deduce the existence of the globalization.

In Section \ref{sec: braid}, we introduce a family of quantum enveloping algebras associated
to $\gl(m|1)$, which correspond to different choices of Dynkin
diagram. We then define braid isomorphisms between these quantum
groups which lift the Weyl groupoid action on the simple systems
of roots, and use these braid isomorphisms to construct PBW
bases, which agree with the PBW bases defined in \cite{CHW3}.
In the special case of the standard system, it is shown
that these PBW bases span the same $\Z[q]$-lattice,
and thus that the corresponding canonical bases coincide.

Finally, in Section \ref{sec:examples}, we compute some small-rank examples.
First, we consider the case $m=2$ and compute the crystal
lattice in terms of the canonical basis as defined in \cite{CHW3};
we also compare our crystal lattice to that of \cite{Zou}, and
define a compatible crystal structure on the atypical finite-dimensional 
simple modules. Second, we consider the case $m=3$ and compute the canonical
basis. We then consider some examples of atypical modules, and 
observe some additional instances of canonical bases in these examples. 
Finally, we discuss the case of the unique non-standard
simple system of rank $2$, and how it is an obstruction to a more general
application of the results of Sections \ref{sec:crystal} and \ref{sec: braid}. We observe that there is nevertheless
a natural candidate for canonical basis in this case, and analyze
its compatibility with modules.

\vspace{1em}

\noindent{\em Acknowledgements.} We would like to thank the Max Planck Institute
for Mathematics for the pleasant work environment while this project
was carried out. We also thank Weiqiang Wang for his stimulating
questions and comments on this project.

\addtocontents{toc}{\protect\setcounter{tocdepth}{2}}
\section{Standard quantum $\gl(m|1)$}\label{sec:gl(m|1)}

We begin by introducing our conventions, notations, and definitions for quantum
$\gl(m|1)$, and recalling some of the essential results in the literature.

\subsection{The root data}\label{subsec:root data}
We write $\Z/2\Z=\set{0,1}$. Throughout, with a $\Z/2\Z$ graded set $X=X_0\cup X_1$, 
we write $p(x)\in\Z/2\Z$ where $x\in X_{p(x)}$.
Define the $\Z/2\Z$ graded sets
\begin{equation}
[m|1]=\underbrace{\set{\epsp_1,\ldots, \epsp_m}}_{[m|1]_0}\cup \underbrace{\set{\epsp_{m+1}}}_{[m|1]_1},\quad
I=\underbrace{\set{1,\ldots, m-1}}_{I_0}\cup \underbrace{\set{m}}_{I_1}
\end{equation}
We define the weight lattice $P=\Z[m|1]=\bigoplus_{k=0}^{m+1} \Z \epsp_k$
and endow $P$ with the symmetric bilinear form defined by 
$(\epsp_i,\epsp_j)=(-1)^{p(\epsp_i)}\delta_{ij}$ for $1\leq i,j\leq m+1$,
where here (and throughout) $\delta_{ab}$ is the Kronecker delta with $\delta_{ab}=1$ if $a=b$
and $0$ otherwise.
We define the coweight lattice $P^\vee=\bigoplus_{k=1}^{m+1} \Z \epsp_k^\vee$ 
and we have the pairing
$\ang{\cdot,\cdot}:P^\vee\times P\rightarrow \Z$ with $\ang{\epsp_i^\vee, \epsp_j}=\delta_{ij}$.
Note that $P$ and $P^\vee$ are naturally $\Z/2\Z$-graded with 
\[p\parens{\sum_{i=1}^{m+1} a_i \epsp_i}=p\parens{\sum_{i=1}^{m+1} a_i \epsp_i^\vee}=\sum_{i\in [m|1]} a_ip(\epsp_i).\]
Then we have the root system $\Phi=\set{\epsp_i-\epsp_j\mid 1\leq i\neq j\leq m+1}$. 
This root system has a parity induced by the parity on $P$, with
\[\Phi_0=\set{\epsp_i-\epsp_j\mid 1\leq i\neq j\leq m},\qquad \Phi_1=\set{\epsp_i-\epsp_{m+1}, \epsp_{{m+1}}-\epsp_i \mid 1\leq i\leq m}.\]
Let $\Pi=\set{\alpha_i=\epsp_{ i}-\epsp_{{i+1}}\mid i\in I}$, and note $p(\alpha_i)=p(i)$. 
Then $\Pi$ is a system
of simple roots for $\Phi$ (called the {\em standard} simple roots), 
and we let $\Phi^+$ be the associated set of positive roots.
We set $h_i=\epsp_{i}^\vee-(-1)^{p(\epsp_{i+1})}\epsp_{i+1}^\vee$ for $i\in I$ 
to be the simple coroots in $P^\vee$,
which satisfy $\ang{h_i,\alpha_j}=(\alpha_i,\alpha_j)$.
More generally, for $\alpha\in \Phi^+$ with $\alpha=\sum a_i \alpha_i$,
we let $h_\alpha=\sum a_i h_i$.

The root lattice is $Q=\sum_{i\in I} \Z\alpha_i \subset P$.
We set $Q^+=\sum_{i\in I} \Z_{\geq 0}\alpha_i$ and $Q^-=-Q^+$.
As usual, we have a partial order on $P$ induced by $Q^+$: we say
$\lambda\leq \mu$ if $\mu-\lambda\in Q^+$.
A useful statistic for $Q^+$ is the {\em height} of a root defined by
$\height(\sum a_i \alpha_i)=\sum a_i$, and for $l\in\Z_{\geq 0}$, 
we define $Q^+(l)=\set{\alpha\in Q^+\mid \height(\alpha)\leq l}$.

There are two distinguished vectors in $P$ of use in our discussions.
First, we define
\begin{equation}\label{eq:parity sum weight}
1_{m|1}=\sum_{k=1}^{m+1} (-1)^{p(\epsp_k)} \epsp_k\in P,
\end{equation}
and note that $\ang{h_i, 1_{m|1}}=0$ for all $i\in I$.
Next, the {\em shifted Weyl vector} $\rho\in P$ associated to this root data is given by
\begin{equation}\label{eq:weyl vector}
\rho=\sum_{k=1}^m (m-k+1) \epsp_k - \epsp_{m+1}
\end{equation}
and satisfies $(\alpha_i,\rho)=\frac{1}{2}(\alpha_i,\alpha_i)$ 
for all $i\in I$.

A weight $\lambda\in P$ is {\em $\gl(m)$-dominant}, or simply {\em dominant}, 
if $\ang{h_i,\lambda}\geq 0$ for
all $i\in I_0$. Explicitly,
the set $P^+$ of dominant weights is given by 
\[P^+=\set{\sum_{k=1}^{m+1} a_k \epsp_k\mid a_k\geq a_{k+1} \text{ for all }1\leq k<m}.\]

We further say that $\lambda\in P^+$ is {\em $\gl(m|1)$-dominant}, or {\em fully dominant}, 
if $\ang{h_m,\lambda}\geq 0$,
and we denote the set of fully dominant weights by $P^{++}$. Explicitly,
\[P^{++}=\set{\sum_{k=1}^{m+1} a_k \epsp_k\mid a_k\geq a_{k+1} \text{ for all }1\leq k<m\text{ and }a_m+a_{m+1}\geq 0}.\]
As observed in 
A weight $\lambda=\sum_{k=1}^{m+1} a_k\epsp_k$ is {\em polynomial}
if $a_k\geq 0$ for all $k$. We denote the set of polynomial
weights by $\tP$.
Finally, we say a weight $\lambda\in P$ is {\em typical}
if $\ang{h_\alpha,\lambda+\rho}\neq 0$ for all $\alpha\in \Phi_1^+$
and we denote the set of typical weights by
$\Pt$. We also will combine these notations in obvious ways; e.g. $\tPt^{++}$ would
denote the set of typical fully dominant polynomial weights.

In particular, we note that $P^{++}=\tP^{++}+\Z 1_{m|1}$
and $\tP^{++}=\tP^+$.
We also note that a fully dominant weight $\lambda=\sum a_k \epsp_k\in P^{++}$ 
is typical if and only if
$\ang{h_m,\lambda}> 0$; indeed, we compute that
\[\ang{h_{\epsp_i-\epsp_{m+1}},\sum a_k \epsp_k+\rho}=a_i+a_{m+1}+(m-i)\geq a_m+a_{m+1}=\ang{h_m,\lambda}\geq 0.\]
\subsection{The quantum enveloping algebra}\label{subsec:quagrp}

We work over the base field $\Q(q)$, where $q$ is an indeterminant parameter. 
We will also occasionally work over the following subrings of $\Qq$:
\begin{itemize}
\item $\bbA=\Z[q,q^{-1}]$;
\item $\cA$, the subring of rational functions in $q$ with no pole at $0$;
\item $\cA_\Z$ is the $\Z$-subalgebra of $\Qq$ generated by $q$ and $\frac{1}{1-q^{2t}}$
for $t\geq 1$.
\end{itemize} 
Recall some
standard notation for $q$-integers: for $a\in \Z$ and $b\in \Z_{\geq 0}$
\begin{equation}
[a]=\frac{q^a-q^{-a}}{q-q^{-1}};\quad [b]!=\prod_{c=1}^{b} [a];\quad \bbinom{a}{b}=\frac{\prod_{c=0}^{b-1} [a-c]}{[b]!} \in \bbA.
\end{equation}
As usual, a $\Qq$-algebra $A$ has a natural notion of divided powers:
if $a\in A$ and $n\in\Z_{\geq 0}$, we set 
\[a^{(n)}=\frac{a^n}{[n]!}.\]

Now let us recall the definition of $\UU_q(\gl(m|1))$.

\begin{dfn}
The algebra $\UU=\UU_q(\gl(m|1))$ is the $\Qq$-algebra on generators
$E_i$, $F_i$, $q^h$ for $i\in I$, $h\in P^\vee$ with parity grading
given by $p(E_i)=p(F_i)=p(i)$, $p(q^h)=0$;  and relations given by
\[
q^{0}=1,\quad q^{h}q^{h'}=q^{h+h'},\quad 
q^hE_jq^{-h}=q^{\ang{h,\alpha_j}}E_j,\quad q^hF_jq^{-h}=q^{-\ang{h,\alpha_j}}F_j\quad
\text{ for } h,h'\in P^\vee,\ j\in I;\]
\[E_iF_j-(-1)^{p(i)p(j)}F_jE_i=\delta_{ij}\frac{K_i-K_i^{-1}}{q-q^{-1}}\quad \text{ for } i,j\in I;\]
\[E_i^2E_j-[2]E_iE_jE_i+E_jE_i^2=0\quad \text{ for } i\in I_0,\ j\in I \text{ such that } |i-j|=1;\]
\[F_i^2F_j-[2]F_iF_jF_i+F_jF_i^2=0\quad \text{ for } i\in I_0,\ j\in I \text{ such that } |i-j|=1;\]
\[E_m^2=F_m^2=0.\]
Here, $K_i=q^{h_i}$.
\end{dfn}

The algebra $\UU$ has the standard structural properties of quantum enveloping algebras,
which we will now recall. First, we recall the Hopf algebra structure on $\UU$.

\begin{lem}
The algebra $\UU$ is a Hopf superalgebra
with coproduct $\Delta$, antipode $S$, and counit $\varepsilon$ defined as follows:
\[\Delta(E_i)=E_i\otimes K_i^{-1}+ 1\otimes E_i,\qquad 
\Delta(F_i)=F_i\otimes 1+K_i\otimes F_i,\qquad \Delta(q^h)=q^h\otimes q^h;\]
\[S(E_i)=-E_iK_i,\qquad S(F_i)=-K_i^{-1} F_i,\qquad S(q^h)=q^{-h};\]
\[\varepsilon(E_i)=\varepsilon(F_i)=0,\qquad\varepsilon(q^h)=1.\]
\end{lem}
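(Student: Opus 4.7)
The plan is to treat this as a routine but careful verification of the Hopf superalgebra axioms, of which the nontrivial content is checking well-definedness of the three maps on the defining relations. So the first step is to show $\Delta$, $\varepsilon$ extend to superalgebra homomorphisms $\UU\to\UU\otimes\UU$ and $\UU\to\Qq$, respectively, and $S$ extends to a superalgebra antihomomorphism (in the graded sense $S(xy)=(-1)^{p(x)p(y)}S(y)S(x)$), by evaluating each side of every defining relation on the specified images and confirming equality.

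For $\varepsilon$ this is immediate since every relation has both sides of counit $0$ or $1$. For $\Delta$, the Cartan relations involving $q^h$ follow from $q^h$ being group-like and the weight calculations $\Delta(q^h)\Delta(E_j)=\Delta(q^{\langle h,\alpha_j\rangle}E_jq^h)$ coming from $K_iE_j=q^{(\alpha_i,\alpha_j)}E_jK_i$ in each tensor slot. The commutator relation $[E_i,F_j]_{\pm}=\delta_{ij}(K_i-K_i^{-1})/(q-q^{-1})$ is the first place where the Koszul sign rule in $\UU\otimes\UU$ plays a role: writing out $\Delta(E_i)\Delta(F_j)-(-1)^{p(i)p(j)}\Delta(F_j)\Delta(E_i)$ and grouping cross terms, the terms involving products of different generators cancel after invoking $K_iF_j=q^{-(\alpha_i,\alpha_j)}F_jK_i$ and the sign from superswap, leaving exactly $\delta_{ij}$ times the correct Cartan element.

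The main obstacle, as usual, is checking that $\Delta$ respects the Serre relations, especially the odd nilpotency $E_m^2=F_m^2=0$. For the odd relation, the super tensor multiplication gives $\Delta(E_m)^2=(E_m\otimes K_m^{-1}+1\otimes E_m)^2$; the two ``square'' terms vanish because $E_m^2=0$ in each factor, and the two cross terms $E_m\otimes K_m^{-1}E_m$ and $(-1)^{p(E_m)^2}E_m\otimes E_mK_m^{-1}$ cancel precisely because $(\alpha_m,\alpha_m)=0$ makes $K_m^{-1}E_m=E_mK_m^{-1}$ while the Koszul sign produces a $-1$. For the classical $q$-Serre relations (with $i\in I_0$, $|i-j|=1$), the computation is longer but standard: expand $\Delta(E_i)^2\Delta(E_j)-[2]\Delta(E_i)\Delta(E_j)\Delta(E_i)+\Delta(E_j)\Delta(E_i)^2$ in $\UU\otimes\UU$, reorder all tensor factors using the $K$-commutation rules together with the Koszul signs (noting $p(i)=0$ simplifies signs for the left factor but not when $j=m$), and observe that the coefficients collapse to a scalar multiple of the Serre relation in each slot, which is zero. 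The $F_i$ side is entirely analogous.

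Once $\Delta$, $\varepsilon$, $S$ are seen to be well-defined, the remaining Hopf axioms (coassociativity, counit, antipode identity $m\circ(S\otimes\mathrm{id})\circ\Delta=\eta\circ\varepsilon=m\circ(\mathrm{id}\otimes S)\circ\Delta$) are verified only on the generators, since each of the maps is a superalgebra (anti)homomorphism. On $q^h$ this is trivial; on $E_i,F_i$ a direct two-line calculation using $K_iK_i^{-1}=1$ finishes the proof. No step requires a new idea beyond the super sign bookkeeping in $\UU\otimes\UU$, which is the only real source of potential error.
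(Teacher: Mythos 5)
The paper states this lemma without proof, treating it as a standard fact about quantum enveloping superalgebras (it goes back to Yamane's foundational work), so there is no ``paper's proof'' to compare against. Your verification is correct and is exactly what one would do to establish it from scratch: the only genuinely super-specific point is the well-definedness of $\Delta$ on the relation $E_m^2=F_m^2=0$, and you identify precisely why it works --- the two cross terms in $\Delta(E_m)^2 = E_m\otimes K_m^{-1}E_m + (-1)^{p(E_m)^2}E_m\otimes E_mK_m^{-1}$ cancel because $(\alpha_m,\alpha_m)=0$ forces $K_m^{-1}E_m=E_mK_m^{-1}$ while the Koszul sign contributes the needed $-1$. The check of the commutator relation likewise works as you describe, since the cross terms $E_iK_j\otimes K_i^{-1}F_j$ and $K_jE_i\otimes F_jK_i^{-1}$ match after moving $K$'s past root vectors with weights that cancel across the tensor factors. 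Your choice of antipode convention $S(xy)=(-1)^{p(x)p(y)}S(y)S(x)$ is the one consistent with $\Delta$ being a morphism of superalgebras (with Koszul sign in $\UU\otimes\UU$), and the two-line antipode axiom check on generators then closes the argument. The proof is sound.
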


Next, recall that $\UU$ has the weight decomposition 
\begin{equation}
\UU=\bigoplus_{\zeta\in Q} \UU_\zeta,\qquad  \UU_{\zeta}=\set{u\in\UU\mid q^h u q^{-h}= q^{\ang{h,\zeta}} u};
\end{equation}
we write $|u|=\zeta$ if $u\in\UU_\zeta$.
It also has the triangular decomposition 
\begin{equation}
\UU\cong \Um\otimes \Uz\otimes \Up\cong \Up\otimes\Uz\otimes\Um.
\end{equation}
Here, $\Up$ is the subalgebra generated by the $E_i$, $\Um$
is the subalgebra generated by the $F_i$, and $\Uz$ is the subalgebra
generated by $q^h$ for $h\in P^\vee$.
We also define the {\em integral form} $\Uint$ of $\UU$ to be the $\Z[q,q^{-1}]$-subalgebra
of $\UU$ generated by $E_i^{(a)}$, $F_i^{(a)}$, $q^h$ for $i\in I$, $a\in \Z_{\geq 0}$, 
and $h\in P^\vee$.
We further set $\UU^{\pm}_{\zeta}=\UU^{\pm}\cap \UU_{\zeta}$ for $\zeta\in Q$
and $\Uint^{\pm 1}=\UU^{\pm}\cap \Uint$.

The algebra $\UU$ has several important involutions:
\begin{enumerate}
\item The {\em bar involution} $\barmap$ defined by
\[\bar{E_i}=E_i,\qquad \bar{F_i}=F_i,\qquad \bar{q^h}=q^{-h},
\qquad \bar{q}=q^{-1},\qquad \bar{xy}=\bar{x}\bar{y}.\]
\item The anti-involution $\tau$ defined by
\[\tau(E_i)=E_i,\qquad \tau(F_i)=F_i,\qquad \tau(q^h)=(-1)^{p(h)}q^{-h},\qquad
\tau(xy)=\tau(y)\tau(x).\]
Note that $\tau$ restricts to an anti-involution of $\UU^{\pm}$.
\item The anti-involution $\eta$ defined by
\[\eta(E_i)=qF_iK_i^{-1},\qquad \eta(F_i)=qE_iK_i,\qquad \eta(q^h)=q^{h},\qquad
\eta(xy)=\eta(y)\eta(x).\]
\end{enumerate}

Moreover, we recall the following results from \cite{CHW3}.
Let $i\in I$. 
We define the left-multiplication maps $f_i:\Um\rightarrow\Um$ by $f_i(x)=F_ix$.
We also define the quantum differential $e_i:\Um\rightarrow\Um$
by $e_i(F_j)=\delta_{ij}$
and, for homogenous $u,v\in \UU^-$,
\begin{equation}\label{eq:quandiff}
e_i(uv)=e_i(u)v+(-1)^{p(u)p(i)}q^{(\alpha_i,|u|)}ue_i(v).
\end{equation}
By twisting with automorphisms, we obtain the following variants
\[\bar{e_i}=\barmap\circ e_i\circ \barmap,\qquad e_i^\tau=\tau\circ e_i\circ \tau,\]
which satisfy $\bar{e_i}(F_j)=e_i^\tau(F_j)=\delta_{ij}$ and
\[\bar{e_i}(uv)=\bar{e_i}(u)v+(-1)^{p(u)p(i)}q^{-(\alpha_i,|u|)}u\bar{e_i}(v),\]
\[e_i^\tau(uv)=(-1)^{p(u)p(i)}q^{(\alpha_i,|u|)}e_i^\tau(u)v+ue_i^\tau(v).\]
Observe that $e_ie_j^\tau=e_j^\tau e_i$.

\begin{prp}\label{P:BilinearForm2}
The algebra $\Um$ is equipped with symmetric nondegenerate bilinear forms $(-,-),\set{-,-}:\Um\times \Um\rightarrow\Qq$ satisfying $(1,1)=\set{1,1}=1$, $(f_i(x),y)=(x,e_i(y))$,
$\set{f_i(x),y}=\set{x,\bar e_i(y)}$, and $\set{x,y}=\bar{(\bar x, \bar y)}$. In particular, 
the subalgebra of $\End_{\Qq}(\Um)$ generated by the $e_i$ (resp. $\bar{e_i}$, $f_i$) is isomorphic to $\Um$.
\end{prp}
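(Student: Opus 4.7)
The plan is to follow Lusztig's strategy, adapted to the super setting as in \cite{CHW3}. First I would construct $(-,-)$ on the free $\Qq$-superalgebra $\ffpr$ generated by symbols $\theta_i$ of parity $p(i)$, and then descend to $\Um$. Define an operator $e_i$ on $\ffpr$ by the twisted Leibniz rule \eqref{eq:quandiff}, and define $(-,-)$ on $\ffpr$ inductively by $(1,1)=1$ together with the adjunction $(\theta_i x, y) = (x, e_i(y))$ whenever $|y|=|x|+\alpha_i$. Since $\ffpr_\zeta$ is finite-dimensional for each $\zeta\in Q^-$, this determines a well-defined bilinear form on weight spaces. Symmetry $(x,y)=(y,x)$ is then verified by induction on total weight: the inductive step reduces to the identity $(\theta_i x, \theta_j y)=(\theta_j y, \theta_i x)$, which follows from expanding $e_i(\theta_j y)$ via the super-Leibniz rule and matching terms.

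The crux of the argument is to show that $(-,-)$ descends to a nondegenerate form on $\Um=\ffpr/R$, where $R$ is the ideal generated by the super-Serre-type relations and $\theta_m^2$. Concretely, I would verify by direct computation that each Serre-type generator lies in $\bigcap_i \ker(e_i)$, so that multiplying it into $\ffpr$ produces elements orthogonal to everything under $(-,-)$. Using the twisted Leibniz rule and \eqref{eq:quandiff}, applying $e_k$ to the cubic Serre element $\theta_i^2\theta_j-[2]\theta_i\theta_j\theta_i+\theta_j\theta_i^2$ (for $i\in I_0$) yields either zero or a scalar multiple of a Serre expression of lower degree, which vanishes by induction; the quadratic $\theta_m^2$ is handled similarly using $p(m)=1$. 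This shows $R$ lies in the radical of $(-,-)$. For nondegeneracy after the quotient, one argues that the radical of the form on $\ffpr$ is a two-sided ideal stable under all $e_i$; by a minimality argument at the lowest weight where a nonzero radical element could appear, the radical coincides with $R$.

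The second form $\{-,-\}$ is defined by $\{x,y\}:=\overline{(\bar x,\bar y)}$, which is automatically symmetric and nondegenerate. Its characterizing property $\{f_i(x),y\}=\{x,\bar e_i(y)\}$ follows directly from the definition $\bar e_i=\barmap\circ e_i\circ \barmap$ together with $\overline{f_i(\bar x)}=f_i(x)$ applied to the corresponding identity for $(-,-)$.

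For the endomorphism-algebra statement, the left multiplication map $L:\Um\to \End_{\Qq}(\Um)$, $u\mapsto f_u$, is injective because $L(u)(1)=u$; hence the subalgebra generated by $f_i$ is isomorphic to $\Um$. For the $e_i$ subalgebra, I would consider the map $E:\Um\to\End_{\Qq}(\Um)$ determined by adjunction $(f_u(x),y)=(x,E(\tau(u))(y))$, which sends $F_i$ to $e_i$ and extends to an algebra homomorphism thanks to the anti-involution $\tau$; nondegeneracy of $(-,-)$ forces $E$ to be injective, so the subalgebra generated by the $e_i$ is isomorphic to $\Um$. The $\bar e_i$-case is identical using $\{-,-\}$ in place of $(-,-)$. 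I expect the main obstacle to be the bookkeeping in descending the form through the Serre relations, since the sign and $q$-power twists introduced by the odd generator $F_m$ must be tracked carefully throughout the induction.
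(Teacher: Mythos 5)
The paper does not prove this proposition; it is stated as a recollection from \cite{CHW3}, where it is established via the quantum shuffle realization of $\Um$. Your proposal instead follows Lusztig's classical construction of the form on the free algebra $\ffpr$ and then quotients. That is a legitimate alternative route, and most of the skeleton is sound: the adjunction defines the form uniquely on each weight space, the radical of the form is indeed a two-sided ideal stable under every $e_i$, and the Serre elements (including $\theta_m^2$, using $(\alpha_m,\alpha_m)=0$) do lie in $\bigcap_i\ker e_i$ and hence in the radical. The algebra-embedding argument at the end via $\tau$-adjunction and nondegeneracy is also correct as stated.

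However, there is a genuine gap in the nondegeneracy step. The ``minimality argument at the lowest weight'' only shows that a minimal-height nonzero element $x$ of the radical satisfies $e_i(x)\in R$ for all $i$ (or even $e_i(x)=0$ if $x$ is minimal in the full radical); it does not follow from this alone that $x\in R$. Indeed the inclusion $R\subset\mathrm{radical}$ is what you prove by the derivation computation; the reverse inclusion $\mathrm{radical}\subset R$ is equivalent to the statement that $\ffpr/R$ has no further degenerations, and this requires an independent input that controls the dimensions of $(\ffpr/R)_\zeta$ --- e.g.\ a PBW theorem for $\Um$, the classical specialization, or the shuffle-algebra embedding of \cite{CHW3}. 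Lusztig's own construction in \cite[\S 1.2]{LuBook} sidesteps this by \emph{defining} $\ff$ to be the quotient by the radical; that the radical equals the Serre ideal is a separate theorem there. Without such an input your argument only produces a nondegenerate form on $\ffpr/\mathrm{radical}$, not on $\Um$ as you have defined it. A smaller imprecision: to see the Serre elements are killed by the $e_k$, the computation yields zero directly (there is no ``Serre expression of lower degree'' to induct on); the verification is degree-by-degree finite, not inductive.
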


We observe the following useful property of the bilinear form.

\begin{lem}
For any $x,y\in \Um$, $(\tau(x),\tau(y))=(x,y)$.
\end{lem}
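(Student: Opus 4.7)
My plan is to exhibit $[x,y]:=(\tau(x),\tau(y))$ as another symmetric bilinear form on $\Um$ satisfying $[1,1]=1$ and the same left adjunction $[f_i(x),y]=[x,e_i(y)]$, and then invoke uniqueness: the conditions $(1,1)=1$, symmetry, and adjunction recursively determine any such form, since $(F_{i_1}\cdots F_{i_k},y)=(1,e_{i_k}\cdots e_{i_1}(y))$ and for $l>0$, $(1,F_{j_1}\cdots F_{j_l})=(F_{j_1}\cdots F_{j_l},1)=(1,e_{j_l}\cdots e_{j_1}(1))=0$. Symmetry of $[-,-]$ is inherited from $(-,-)$ and $[1,1]=1$ is immediate from $\tau(1)=1$, so the substance of the proof is the adjunction.

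The obstacle to verifying $[f_i(x),y]=[x,e_i(y)]$ directly is that $\tau$ swaps left and right multiplication, $\tau(F_ix)=\tau(x)F_i$, so $[f_i(x),y]=(\tau(x)F_i,\tau(y))$ involves right multiplication by $F_i$. I thus first establish the right-handed adjunction
\[(xF_i,y)=(x,e_i^\tau(y))\qquad\text{for all }x,y\in\Um,\]
by induction on $\height(-|x|)$. The base case $x\in\Qq\cdot 1$ reduces by weight considerations to $(1,e_i(y))=(1,e_i^\tau(y))$, which is nonzero only on $\Um_{-\alpha_i}=\Qq F_i$, and there both differentials send $F_i\mapsto 1$. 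For the inductive step, I write $x=F_jx'$, peel off $F_j$ via the known left adjunction to obtain $(x'F_i,e_j(y))$, apply the inductive hypothesis to get $(x',e_i^\tau(e_j(y)))$, and then commute $e_i^\tau$ past $e_j$ using the identity $e_je_i^\tau=e_i^\tau e_j$ noted in the excerpt. This commutation is the crucial algebraic input; without it, one would be stuck trying to reconcile left and right actions on the two sides.

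With the right-handed adjunction in place, the verification of the left adjunction for $[-,-]$ is a short chain:
\[[f_i(x),y]=(\tau(x)F_i,\tau(y))=(\tau(x),e_i^\tau(\tau(y)))=(\tau(x),\tau(e_i(y)))=[x,e_i(y)],\]
using successively $\tau(F_ix)=\tau(x)F_i$, the right-handed adjunction, and $e_i^\tau\circ\tau=\tau\circ e_i$ (which follows from $e_i^\tau=\tau e_i\tau$). Uniqueness then yields $[x,y]=(x,y)$, proving the lemma. The main difficulty is concentrated in setting up the right-handed adjunction; the rest of the argument is essentially formal once that is in hand.
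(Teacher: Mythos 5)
Your proof is correct and follows essentially the same route as the paper: both establish the right-handed adjunction $(xF_i,y)=(x,e_i^\tau(y))$ by induction on height, using the commutation $e_ie_j^\tau=e_j^\tau e_i$ at the crucial step. The paper runs a single simultaneous induction on both statements rather than isolating the right adjunction and finishing with a uniqueness argument, but the mathematical content is identical.
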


\begin{proof}
Let $l\geq 0$ and $\xi\in Q^+(l)$. We will prove that for all $x\in \Um_{-\xi}$,
$(xF_i,y)=(x,e_i^\tau(y))$ and $(\tau(x),\tau(y))=(x,y)$ for all  $i\in I$ and 
$y\in \Um$ by induction on $l$.
If $l=0$ this is obvious. Otherwise, we can write $x=F_jz$ for some $j\in I$ and $z\in \Um$.
Then $$(F_jxF_i,y)=(zF_i,e_j(y))=(z,e_i^\tau e_j(y))=(z,e_je_i^\tau(y))=(x,e_i^\tau(y)),$$
and 
$$(\tau(x),\tau(y))=(F_j\tau(z),\tau(y))=(\tau(z),e_j\tau(y))=(\tau(z),\tau(e_j^\tau(y)))=
(z,e_j^\tau(y))=(x,y).$$

\end{proof}

We now recall one of the crucial results from \cite{CHW3} on distinguished bases for
$\Um$. To wit, using quantum shuffles, one can inductively construct PBW bases associated
to any total order on the simple roots. These PBW bases then allow us to construct a signed
canonical basis.

\begin{lem}[{\cite{CHW3}}]\label{lem:CHW3PBW}
Let $\prec$ be a total order on $I$. 
Then there is an induced convex order $\prec$ on $\Phi^+$, 
so write $\Phi^+=\set{\beta_1\prec\ldots\prec \beta_N}$.
Let $N=|\Phi^+|$, and let 
$$\Z_{\geq 0}^{\Phi^+}=\set{a=(a_1,\ldots, a_N)\in \Z_{\geq 0}^N\mid a_r\leq 1
\text{ if } p(\beta_r)=1}.$$
Then $\Umint$ has an $\bbA$-basis
$$\BB(\prec)=\set{F_{(a_1,\ldots, a_r)}=\prod_{r=1}^N F_{\prec,\beta_r}^{(a_r)}\mid 
(a_1,\ldots, a_N)\in\Z_{\geq 0}^{\Phi^+}}.$$
Here, $F_{\prec,\beta_r}$ is a root vector of weight $-\beta_r$ depending on the ordering $\prec$,
and  $F_{\prec,\beta_r}^{(a)}=F_{\prec,\beta_r}^a/[a]!$ as usual.
Moreover, for any $a=(a_1,\ldots, a_N)$ and $a'=(a_1',\ldots, a_N')$ with 
$a,a'\in \Z_{\geq 0}^{\Phi^+}$,
we have
\[(F_a,F_{a'})\in \delta_{a,a'}+q\cA_\Z.\]
(That is, $\BB(\prec)$ is almost-orthonormal under $(-,-)$.)
Finally, there exists a homogeneous $\bbA$-basis $\cB(\prec)=\set{b_a\mid a\in \Z_{\geq 0}^{\Phi^+}}$ 
of $\Umint$ satisfying
\begin{align*}
\text{\em (1) }\tag{bar-invariance}&\bar b_a=b_a, \\
\text{\em (2) }\tag{$q$-unitriangularity} &b_a-F_a\text{ is in the }q\Z[q]\text{-span of }\PBW,\\
\text{\em (3) }\tag{almost-orthogonality} &\text{and }(b_a,b_{a'})\in \delta_{a,a'}+q\cA_\Z\text{ for all }
a,a'\in \Z_{\geq 0}^{\Phi^+}.
\end{align*}  
\end{lem}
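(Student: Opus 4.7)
The plan is to follow Leclerc's quantum-shuffle/Lyndon word strategy, adapted to the super setting as in \cite{CHW3}. First I would fix the total order $\prec$ on $I$; this induces the lexicographic order on words in the alphabet $I$ and hence the usual notion of Lyndon word. The ``good'' Lyndon words are in bijection with $\Phi^+$, and the lexicographic order on them yields the convex order $\beta_1\prec\cdots\prec\beta_N$ on $\Phi^+$. The root vector $F_{\prec,\beta}$ is then defined recursively: set $F_{\prec,\alpha_i}=F_i$, and for a longer Lyndon word take the Shirshov (standard) factorization $\beta=\beta'+\beta''$ with $\beta'\prec\beta''$ and define $F_{\prec,\beta}=[F_{\prec,\beta'},F_{\prec,\beta''}]_q$ for the appropriate super $q$-bracket.

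Next I would show the ordered monomials $F_a=\prod_r F_{\prec,\beta_r}^{(a_r)}$ form an $\bbA$-basis of $\Umint$. The standard tool here is the Rosso/Schauenburg-type embedding of $\Um$ into a quantum shuffle superalgebra induced by the bilinear form $(-,-)$; under this embedding $F_{\prec,\beta}$ maps to a shuffle element whose lexicographically leading term is the corresponding Lyndon word. A triangularity argument in the lexicographic order then gives linear independence of the $F_a$, and spanning follows from a height induction combined with the known dimension of each weight space $\Um_{-\xi}$. The restriction $a_r\leq 1$ when $p(\beta_r)=1$ comes from the nilpotency $F_{\prec,\beta_r}^2=0$, which is proved by induction on height starting from the Serre relation $F_m^2=0$ and the recursive definition of root vectors.

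For the almost-orthonormality I would exploit $(f_i(x),y)=(x,e_i(y))$ together with an inductive computation of $e_i F_{\prec,\beta}$ via \eqref{eq:quandiff}. This yields $(F_{\prec,\beta},F_{\prec,\beta})\in 1+q\cA_\Z$ by reducing along the Shirshov factorization of $\beta$, and the off-diagonal vanishing $(F_a,F_{a'})\in q\cA_\Z$ for $a\neq a'$ drops out from the convexity of $\prec$: applying $e_i$ (or more generally, co-multiplying) to $F_a$ produces only terms whose leading Lyndon word is incompatible in lexicographic order with the root vector shape of $F_{a'}$ modulo higher $q$-order corrections.

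Finally, the bar-invariant basis $\cB(\prec)$ is produced via Lusztig's lemma. One checks that the bar involution acts on the PBW basis by an upper-unitriangular matrix with entries in $\bbA$; this is a standard computation from the recursive definition of $F_{\prec,\beta}$ and the identity $\bar q=q^{-1}$, tracking the $q$-powers introduced by each $q$-bracket. Lusztig's lemma then produces a unique bar-invariant $b_a$ with $b_a-F_a$ in the $q\Z[q]$-span of $\PBW$; bar-invariance and $q$-unitriangularity are then built in by construction, and almost-orthogonality of $\cB(\prec)$ follows by expanding both arguments in the PBW basis and combining the unitriangular base change with the almost-orthonormality of the $F_a$. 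The main technical obstacle is the consistent bookkeeping of super signs: they intervene in the super shuffle product, in the twisted Leibniz rule \eqref{eq:quandiff} for $e_i$, and in the verification of bar-triangularity near the isotropic simple root $\alpha_m$, where one must also confirm that the relation $F_m^2=0$ is compatible with, rather than destructive to, the triangularity of the transition matrix.
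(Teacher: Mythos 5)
Your outline follows essentially the same Leclerc/Lyndon-word route that \cite{CHW3} uses (and that the paper simply cites), so the overall strategy is correct. The paper's own proof is mostly a pointer to \cite{CHW3}: it records that those arguments give everything for the standard ordering, that a non-standard ordering follows by direct computation or by the braid construction of Section~\ref{sec: braid}, and then supplies the one piece of new bookkeeping needed, namely the almost-orthonormality bound $(F_a,F_{a'})\in\delta_{a,a'}+q\cA_\Z$. For that, the paper invokes Theorem~5.7 of \cite{CHW3} — a Levendorskii--Soibelman-type factorization of the bilinear form on PBW monomials — to reduce at once to the diagonal, single-root-vector case, and then treats even and odd root vectors separately.

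This is exactly where your sketch is under-specified. Your justification of the off-diagonal vanishing $(F_a,F_{a'})\in q\cA_\Z$ for $a\neq a'$ appeals to ``lexicographic incompatibility of leading Lyndon words,'' but leading-term comparisons control only the top of the filtration, not the full pairing; to actually land in $q\cA_\Z$ you need to control all the lower-order cross terms. The clean mechanism — and the one the paper relies on via \cite{CHW3} Theorem~5.7 — is that the PBW pairing factorizes, $(F_a,F_{a'})$ being a product of pairings $(F_{\prec,\beta_r}^{(a_r)},F_{\prec,\beta_r}^{(a'_r)})$ of divided powers of a single root vector, so off-diagonal entries vanish outright and the whole question reduces to one-variable norms. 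You should make that reduction explicit rather than gesture at leading words. Also note that the diagonal computation for odd root vectors in the $\gl(m|1)$ case is particularly simple because every $\beta\in\Phi^+\setminus\Pi$ admits a factorization $F_\beta=F_iF_{\beta-\alpha_i}-qF_{\beta-\alpha_i}F_i$ with a \emph{simple}-root factor (all coefficients of simple roots in $\beta$ are $\leq 1$); the generic Shirshov factorization $\beta=\beta'+\beta''$ you invoke would require more care if both factors were non-simple, which fortunately cannot happen here. The remainder of your outline — the quantum-shuffle triangularity argument for linear independence, the nilpotency of odd root vectors, and the Lusztig-lemma construction of $\cB(\prec)$ from bar-unitriangularity of the PBW base change — matches what \cite{CHW3} does and what the paper takes for granted.
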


\begin{proof}
These statements follow from \cite{CHW3} in the standard ordering case.
In a non-standard ordering $\prec$, it is only shown that the PBW basis exists for $\Um$ 
and is orthonormal. However, direct computation using the formulas in {\em loc. cit.} 
shows that the PBW basis still has the claimed properties; alternatively, this will
follow from the construction in Section \ref{sec: braid}. The existence
of the canonical basis then follows from similar arguments to {\em loc. cit.} Section 7.

Note that to prove $(F_a,F_{a'})\in \delta_{a,a'}+q\cA_\Z$, Theorem 5.7 in 
{\em loc. cit.} implies it suffices to show this in the case of a divided power of a single
root vector. For root vectors of even parity, this follows from the same calculation as
in $\gl(m)$; for the odd root vectors, it suffices to show that the norm of the root
vector lies in $1+q\Z[q]$ but this is easy to verify directly using the fact
that root vectors for $\beta\in \Phi^+\setminus \Pi$ are always of the form 
$F_\beta=F_iF_{\beta-\alpha_i}-qF_{\beta-\alpha_i}F_i$ for some $i\in I$.
\if 0
Indeed, in the notation of {\em loc. cit.} \S 4.5,
let $\bi^+(\alpha)$ be the dominant Lyndon word of weight $\alpha\in \Phi^+$.
Then we have $\bi^+(\beta)=\bi^+(\beta-\alpha_i)i$ for some $i\in I$ with 
$\beta\prec \alpha_i$ and $\beta-\alpha_i\in \Phi^+$.
Indeed, by Lemma 4.3 of {\em loc. cit.} and since $\beta=\sum_{i\in I} a_i \alpha_i\in \Phi^+$
implies $0\leq a_i\leq 1$, we see that the co-standard factorization
is of the form $\bi^+(\beta)=\bi_1i$.
Moreover, Proposition 4.16 of {\em loc. cit.} proves that the co-standard factorization
is always of the form $\bi^+(\beta_1)\bi^+(\beta_2)$ for some $\beta_1\prec \beta_2$
with $\beta_1+\beta_2=\beta$, so the claim follows.

Then by construction, the PBW vector is defined by $F_{\beta}=F_{i}F_{\beta-\alpha_i}-qF_{\beta-\alpha_i} F_i$ (where here, we drop the $\prec$ subscript as we will not
consider different orderings in the argument). Moreover, note
that $e_i'(F_{\beta-\alpha_i})={}_ie'(F_{\beta-\alpha_i})=0$.
Then 
\begin{align*}
(F_\beta,F_\beta)&=(F_{i}F_{\beta-\alpha_i},F_{i}F_{\beta-\alpha_i})
-2q(F_{i}F_{\beta-\alpha_i},F_{\beta-\alpha_i}F_{i})
+q^2(F_{\beta-\alpha_i}F_{i},F_{\beta-\alpha_i}F_{i})\\
&=(1-q^2)(F_{\beta-\alpha_i},F_{\beta-\alpha_i})
\end{align*}
\fi 
\end{proof}

We call $\cB(\prec)$ the $\prec$-canonical basis of $\Um$.

\begin{rmk}
It is well-known that the conditions on $\cB(\prec)$ imply it is unique
up to a sign (cf. \cite[Theorem 14.2.3]{LuBook}), so the signed basis $\cB(\prec)\cup -\cB(\prec)$
does not depend on $\prec$. One of our goals will be to show that $\cB(\prec)$ itself
is independent of $\prec$.
\end{rmk}

\begin{example}\label{ex:CB(2|2)}
A version of Lemma \ref{lem:CHW3PBW} is proven in \cite[Section 7]{CHW3} for the standard data
associated to some families of basic Lie superalgebras.
In particular, $\UU_q(\gl(m|n))$ is observed to have a 
pseudo-canonical basis; that
is, a basis $\cB$ which is bar-invariant and $q$-unitriangular with respect to
the standard PBW basis, but which is not almost-orthogonal under the bilinear form.
(Another construction of this basis is given in \cite{DG}, motivated
by connections to quantum Schur superalgebras.)
However, it is easy to see this basis crucially depends
on the choice of PBW basis. Indeed, in the case of $m=n=2$, the standard Cartan datum
has $I=\set{1<2<3}$ with $I_1=\set{2}$ and the GCM $A=\begin{bmatrix}2&-1&0\\-1&0&1\\0&1&-2\end{bmatrix}$. The PBW vectors are then
$F_{12}=F_2F_1-qF_1F_2$, $F_{23}=F_3F_2-q^{-1}F_2F_3$, and $F_{123}=F_3F_2F_1-qF_1F_3F_2-q^{-1}F_{2}F_1F_3+F_1F_2F_3$. On the other hand, 
applying $\tau$ gives us the PBW vectors for the opposite 
order; that is, the order $I=\set{3<^{\rm op}2<^{\rm op}1}$.
Then, for instance, it is easy to see that
we have 
\[F_2F_3,\ F_3F_2-[2]F_2F_3\in \cB(<),\text{ but }F_3F_2,\ F_2F_3-[2]F_3F_2\in \cB(<^{\rm op}).\]
It is unclear how to resolve this sort of incompatibility, but
morally one would want the canonical basis of $\UU_q(\gl(m|n))$ to be compatible
with its Levi subalgebras. In particular, this means that the root spaces
corresponding to the subalgebras $\UU_q(\gl(2|1))$
and $\UU_{q}(\gl(1|2))\cong \UU_{q^{-1}}(\gl(2|1))$ 
should have the canonical bases given in \cite[\S 8]{CHW3}.
For this to be possible, we see that both almost-orthogonality and $q$-unitriangularity
fail, as necessarily one subalgebra will satisfy the conditions in $q$ while the other
satisfies the conditions in $q^{-1}$. We do not yet see how to glue these conditions in such
a way to have a well-defined canonical basis on the remaining root spaces, hence
we restrict our attention to the $n=1$ case.
\end{example}

\subsection{Weight modules}

Throughout, a $\UU$-module is a $\Z/2\Z$-graded weight module;
that is, a $\Z/2\Z$-graded $\Qq$-vector space $M=M^0\oplus M^1$ with a decomposition
$M^t=\bigoplus_{\lambda\in P} M^t_{\lambda}$ for $t=0,1$ such that  $M_\lambda^t$ 
is finite-dimensional and
$q^h|_{M^t_{\lambda}}=q^{\ang{h,\lambda}}$. 
We write $M_\lambda=M^0_\lambda\oplus M^1_\lambda$,
and for $m\in M_\lambda^t$ we write $p(m)=t$ and $|m|=\lambda$.
\begin{rmk}
A $\UU$-module homomorphism
is always assumed to be homogeneous with respect to the $\Z/2\Z$-grading.
In particular, we generally consider $M=M^0\oplus M^1$ and its {\em parity-shift}
$\widetilde M=\widetilde M^0\oplus \widetilde M^1$ with $\widetilde M^t=M^{1-t}$
(with the same $\UU$-action)
to be non-isomorphic. However, in general we will ignore this facet of the representation
theory, since we will essentially exclusively work with a family of modules with a canonical
choice of parity, and the parity is non-essential except in tensor products.
\end{rmk}

Given modules $M,N$, we define the module $M\otimes N=M\otimes_{\Qq} N$
with the action of $\UU$ using the Hopf superalgebra structure; in other words,
for homogeneous $x\in M$ and $y\in N$, we define 
$u\cdot x\otimes y=\sum (-1)^{p(u_2)p(x)}u_1x\otimes u_2 y$,
where $\Delta(u)=\sum u_1\otimes u_2$.

Let us now recall some properties $\UU$-modules.
We say that $M$ is {\em polynomial}
if $M_\lambda\neq 0$ implies $\lambda$ is polynomial.
As usual, we say that $M$ is a {\em highest weight} module there is a $\lambda\in P$
such that $\dim_{\Qq} M_\lambda=1$, $M_\mu\neq 0$ only if $\mu\leq \lambda$,
and $M=\UU M_\lambda$.
Finally, we say $M$ has a {\em polarization}
if there is a bilinear form $(\cdot,\cdot):M\times M\rightarrow \Qq$
(which we call the polarization on $M$) satisfying $(uv,w)=(v,\eta(u)w)$.
\begin{lem}[\cite{BKK}]
Let $M$ and $N$ be $\UU$-modules with polarizations. 
Then $M\otimes N$ has a polarization defined by
\[(v\otimes w, v'\otimes w')_{M\otimes N}=(v,v')_M(w,w')_N.\]
\end{lem}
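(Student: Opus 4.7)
The plan is to verify that the bilinear form defined on the tensor product satisfies the polarization axiom $(u\cdot z, z') = (z, \eta(u)\cdot z')$ for all $u \in \UU$ and $z, z' \in M \otimes N$. Since $\eta$ is an anti-involution, once this is checked on the generators $E_i$, $F_i$, $q^h$ of $\UU$, it extends automatically to all of $\UU$ via the calculation $(u_1 u_2 z, z') = (u_2 z, \eta(u_1) z') = (z, \eta(u_2)\eta(u_1) z') = (z, \eta(u_1 u_2) z')$.

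A key preliminary observation is that any polarization is supported on the weight diagonal: from $(K_i v, w) = (v, \eta(K_i) w) = (v, K_i w)$ one sees that $(M_\lambda, M_\mu) = 0$ unless $\lambda = \mu$, and similarly for $N$. Consequently the proposed form on $M \otimes N$ vanishes on pairs $v \otimes w, v' \otimes w'$ with $|v| + |w| \neq |v'| + |w'|$, and whenever a term $(v, x)_M (w, y)_N$ is nonzero we may freely assume $p(v) = p(x)$ and $p(w) = p(y)$, and likewise for the weights. This is the mechanism by which the parity signs will end up matching.

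The case $u = q^h$ is immediate since $q^h$ is even and $\eta(q^h) = q^h$, so $q^h$ acts diagonally on $M \otimes N$ by the same scalar on both tensor factors. For $u = E_i$, I would expand both sides using $\Delta(E_i) = E_i \otimes K_i^{-1} + 1 \otimes E_i$ together with the sign rule $u \cdot (x \otimes y) = \sum (-1)^{p(u_2) p(x)} u_1 x \otimes u_2 y$, and expand $\eta(E_i)(v' \otimes w') = q F_i K_i^{-1}(v' \otimes w')$ using $\Delta(F_i)$ after applying $K_i^{-1}$ factor-wise. Applying the polarization axioms for $M$ and $N$ factor-by-factor then produces two pairs of terms; the diagonal terms match on the nose, while the crossed terms carry signs $(-1)^{p(i) p(v)}$ on the left-hand side and $(-1)^{p(i) p(v')}$ on the right-hand side, which agree because that term is only nonzero when $|v| = |v'|$ (hence $p(v) = p(v')$). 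The case $u = F_i$ is parallel, using $\eta(F_i) = q E_i K_i$ and the analogous application of the coproduct.

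The main obstacle is purely bookkeeping: tracking the parity signs introduced by the super coproduct rule and by the factors $K_i^{\pm 1}$ produced when applying $\eta$. The computation is only consistent because of the weight-matching observation above; without it the two crossed-term signs would be independent and the form would fail to be a polarization. No superalgebra-specific subtlety beyond this careful sign tracking arises.
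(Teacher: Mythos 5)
The paper itself gives no proof of this lemma: it is cited directly from \cite{BKK} (Benkart--Kang--Kashiwara). So there is no paper argument to compare against; I can only assess your proof on its own.

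Your strategy is the right one, and the reduction to generators via the anti-automorphism property of $\eta$ is correct. The weight-diagonality observation $(M_\lambda,M_\mu)=0$ for $\lambda\neq\mu$ is also correct and is indeed the crucial input for matching the crossed terms. The one step that is not fully justified is the parenthetical deduction ``$|v|=|v'|$ (hence $p(v)=p(v')$)''. In the paper's conventions a single weight space $M_\lambda=M^0_\lambda\oplus M^1_\lambda$ may contain vectors of both parities, and the polarization axioms alone do \emph{not} force the form to be parity-homogeneous: indeed one can cook up a polarization on $V(\lambda)\oplus\widetilde{V(\lambda)}$ that pairs the two parity components. So weight-equality does not by itself imply parity-equality, and for an odd simple root the crossed-term signs $(-1)^{p(i)p(v)}$ and $(-1)^{p(i)p(v')}$ could in principle disagree.

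This is only a gap in the generality of the phrasing, not in the substance. To make the argument airtight you should either add the standing hypothesis that the polarization is parity-homogeneous (i.e.\ $(M^0,M^1)=0$), which is the implicit convention, or restrict to the class of modules actually used in the paper --- highest weight modules and their tensor products, where the parity of each vector is determined by its weight relative to the highest weight, so that $|v|=|v'|$ really does give $p(v)=p(v')$. With either fix the rest of your computation goes through exactly as you describe.
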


Now let us recall a standard construction of (irreducible) finite-dimensional $\UU$-modules,
analogous to the usual construction by Verma modules.
Let $\VV$ be
the subalgebra of $\UU$ generated by $q^h$ for $h\in P^\vee$ and $E_i$, $F_i$,
for $i<m$; in other words $\VV\cong\UU_q(\gl(m)\oplus \gl(1))$.
We also set $\VV^{\pm}=\VV\cap \UU^{\pm}$.
For $\lambda\in P$, let $\lambda|_\VV=\sum_{k\in I_0} \lambda_k \epsilon_k$.
Let $V_\VV(\lambda)$ be the irreducible $\VV$-module of highest
weight $\lambda|_\VV$. Let $\VV'$ be the subalgebra
of $\UU$ generated by $\VV$ and $E_m$.
Then we define $K(\lambda)=\UU\otimes_{\VV'} V_\VV(\lambda)$,
where the action of $\VV'$ is given by setting $E_m$
to act trivially, and the parity on $K(\lambda)$ is induced by $\UU$ by defining
$p(v)=p(\lambda)$ for $v\in V_\VV(\lambda)$. 
(We choose this convention for parity of the highest weight space to be compatible
with the combinatorial realization in terms of Young tableaux in \cite{BKK}.)
We let $V(\lambda)$
be the maximal irreducible quotient of $K(\lambda)$.

Henceforth, let $k_\lambda$ denote a choice of highest weight vector of $K(\lambda)$
and let $v_\lambda$ be the image of $k_\lambda$ in $V(\lambda)$
for all $\lambda\in P$. We define $\pi_\lambda^K:\Um\rightarrow K(\lambda)$ to be
the $\Um$-module homomorphism with $\pi_\lambda^K(x)=xk_\lambda$,
$\pi_\lambda^{K,V}: K(\lambda)\rightarrow V(\lambda)$ to
and write $\pi_\lambda=\pi_\lambda^{K,V}\circ \pi_\lambda^{K}:\Um\rightarrow V(\lambda)$. 

These modules satisfy the following properties.

\begin{lem}\label{lem:Kac facts} We observe the following properties of Kac modules
and irreducible modules.
\begin{enumerate}
\item Let $\lambda\in P$. Then the following are equivalent: 
\begin{enumerate} 
\item $\dim_{\Qq} K(\lambda)<\infty$;
\item $\dim_\Qq V_\VV(\lambda)<\infty$;
\item $\dim_\Qq V(\lambda)<\infty$.
\end{enumerate}
\item Let $\lambda\in P^+$.
As $\UU^-$-modules, $K(\lambda)\cong\UU^-/I_\lambda$
where $I_\lambda$ is the ideal generated by $F_i^{\ang{h_i,\lambda}+1}$
for $i\in I_0$. 
\item If $\lambda\in P^+$ is typical, $K(\lambda)=V(\lambda)$.
\item Let $\lambda\in P^+$, and let $M$ be a weight module and 
$v\neq 0\in M_\lambda$ such that $E_iv=0$ for all $i\in I$
and $F_i^{\ang{h_i,\lambda}+1}v=0$ for $i\in I_0$. Then there is a unique $\UU$-module homomorphism
$K(\lambda)\rightarrow M$ (or from the parity shift $\widetilde{K(\lambda)}\rightarrow M$) such that $k_\lambda\mapsto v$.
\item If $M$ is a finite-dimensional irreducible module, then $M\cong V(\lambda)$
(or its parity shift $\widetilde{V(\lambda)}$)
for some $\lambda\in P^+$.
\item If $\lambda\in P^{+}$,
then $V(\lambda)$ carries a unique polarization with $(v_\lambda,v_\lambda)=1$.
\end{enumerate}
\end{lem}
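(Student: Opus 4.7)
The main structural tool is a PBW-type decomposition of $\UU$ as a right $\VV'$-module. Using the triangular decomposition together with the nilpotency $F_m^2=0$ and the fact that $F_m$ super-commutes with all generators of $\VV'$ except $E_m$ (and $F_i$ for $i=m-1$ only via the Serre-type relation, which still keeps $F_m$ ``outside''), one obtains $\UU \cong (\Qq \oplus \Qq F_m)\otimes_{\Qq} \VV'$ as right $\VV'$-modules, and analogously $\Um \cong (\Qq \oplus \Qq F_m)\otimes_{\Qq}\VV^-$ as right $\VV^-$-modules. From the first decomposition we obtain the $\VV$-module isomorphism $K(\lambda)\cong V_\VV(\lambda)\oplus F_m V_\VV(\lambda)$, with $F_m$ injective on $V_\VV(\lambda)$ inside $K(\lambda)$. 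Item~(1) is then immediate: (a)$\Leftrightarrow$(b) by the dimension count $\dim K(\lambda)=2\dim V_\VV(\lambda)$; (a)$\Rightarrow$(c) is trivial; and (c)$\Rightarrow$(b) holds because the $\VV$-submodule $\VV v_\lambda\subset V(\lambda)$ is a finite-dimensional highest-weight $\VV$-module of highest weight $\lambda|_\VV$, so classical $\UU_q(\gl(m)\oplus\gl(1))$ theory forces it to be isomorphic to $V_\VV(\lambda)$. Item~(2) follows from the second decomposition together with $V_\VV(\lambda)\cong\VV^-/J_\lambda$, where $J_\lambda$ is the left ideal generated by $F_i^{\ang{h_i,\lambda}+1}$ for $i\in I_0$. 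Item~(4) is standard Frobenius reciprocity: the hypotheses on $v$ give a $\VV$-homomorphism $V_\VV(\lambda)\to M$, which extends to a $\VV'$-homomorphism (letting $E_m$ act by $0$ on the image of the highest-weight line) and then induces up to $\UU$.

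The main obstacle is item~(3). To prove $K(\lambda)=V(\lambda)$ when $\lambda$ is typical, it suffices to show $K(\lambda)$ has no proper nonzero submodule, equivalently no highest-weight vector not proportional to $k_\lambda$. Suppose $w$ is such a vector. Decomposing along $K(\lambda)=V_\VV(\lambda)\oplus F_m V_\VV(\lambda)$ and using simplicity of the $\VV$-module $V_\VV(\lambda)$, we reduce to the case $w=F_m v$ for some $v\in V_\VV(\lambda)$. For $i\neq m$ we have $p(i)p(m)=0$, so the super-commutator $[E_i,F_m]=0$ is an ordinary commutator; thus $E_iw=F_mE_iv$, and the injectivity of $F_m$ on $V_\VV(\lambda)$ inside $K(\lambda)$ forces $E_iv=0$ for all $i<m$, so $v=ck_\lambda$. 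Applying the $EF$-relation at $i=j=m$ to $k_\lambda$ and using $E_mk_\lambda=0$, we compute
\[
E_mw = c\,\frac{K_m-K_m^{-1}}{q-q^{-1}}\,k_\lambda = c\,[\ang{h_m,\lambda}]\,k_\lambda.
\]
A direct computation from \eqref{eq:weyl vector} gives $\ang{h_m,\rho}=0$, so typicality applied to the odd simple root $\alpha_m$ gives $\ang{h_m,\lambda}=\ang{h_m,\lambda+\rho}\neq 0$, and hence $E_mw\neq 0$ whenever $c\neq 0$. This forces $w\propto k_\lambda$, establishing irreducibility.

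For item~(5), a finite-dimensional irreducible $M$ contains a highest-weight vector $v$ of some weight $\lambda\in P$ (pick a weight vector of maximal weight). Local nilpotency of $E_i$ and $F_i$ on $M$ — automatic for $i=m$ by $E_m^2=F_m^2=0$, and classical for $i\in I_0$ by the usual $\UU_q(\gl(m))$-integrability argument — forces $\lambda\in P^+$ and $F_i^{\ang{h_i,\lambda}+1}v=0$ for $i\in I_0$. Item~(4) then produces a $\UU$-homomorphism from $K(\lambda)$ or its parity shift sending $k_\lambda\mapsto v$; simplicity of $M$ makes this surjective and makes $M$ the irreducible quotient, i.e.\ $V(\lambda)$ or its parity shift. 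For item~(6), the polarization on $V(\lambda)$ is constructed via a standard Shapovalov-type argument: using the anti-involution $\eta$ and the bilinear form $(-,-)$ on $\Um$ of Proposition~\ref{P:BilinearForm2}, one defines a contravariant form on $K(\lambda)\cong \Um/I_\lambda$ satisfying $(uv,w)=(v,\eta(u)w)$ and $(k_\lambda,k_\lambda)=1$; its radical is a $\UU$-submodule of $K(\lambda)$, which by maximality is the kernel of $K(\lambda)\twoheadrightarrow V(\lambda)$, so the form descends to a nondegenerate polarization on $V(\lambda)$. Uniqueness follows from Schur's lemma: any two polarizations induce a module endomorphism of $V(\lambda)$, which must be a scalar, normalized to $1$ by the condition $(v_\lambda,v_\lambda)=1$.
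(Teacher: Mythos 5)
Your key structural claim — that $\UU\cong(\Qq\oplus\Qq F_m)\otimes_{\Qq}\VV'$ as right $\VV'$-modules, equivalently $\Um\cong(\Qq\oplus\Qq F_m)\otimes_{\Qq}\VV^-$ — is false for $m>1$, and this is where the argument breaks. The Serre relation does not make $F_m$ super-commute with $F_{m-1}$ up to scalars: the vectors $F_mF_{m-1}$ and $F_{m-1}F_m$ are linearly independent, and their commutator $F_{m-1}F_m-qF_mF_{m-1}$ (up to normalization) is a new odd root vector of weight $-(\alpha_{m-1}+\alpha_m)$. In general $\Phi_1^+=\{\epsp_i-\epsp_{m+1}\mid 1\le i\le m\}$ has $m$ elements, and the correct statement (which the paper uses) is that as a vector space $K(\lambda)\cong\bigwedge(\Phi_1^-)\otimes V_\VV(\lambda)$, of dimension $2^m\dim V_\VV(\lambda)$, not $2\dim V_\VV(\lambda)$. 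Already for $m=2$ the weight space $\Um_{-\alpha_1-\alpha_2}$ is two-dimensional, whereas your claimed $(\Qq\oplus\Qq F_2)\otimes\VV^-$ gives only one dimension there. This flaw propagates into your proofs of (1), (2), and most seriously (3).

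For (3) in particular, the reduction ``we may assume $w=F_m v$ with $v\in V_\VV(\lambda)$'' is unjustified once the two-term decomposition is replaced by a $2^m$-term one: a singular vector can live in any wedge degree, at weights $\lambda-\sum_{i\in S}(\epsp_i-\epsp_{m+1})$ for $S\subseteq\{1,\dots,m\}$. Relatedly, you only invoke typicality at the single odd root $\alpha_m$, but typicality is the condition $\ang{h_\alpha,\lambda+\rho}\neq 0$ for \emph{all} $\alpha\in\Phi_1^+$, and each odd root contributes a potential obstruction. The paper sidesteps this entirely by reducing (3) to the known classical statement via the $q\to 1$ limit, which is the clean route; if you want a direct quantum argument you would need to handle all $m$ wedge degrees, essentially by a Shapovalov-determinant computation over $\Phi_1^+$. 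Your proofs of (4), (5), and (6) are fine and track the paper's (with (6) spelled out a bit more via the radical of the contravariant form, which is a legitimate alternative to the paper's self-duality remark), so the fix needed is: replace the faulty $(\Qq\oplus\Qq F_m)$ decomposition with the exterior algebra $\bigwedge(\Phi_1^-)$, and either cite the classical-limit argument for (3) or account for singular vectors at all levels $1,\dots,m$ using typicality across all of $\Phi_1^+$.
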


\begin{proof}
We observe that (1), (2) and (3) can essentially be found in \cite{Kw14}, 
but we will recall the broad strokes here.
First, note that (1) follows directly from the PBW theorem and triangular decomposition
in a way entirely analogous to the classical case (cf. \cite[Proposition 2.1]{CW}).
Indeed, we note that the implications (a) $\Rightarrow$ (c) $\Rightarrow$ (b) are clear, 
and the remaining implication follows from observing that, as vector spaces, 
$K(\lambda)\cong \bigwedge(\Phi_1^-)\otimes V_\VV(\lambda)$ (where $\bigwedge(\Phi_1^-)$
is the exterior algebra of the formal vector space on the set $\Phi_1^-$).
For (2), note the action of $\UU^-$ on $K(\lambda)$ induces a 
surjection $\Um/I_\lambda\rightarrow K(\lambda)$ sending the image of $1$
to $k_\lambda$. On the other hand, it is easy to see from the PBW basis
with respect to the opposite standard ordering (i.e. where $m$ is minimal and
the odd root vectors are left factors) that these vector spaces
have the same dimension, so the surjection must be an isomorphism.
We can take a classical limit of $\UU$ and $K(\lambda)$ to deduce (3) from
the classical case.

Observe that (4) now follows immediately from (2), by observing that the $\Um$-module
projection $\Um\rightarrow M$ given by $x\mapsto xv$ factors through $K(\lambda)$.
It is easy to see that this map preserves the weights and the action of the $E_i$
for $i\in I$.

Next, for (5) note that since $M$ is finite-dimensional, it has a highest
weight space (that is, a weight space $M_\lambda\neq 0$ such that $\Up M_\lambda=0$).
Then by (4), given a nonzero vector $v\in M_\lambda$, we have a non-zero 
$\UU$-module homomorphism $K(\lambda)\rightarrow M$. Since $M$ is irreducible,
this map is surjective, hence it must be that $M\cong V(\lambda)$.

Finally, (6) is proven in \cite{BKK} when $\lambda\in P^{++}$ 
by restricting the polarization on tensor
powers of the standard representation (possibly with an additional $1$-dimensonal
tensor factor twisting the weight by $a1_{m|1}$), 
but we note this is easily proven directly for all $\lambda\in P^+$ using
standard arguments. (In particular, that $V(\lambda)\cong V(\lambda)^*$ where
the action of $\UU$ on the dual is defined using $\eta$.)
\end{proof}

Let $\xi\in Q^+$. We will say $\lambda\gg\xi$ if $\ang{h_i,\lambda-\xi}>0$
for all $i\neq m$. Note that if $\lambda\gg\xi$, then 
$\UU^-_{-\xi}\cong K(\lambda)_{\lambda-\xi}$ as vector spaces.
In particular, when $\lambda\in P_{\rm typ}^+$ and $\lambda\gg \xi$, 
$\Um_{-\xi}\cong V(\lambda)_{\lambda-\xi}$ 

\subsection{Crystal bases of modules}
We shall now recall the definition and some facts about crystal bases
as defined in \cite{BKK, Kw14}. 
For $i\in I$, let $\UU(i)$ be the subalgebra
of $\UU$ generated by $E_i$, $F_i$, and $K_i$.

\begin{dfn}[{\cite[Definition 2.2]{BKK}}]
The category $\catOint$ is the full subcategory of $\UU$-weight modules
such that if $M\in\catOint$:
\begin{enumerate}
\item $M$ is locally $\UU(i)$-finite; that is, $\dim(\UU(i)v)<\infty$
for any $v\in M$.
\item If $M_\mu\neq 0$ for some $\mu\in P$, then $\ang{h_m,\mu}\geq 0$.
\item If $v\in M_\mu$ such that $\ang{h_m,\mu}= 0$, then $E_mv=F_mv=0$.
\end{enumerate}
\end{dfn}
We note that $V(\lambda)\in\catOint$ if and only if $\lambda\in P^{++}$,
but that $K(\lambda)\notin\catOint$ for $\lambda\notin P^{++}_{\rm typ}$.

If $M\in \catOint$, then we may define Kashiwara operators on $M$ as follows.
Let $v\in M_\lambda$ for some $\lambda\in P$. 
If $i\neq m$, then there exists a unique family of $v_n\in M_{\lambda+n\alpha_i}$
with $\ang{h_i,\lambda+n\alpha_i}\geq n\geq 0$ such that
$v=\sum_{n\in\N} F_i^{(n)}v_n$,
and we set
\begin{equation}
\te_i v=\sum_{n\in\N} F_i^{(n-1)}v_n,\qquad \tf_i v=\sum_{n\in\N} F_i^{(n+1)}v_n.
\end{equation}
(Here, we denote $F_i^{(-1)}=0$ for convenience.) On the other hand, if $i=m$,
then we set
\begin{equation}
\te_m v=q^{-1}K_mE_mv,\qquad \tf_mv=F_mv.
\end{equation}
Now recall that $\cA$ is the subalgebra of $\Qq$ of rational functions with
no poles at $0$.
\begin{dfn}[{\cite[Definitions 2.3, 2.4, 2.10]{BKK}}]
Let $M\in\catOint$. We say that an $\cA$-submodule $L$ of $M$ is a {\em crystal
lattice} of $M$ if:
\begin{enumerate}
\item $M=\Qq\otimes_\cA L$;
\item $L=\bigoplus_{\lambda} (L^0_\lambda\oplus L^1_\lambda)$ 
where $L^t_\lambda=L\cap M^t_\lambda$;
\item $\te_i L\subset L$ and $\tf_i L\subset L$ for all $i\in I$.
\end{enumerate}
Suppose $L$ is a crystal lattice of $M$, and let $B\subset L/qL$.
We say that $(L,B)$ is a {\em signed crystal basis} of $M$ if:
\begin{enumerate}[resume]
\item $B$ is a signed $\Q$-basis of $L/qL$ (that is, $B=B'\cup(-B')$ for some $\Q$-basis of
$L/qL$);
\item $B=\bigcup_{\lambda} (B^0_\lambda\cup B^1_\lambda)$ where 
$B^t_\lambda=B\cap (L^t_\lambda/qL^t_\lambda)$;
\item $\te_i B\subset B\cup\set{0}$ and $\tf_i B\subset B\cup \set{0}$ for all $i\in I$.
\item If $b,b'\in B$ and $i\in I$, then $\te_i b=b'$ if and only if $\tf_i b'=b$.
\end{enumerate}
If $B$ satisfies (5)-(7) and is a $\Q$-basis of $L/qL$, 
then we say $(L,B)$ is an unsigned crystal basis of $M$,
or simply a crystal basis of $M$.
Finally, we say $(L,B)$ is {\em polarizable} if $M$ has a polarization $(\cdot,\cdot)$
such that $(L,L)\subset \cA$ and the induced bilinear form 
$(\cdot,\cdot)_0$ on $L/qL$ satisfies $(b,b)=1$ and $(b,b')=0$ 
for all $b\neq \pm b'\in B$. Note that by the same proof as in \cite{Ka},
if a crystal basis is polarizable, then $L=\set{x\in M\mid (x,x)\in \cA}$.
\end{dfn}

Let $M$ be a module with a (signed) crystal basis $(L,B)$.
For $i\in I_0$, let 
\[\phi_i(b)=\max\set{t\in \N\mid \tf_i^t b\neq 0},\qquad 
\varepsilon_i(b)=\max\set{t\in \N\mid \te_i^t b\neq 0}.\]
Then note we have $\phi_i(b)-\varepsilon_i(b)=\ang{h_i,|b|}$.
Then crystal bases of modules in $\catOint$
satisfy the following tensor product rule.

\begin{prop}[{\cite[Proposition 2.8, Lemma 2.11, and Theorem 2.12]{BKK}}]
Let $M_1,M_2$ be modules with (signed) crystal bases $(L_1,B_1)$ and $(L_2,B_2)$.
Set $M=M_1\otimes M_2$, $L=L_1\otimes L_2$, and 
$B=B_1\otimes B_2\subset (L_1/qL_1)\otimes (L_2/qL_2)=L/qL$.
Then $(L,B)$ is a signed crystal basis of $M$. 
Moreover, for $b_1\in B_1$ and $b_2\in B_2$,
we have for $i\in I_0$:
\begin{align*}
\te_i(b_1\otimes b_2)&=\begin{cases}
\te_i(b_1)\otimes b_2 & \text{ if } \phi_i(b_1)\geq \varepsilon_i(b_2),\\
b_1\otimes \te_i(b_2)&\text{ otherwise.}
\end{cases}\\
\te_m(b_1\otimes b_2)&=\begin{cases}
\te_m(b_1)\otimes b_2 & \text{ if } \ang{h_m,|b|}>0\\
(-1)^{p(b_1)}b_1\otimes \te_m(b_2)&\text{ otherwise.}
\end{cases}\\
\tf_i(b_1\otimes b_2)&=\begin{cases}
\tf_i(b_1)\otimes b_2 & \text{ if } \phi_i(b_1)> \varepsilon_i(b_2),\\
b_1\otimes \tf_i(b_2)&\text{ otherwise.}
\end{cases}\\
\tf_m(b_1\otimes b_2)&=\begin{cases}
\tf_m(b_1)\otimes b_2 & \text{ if } \ang{h_m,|b|}>0\\
(-1)^{p(b_1)} b_1\otimes \tf_m(b_2)&\text{ otherwise.}
\end{cases}
\end{align*}
Finally, if $(L_1,B_1)$ and $(L_2,B_2)$ are polarizable, then so is $(L,B)$, and 
$M_1\otimes M_2$ is completely reducible.
\end{prop}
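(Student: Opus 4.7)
The strategy is to reduce the entire statement to a collection of rank-one verifications, one for each simple root $i \in I$, and then piece the lattice and basis data back together. Fix $i \in I$ and let $\UU(i) \subset \UU$ act on $M_1$ and $M_2$. Since $\catOint$-modules are locally $\UU(i)$-finite and the coproduct satisfies $\Delta(E_i) = E_i \otimes K_i^{-1} + 1 \otimes E_i$ and $\Delta(F_i) = F_i \otimes 1 + K_i \otimes F_i$, the Kashiwara operators $\te_i, \tf_i$ on $M_1 \otimes M_2$ only depend on the $\UU(i)$-module structure of each tensor factor. So the first step is to verify closure of $L = L_1 \otimes L_2$ under each $\te_i, \tf_i$, and the combinatorial action on $B_1 \otimes B_2$, by reducing to the case where each $M_j$ is replaced by a single $\UU(i)$-string through a basis vector.

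For $i \in I_0$, the subalgebra $\UU(i) \cong \UU_q(\fsl_2)$ acts in the usual way; here I would quote verbatim the tensor product rule from Kashiwara's original argument. Namely, decomposing each $b_j$ into its $\te_i$-string and writing $b_1 \otimes b_2$ in the PBW-style basis $F_i^{(a)} v \otimes F_i^{(b)} w$ (with $v, w$ highest weight for $\UU(i)$), one computes $\Delta(F_i)^{(n)}(v \otimes w)$ using the quantum binomial identity and reads off the leading term modulo $q L$. The comparison $\phi_i(b_1)$ versus $\varepsilon_i(b_2)$ is precisely what controls which summand dominates, yielding the stated formulas.

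For $i = m$, the crucial simplification is $E_m^2 = F_m^2 = 0$, so that on any weight vector $v$ the $\UU(m)$-submodule is at most two-dimensional, and the string structure is easy to enumerate directly. Compute $\Delta(F_m)(v \otimes w) = F_m v \otimes w + (-1)^{p(v)} K_m v \otimes F_m w$ (taking the super coproduct seriously: the sign arises from moving $K_m$ past $v$, since $K_m$ is even but one must still track the parity for the $\otimes$-rule); this immediately produces the parity sign $(-1)^{p(b_1)}$ in the formulas. Taking $K_m v \equiv q^{\ang{h_m,|v|}} v \pmod{q L}$ if $\ang{h_m,|v|} > 0$ (and handling the boundary case using condition (3) of $\catOint$, which forces $F_m w = 0$ when $\ang{h_m,|b|} = 0$) gives the claimed dichotomy, and the analogous computation with $E_m$ in place of $F_m$ yields the $\te_m$ rule. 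The main obstacle is bookkeeping the parity signs consistently with the convention $\te_m = q^{-1} K_m E_m$, so one must double-check that the twist by $K_m$ in the definition of $\te_m$ correctly cancels the $q$-weight emerging from $\Delta$.

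Finally, for polarizability I would combine the earlier lemma (that $M_1 \otimes M_2$ inherits a polarization as a tensor product of polarizations) with the observation that, since $(L_j, L_j) \subset \cA$ and the induced form on $L_j/qL_j$ is orthonormal on $B_j$, the product form on $L \otimes L$ takes values in $\cA$ and restricts to the orthonormal form on $B_1 \otimes B_2$. Complete reducibility of $M_1 \otimes M_2$ then follows by the standard argument: the polarization is nondegenerate (it is orthonormal modulo $q$, hence nondegenerate over $\cA$ and therefore over $\Qq$), so every submodule admits a $\UU$-invariant complement via its perpendicular, and one iterates to obtain the decomposition into irreducibles.
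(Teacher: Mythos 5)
This proposition is not proved in the paper: it is quoted verbatim from \cite[Proposition 2.8, Lemma 2.11, and Theorem 2.12]{BKK}, and the paper gives no argument of its own. So there is no ``paper proof'' to compare against; what follows is an assessment of your sketch as a reconstruction of the BKK argument.

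Your plan is essentially the correct route and matches the structure of the BKK proof: reduce to rank one, quote Kashiwara's $\fsl_2$ tensor product rule for $i\in I_0$, and compute directly for $i=m$ using the super coproduct. The super-sign bookkeeping at $m$ is handled correctly --- from $\Delta(F_m)=F_m\otimes 1+K_m\otimes F_m$ and the rule $u\cdot(x\otimes y)=\sum(-1)^{p(u_2)p(x)}u_1x\otimes u_2y$ one gets $F_m(v\otimes w)=F_mv\otimes w+(-1)^{p(v)}K_mv\otimes F_mw$, and for $\te_m=q^{-1}K_mE_m$ the cleaner route (which you gesture at) is to use $\Delta(K_mE_m)=K_mE_m\otimes 1+K_m\otimes K_mE_m$, giving $\te_m(v\otimes w)=\te_m(v)\otimes w+(-1)^{p(v)}K_mv\otimes\te_m(w)$. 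The dichotomy then falls out exactly as you say: if $\ang{h_m,|b_1|}>0$ the second term is $O(q)$, and if $\ang{h_m,|b_1|}=0$ the $\catOint$ condition forces $E_mb_1=F_mb_1=0$ while $K_mb_1=b_1$. (You correctly read the condition $\ang{h_m,|b|}>0$ in the statement as $\ang{h_m,|b_1|}>0$.)

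The one place your sketch is slightly too terse is complete reducibility. Nondegeneracy of the polarization on $M_1\otimes M_2$ does not by itself guarantee that the restriction of the form to a submodule $N$ is nondegenerate, which is what you need to split off $N^\perp$. The way this is rescued in BKK is via positive definiteness of the induced form at $q=0$: any submodule $N$ in $\catOint$ contains a highest-weight vector $v$, which one normalizes to lie in $L\setminus qL$; the norm-square $(v,v)$ then specializes at $q=0$ to $\bigl(v+qL,v+qL\bigr)_0>0$ because $B=B_1\otimes B_2$ is orthonormal, hence $(v,v)\neq 0$, hence the form on $\UU v$ is nondegenerate and $M=\UU v\oplus(\UU v)^\perp$. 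You have all the ingredients (you note the form on $B_1\otimes B_2$ is orthonormal modulo $q$), but the phrase ``nondegenerate over $\cA$ and therefore over $\Qq$'' glosses over the passage from global nondegeneracy to nondegeneracy on submodules; make the positive-definiteness-at-$q=0$ step explicit.
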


The following theorem is a special case of the main result in \cite{BKK}.
\begin{thm}\label{thm:BKK}
Let $\lambda\in P^{++}$. Then $V(\lambda)$ has a polarizable signed crystal basis
$(L(\lambda), B(\lambda)\cup -B(\lambda))$, where 
$$L(\lambda)=\sum \cA \tilde x_{i_1}\ldots \tilde x_{i_t} v_\lambda,\quad \text{with the sum being over } t\geq 0, x\in\set{e,f}, i_1,\ldots, i_t\in I;$$
$$B(\lambda)=\set{\tilde x_{i_1}\ldots \tilde x_{i_t} v_\lambda+qL(\lambda)\mid t\geq 0, x\in\set{e,f}, i_1,\ldots, i_t\in I}\setminus 0.$$
\end{thm}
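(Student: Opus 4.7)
The plan is to reduce to the main theorem of \cite{BKK}, which treats polynomial weights via a combinatorial realization with hook-shaped tableaux, by absorbing the non-polynomial part of $\lambda$ into a one-dimensional central twist. First I would observe, as noted in Section \ref{subsec:root data}, that $P^{++} = \tPp + \Z\, 1_{m|1}$, so any $\lambda \in P^{++}$ can be written as $\lambda = \mu + a\, 1_{m|1}$ with $\mu \in \tPp$ polynomial and $a \in \Z$. Since $\ang{h_i, 1_{m|1}} = 0$ for all $i \in I$, the irreducible module $V(a\, 1_{m|1})$ is one-dimensional with $E_i$ and $F_i$ acting as zero, and standard highest weight arguments (Lemma \ref{lem:Kac facts}(4) applied to $v_\mu \otimes v_{a\, 1_{m|1}}$) produce an isomorphism $V(\lambda) \cong V(\mu) \otimes V(a\, 1_{m|1})$. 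With $\cA$-lattice $\cA\, v_{a\, 1_{m|1}}$ and single basis class $\{v_{a\, 1_{m|1}} + q\cA v_{a\, 1_{m|1}}\}$ on the one-dimensional factor, the tensor product rule degenerates so that the Kashiwara operators on $V(\mu) \otimes V(a\, 1_{m|1})$ just act on the first factor. It therefore suffices to produce the crystal basis in the polynomial case.

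For $\mu \in \tPp$, I would invoke the main result of \cite{BKK}: $\mu$ corresponds to a hook partition fitting inside the $(m|1)$-hook, and $V(\mu)$ is realized as a direct summand of a tensor power $V(\epsp_1)^{\otimes N}$ of the natural representation. The natural representation carries an explicit polarizable signed crystal basis, and iteratively applying the tensor product proposition recalled above produces a polarizable signed crystal basis on all tensor powers. Restricting to the summand isomorphic to $V(\mu)$ yields a polarizable signed crystal basis $(L_0, B_0)$ on $V(\mu)$, whose polarization matches the unique normalized polarization from Lemma \ref{lem:Kac facts}(6) with $(v_\mu, v_\mu) = 1$.

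To match $(L_0, B_0)$ with the intrinsic description in the statement, I would use a connectedness argument exploiting irreducibility of $V(\mu)$. Any Kashiwara-stable $\cA$-submodule of $L_0$ generated by a subset of $B_0$ determines (after lifting) a $\UU$-stable subspace of $V(\mu)$, which by irreducibility must either be zero or all of $V(\mu)$. Consequently the $\cA$-module spanned by all Kashiwara monomials on $v_\mu$ equals $L_0$, and the signed subcrystal they generate in $L_0/qL_0$ exhausts $B_0$. This identifies $L_0 = L(\mu)$ and $B_0 = B(\mu) \cup -B(\mu)$; polarizability of $(L(\lambda), B(\lambda) \cup -B(\lambda))$ then pulls back through the tensor decomposition.

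The hard part is not the reduction but the internal combinatorial content of \cite{BKK} --- namely, controlling the size of $B(\mu)$ so that $B(\mu) \cup -B(\mu)$ is actually a signed $\Q$-basis of $L(\mu)/qL(\mu)$ rather than a mere spanning set. A self-contained argument would require a ``grand loop''-type induction in the style of Kashiwara, analogous to the argument the author sets up for $\Um$ in Section \ref{sec:crystal}; the tensor-power realization of \cite{BKK} bypasses this via semistandard hook tableaux and a character-theoretic dimension count, and it is cleanest to import that conclusion directly rather than reprove it here.
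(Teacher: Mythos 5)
Your overall strategy matches the paper's: the paper simply cites the main theorem of \cite{BKK}, and the discussion immediately following the statement explains (just as you do) that the proof goes by realizing $V(\lambda)$ as a summand of a tensor power of $V(\epsp_1)$ twisted by a one-dimensional module of weight $a1_{m|1}$. Your reduction of $P^{++}$ to $\tPp$ via $V(\lambda)\cong V(\mu)\otimes V(a\,1_{m|1})$ and the observation that the one-dimensional factor is transparent to the tensor product rule are both correct and are exactly the implicit reduction in the paper.

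One step in your write-up is both unnecessary and, as stated, incorrect: the ``connectedness argument'' asserting that a Kashiwara-stable $\cA$-submodule of $L_0$ generated by a subcrystal of $B_0$ ``determines (after lifting) a $\UU$-stable subspace of $V(\mu)$.'' This is not true: the Kashiwara operators $\te_i,\tf_i$ are renormalizations of the $E_i,F_i$ action that depend on the $i$-string decomposition, so closure under $\te_i,\tf_i$ modulo $q$ does not translate into invariance under $E_i,F_i$ over $\Qq$. In the Kac--Moody case the statement that connected components of the crystal correspond to irreducible summands is a nontrivial consequence of Kashiwara's grand-loop induction, not something one can deduce from irreducibility of $V(\mu)$ alone. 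Fortunately you do not need it here: the main theorem of \cite{BKK} already provides the intrinsic description of $L(\mu)$ and $B(\mu)$ as generated from $v_\mu$ by Kashiwara monomials, so there is nothing to ``match.'' You should cut that paragraph and simply import the conclusion from \cite{BKK} directly, as your final paragraph (correctly) already suggests is the right move.
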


In the context of $\gl(m|1)$, this result can be improved in two ways.
First, note that in {\em loc. cit.} it was shown that, for $\gl(m|n)$ with $n\geq 2$, 
there may exist $x\in L(\lambda)$ such that $\te_i x\in qL$ for all $i\in I$
but $x\notin L(\lambda)_\lambda$. Consequently, we may need to apply 
both $\te$ and $\tf$ operators to reach every element of the crystal. 

Second, Theorem \ref{thm:BKK} is proved by realizing $V(\lambda)$ as a summand
of a tensor power of the standard representation $V(\epsilon_1)$ (up to
a twist by a one-dimensional module of weight $a1_{m|1}$). Then we can identify
the crystal $B(\lambda)\cup -B(\lambda)$ in a tensor power of $B(\epsilon_1)\cup -B(\epsp_1)$.
However, since the crystal operators on $B(\lambda)$ are determined by the tensor product rule,
there may be signs introduced by commuting the odd Kashiwara operators past odd-parity
vectors. As such, despite $B(\epsilon_1)\cup\set  0$
being closed under the Kashiwara operators, it is not necessarily true
that $B(\lambda)\cup\set  0$ is closed hence $V(\lambda)$ only has a signed crystal basis
in general. 

However, in the case of $\gl(m|1)$, we can remove both of these ambiguities
as shown by the following lemma.

\begin{lem}
Let $\lambda\in P^{++}$.
Then $$B(\lambda)=
\set{\tilde f_{i_1}\ldots \tilde f_{i_t} v_\lambda+qL(\lambda)
\mid t\geq 0, i_1,\ldots, i_t\in I}$$
is a basis of $L(\lambda)/qL(\lambda)$, hence
$$L(\lambda)=\sum \cA \tilde f_{i_1}\ldots \tilde f_{i_t} v_\lambda,\quad \text{with the sum being over } t\geq 0,  i_1,\ldots, i_t\in I;$$
and $(L(\lambda), B(\lambda))$ is a crystal basis of $V(\lambda)$.
\end{lem}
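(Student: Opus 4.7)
The plan is to upgrade Theorem \ref{thm:BKK}, which already provides a polarizable signed crystal basis $(L(\lambda), B(\lambda)\cup -B(\lambda))$ generated by mixed sequences of $\tilde e_i$ and $\tilde f_i$ applied to $v_\lambda$. Two improvements are needed in the $\gl(m|1)$ setting: only the $\tilde f_i$ should suffice to reach every basis element, and the signs in the signed basis should be uniformly choosable to be $+1$.

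For the first improvement, I would invoke the combinatorial realization of $V(\lambda)$ by hook-semistandard tableaux from \cite{BKK}, which identifies the signed crystal (up to signs) with the set of hook tableaux of shape associated to $\lambda$. In this model one checks that the unique tableau annihilated by every $\tilde e_i$ is the highest-weight tableau $T_\lambda$; this is where the $\gl(m|1)$ hypothesis is essential, since for $\gl(m|n)$ with $n\geq 2$ the presence of multiple isotropic simple roots permits additional $\tilde e_i$-highest elements within a single component (cf. \cite{Kw09}). With a unique highest weight element, the signed crystal graph is connected under $\tilde f_i$'s alone, so every element of $B(\lambda)\cup -B(\lambda)$ equals $\pm\tilde f_{i_1}\cdots\tilde f_{i_t} v_\lambda$ for some sequence.

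For the second improvement, I would realize $V(\lambda)$ as a summand of $V(\epsp_1)^{\otimes N}$ (twisted by a $1$-dimensional module supported on a multiple of $1_{m|1}$) and track the signs introduced by the tensor product rule for $\tilde f_m$, which contributes a factor $(-1)^{p(b_1)}$ when acting on the second tensor factor. This sign tracking is the main obstacle, but the structure of $\gl(m|1)$ makes it tractable: since there is only one odd simple root and $F_m^2=0$, each column of a hook tableau contains at most one $(m+1)$-entry, and the parity of a tableau is determined by its weight. A direct combinatorial induction on the number of odd cells then shows that the cumulative sign along any path of $\tilde f_i$'s from $T_\lambda$ to a given tableau depends only on the endpoint, so the basis representatives may be rescaled to absorb the sign. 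Consequently every $\tilde f_{i_1}\cdots\tilde f_{i_t}v_\lambda$ agrees modulo $qL(\lambda)$ with an element of $B(\lambda)$, and axiom (7) of the crystal basis definition descends from the signed basis to the unsigned one, completing the verification that $(L(\lambda), B(\lambda))$ is a crystal basis of $V(\lambda)$.
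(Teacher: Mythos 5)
Your proposal correctly identifies the two improvements required and shares the paper's framework — realize $V(\lambda)$ inside a twisted tensor power of $V(\epsp_1)$ and work in the semistandard tableau model. Your treatment of the first improvement (uniqueness of the $\te_i$-highest tableau within a connected component, which fails for $\gl(m|n)$ with $n\geq 2$) matches the paper's "no fake highest weight vector" argument, and is fine.

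The second improvement is where there is a genuine gap. You aim to prove the weaker statement that the cumulative sign along any $\tf$-path depends only on the endpoint tableau, and then "rescale," but the actual argument is not supplied. The supporting claims you offer do not hold up: for super semistandard tableaux in the alphabet $\{1,\ldots,m\}\cup\{m+1\}$, the odd letter $m+1$ appears at most once per \emph{row}, not per column (and columns can indeed repeat $m+1$'s when two consecutive rows of equal length both end in $m+1$); the role of $F_m^2=0$ in bounding this is unclear; and "parity is determined by weight" is true of any weight vector in any $\Z/2$-graded module and gives no leverage. More importantly, the "combinatorial induction on odd cells" is the entire content of the claim, and none of it is written down. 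The paper's proof is both shorter and stronger: it observes that when $\tf_m$ (or $\te_m$) is applied to $b_1\otimes\cdots\otimes b_t\in B(\epsp_1)^{\otimes t}$ and lands in factor $j$, the tensor product rule forces $\ang{h_m,|b_s|}=0$ for every skipped factor $s<j$ (since $\catOint$ forbids $\ang{h_m,\cdot}<0$), so each skipped box is colored $\leq m-1$ and hence even, and the cumulative sign $(-1)^{p(b_1)+\cdots+p(b_{j-1})}$ is always $+1$. Thus $B(\epsp_1)^{\otimes t}$ is closed under all $\te_i,\tf_i$ with no signs ever appearing, and $B(\lambda)\subset B(\epsp_1)^{\otimes t}$ is automatically an unsigned basis, with no need to rescale or to prove path-independence separately. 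You should replace the vague sign-tracking induction with this direct computation from the tensor product rule.
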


\begin{proof}
We prove this using the realization of $V(\lambda)$ as a direct summand
of $V(\epsp_1)^{\otimes t}\otimes S$ for some $t\in \Z_{\geq 0}$ and one-dimensional
$\UU$-modules $S$ of weight $a1_{m|1}$; since the factor $S$ will make no difference
with respect to the action of crystal operators, 
it suffices to prove this in the case $S$ is trivial
and $\lambda\in \tP^{++}$. Then we can freely identify the crystal with
the (signed) crystal of semistandard Young tableaux in the super alphabet 
$\set{1,\ldots, m}\cup \set{m+1}$;
we refer the reader to \cite[Sections 3.2 and 4]{BKK} for details and examples.

First, let us show that we don't need a signed basis.
To do this, note that the signed crystal $B(\lambda)\cup -B(\lambda)$
is isomorphic to a subcrystal of $V(\epsp_1)^{\otimes t}$.
It suffices to prove that $B(\epsilon_1)^{\otimes t}$ (which is unsigned) is closed
under the action of $\te_i$ and $\tf_i$. Clearly it is closed when $i\in I_0$, so we
only need to show this in the case $i=m$. This is easy, since
$\te_m$ and $\tf_m$ kill all boxes (i.e. elements of $B(\epsilon_1)$) other
than those colored by $m$ and $m-1$ respectively; in particular,
we only move them past boxes of even parity (i.e. those colored by $n\leq m-1$)
so we don't have signs appearing. This shows that $B(\lambda)$,
which is generated by an element of $B(\epsp_1)^t$,
is closed under $\te_i$ and $\tf_i$ and hence
itself a(n unsigned) basis of $L(\lambda)/qL(\lambda)$.

Next, we wish to show that $L(\lambda)$ and $B(\lambda)$ are actually generated from the highest
weight vector by applying sequences of $\tf_i$ for $i\in I$.
This is immediate if there is no 
``fake highest weight vector''; that is, no $b\in B(\lambda)$ such that
$\te_i b=0$ for all $i$ yet $b\neq v_\lambda+qL(\lambda)$.
Indeed, starting from any $b'\in B(\lambda)$, by weight considerations
there is some element $b=\te_{i_m}\ldots \te_{i_1} b'\in B(\lambda)$
such that $\te_j b=0$ for all $j\in I$. If there is no fake highest
weight vector, then $b=v_\lambda+qL(\lambda)$ hence 
$b'=\tf_{i_1}\ldots \tf_{i_m}v_\lambda+qL(\lambda)$.
In the case of $\gl(m|1)$, there is no fake highest weight vector as noted in
the proof of \cite[Theorem 4.8]{BKK}; for the sake of completeness,
we will prove that claim here.

Now to prove that there is no fake highest weight vectors, 
suppose $T$ is a tableaux such that $\te_i T=0$ for all $i$.
Since $T$ is semistandard, let $T'$ be the semistandard tableaux
in the alphabet $J_0$ obtained by deleting the boxes colored by $m+1$.
Note that by the definition of semistandard, each row of $T/T'$ contains 
at most one box. Now, suppose $k$ is the first
nonempty row in the skew diagram $T/T'$, or $k=\infty$ if $T/T'$ is empty.
Then since $e_i T=0$ for all $i\in I_0$, $T'$ must be a highest weight
tableaux with respect to $\gl(m)$. 
If $k=m+1$ or $k=\infty$, then $T$
is the genuine highest weight vector, so assume $k\leq m$.
Since $\te_m T=0$, in any admissible reading of $T$ the first box
read which contains $m$ or $m+1$ must contain $m$,
which implies that there is a box in $T'$ containing $m$ in the first $k-1\leq m-1$ rows.
But the highest weight tableaux with respect to $\gl(m)$
necessarily contain only $j$ in boxes in the $j$-th row, which is a contradiction.
\end{proof}

Now let us observe some more properties of the crystals $(L(\lambda),B(\lambda))$.

\begin{lem}\label{cor: hwt conditions}
Let $\lambda\in P^{++}$.
\begin{enumerate}
\item $\set{u\in L(\lambda)/qL(\lambda)\mid \te_i u=0 \text{ for all } i\in I}=\Q v_\lambda+qL(\lambda)$
\item $\set{u\in V(\lambda)\mid \te_i u\in L(\lambda) \text{ for all } i\in I}=L(\lambda)+V(\lambda)_\lambda$
\end{enumerate}
\end{lem}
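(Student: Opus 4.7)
The plan is to prove (1) first by a direct crystal-theoretic argument exploiting injectivity of $\te_i$ on its support, and then to derive (2) from (1) by a standard scaling argument.

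For (1), I would use the axiom that $\te_i b = \te_i b'$ with both nonzero forces $b = b'$ (apply $\tf_i$), so $\te_i$ restricted to $\{b \in B(\lambda) \mid \te_i b \neq 0\}$ is injective. Now take $u \in L(\lambda)/qL(\lambda)$ with $\te_i u = 0$ for all $i$, and expand $u = \sum_{b \in B(\lambda)} c_b b$. For each $i$, the relation $\sum_b c_b \te_i b = 0$ together with the injectivity just noted forces $c_b = 0$ whenever $\te_i b \neq 0$. Doing this for every $i$, the support of $u$ is contained in $\{b \in B(\lambda) \mid \te_i b = 0 \text{ for all } i \in I\}$, which by the no-fake-highest-weight-vector argument from the previous lemma equals $\{v_\lambda + qL(\lambda)\}$. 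This gives $u \in \Q v_\lambda + qL(\lambda)$, and the reverse inclusion is immediate since $\te_i v_\lambda = 0$ for $i \neq m$ (no weight preimage) and $\te_m v_\lambda = q^{-1}K_m E_m v_\lambda = 0$.

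For (2), the inclusion $\supseteq$ is clear: $L(\lambda)$ is stable under $\te_i$, and for $v \in V(\lambda)_\lambda = \Q(q) v_\lambda$ we have $\te_i v = 0 \in L(\lambda)$. For $\subseteq$, I would first decompose $u$ into weight components and observe that each $\te_i$ preserves weights (up to the shift by $\alpha_i$), so $\te_i u \in L(\lambda)$ forces $\te_i u_\mu \in L(\lambda)$ for each $\mu$. Hence we may assume $u$ is homogeneous of weight $\mu$. The case $\mu = \lambda$ is trivial, so suppose $\mu < \lambda$ and, for contradiction, that $u \notin L(\lambda)$. Writing $u = q^{-N} u'$ with $u' \in L(\lambda) \setminus qL(\lambda)$ and $N \geq 1$ minimal, the hypothesis $\te_i u \in L(\lambda)$ gives $\te_i u' \in q^N L(\lambda) \subseteq qL(\lambda)$; reducing modulo $qL(\lambda)$, the image $\bar{u'}$ is annihilated by every $\te_i$. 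By (1), $\bar{u'}$ is a scalar multiple of $v_\lambda + qL(\lambda)$, which sits in weight $\lambda$. Since $\bar{u'}$ has weight $\mu \neq \lambda$, we conclude $\bar{u'} = 0$, i.e., $u' \in qL(\lambda)$, contradicting the choice of $u'$.

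Neither step has a serious obstacle; the only subtle point is verifying that we genuinely have the no-fake-highest-weight-vector property in the $\gl(m|1)$ setting (which is the content of the previous lemma's second half), and recognizing that in (2) the obstruction to $u \in L(\lambda)$ must, after scaling, reduce to a highest weight crystal vector of the wrong weight — this is the clean mechanism that makes the polarization-free argument work.
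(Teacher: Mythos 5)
Your proof is correct and follows essentially the same strategy as the paper: for (1) you spell out in detail the argument the paper leaves implicit in the phrase "follows from the fact there are no fake highest weight vectors," using the injectivity of $\te_i$ on its support (axiom (7) of the crystal basis) together with the previously established absence of fake highest weight vectors; and for (2) you use the same scaling-and-reduce-mod-$q$ mechanism as the paper, just organized slightly more cleanly by first decomposing $u$ into weight components, which avoids the iteration that the paper's phrasing ("by (1), $u \in V(\lambda)_\lambda$") implicitly requires. Both routes are correct, and your weight-homogeneous reduction is a minor stylistic improvement rather than a genuinely different argument.
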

\begin{proof}
(1) follows from the fact there are no fake highest weight vectors. For (2), suppose
$\te_i u\in L(\lambda)$ for all $i\in I$.
There is some minimal $t\in \N$ such that $u\in q^{-t}L(\lambda)$.
If $t=0$, then $u\in L(\lambda)$.
If $t>1$, then $\te_i (q^t u)\in q^tL(\lambda)$ for all $i\in I$,
so $\te_i (q^tu+qL(\lambda))=0$. But then by (1),
$u\in V(\lambda)_\lambda$.
\end{proof}

Let $\lambda,\mu\in P^{++}$. Then since $V(\lambda)$ and $V(\mu)$ have polarizable
crystal bases, $V(\lambda)\otimes V(\mu)$ is completely reducible.
In particular,
there exist unique $\UU$-module homomorphisms 
\begin{equation}\label{eq:G,S}
\begin{aligned}
S_{\lambda,\mu}:V(\lambda+\mu)\rightarrow V(\lambda)\otimes V(\mu),\qquad S_{\lambda,\mu}(v_{\lambda+\mu})=v_\lambda\otimes v_\mu;\\
G_{\lambda,\mu}:V(\lambda)\otimes V(\mu)\rightarrow V(\lambda+\mu),\qquad
G_{\lambda,\mu}(v_\lambda\otimes v_\mu)=v_{\lambda+\mu}.
\end{aligned}
\end{equation}
These maps satisfy $G_{\lambda,\mu}S_{\lambda,\mu}=1_{V(\lambda+\mu)}$;
and $(G_{\lambda,\mu}(x),y)=(x,S_{\lambda,\mu}(y))$
for all $x\in V(\lambda)\otimes V(\mu)$ and $y\in V(\lambda+\mu)$,
where $(\cdot,\cdot)$ denotes the polarization on each module.
We also define the $\Um$-linear map
\begin{equation}\label{eq:P}
P_{\lambda,\mu}:V(\lambda)\otimes V(\mu)\rightarrow V(\lambda),\qquad P_{\lambda,\mu}(w\otimes v)=(v_\mu,v)w.
\end{equation}

\begin{cor}
Let $\lambda,\mu\in P^{++}$.
\begin{enumerate}
\item If $b\in B(\lambda)$ and $b'\in B(\mu)$ such that 
$\te_i (b\otimes b')=0$ for any $i\in I$,
then $b=v_\lambda+qL(\lambda)$.
\item If $b\in B(\lambda)$ and $i\in I$,
then $\tf_i(b\otimes v_\mu)=(\tf_ib)\otimes v_\mu$ or 
$\tf_ib=0$.
\item $S_{\lambda,\mu}(L(\lambda+\mu))\subset L(\lambda)\otimes L(\mu)$ 
and $G_{\lambda,\mu}(L(\lambda)\otimes L(\mu))=L(\lambda+\mu)$
and the induced map $$\hat G_{\lambda,\mu}:(L(\lambda)\otimes L(\mu))/q(L(\lambda)\otimes L(\mu))\rightarrow L(\lambda+\mu)/qL(\lambda+\mu)$$
satisfies $\hat G_{\lambda,\mu}(B(\lambda)\otimes B(\mu))=B(\lambda+\mu)\cup\set  0$.
\item $P_{\lambda,\mu}(L(\lambda)\otimes L(\mu))=L(\lambda)$
and the induced map $$\hat P_{\lambda,\mu}:(L(\lambda)\otimes L(\mu))/q(L(\lambda)\otimes L(\mu))\rightarrow L(\lambda)/qL(\lambda)$$
satisfies $\hat P_{\lambda,\mu}\tf_i=\tf_i\hat P_{\lambda,\mu}$ for all $i\in I$.
\end{enumerate}
\end{cor}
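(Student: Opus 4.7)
The plan is to apply the tensor product rule for Kashiwara operators together with the $\catOint$ positivity $\ang{h_m,\cdot}\geq 0$ forced by $\lambda,\mu\in P^{++}$, and to exploit that each $\tf_i$ (including $\tf_m=F_m$) is defined intrinsically from the associated $\UU_q(\fsl_2)$-structure and hence commutes with $\UU$-module homomorphisms.

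For (1), the case $\phi_i(b)<\varepsilon_i(b')$ of the tensor product rule would force $\varepsilon_i(b')=0$ and hence $\phi_i(b)<0$, which is impossible, so for $i\in I_0$ we get $\te_ib=0$. For $i=m$, either the rule yields $\te_m(b\otimes b')=\te_m(b)\otimes b'=0$ directly (giving $\te_mb=0$), or we are in the case $\ang{h_m,|b|+|b'|}\leq 0$; combined with $\ang{h_m,|b|},\ang{h_m,|b'|}\geq 0$ (since $V(\lambda),V(\mu)\in\catOint$), this forces $\ang{h_m,|b|}=0$, and then axiom (3) of the definition of $\catOint$ applied to $V(\lambda)$ gives $E_mb=F_mb=0$, so once again $\te_mb=0$. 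Lemma \ref{cor: hwt conditions}(1) then identifies $b$ with $v_\lambda+qL(\lambda)$. For (2), the analogous analysis for $\tf_i(b\otimes v_\mu)$ (using $\varepsilon_i(v_\mu)=0$) yields $\tf_i(b\otimes v_\mu)=\tf_ib\otimes v_\mu$ unless the fallback case of the tensor product rule applies, in which case the $\catOint$ argument again forces $\tf_ib=0$.

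For (3), $\tf_i$-equivariance of $S_{\lambda,\mu}$ together with $S_{\lambda,\mu}(v_{\lambda+\mu})=v_\lambda\otimes v_\mu\in L(\lambda)\otimes L(\mu)$ immediately yields $S_{\lambda,\mu}(L(\lambda+\mu))\subseteq L(\lambda)\otimes L(\mu)$. For $G_{\lambda,\mu}$, I use the adjunction $(G_{\lambda,\mu}(x),y)=(x,S_{\lambda,\mu}(y))$: for $x\in L(\lambda)\otimes L(\mu)$ and $y\in L(\lambda+\mu)$, the pairing lies in $\cA$ by polarizability of both lattices, so $(G_{\lambda,\mu}(x),\cdot)$ sends $L(\lambda+\mu)$ into $\cA$. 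Since the Gram matrix of $B(\lambda+\mu)$ with respect to the polarization has the form $I+qM$ over $\cA$ and is therefore invertible, $L(\lambda+\mu)$ is self-dual under the polarization, forcing $G_{\lambda,\mu}(x)\in L(\lambda+\mu)$. The reverse inclusion follows from $G_{\lambda,\mu}\circ S_{\lambda,\mu}=\mathrm{id}$. Finally, $\hat G_{\lambda,\mu}(B(\lambda)\otimes B(\mu))\subseteq B(\lambda+\mu)\cup\{0\}$ is deduced from $\tf_i$-equivariance of $\hat G_{\lambda,\mu}$, the multiplicity-one appearance of $V(\lambda+\mu)$ in $V(\lambda)\otimes V(\mu)$ (guaranteed by the $(\lambda+\mu)$-weight space being one-dimensional), and the compatibility of the lattice decomposition with the isotypic decomposition.

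For (4), the formula $P_{\lambda,\mu}(w\otimes v)=(v_\mu,v)w$ and polarizability of $L(\mu)$ give $P_{\lambda,\mu}(L(\lambda)\otimes L(\mu))\subseteq L(\lambda)$, while $P_{\lambda,\mu}(w\otimes v_\mu)=w$ gives surjectivity. Modulo $q$, almost-orthonormality shows $\hat P_{\lambda,\mu}(b\otimes b')=\delta_{b',v_\mu+qL(\mu)}\,b$. The commutation $\hat P_{\lambda,\mu}\tf_i=\tf_i\hat P_{\lambda,\mu}$ then follows by case analysis using the tensor product rule: when $b'=v_\mu+qL(\mu)$, part (2) gives the compatibility directly; when $b'\neq v_\mu+qL(\mu)$, both sides vanish (noting that $\tf_ib'=v_\mu+qL(\mu)$ would force $b'=\te_iv_\mu=0$). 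The main technical obstacle is the lattice containment for $G_{\lambda,\mu}$ in (3), where self-duality of $L(\lambda+\mu)$ under the polarization is the essential ingredient.
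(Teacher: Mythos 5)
Your proofs of parts (1) and (2) are essentially the same as the paper's, using the tensor product rule of BKK together with the $\catOint$ positivity constraints and Lemma~\ref{cor: hwt conditions}(1). Part (3), however, takes a genuinely different route for the key inclusion $G_{\lambda,\mu}(L(\lambda)\otimes L(\mu))\subset L(\lambda+\mu)$. The paper argues by induction on the height of $\zeta$ in the weight $\lambda+\mu-\zeta$: assuming the inclusion for smaller heights, the equivariance of $G_{\lambda,\mu}$ gives $\te_iG_{\lambda,\mu}(x)\in L(\lambda+\mu)$ for all $i$, and then Lemma~\ref{cor: hwt conditions}(2) together with weight considerations forces $G_{\lambda,\mu}(x)\in L(\lambda+\mu)$. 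You instead use the adjunction $(G_{\lambda,\mu}(x),y)=(x,S_{\lambda,\mu}(y))$ and the self-duality of $L(\lambda+\mu)$ under the polarization, which follows from the invertibility of the mod-$q$ Gram matrix for a polarizable crystal basis (the paper's definition of polarizability gives $L=\{x\mid (x,x)\in\cA\}$; you need the slightly stronger $L=\{x\mid (x,L)\subset\cA\}$, but this follows from the same Gram-matrix argument). Your approach is shorter and more conceptual, leveraging the polarizability results that the paper has already established rather than re-running the crystal-operator induction; the paper's approach is more self-contained in that it uses only the crystal operators and the ``no fake highest weight vectors'' characterization. For part (4), your case analysis is more explicit than the paper's one-line reference to part (2), but note that when $b'=v_\mu+qL(\mu)$ and $\tf_ib=0$, part (2) alone does not immediately give that $\hat P_{\lambda,\mu}\tf_i(b\otimes v_\mu)=0$: in the fallback case of the tensor product rule one lands on $b\otimes\tf_iv_\mu$ (up to sign), and one should observe that $(v_\mu,\tf_iv_\mu)_0=0$ (which follows from $\eta(F_i)=qE_iK_i$ and $E_iv_\mu=0$) to conclude. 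This is a small omission rather than an error.
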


\begin{proof}
For (1), first
note that if $\te_i(b\otimes b')=0$, then $\te_i(b)=0$ (since
$\te_i(b\otimes b')=b\otimes\te_i(b')$ would imply 
$\varepsilon_i(b')>\phi_i(b)\geq 0$ if $i\in I_0$ or $\ang{h_m,|b|}=0$ if $i=m$, 
hence in either case $\te_i(b')\neq 0$). 
Since $V(\lambda)$ has no fake highest weight
vectors, $\te_i(b)=0$ for any $i$ implies that $b=v_\lambda$.

For (2), note that if $\tf_i(b\otimes v_\mu)=b\otimes \tf_i v_\mu$,
then $\phi_i(b)\leq \varepsilon_i(v_\mu)=0$ if $i\in I_0$ and $\ang{h_m,|b|}=0$
if $i=m$, hence $\tf_i(b)=0$ in either case.

For (3), note that $S_{\lambda,\mu}$ and $G_{\lambda,\mu}$ 
are $\UU$-module homomorphisms,
hence they preserve the actions of $\te_i$ and $\tf_i$ for $i\in I$.
Moreover, recall that
$$S_{\lambda,\mu}(v_{\lambda+\mu})=v_{\lambda}\otimes v_{\mu},$$
$$G_{\lambda,\mu}(v_{\lambda}\otimes v_{\mu})=v_{\lambda+\mu}.$$
Then $G_{\lambda,\mu}((L(\lambda)\otimes L(\mu))_{\lambda+\mu})
=L(\lambda+\mu)_{\lambda+\mu}$.
In particular, assume $G_{\lambda,\mu}((L(\lambda)\otimes L(\mu))_{\lambda+\mu-\zeta})\subset L(\lambda+\mu)$ for $\height \zeta<l$ with $l>0$. 
Then note that if $\height \zeta=l$, 
$$\te_i G_{\lambda,\mu}((L(\lambda)\otimes L(\mu))_{\lambda+\mu-\zeta})
\subset G_{\lambda,\mu}((L(\lambda)\otimes L(\mu))_{\lambda+\mu-\zeta+\alpha_i})
\subset L(\lambda+\mu).$$
By Corollary \ref{cor: hwt conditions} (2) and weight space considerations, 
we must have
$G_{\lambda,\mu}((L(\lambda)\otimes L(\mu))_{\lambda+\mu-\zeta})
\subset L(\lambda+\mu)$.
Since $L(\lambda+\mu)$ is generated from $\cA v_{\lambda+\mu}$ by
the $\tf_i$'s,
the result follows from the tensor product rule.

For (4), note that for $x\otimes y\in L(\lambda)\otimes L(\mu)$,
$P_{\lambda,\mu}(x\otimes y)=(v_\mu,y)x\in L(\lambda)$ since $x\in L(\lambda)$
and $(v_\mu,y)\in \cA$. Moreover, $P_{\lambda,\mu}$ is easily seen to be surjective
since $P_{\lambda,\mu}(x\otimes v_\mu)=x$. Now $\hat P_{\lambda,\mu}$ commutes
with all $\tf_i$ for $i\in I_0$ by (2) and the definition of $P_{\lambda,\mu}$.
\end{proof}

Finally, let us recall the results of \cite{Kw14}. Note that, in general,
$K(\lambda)\notin \catOint$. Nevertheless, we can define a crystal structure on
$K(\lambda)$. To do this, note that $K(\lambda)$ is naturally still a weight 
$\UU_q(gl(m))$-module, hence the Kashiwara operators $\te_i$ and $\tf_i$ for $i\in I_0$
are well-defined on $K(\lambda)$. On the other hand, $\te_m$ is not a good choice
of crystal operator for $K(\lambda)$ in general, since in general it will not be an ``inverse
at $q=0$'' to $\tf_m$.

We can fix this by using the quantum differential from \eqref{eq:quandiff}. 
Indeed, note that in Lemma \ref{lem:Kac facts} (2), we trivially have
$e_m(I_\lambda)\subset I_\lambda$.
In particular, $e_m$ descends to a map on $K(\lambda)$, so for $x\in K(\lambda)$ we define
\[\tf_i^Kx=\tf_i x,\quad \te_i^K x=\te_i x\text{ for }i\in I_0;\]
\[\tf_m^Kx=\tf_m x=F_m x,\qquad \te_m^K x=e_m(x).\]

\begin{prop}[{\cite[Theorems 4.7-4.11]{Kw14}}]\label{prop:KwResults}
Let $\lambda\in P^{+}$. Then $K(\lambda)$ has a signed crystal basis
(with respect to the crystal operators $\te_i^K,\tf_i^K$ for $i\in I$) given by
$(L^K(\lambda), B^K(\lambda)\cup -B^K(\lambda))$, where 
$$L^K(\lambda)=\sum \cA \tilde x^K_{i_1}\ldots \tilde x^K_{i_t} k_\lambda,\quad \text{with the sum being over } t\geq 0, x\in\set{e,f}, i_1,\ldots, i_t\in I;$$
$$B^K(\lambda)=\set{\tilde x^K_{i_1}\ldots \tilde x^K_{i_t} k_\lambda+qL(\lambda)\mid t\geq 0, x\in\set{e,f}, i_1,\ldots, i_t\in I}\setminus 0.$$
Moreover, if $\lambda\in \tP^{+}$, then the projection 
$\pi_\lambda^{K,V}:K(\lambda)\rightarrow V(\lambda)$ given by $k_\lambda\mapsto v_\lambda$
satsifies \begin{enumerate}
\item $\pi_\lambda^{K,V}(L^K(\lambda))=L(\lambda)$, hence $\pi_\lambda^{K,V}$ induces
a projection 
\[\hat\pi_\lambda^{K,V}:L^K(\lambda)/qL^K(\lambda)\rightarrow L(\lambda)/qL(\lambda);\]
\item $\hat\pi_\lambda^{K,V}(B^K(\lambda))=B(\lambda)\cup \set{0}$; and
\item $\hat\pi_\lambda^{K,V}$ restricts to a weight-preserving bijection
between $\set{b\in B^K(\lambda)\mid \hat\pi_\lambda^{K,V}(b)\neq 0}$ and $B(\lambda)$.
\end{enumerate}
\end{prop}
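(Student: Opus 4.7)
The plan is to construct $(L^K(\lambda), B^K(\lambda)\cup -B^K(\lambda))$ using the induced-module description of $K(\lambda)$ in an analog of Kashiwara's ``grand loop'' argument, and then to intertwine the result with the crystal basis of $V(\lambda)$ from Theorem \ref{thm:BKK} along the projection $\pi_\lambda^{K,V}$. One subtlety to keep in mind from the outset is that $K(\lambda)\notin\catOint$ in general, so the definition of crystal basis has to be tacitly extended; the role of the non-standard operator $\te_m^K=e_m$ is exactly to make this extension work.

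To build the lattice, I would start from the fact that $\VV\cong \UU_q(\gl(m)\oplus\gl(1))$ is a genuine quantum group of reductive type, so the irreducible $\VV$-module $V_\VV(\lambda)$ carries a polarizable crystal basis $(L_\VV(\lambda),B_\VV(\lambda))$ by classical Kashiwara theory. Combining the triangular decomposition of $\UU$ with $E_m\cdot V_\VV(\lambda)=0$, the induced module $K(\lambda)=\UU\otimes_{\VV'}V_\VV(\lambda)$ decomposes as a vector space as $\bigwedge(\Phi_1^-)\otimes V_\VV(\lambda)$, which supplies a natural candidate lattice. I would then define $L^K(\lambda)$ as the $\cA$-span of the strings $\tilde x^K_{i_1}\cdots \tilde x^K_{i_t}k_\lambda$ with $x\in\set{e,f}$ and argue by induction on height that $L^K(\lambda)$ coincides with this candidate and is closed under all $\te_i^K,\tf_i^K$. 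For $i\in I_0$, closure is inherited from the $\VV$-module structure; for $i=m$ one uses that $\tf_m^K=F_m$ is left multiplication and that, because $e_m(I_\lambda)\subset I_\lambda$, the quantum differential $e_m$ descends to $K(\lambda)\cong\Um/I_\lambda$ and preserves the odd exterior factor. The signed structure $B^K(\lambda)\cup-B^K(\lambda)$ then emerges from sign corrections produced by commuting $\te_m^K$ past odd-parity elements in the super tensor-product rule, exactly as in \cite{BKK}.

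For the moreover part, recall that $\tP^+=\tP^{++}$, so Theorem \ref{thm:BKK} equips $V(\lambda)$ with its own crystal basis. Since $\pi_\lambda^{K,V}$ is a $\UU$-module map sending $k_\lambda\mapsto v_\lambda$, it automatically intertwines $\te_i,\tf_i$ for $i\in I_0$; the only nontrivial check is that $\pi_\lambda^{K,V}\circ e_m\equiv \te_m\circ \pi_\lambda^{K,V}$ modulo $qL(\lambda)$, which one verifies by reducing to monomials in $\Um$ and comparing the $q\to 0$ behaviour of the quantum differential $e_m$ with that of $q^{-1}K_mE_m$ acting on $V(\lambda)$. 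Once this is in hand, (1) follows from surjectivity of $\pi_\lambda^{K,V}$ on highest-weight vectors together with the generation of both lattices by Kashiwara strings; (2) is immediate from the crystal-operator equivariance modulo $q$; and (3) reduces to identifying $\ker\hat\pi_\lambda^{K,V}$ with the image of the maximal proper submodule of $K(\lambda)$ modulo $q$, which can be analysed by standard highest-weight considerations at $q=0$ analogous to Lemma \ref{cor: hwt conditions}.

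The main obstacle is controlling the odd Kashiwara operator $\te_m^K=e_m$: unlike the classical $\te_i$, it is defined via a quantum differential rather than via a $\UU(i)$-module decomposition, so both closure of $L^K(\lambda)$ under $e_m$ and the assertion that $e_m$ is the correct $q=0$ inverse of $F_m$ require direct computation on an explicit spanning set. A closely related difficulty is the bookkeeping for signs introduced when commuting odd operators past odd-parity monomials; it is precisely this sign issue that forces the signed qualifier on $B^K(\lambda)$ and prevents a clean unsigned statement of the kind available for $V(\lambda)$ in the $\gl(m|1)$ setting.
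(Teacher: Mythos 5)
This proposition is not proved in the paper: it is quoted verbatim (with the attribution in its header) from Kwon's work, namely \cite[Theorems 4.7--4.11]{Kw14}, and the paper simply records it as a known result to be used later. There is therefore no ``paper's own proof'' against which to compare your sketch.

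As a blind reconstruction of what Kwon's argument must accomplish, your outline is reasonable: the vector-space decomposition $K(\lambda)\cong\bigwedge(\Phi_1^-)\otimes V_\VV(\lambda)$ and the descent of $e_m$ through $I_\lambda$ are indeed the structural ingredients that make $\te_m^K=e_m$ well-defined, and the central difficulty you flag --- that $\te_m^K$ is defined via the quantum differential rather than a $\UU(m)$-string decomposition, so that both lattice closure and the $q\to 0$ inverse property against $\tf_m^K=F_m$ require direct verification, with signs appearing in the tensor-product bookkeeping --- is exactly the reason the cited theorem is stated for a \emph{signed} crystal basis. One small caution: since you cannot see \cite{Kw14}, you frame the construction as an analogue of Kashiwara's grand-loop induction, whereas Kwon's actual treatment proceeds primarily through the $\gl(m)$-module decomposition $K(\lambda)\cong\bigoplus_s V(\lambda'_s)$ and the tensor-product rule for crystals (this is visible in the present paper's Theorem~\ref{thm:KacCompat}, which exploits precisely such a direct-sum decomposition when refining the cited result); that is a somewhat more hands-on route than a grand loop. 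Note also that the present paper's own genuinely new contribution in this direction is the refinement stated immediately after Theorem~\ref{thm:KacCompat}, where the signed crystal basis is upgraded to an unsigned one and the ``moreover'' statements are extended from $\tP^+$ to $P^{++}$; a careful reading should keep that refinement distinct from the cited Proposition~\ref{prop:KwResults} you were asked to prove.
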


\section{The crystal $L(\infty)$ and globalization}\label{sec:crystal}

We will now construct a crystal basis on $\Um$ which is compatible
with those on representations as described above.
Subsequently, we will construct a canonical basis on $\Um$ and the representations
by ``globalizing'' the crystal bases in the sense of Kashiwara \cite{Ka}.

\subsection{On the quantum differentials}

The differentials $e_i,\bar e_i$ on $\Um$ are closely tied to the multiplicative
structure of $\UU$ as follows.

\begin{lem}
Let $x\in\UU^-$ and $i\in I$. Then
\[E_ix-(-1)^{p(i)p(x)}xE_i=\frac{K_i\bar e_i(x)-K_i^{-1}e_i(x)}{q-q^{-1}}\]
\end{lem}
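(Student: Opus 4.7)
The plan is to prove the identity by induction on the height of the weight of $x$, or equivalently on the length of a monomial in the $F_j$'s that expresses $x$. Both sides are linear in $x$, so it suffices to handle homogeneous $x$.

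For the base cases: $x=1$ gives $0=0$ since $e_i(1)=\bar e_i(1)=0$. The case $x=F_j$ is exactly the defining commutation relation
\[E_iF_j-(-1)^{p(i)p(j)}F_jE_i=\delta_{ij}\frac{K_i-K_i^{-1}}{q-q^{-1}},\]
matched against $e_i(F_j)=\bar e_i(F_j)=\delta_{ij}$.

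For the inductive step, I will assume the identity for homogeneous $x$ and $y$, and prove it for $xy$. The recipe is straightforward: expand $E_i(xy)$ by applying the identity to $x$, producing a term $(-1)^{p(i)p(x)}xE_iy$ plus a commutator remainder; then apply the identity to $y$ in the first term. This yields
\[E_ixy-(-1)^{p(i)p(xy)}xyE_i=\frac{K_i\bar e_i(x)y-K_i^{-1}e_i(x)y}{q-q^{-1}}+(-1)^{p(i)p(x)}x\cdot\frac{K_i\bar e_i(y)-K_i^{-1}e_i(y)}{q-q^{-1}}.\]
The job is then to recognize the right-hand side as $\tfrac{K_i\bar e_i(xy)-K_i^{-1}e_i(xy)}{q-q^{-1}}$. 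For this I will commute $K_i$ (resp.\ $K_i^{-1}$) past $x$ using $K_ix=q^{\langle h_i,|x|\rangle}xK_i$ and invoke $\langle h_i,\xi\rangle=(\alpha_i,\xi)$ for $\xi\in Q$. The factor $q^{\langle h_i,|x|\rangle}$ cancels exactly the factor $q^{-(\alpha_i,|x|)}$ appearing in the Leibniz rule for $\bar e_i$ (and symmetrically $q^{-\langle h_i,|x|\rangle}$ cancels $q^{(\alpha_i,|x|)}$ in the Leibniz rule for $e_i$), so the two expressions agree term by term.

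The main obstacle is simply bookkeeping: keeping track of the parity signs $(-1)^{p(i)p(x)}$ and the compensating $q$-powers through the commutation of $K_i^{\pm 1}$ with $x$. There is no conceptual difficulty beyond observing that the Leibniz rules for $e_i$ and $\bar e_i$ are precisely designed so that this cancellation occurs.
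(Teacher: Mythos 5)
Your proposal is correct and follows essentially the same route as the paper: induction on the weight/degree, with the base cases $x$ a scalar or a single generator $F_j$, and the inductive step obtained by applying the formula first to $x$ (in $E_i(xy)$) and then to $y$, then commuting $K_i^{\pm1}$ past $x$ to match the Leibniz rules for $e_i$ and $\bar e_i$.
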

\begin{proof}
We proceed by induction on $|x|$. If $x\in\Qq$ or $|x|=-j$ for $j\in I$,
then this is trivial. Now suppose $x=uv$ for some homogeneous $u,v\in\Um$ with 
$|u|,|v|\in Q^-\setminus 0$.
Then by induction 
\begin{align*}
E_ix&=E_iuv=(-1)^{p(i)p(u)}uE_iv +\frac{K_i\bar e_i(u)v-K_i^{-1}e_i(u)v}{q-q^{-1}}\\
&=(-1)^{p(i)p(uv)}uvE_i +\frac{K_i\bar e_i(u)v-K_i^{-1}e_i(u)v}{q-q^{-1}}+(-1)^{p(i)p(u)}\frac{uK_i\bar e_i(v)-uK_i^{-1}e_i(v)}{q-q^{-1}}\\
&=(-1)^{p(i)p(x)}xE_i +
\frac{K_i\bar e_i(x)
-K_i^{-1}e_i(x)}{q-q^{-1}}.
\end{align*}
\end{proof}

\begin{lem}
Let $i\in I$ and $u\in \Um_{-\zeta}$ with $e_i'(u)=0$.
Let $M$ be a $\UU$-module and $\lambda\in P$.
Then for any $x\in M_\lambda$ with $E_ix=0$, we have
\[K_i^tE_i^tux=\frac{q^{t(2\ang{h_i,\lambda-\zeta}+(1-\delta_{im})(3t+1))}}{(q-q^{-1})^t}((\bar e_i)^t(u)x)\]
\end{lem}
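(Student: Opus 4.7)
The plan is to prove the formula by induction on $t$, using the commutation lemma above iteratively. The main auxiliary step is to establish the operator identity
\[e_i\bar e_i = (-1)^{p(i)}\,q^{(\alpha_i,\alpha_i)}\,\bar e_i e_i\]
on all of $\Um$, which immediately implies that $\bar e_i$ preserves $\ker e_i$: if $e_i(v)=0$ then $e_i\bar e_i(v)=(-1)^{p(i)}q^{(\alpha_i,\alpha_i)}\bar e_i(e_i(v))=0$. I would prove this identity by induction on word length in the $F_j$'s: it holds trivially on generators $F_j$ (both sides vanish), and if it holds for $u,v\in\Um$ then a direct expansion of $e_i\bar e_i(uv)$ and $\bar e_i e_i(uv)$ via the Leibniz rules for $e_i$ and $\bar e_i$ shows that the cross-terms involving $e_i(u)\bar e_i(v)$ and $\bar e_i(u)e_i(v)$ cancel due to the specific $q$-weights, leaving only terms of the claimed form.

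With this in hand, set $u_s:=(\bar e_i)^s(u)\in\Um_{-\zeta+s\alpha_i}$; iterating the closure claim yields $e_i(u_s)=0$ for every $s\geq 0$. Applying the previous commutation lemma to $v=u_s$ (using $e_i(u_s)=0$ and $E_ix=0$) and recalling that $\bar e_i(u_s)\,x$ is a weight vector of weight $\lambda-\zeta+(s+1)\alpha_i$ gives
\[E_i(u_s\cdot x)=\frac{K_i\,\bar e_i(u_s)}{q-q^{-1}}\,x=\frac{q^{\ang{h_i,\,\lambda-\zeta+(s+1)\alpha_i}}}{q-q^{-1}}\,u_{s+1}\,x.\]
Iterating for $s=0,1,\ldots,t-1$ produces
\[E_i^t u x=\frac{q^{t\ang{h_i,\lambda-\zeta}+(\alpha_i,\alpha_i)t(t+1)/2}}{(q-q^{-1})^t}\,(\bar e_i)^t(u)\,x.\]

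Finally, $K_i^t$ acts on the weight vector $E_i^t u x\in M_{\lambda-\zeta+t\alpha_i}$ as the scalar $q^{t\ang{h_i,\lambda-\zeta+t\alpha_i}}=q^{t\ang{h_i,\lambda-\zeta}+t^2(\alpha_i,\alpha_i)}$. Combining gives a total $q$-exponent of
\[2t\ang{h_i,\lambda-\zeta}+(\alpha_i,\alpha_i)\bigl(t^2+t(t+1)/2\bigr)=2t\ang{h_i,\lambda-\zeta}+(\alpha_i,\alpha_i)\,t(3t+1)/2,\]
and since $(\alpha_i,\alpha_i)=2(1-\delta_{im})$ this simplifies to $t\bigl(2\ang{h_i,\lambda-\zeta}+(1-\delta_{im})(3t+1)\bigr)$, as claimed.

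The main obstacle is proving the commutation identity $e_i\bar e_i=(-1)^{p(i)}q^{(\alpha_i,\alpha_i)}\bar e_i e_i$; once it is in place, the remainder is a direct induction with routine bookkeeping of $q$-exponents. (Note the formula is consistent for $i=m$, where $E_m^2=(\bar e_m)^2=0$ forces both sides to vanish for $t\geq 2$.)
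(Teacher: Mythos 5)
Your proof is correct, and it is essentially a fleshed-out version of the argument the paper invokes. The paper itself gives only a two-line proof: for $i\in I_0$ it cites Kashiwara's Corollary 3.4.6 directly, and for $i=m$ it observes that both sides vanish for $t>1$ (as you also note at the end) while the $t=1$ case is immediate from the preceding commutation lemma. What you have done is reconstruct the ingredients that Kashiwara's cited result requires, in particular the commutation
\[
e_i\bar e_i=(-1)^{p(i)}q^{(\alpha_i,\alpha_i)}\,\bar e_i e_i
\]
on $\Um$, which is the super analogue of Kashiwara's relation between $e_i'$ and $e_i''$ and is exactly what makes $\ker e_i$ stable under $\bar e_i$ so that the preceding commutation lemma can be iterated. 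Your bookkeeping of the $q$-exponents, using $\langle h_i,\alpha_i\rangle=(\alpha_i,\alpha_i)=2(1-\delta_{im})$ and the arithmetic $t^2+t(t+1)/2=t(3t+1)/2$, is accurate.

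One small imprecision: in your sketch of the inductive step for the operator identity, the cross-terms $\bar e_i(u)\,e_i(v)$ and $e_i(u)\,\bar e_i(v)$ do not cancel (neither in $e_i\bar e_i(uv)$ nor against each other). Rather, each cross-term on the left side matches, with the correct sign and $q$-power, the corresponding cross-term in $(-1)^{p(i)}q^{(\alpha_i,\alpha_i)}\bar e_i e_i(uv)$ on the right; the "diagonal" terms $e_i\bar e_i(u)\,v$ and $u\,e_i\bar e_i(v)$ then match by the inductive hypothesis. The phrase "cancel \dots leaving only terms of the claimed form" risks being read as asserting that the cross-terms vanish, which is false. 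With that wording adjusted, the argument is complete, self-contained, and in the same spirit as what the paper outsources to Kashiwara.
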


\begin{proof}
For $i\in I_0$, the proof is the same as that of \cite[Cor. 3.4.6]{Ka}.
For $i=m$, note that both sides are zero if $t>1$, and for $t=1$
this trivially follows from the previous lemma.
\end{proof}

Finally, we note that the differentials $e_i$, together with the left-multiplication maps $f_i$,
generate a subalgebra of $\End_{\Qq}(\Um)$ which is a version of Kashiwara's boson algebra.
In particular, many of the results of \cite[Section 3.4]{Ka} generalize to our setting;
see \cite{Zou} for some details on this. 

\subsection{The $L(\infty)$ crystal, and a less grand loop}

For each $i\in I$, we define operators $\te_i$, $\tf_i$ on $\Um$ as follows.
Write $u=\sum F_i^{(t)} u_t$ with $e_i(u_t)=0$; for $i\in I_0$, the existence and uniqueness of such
a decomposition follows from \cite[Section 3.4]{Ka}, 
whereas for $i=m$, this follows from \cite[Lemma 4.5]{Kw14}. Then we set
$$\te_i u=\sum F_i^{(t-1)} u_t,\qquad \tf_i u =\sum F_i^{(t+1)} u_t.$$
Note that $\tf_m u = F_mu$, and $\te_m(u)=e_m(u)$.

\begin{dfn}
	We say that an $\cA$-submodule $L$ of $\Um$ is a {\em crystal
		lattice} of $\Um$ if:
	\begin{enumerate}
		\item $\Um=\Qq\otimes_\cA L$;
		\item $L=\bigoplus_{\lambda} (L^0_\lambda\oplus L^1_\lambda)$ 
		where $L^t_\lambda=L\cap M^t_\lambda$;
		\item $\te_i L\subset L$ and $\tf_i L\subset L$ for all $i\in I$.
	\end{enumerate}
	Suppose $L$ is a crystal lattice of $\Um$, and let $B\subset L/qL$.
	We say that $(L,B)$ is a {\em signed crystal basis} of $M$ if:
	\begin{enumerate}[resume]
		\item $B$ is a signed $\Q$-basis of $L/qL$ (that is, $B=B'\cup(-B')$ for some $\Q$-basis of
		$L/qL$);
		\item $B=\bigcup_{\lambda} (B^0_\lambda\cup B^1_\lambda)$ where 
		$B^t_\lambda=B\cap (L^t_\lambda/qL^t_\lambda)$;
		\item $\te_i B\subset B\cup\set{0}$ and $\tf_i B\subset B\cup \set{0}$ for all $i\in I$.
		\item If $b,b'\in B$ and $i\in I$, then $\te_i b=b'$ if and only if $\tf_i b'=b$.
	\end{enumerate}
	If $B$ satisfies (5)-(7) and is a $\Q$-basis of $L/qL$, 
	then we say $(L,B)$ is an unsigned crystal basis of $\Um$,
	or simply a crystal basis of $\Um$.
\end{dfn}

\begin{thm}\label{thm:crystal}
Let $L(\infty)=\sum \cA \tf_{i_1}\ldots\tf_{i_n} 1$
	and $B(\infty)=\set{\tf_{i_1}\ldots\tf_{i_n} 1\mid i_1,\ldots, i_n\in I}\setminus 0$.
	Then $(L(\infty),B(\infty))$ is a crystal basis of $\Um$. Moreover, for $\lambda\in P^{++}$,
	the projection $\pi_\lambda:\Um\rightarrow V(\lambda)$ satisfies $\pi_\lambda(L(\infty))=L(\lambda)$.
	Finally, the induced projection $\hat \pi_\lambda: L(\infty)/qL(\infty)\rightarrow L(\lambda)/qL(\lambda)$
	satisfies $\hat\pi_\lambda(B(\infty))=B(\lambda)\cup 0$ and induces a bijection
	\[\set{b\in B(\infty)\mid \pi_\lambda(b)\neq 0}\leftrightarrow B(\lambda).\]
\end{thm}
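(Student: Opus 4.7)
The plan is to carry out a simultaneous induction on $n = \height(\zeta)$ for $\zeta \in Q^+$, establishing at each level the crystal lattice and basis properties on the weight space $\Um_{-\zeta}$ together with their compatibility with the BKK crystals on $V(\lambda)_{\lambda-\zeta}$ for sufficiently dominant $\lambda$. This is a truncation of Kashiwara's ``grand loop'' in the sense that only the half-algebra statements are new; the module-side statements are supplied by Theorem~\ref{thm:BKK} and the subsequent results.

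The central device is the observation that when $\lambda \gg \zeta$, the projection $\pi_\lambda$ restricts to a vector space isomorphism $\Um_{-\zeta} \cong V(\lambda)_{\lambda-\zeta}$. Moreover, in this range the string decompositions $u = \sum F_i^{(t)} u_t$ used to define $\te_i, \tf_i$ on $\Um$ correspond under $\pi_\lambda$ to the analogous decompositions on $V(\lambda)$: for $i \in I_0$ this is because no Serre-type truncation intervenes when $\ang{h_i,\lambda-\zeta}$ is large enough, while for $i=m$ the identification follows from the first lemma of Section~3.1, which, when applied at the highest weight vector $v_\lambda$ with $E_m v_\lambda = 0$, shows that $\pi_\lambda$ intertwines $e_m$ on $\Um_{-\zeta}$ with (a scalar multiple of) $K_m E_m$ on $V(\lambda)_{\lambda-\zeta}$. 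Thus, at each fixed weight, one can pull back the Kashiwara operators on $V(\lambda)$ to those on $\Um$.

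With this device in hand, the induction proceeds by packaging together the following statements at weight $-\zeta$:
(a) $\te_i$ and $\tf_i$ preserve $L(\infty)$ on this weight space;
(b) $B(\infty)_{-\zeta}$ descends to a basis of $L(\infty)_{-\zeta}/qL(\infty)_{-\zeta}$;
(c) $\te_i$ and $\tf_i$ act as inverse partial bijections on $B(\infty)\cup\{0\}$ between weights $-\zeta$ and $-\zeta\pm\alpha_i$;
(d) $\pi_\lambda(L(\infty)_{-\zeta}) = L(\lambda)_{\lambda-\zeta}$ for any $\lambda \gg \zeta$; and
(e) $\hat\pi_\lambda$ carries $B(\infty)_{-\zeta}$ into $B(\lambda)_{\lambda-\zeta} \cup \{0\}$, inducing a bijection between the nonzero fibers. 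The base case $\zeta = 0$ is trivial with $L(\infty)_0 = \cA\cdot 1$ and $B(\infty)_0 = \{1\}$. For the inductive step at height $n$, one picks $\lambda \gg \zeta$, transfers the structural claims through $\pi_\lambda$ via the identification above, and invokes the corresponding statements for $V(\lambda)$ from Theorem~\ref{thm:BKK} and the following corollary on highest weight conditions. The inductive hypothesis at smaller heights is needed to verify that the transfer is consistent as $\lambda$ varies and to check that the induced operators agree modulo $q$ with the module-side Kashiwara operators.

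The main obstacle is the special behavior at the odd index $i = m$: since $\tf_m = F_m \cdot$ and $\tf_m^2 = 0$ by the odd Serre relation, the ``string decomposition'' along $F_m^{(t)}$ is essentially two-term, and one must verify compatibility with the quantum differential $e_m$ both inside $\Um$ (to ensure that $L(\infty)$ is stable under $\te_m$) and after passing to $V(\lambda)$. This is resolved by combining the identity relating $e_m$ to the commutator with $E_m$, the absence of fake highest weight vectors in $V(\lambda)$ proved above, and the tensor product rule, which reduces compatibility questions to the transparent action on tensor powers of the standard representation. The final statements about $\hat\pi_\lambda$ then fall out by assembling (d) and (e) across all weights.
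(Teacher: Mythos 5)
Your overall strategy — a simultaneous height induction that packages together the crystal-lattice and crystal-basis properties on $\Um_{-\zeta}$ with compatibility statements for $\pi_\lambda$, feeding in the BKK results as the ``module half'' of the grand loop — is exactly what the paper does, including the central device ($\pi_\lambda\colon \Um_{-\zeta}\to V(\lambda)_{\lambda-\zeta}$ is an isomorphism for $\lambda\gg\zeta$) and the special treatment of $i=m$ via the commutator identity relating $e_m$ to $K_mE_m$.

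There is, however, one place where the sketch as written falls short of the claimed theorem. Your items (d) and (e) are stated only for $\lambda\gg\zeta$, but the theorem asserts $\pi_\lambda(L(\infty))=L(\lambda)$, $\hat\pi_\lambda(B(\infty))=B(\lambda)\cup\{0\}$, and the bijection on nonzero fibers for \emph{every} $\lambda\in P^{++}$, with no $\gg$ condition. The nontrivial part is that while $\tf_i$ commutes with $\pi_\lambda$ for all $\lambda$ by definition, the corresponding compatibility for $\te_i$ modulo $q$ is established in the first instance only when $\ang{h_i,\lambda}\gg 0$. Transferring it to an arbitrary $\lambda\in P^{++}$ requires choosing $\mu\in P^{++}$ with $\lambda+\mu\gg\zeta$ and then descending through the maps $S_{\lambda,\mu}\colon V(\lambda+\mu)\to V(\lambda)\otimes V(\mu)$ and $P_{\lambda,\mu}\colon V(\lambda)\otimes V(\mu)\to V(\lambda)$, using the tensor product rule and the fact that $\te_i(x\otimes v_\mu)\equiv \te_i x\otimes v_\mu$ modulo $q$ when $x$ lies in a lower crystal lattice. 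You do invoke the tensor product rule, but you attribute it to resolving the $i=m$ subtlety only, whereas it is actually needed for every $i$ to carry the commutation of $\te_i$ with $\hat\pi_\lambda$ from far-dominant weights down to general $\lambda\in P^{++}$. Without this transfer step built into the induction, the ``final statements about $\hat\pi_\lambda$'' do not fall out — they require a separate argument of the above form at each level of the induction.
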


For the remainder of this subsection, we will prove the theorem
by induction on the weight grading.
We prove the following statements, which form a truncated
version of Kashiwara's grand loop \cite{Ka}.
\begin{enumerate}
\item[(S$1_l$)] For $\xi\in Q^+(l)$, $\te_i L(\infty)_{-\xi}\subset L(\infty)$.
\item[(S$2_l$)] For $\xi\in Q^+(l)$, $\pi_\lambda L(\infty)_{-\xi}\subset L(\lambda)_{\lambda-\xi}$ for $\lambda\in P^{++}$, so we have the induced
projection $\hat\pi_\lambda: L(\infty)_{-\xi}/qL(\infty)_{-\xi}\rightarrow L(\lambda)_{\lambda-\xi}/qL(\lambda)_{\lambda-\xi}$.
\item[(S$3_l$)] For $\zeta\in Q^+(l)$, $B(\infty)_{-\xi}$ is a basis
for $L(\infty)_{-\xi}/qL(\infty)_{-\xi}$.
\item[(S$4_l$)] For $\xi\in Q^+(l-1)$ and $\lambda\in P^{++}$,
$\tf_i(xv_\lambda)\in (\tf_ix)v_\lambda+qL(\lambda)_{\lambda-\xi}$ for $x\in L(\infty)_{-\xi}$.
\item[(S$5_l$)] For $\xi\in Q^+(l)$,
$\te_iB(\infty)_{-\xi}\subset B(\infty)\cup\set  0$.
\item[(S$6_l$)] For $\xi\in Q^+(l)$ and $\lambda\in P^{++}$,
$\hat\pi_\lambda$ induces a bijection between
$\set{b\in B(\infty)_{-\xi}\mid \hat\pi_\lambda(b)\neq 0}$ and
$B(\lambda)_{\lambda-\xi}$.
\item[(S$7_l$)] For $\xi\in Q^+(l)$ and $\lambda\in P^{++}$,
if $\hat\pi_\lambda(b)\neq 0$ for some $b\in B(\infty)_{-\xi}$,
then $\te_i\hat\pi_\lambda(b)=\hat\pi_\lambda(\te_i b)$.
\item[(S$8_l$)] For $\xi\in Q^+(l)$ (resp. $\xi\in Q^+(l-1)$) and for $b\in B(\infty)_{-\xi}$,
if $\te_ib\neq 0$ (resp. $\tf_i b\neq 0$) then $\tf_i\te_ib=b$
(resp. $\te_i\tf_ib=b$)..
\end{enumerate}

Assume (S$1_{l-1}$)-(S$8_{l-1}$) for $l\geq 2$ (since they are obvious for $l=0,1$).

\begin{lem}[S$4_l$]\label{lem:kash op commute with proj}
Let $i\in I$, $\xi\in Q^+(l-1)$ and $x\in \Um_{-\xi}$.
Suppose $\lambda\in P^{++}$.
Then $(\tf_ix)v_\lambda\equiv\tf_i(xv_\lambda)$ modulo $qL(\lambda)$. If we further assume that
$\ang{h_i,\lambda}\gg 0$, then
$(\te_i x)v_\lambda\equiv \te_i (xv_\lambda)$ modulo $qL(\lambda)$.
\end{lem}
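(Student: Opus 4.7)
The plan is to use the $i$-th Kashiwara-type decomposition $x = \sum_n F_i^{(n)} x_n$ with $e_i(x_n) = 0$, which by construction gives $(\tf_i x)v_\lambda = \sum_n F_i^{(n+1)}(x_n v_\lambda)$ and $(\te_i x)v_\lambda = \sum_n F_i^{(n-1)}(x_n v_\lambda)$. To compare these with $\tf_i(xv_\lambda)$ and $\te_i(xv_\lambda)$, I would show that, modulo $qL(\lambda)$, the expansion $xv_\lambda = \sum_n F_i^{(n)}(x_n v_\lambda)$ is itself a valid Kashiwara decomposition of $xv_\lambda$ in $V(\lambda)$ with respect to the $\UU(i)$-subalgebra. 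Once that is established, uniqueness of the decomposition on $V(\lambda)$ (modulo $qL(\lambda)$) immediately matches the two sides of each claimed congruence.

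The central computation is a direct application of the commutation lemma proved at the start of this section: since $E_iv_\lambda = 0$ and $e_i(x_n) = 0$,
\[
E_i(x_n v_\lambda) = \frac{q^{\ang{h_i,\lambda-\xi+(n+1)\alpha_i}}}{q-q^{-1}}\, \bar e_i(x_n)\, v_\lambda.
\]
Now $\bar e_i(x_n) \in \Um_{-\xi+(n+1)\alpha_i}$ has weight of height at most $l-1$, so the inductive hypothesis (S$2_{l-1}$) gives $\bar e_i(x_n)v_\lambda \in L(\lambda)$. I would then argue that the scalar $q^{\ang{h_i,\lambda-\xi+(n+1)\alpha_i}}/(q-q^{-1})$ places this expression in $qL(\lambda)$ whenever $\lambda \in P^{++}$, so that $x_n v_\lambda$ represents a $\UU(i)$-highest weight class in $L(\lambda)/qL(\lambda)$ and the required weight inequality $\ang{h_i,|x_n v_\lambda|}\geq n$ holds. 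For $\tf_i$, the shift $n\mapsto n+1$ always yields a valid Kashiwara expansion regardless of $\ang{h_i,\lambda}$, so no additional hypothesis is needed. For $\te_i$, the shift $n\mapsto n-1$ can collide with the truncation of the Kashiwara decomposition on $V(\lambda)$ if $\ang{h_i,\lambda}$ is too small; the assumption $\ang{h_i,\lambda}\gg 0$ ensures that $\ang{h_i,\lambda-\xi+n\alpha_i}\geq n$ for all relevant $n$, so that no piece is lost under projection to $V(\lambda)$.

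The main obstacle I anticipate is the isotropic case $i = m$, where $F_m^2 = 0$ collapses the decomposition to $x = x_0 + F_m x_1$ with $e_m(x_j) = 0$, and the Kashiwara operators reduce to $\tf_m = F_m\cdot$ and $\te_m = e_m$. Here I would invoke the Kac-module realization of Lemma \ref{lem:Kac facts}(2) to verify that $e_m$ descends compatibly from $\Um$ to $V(\lambda)$, and then check that $\te_m$ defined on $V(\lambda)$ via $q^{-1}K_mE_m$ matches the $\Um$-action of $e_m$ after passing to $L(\lambda)/qL(\lambda)$; the specialization of the displayed formula to $i=m$ reduces this to a scalar check using the identity $q^{\ang{h_m,|\bar e_m(x_n)v_\lambda|}}K_m^{-1}\bar e_m(x_n)v_\lambda = \bar e_m(x_n)v_\lambda$, after which the congruences follow exactly as in the even case.
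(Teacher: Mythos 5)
Your overall strategy for $i\in I_0$ (match the $i$-th Kashiwara decompositions of $x$ in $\Um$ and of $xv_\lambda$ in $V(\lambda)$, reduce to a scalar estimate on $E_i(x_nv_\lambda)$) is essentially the content of the cited \cite[Lemma 4.4.1, Proposition 4.4.2]{Ka}, which is exactly what the paper invokes for the even case. But two steps in your sketch are not actually valid. First, the membership $\bar e_i(x_n)v_\lambda\in L(\lambda)$ does not follow from (S$2_{l-1}$): that statement maps $L(\infty)_{-\eta}$ into $L(\lambda)$, while $x$ is only assumed to lie in $\Um_{-\xi}$, so $\bar e_i(x_n)$ lies only in $q^{-N}L(\infty)$ for some $N$, hence $\bar e_i(x_n)v_\lambda\in q^{-N}L(\lambda)$. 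Second, the claim that the scalar $q^{\ang{h_i,\lambda-\xi+(n+1)\alpha_i}}/(q-q^{-1})$ ``places this expression in $qL(\lambda)$ whenever $\lambda\in P^{++}$'' is false: with only $\lambda\in P^{++}$ the exponent need not be large, and it certainly cannot absorb the $q^{-N}$ above. This is precisely why the $\te_i$ congruence carries the extra hypothesis $\ang{h_i,\lambda}\gg 0$, while the $\tf_m$ congruence sidesteps the whole discussion: $\tf_m x = F_m x$ and $\tf_m(xv_\lambda)=F_m(xv_\lambda)$, so that direction is an outright identity needing no scalar estimate at all.

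The treatment of $i=m$ is also conceptually off. The differential $e_m$ descends from $\Um$ to the Kac module $K(\lambda)$ via Lemma \ref{lem:Kac facts}(2) --- that is Kwon's $\te_m^K$ --- but it does \emph{not} descend further to $V(\lambda)$. On $V(\lambda)\in\catOint$ the operator $\te_m$ is defined module-theoretically as $q^{-1}K_mE_m$, and the whole content of the $\te_m$ congruence is that these two genuinely different operators agree modulo $qL(\lambda)$ once $\ang{h_m,\lambda}\gg 0$. The paper's proof establishes this directly: split $x=x_0+F_mx_1$ with $e_m(x_0)=e_m(x_1)=0$, compute $q^{-1}K_mE_mxv_\lambda$ explicitly in each case, compare to $e_m(x)v_\lambda$, and observe that the error terms carry a factor $q^{2\ang{h_m,\lambda-\xi}}$ multiplying something in $q^{-N}L(\lambda)$, so they land in $qL(\lambda)$ precisely when $\ang{h_m,\lambda}$ is large. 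Your final ``single scalar identity'' $q^{\ang{h_m,\cdot}}K_m^{-1}w = w$ is a tautology about weights and does not substitute for this case analysis and estimate.
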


\begin{proof}
If $i\in I_0$, this follows from the same proof as \cite[Lemma 4.4.1, Proposition 4.4.2]{Ka}.
Suppose $i=m$. Then $x=x_0+F_mx_1$ with $e_m(x_0)=e_m(x_1)=0$.
Since $\tf_m x=F_m x$ and $\tf_m(xv_\lambda)=F_mxv_\lambda$ by definition, 
obviously $\tf_m(xv_\lambda)=(\tf_m x)v_\lambda$.

For $\te_m$, it suffices to prove the statement when $x=x_0$ or $x=F_mx_1$.
If $x=x_0$, then $\te_m x=0$. On the other hand,
$$\te_m (xv_\lambda)=q^{-1}K_mE_mxv_\lambda
=\frac{q^{2\ang{h_m,\lambda-\xi}}}{q^2-1} \bar e_m(x)v_\lambda.$$ 
Then since $\bar e_m(x)\in q^{-N}L(\infty)$ for some $N$, and thus by (S$2_{l-1}$)
we have $\bar e_m(x)\in q^{-N}L(\lambda)$ for any $\lambda$,
 we see that 
$q^{2\ang{h_m,\lambda-\xi}}\bar e_m(x)v_\lambda\in qL(\lambda)$ for $\ang{h_m,\lambda}\gg 0$,
hence $\te_m(xv_\lambda)\equiv 0=(\te_m x)v_\lambda$ modulo $qL(\lambda)$
in that case.

If $x=F_mx_1$, then $\te_m x=x_1$. On the other hand,
$$\te_m (xv_\lambda)=q^{-1}K_mE_mxv_\lambda
=\frac{q^{2\ang{h_m,\lambda-\xi}}-1}{q^2-1}x_1v_\lambda-\frac{q^{2\ang{h_m,\lambda-\xi}}}{q^2-1} F_m\bar e_m(x_1)v_\lambda,$$ 
hence in particular if $\ang{h_m,\lambda}\gg 0$, then  \[\te_m(xv_\lambda)-(\te_mx)v_\lambda=\frac{q^{2\ang{h_m,\lambda-\xi}}+q^2}{q^2-1}x_1v_\lambda-\frac{q^{2\ang{h_m,\lambda-\xi}}}{q^2-1} F_m\bar e_m(x_1)v_\lambda\in qL(\lambda).\]
\end{proof}

\begin{cor}[S$2_l$ and S$3_l$]
For any $\lambda\in P^{++}$ and $\xi\in Q^+(l)$, we have
$\pi_\lambda(L(\infty)_{-\xi})=L(\lambda)_{\lambda-\xi}$.
Moreover, the induced map 
$$\hat\pi_\lambda:L(\infty)_{-\xi}/qL(\infty)_{-\xi}\rightarrow
L(\lambda)_{\lambda+\xi}/qL(\lambda)_{\lambda-\xi}$$
satisfies $\hat\pi_\lambda(B(\infty)_{-\xi})\setminus 0=B(\lambda)_{\lambda-\xi}$.
Finally, if $\lambda\gg \xi$,
then $\pi_\lambda$ restricts to an isomorphism 
$L(\infty)_{-\xi}\cong L(\lambda)_{\lambda-\xi}$
and $\hat\pi_\lambda$ restricts to a bijection 
$B(\infty)_{-\xi}\leftrightarrow B(\lambda)_{\lambda-\xi}$.
Consequently, $B(\infty)_{-\xi}$ is a basis for 
$L(\infty)_{-\xi}/qL(\infty)_{-\xi}$.
\end{cor}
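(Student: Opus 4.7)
The plan is to prove both containments of the identity $\pi_\lambda(L(\infty)_{-\xi}) = L(\lambda)_{\lambda-\xi}$ by induction on $\height(\xi) \leq l$, exploiting the commutation modulo $qL(\lambda)$ of $\tf_i$ with $\pi_\lambda$ supplied by the preceding lemma; the statements on crystal bases will then follow by choosing $\lambda$ large enough to force $\pi_\lambda$ to be a vector-space isomorphism on the $-\xi$ weight space.

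For the containment $\pi_\lambda(L(\infty)_{-\xi}) \subseteq L(\lambda)_{\lambda-\xi}$, I would write a generator of $L(\infty)_{-\xi}$ as $\tf_i y$ with $y \in L(\infty)_{-\xi+\alpha_i}$; the inductive hypothesis (S$2_{l-1}$) then gives $yv_\lambda \in L(\lambda)$, and applying the lemma at $\xi-\alpha_i \in Q^+(l-1)$ yields $(\tf_i y)v_\lambda \equiv \tf_i(yv_\lambda) \pmod{qL(\lambda)}$, which lies in $L(\lambda)$ since $L(\lambda)$ is a crystal lattice. For the reverse containment, I would iterate the same congruence from the inside out to rewrite any generator $\tf_{i_1}\cdots\tf_{i_n} v_\lambda$ of $L(\lambda)_{\lambda-\xi}$ as $(\tf_{i_1}\cdots\tf_{i_n} 1)v_\lambda$ modulo $qL(\lambda)_{\lambda-\xi}$. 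This gives
\[
L(\lambda)_{\lambda-\xi} \subseteq \pi_\lambda(L(\infty)_{-\xi}) + qL(\lambda)_{\lambda-\xi},
\]
and Nakayama's lemma, applied to the finitely generated $\cA$-module $L(\lambda)_{\lambda-\xi}$ over the local ring $\cA$ with maximal ideal $q\cA$, promotes this to equality.

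Passing to quotients, $\hat\pi_\lambda$ is surjective, and the congruences above show that $\hat\pi_\lambda$ sends each generator $\tf_{i_1}\cdots\tf_{i_n} 1 + qL(\infty)$ of $B(\infty)_{-\xi}$ to $\tf_{i_1}\cdots\tf_{i_n} v_\lambda + qL(\lambda) \in B(\lambda)_{\lambda-\xi} \cup \{0\}$ and surjects onto $B(\lambda)_{\lambda-\xi}$. For the final sharpening, I would pick $\lambda \in P^{++}\cap\Pt$ with $\lambda \gg \xi$ (such a $\lambda$ always exists: take, e.g., $\lambda = N(\epsp_1 + \cdots + \epsp_m) + \epsp_{m+1}$ for sufficiently large $N$), so that the identification $\Um_{-\xi} \cong V(\lambda)_{\lambda-\xi}$ recalled at the end of Section~\ref{subsec:quagrp} forces $\pi_\lambda$ to be injective on $L(\infty)_{-\xi}$. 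Combined with the surjectivity already established, this upgrades $\pi_\lambda$ to an isomorphism $L(\infty)_{-\xi} \cong L(\lambda)_{\lambda-\xi}$ and $\hat\pi_\lambda$ to a bijection $B(\infty)_{-\xi} \leftrightarrow B(\lambda)_{\lambda-\xi}$, transporting the basis property from the target to the source.

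The argument is essentially mechanical once the lemma is in hand; the two points worth watching are the Nakayama step, which depends on $L(\lambda)_{\lambda-\xi}$ being finitely generated over $\cA$ (which is inherited from the finite-dimensionality of $V(\lambda)_{\lambda-\xi}$ as a $\Qq$-vector space), and the verification that the ``typical $\lambda \gg \xi$'' regime genuinely produces a vector-space isomorphism on the $-\xi$ weight space, which is the linchpin of the injectivity argument in the final paragraph.
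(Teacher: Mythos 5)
Your argument follows the paper's proof closely: one containment by the commutation lemma and (S$2_{l-1}$), the reverse containment plus Nakayama over the local ring $\cA$, then passage to $L/qL$ and use of a typical $\lambda \gg \xi$ to transfer the basis property from $B(\lambda)$ back to $B(\infty)$. The only slip is in your parenthetical example: $\lambda = N(\epsp_1+\cdots+\epsp_m)+\epsp_{m+1}$ has $\ang{h_i,\lambda}=0$ for all $i<m$, so $\ang{h_i,\lambda-\xi}>0$ will fail for nonzero $\xi$; you should instead take a weight with strictly decreasing coefficients on $\epsp_1,\ldots,\epsp_m$ (e.g.\ $\lambda = N\rho'$ where $\rho'=\sum_{k=1}^m(m-k+1)\epsp_k+\epsp_{m+1}$, for large $N$), which is typical, fully dominant, and satisfies $\lambda\gg\xi$. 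This does not affect the rest of your argument, which is correct.
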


\begin{proof}
By Lemma \ref{lem:kash op commute with proj} and (S$2_{l-1}$), $\pi_\lambda(L(\infty)_{-\xi})\subset L(\lambda)$ and
$L(\lambda)_{\lambda-\xi}\subset \pi_\lambda(L(\infty)_{-\xi})+qL(\lambda)_{\lambda-\xi}$,
hence $\pi_\lambda(L(\infty)_{-\xi})= L(\lambda)_{\lambda-\xi}$ by Nakayama's lemma.
Then $\hat\pi_\lambda$ commutes with $\tf_i$ by Lemma 
\ref{lem:kash op commute with proj}, hence 
$\hat\pi_\lambda(B(\infty)_{-\xi})\setminus 0=B(\lambda)_{\lambda-\xi}$.
The final statements follow from the fact that if $\lambda\gg \xi$, 
then $\Um_{-\xi}\cong V(\lambda)_{\lambda-\xi}$
and that $B(\lambda)$ is a basis for $L(\lambda)/qL(\lambda)$.
\end{proof}

\begin{cor}[S$1_l$ and S$5_l$]
For $\xi\in Q^+(l)$, we have 
\[\te_iL(\infty)_{-\xi}\subset L(\infty)\quad\text{and}\quad
\te_iB(\infty)_{-\xi}\subset B(\infty)\cup\set 0.\]
\end{cor}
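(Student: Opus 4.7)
The plan is to bootstrap from the corresponding statements for $L(\lambda)$, which we already control, by transporting them via the projection $\pi_\lambda$ for $\lambda$ sufficiently dominant. Concretely, given $\xi \in Q^+(l)$, I would fix a $\lambda \in P^{++}$ with $\lambda \gg \xi$ (so that in particular $\ang{h_i,\lambda}$ is as large as needed and $\lambda \gg \xi - \alpha_i$ as well), and use (S$3_l$) together with (S$3_{l-1}$) to obtain isomorphisms
\[
\pi_\lambda : L(\infty)_{-\xi} \xrightarrow{\sim} L(\lambda)_{\lambda-\xi}, \qquad \pi_\lambda : L(\infty)_{-\xi+\alpha_i} \xrightarrow{\sim} L(\lambda)_{\lambda-\xi+\alpha_i},
\]
with the corresponding bijections $B(\infty)_{-\xi} \leftrightarrow B(\lambda)_{\lambda-\xi}$ and $B(\infty)_{-\xi+\alpha_i} \leftrightarrow B(\lambda)_{\lambda-\xi+\alpha_i}$.

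For (S$1_l$), take $u \in L(\infty)_{-\xi}$. By Lemma \ref{lem:kash op commute with proj} (which is available at this height since $\xi - \alpha_i$ has height $\leq l-1$), we have
\[
(\te_i u)v_\lambda \equiv \te_i(uv_\lambda) \pmod{qL(\lambda)}.
\]
Now $uv_\lambda = \pi_\lambda(u) \in L(\lambda)$ by (S$2_l$), and $\te_i L(\lambda) \subset L(\lambda)$ since $(L(\lambda),B(\lambda))$ is a crystal basis of $V(\lambda) \in \catOint$. Hence $\te_i(uv_\lambda) \in L(\lambda)$, and consequently $(\te_i u)v_\lambda = \pi_\lambda(\te_i u) \in L(\lambda)_{\lambda-\xi+\alpha_i}$. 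Because $\pi_\lambda$ restricts to the isomorphism $L(\infty)_{-\xi+\alpha_i} \cong L(\lambda)_{\lambda-\xi+\alpha_i}$, this forces $\te_i u \in L(\infty)$.

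For (S$5_l$), let $b \in B(\infty)_{-\xi}$. Under the bijection above, $\hat\pi_\lambda(b) \in B(\lambda)_{\lambda-\xi}$. Since $(L(\lambda),B(\lambda))$ is a genuine (unsigned) crystal basis, $\te_i \hat\pi_\lambda(b) \in B(\lambda) \cup \{0\}$. Passing Lemma \ref{lem:kash op commute with proj} to the quotient by $q$ yields $\te_i \hat\pi_\lambda(b) = \hat\pi_\lambda(\te_i b)$, so $\hat\pi_\lambda(\te_i b) \in B(\lambda) \cup \{0\}$. Using the bijection $B(\infty)_{-\xi+\alpha_i} \leftrightarrow B(\lambda)_{\lambda-\xi+\alpha_i}$, we conclude $\te_i b \in B(\infty) \cup \{0\}$.

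The main delicate point is ensuring that a single $\lambda$ simultaneously trivializes both weight spaces $-\xi$ and $-\xi+\alpha_i$; this is painless for $i \in I_0$ since the condition $\lambda \gg \cdot$ only constrains the even coroots, and for $i = m$ it is automatic because $\alpha_m$ pairs trivially with those coroots. The other subtlety is that Lemma \ref{lem:kash op commute with proj} requires $\ang{h_i,\lambda} \gg 0$ for the $\te_i$ statement, which is exactly guaranteed by $\lambda \gg \xi$. Everything else is a formal transport across $\pi_\lambda$.
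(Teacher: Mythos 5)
Your argument is correct and is essentially the same as the paper's: choose $\lambda \gg \xi$ (forcing $\ang{h_i,\lambda}\gg 0$ and $\lambda\gg\xi-\alpha_i$), transport $\te_i$ across $\pi_\lambda$ modulo $q$ via Lemma \ref{lem:kash op commute with proj}, use that $L(\lambda)$ is $\te_i$-stable, and then pull back along the isomorphism $L(\infty)_{-\xi+\alpha_i}\cong L(\lambda)_{\lambda-\xi+\alpha_i}$ from (S$2_l$), (S$3_l$); the crystal statement follows in parallel via $\hat\pi_\lambda$. The only slight imprecision is your parenthetical justification for invoking Lemma \ref{lem:kash op commute with proj}: it is stated there for $x$ in a weight space of height $\leq l-1$, while here you apply it to $u$ at height $l$ (the relevant constraint is on the weight of $u$, not of $\te_i u$), but this matches the paper's own usage and the proof of the lemma does go through at that height.
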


\begin{proof}
Let $\lambda\in\tPp$ with $\lambda\gg \xi$ and $\ang{h_i,\lambda}\gg 0$.
Let $x\in L(\infty)$. Then $\pi_\lambda(\te_ix)\equiv \te_i (xv_\lambda)$
modulo $qL(\lambda)$ by Lemma \ref{lem:kash op commute with proj}. 
Since $xv_\lambda\in L(\lambda)$  by (S$2_l$), $\te_i(xv_\lambda)\in L(\lambda)$
hence $\pi_\lambda(\te_ix)\in L(\lambda)$. But since $\lambda\gg \xi$,
$\lambda\gg \xi-\alpha_i$ hence $\pi_\lambda$ is an isomorphism
$\Um_{-\xi+\alpha_i}\rightarrow V(\lambda)_{\lambda-\xi+\alpha_i}$ such that
$L(\infty)_{-\xi+\alpha_i}\cong L(\lambda)_{\lambda-\xi+\alpha_i}$.
In particular, $\te_ix \in L(\infty)$.

Now suppose $b=x+qL(\infty)\in B(\infty)$. Then applying $\hat\pi_\lambda$,
$b_\lambda=xv_\lambda+qL(\lambda)\in B(\lambda)_{\lambda-\xi}$,
so $\te_i b_\lambda\in B(\lambda)_{\lambda-\xi+\alpha_i}\cup\set  0$.
Moreover, by Lemma \ref{lem:kash op commute with proj} we have
$\hat\pi_\lambda(\te_i b)=\te_i b_\lambda$. Since $\lambda\gg \xi-\alpha_i$,
$\hat\pi_\lambda$ is a bijection 
$B(\infty)_{-\xi+\alpha_i}\cup\set  0\leftrightarrow
B(\lambda)_{\lambda-\xi+\alpha_i}\cup\set  0$
and thus $\te_i b\in B(\infty)\cup\set  0$.
\end{proof}

\begin{lem}[S$7_l$]
Let $\xi\in Q^-(l)$.
If $b\in B(\infty)_{-\xi}$ with $\te_ib\neq 0$, 
then $\pi_\lambda(\te_ib)=\te_i\pi_\lambda(b)$ for all $\lambda\in \tPp$.
\end{lem}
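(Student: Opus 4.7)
Plan. The argument splits naturally according to whether $i\in I_0$ or $i=m$.

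For $i\in I_0$, the crystal operators $\te_i$ and $\tf_i$ coincide with the usual Kashiwara operators arising from the subalgebra $\UU_q(\gl(m))\subset\UU$. Since $\lambda\in\tPp$ is in particular $\gl(m)$-dominant and $V(\lambda)$ decomposes as a $\UU_q(\gl(m))$-module into a direct sum of integrable highest-weight modules, the identity $\pi_\lambda(\te_i b)=\te_i\pi_\lambda(b)$ follows from Kashiwara's classical argument in \cite[Lemma 4.4.1, Proposition 4.4.2]{Ka}, which we already invoked in the proof of Lemma \ref{lem:kash op commute with proj}; that proof gives the identity for every $\lambda\in P^{++}$ without a largeness assumption, so the conclusion holds in this case.

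For $i=m$, Lemma \ref{lem:kash op commute with proj} yields the conclusion whenever $\ang{h_m,\lambda}\gg 0$, and to extend to arbitrary $\lambda\in\tPp$ we reduce to the large case via tensor products. Fix $\mu\in\tPp$ (for instance, a sufficiently large multiple of $\epsp_m$) with $\ang{h_m,\mu}$ large enough that $\ang{h_m,\lambda+\mu-\xi+\alpha_m}>0$. The already-established case applies to $\lambda+\mu$, so for any representative $x\in L(\infty)_{-\xi}$ of $b$,
\[
(\te_m x)v_{\lambda+\mu}\equiv \te_m\bigl(xv_{\lambda+\mu}\bigr)\pmod{qL(\lambda+\mu)}.
\]
Transport this identity via the $\UU$-linear embedding $S_{\lambda,\mu}:V(\lambda+\mu)\hookrightarrow V(\lambda)\otimes V(\mu)$ (which is compatible with Kashiwara operators on lattices and sends $L(\lambda+\mu)$ into $L(\lambda)\otimes L(\mu)$) to obtain
\[
\te_m\bigl(x(v_\lambda\otimes v_\mu)\bigr)\equiv (\te_m x)(v_\lambda\otimes v_\mu)\pmod{q(L(\lambda)\otimes L(\mu))}.
\]
By the choice of $\mu$, the tensor product rule for $\te_m$ places its action without sign on the first tensor factor on every weight space of total weight $\lambda+\mu-\xi$ occurring in $V(\lambda)\otimes V(\mu)$. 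Expanding $x(v_\lambda\otimes v_\mu)=(xv_\lambda)\otimes v_\mu+y$ with $y\in V(\lambda)\otimes V(\mu)_{<\mu}$, the tensor product rule then preserves this decomposition under $\te_m$, so the $V(\lambda)\otimes v_\mu$-component of $\te_m(x(v_\lambda\otimes v_\mu))$ equals $\te_m(xv_\lambda)\otimes v_\mu$. Applying the $\Um$-linear projection $P_{\lambda,\mu}$, which annihilates $V(\lambda)\otimes V(\mu)_{<\mu}$ and extracts the $v_\mu$-component to recover $\pi_\lambda$, yields the desired identity in $L(\lambda)/qL(\lambda)$.

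The main obstacle is the reduction step for $i=m$, specifically the careful application of the tensor product rule to guarantee that $\te_m$-action localizes on the first factor. The choice $\ang{h_m,\mu}\gg 0$ simultaneously avoids the boundary case of the tensor product rule (where the total $h_m$-weight is zero) and the odd parity sign that would appear if $\te_m$ were forced onto the second factor. Once this is arranged, the extraction via $P_{\lambda,\mu}$ is routine, because $P_{\lambda,\mu}$ is $\Um$-linear and respects the decomposition $V(\lambda)\otimes V(\mu)=V(\lambda)\otimes v_\mu\oplus V(\lambda)\otimes V(\mu)_{<\mu}$, which is preserved by $\te_m$ under our hypotheses on $\mu$.
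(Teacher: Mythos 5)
Your case split into $i\in I_0$ and $i=m$ is not how the paper argues, and the $i\in I_0$ branch contains a genuine gap. You claim that Kashiwara's \cite[Lemma 4.4.1, Proposition 4.4.2]{Ka} give the identity $\pi_\lambda(\te_i b)=\te_i\pi_\lambda(b)$ for every $\lambda\in P^{++}$ "without a largeness assumption," but this contradicts Lemma~\ref{lem:kash op commute with proj} of the paper, which for $i\in I_0$ is derived from precisely those results of Kashiwara and explicitly requires $\ang{h_i,\lambda}\gg 0$ for the $\te_i$ statement. Removing that hypothesis is the entire content of (S$7_l$); in Kashiwara's grand loop, as here, this is achieved only through the tensor product reduction via $S_{\lambda,\mu}$ and $P_{\lambda,\mu}$, carried out uniformly for every $i\in I$. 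There is no direct shortcut for $i\in I_0$, and restricting to $\gl(m)$-isotypic components of $V(\lambda)$ does not help, since $\pi_\lambda$ is a morphism out of $\Um$, not out of $\UU_q^-(\gl(m))$, and a single $\gl(m)$-constituent of $V(\lambda)$ will generically have small highest weight.

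Your $i=m$ branch follows the right outline (it is essentially what the paper does for all $i$ at once), but the crucial reduction step is false as stated. You claim that $\te_m$ preserves the decomposition $V(\lambda)\otimes V(\mu)=V(\lambda)\otimes v_\mu\oplus V(\lambda)\otimes V(\mu)_{<\mu}$ for suitably large $\mu$. In fact the coproduct gives
\[
\te_m(w\otimes v)=\te_m w\otimes v+(-1)^{p(w)}q^{\ang{h_m,|w|}}\,w\otimes\te_m v,
\]
so when $v\in V(\mu)_{\mu-\alpha_m}$ the second term is a multiple of $w\otimes v_\mu$: the decomposition is \emph{not} preserved. This leakage is $O(q)$ only when $\ang{h_m,|w|}>0$, a condition on the first tensor factor (hence on $\lambda$ and $\zeta$), not on $\mu$; it fails exactly on the weight spaces you need to control when $\lambda$ is close to atypical, and no choice of $\mu$ fixes it. The paper instead uses the much more modest observation that on the specific slice $x\otimes v_\mu$ the formula is exact, $\te_m(x\otimes v_\mu)=\te_m x\otimes v_\mu$, because $\te_m v_\mu=0$; combined with the $\tf$-side tensor product rule (Corollary following Lemma~\ref{cor: hwt conditions}, which gives the compatibility of $\hat P_{\lambda,\mu}$ with the crystal operators), this is enough to carry the congruence from $L(\lambda+\mu)$ down to $L(\lambda)$. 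Two smaller defects: $c\epsp_m$ is not $\gl(m)$-dominant, so your proposed $\mu$ does not lie in $\tPp$; and the largeness of $\ang{h_m,\lambda+\mu}$ needed in Lemma~\ref{lem:kash op commute with proj} depends on the particular element $b$ (through the constant $N$ in its proof), so the single inequality $\ang{h_m,\lambda+\mu-\xi+\alpha_m}>0$ is not enough.
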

\begin{proof}
This is proved exactly as in \cite[\S 4.6]{Ka}.
Namely, observe that for any $\lambda,\mu\in P^{++}$ and $x\in L(\lambda)$,
we have $\te_i(x\otimes v_\mu)\equiv \te_i x\otimes v_\mu$
modulo $qL(\lambda)\otimes L(\mu)$; the proof in {\em loc. cit.} is valid
for $i\in I_0$, and for $i=m$ observe that $\te_m(b\otimes v_\lambda)=(q^{-1}K_m E_m x)\otimes v_\mu$. 

Now fix $\lambda\in P^{++}$, and pick a $\mu\in P^{++}$
such that $\lambda+\mu\gg \xi$. In $L(\lambda+\mu)$, $\te_i$ commutes with $\pi_{\lambda+\mu}$ modulo $q$ as in the proof of (S$1_l$).
Note that we can push this congruence to $L(\lambda)\otimes L(\mu)$ by applying 
$S_{\lambda,\mu}$, and then also to $L(\lambda)$ by applying $P_{\lambda,\mu}$.

In particular, suppose $b\in B(\infty)_{-\xi}$ is given by
$b=\tf_{i_1}\ldots\tf_{i_t} 1+qL(\infty)$.
It suffices to show 
$(\te_i\tf_{i_1}\ldots\tf_{i_t} 1)v_\lambda
\equiv\te_i(\tf_{i_1}\ldots\tf_{i_t} v_\lambda)$
modulo $qL(\lambda)$. Well, observe that 
$(\te_i \tf_{i_1}\ldots\tf_{i_t} v_\lambda)\otimes v_\mu\equiv\te_i\tf_{i_1}\ldots\tf_{i_t} (v_\lambda\otimes v_\mu)$.
Applying $S_{\lambda,\mu}$ to the congruence 
$\te_i\tf_{i_1}\ldots\tf_{i_t} v_{\lambda+mu}
\equiv  (\te_i\tf_{i_1}\ldots\tf_{i_t} 1)v_{\lambda+mu}$ modulo $qL(\lambda+\mu)$,
we have $\te_i\tf_{i_1}\ldots\tf_{i_t} (v_\lambda\otimes v_\mu)\equiv 
(\te_i\tf_{i_1}\ldots\tf_{i_t} 1) v_\lambda\otimes v_\mu$
modulo $qL(\lambda)\otimes L(\mu)$.
But then applying $P_{\lambda,\mu}$, we obtain the desired congruence.
\end{proof}

\begin{cor}[S$8_l$]
Let $\xi\in Q^+(l)$ (resp. $\xi\in Q^+(l-1)$) and $b\in B(\infty)_{-\xi}$.
Suppose $\te_i b\neq 0$ (resp. $\tf_i b\neq 0$).
Then $\tf_i\te_i b=b$ (resp. $\te_i\tf_i b=b$).
\end{cor}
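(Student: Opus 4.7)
The plan is to transfer the statement to the already-established crystal basis on a suitable $V(\lambda)$, where the inverse property $\tf_i\te_i = \te_i\tf_i = \mathrm{id}$ on crystal elements is known, and then push back via the bijection induced by $\hat\pi_\lambda$. The key reason this works is that the earlier steps in the loop give us, for a sufficiently generic dominant weight, an isomorphism $L(\infty)_{-\xi} \cong L(\lambda)_{\lambda-\xi}$ on each fixed weight space, along with commutation of the Kashiwara operators with $\hat\pi_\lambda$ modulo $q$ (statements S$4_l$ and S$7_l$).

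For the $\te_i$ direction, suppose $b \in B(\infty)_{-\xi}$ with $\xi \in Q^+(l)$ and $\te_i b \neq 0$. Choose $\lambda \in \tPpt$ with $\lambda \gg \xi$; by the earlier corollary $\hat\pi_\lambda$ is a bijection between $B(\infty)_{-\xi} \cup \{0\}$ and $B(\lambda)_{\lambda-\xi}\cup\{0\}$, and likewise for the $-\xi+\alpha_i$ weight space. Applying S$7_l$ we get $\hat\pi_\lambda(\te_i b) = \te_i\hat\pi_\lambda(b)$, which is nonzero by the bijection. Since $(L(\lambda), B(\lambda))$ is a genuine crystal basis, $\tf_i\te_i \hat\pi_\lambda(b) = \hat\pi_\lambda(b)$. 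Now apply S$4_l$ to $\te_i b \in L(\infty)_{-(\xi-\alpha_i)}$, valid because $\xi-\alpha_i \in Q^+(l-1)$, to conclude $\hat\pi_\lambda(\tf_i\te_i b) = \tf_i\hat\pi_\lambda(\te_i b) = \hat\pi_\lambda(b)$. Injectivity of $\hat\pi_\lambda$ on $L(\infty)_{-\xi}/qL(\infty)_{-\xi}$ then yields $\tf_i\te_i b = b$.

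For the $\tf_i$ direction, take $b \in B(\infty)_{-\xi}$ with $\xi \in Q^+(l-1)$ and $\tf_i b \neq 0$. Choose $\lambda \in \tPpt$ with $\lambda \gg \xi+\alpha_i$, so $\hat\pi_\lambda$ is a bijection on both $B(\infty)_{-\xi}$ and $B(\infty)_{-(\xi+\alpha_i)}$. Applying S$4_l$ gives $\hat\pi_\lambda(\tf_i b) = \tf_i\hat\pi_\lambda(b)$, nonzero by the bijection. In $B(\lambda)$ we have $\te_i\tf_i\hat\pi_\lambda(b) = \hat\pi_\lambda(b)$. Applying S$7_l$ to $\tf_i b \in B(\infty)_{-(\xi+\alpha_i)}$, which lies at height $\leq l$, we obtain $\hat\pi_\lambda(\te_i\tf_i b) = \te_i\hat\pi_\lambda(\tf_i b) = \hat\pi_\lambda(b)$, and injectivity finishes the argument.

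The main thing to keep track of is bookkeeping on heights: one must ensure that S$4_l$ is invoked only for weights of height $\leq l-1$ (which is why the $\tf_i$ case is stated for $\xi \in Q^+(l-1)$, so that $\te_i b$ lives in the appropriate range), and that S$7_l$ is invoked only for weights of height $\leq l$. Beyond this, the only subtle point is ensuring that we can pick $\lambda$ with $\ang{h_j,\lambda} \gg 0$ for all $j \neq m$ while staying typical; this is guaranteed once $\lambda \in \tPpt$ is chosen deep enough in the dominant typical cone, since the typicality condition $\ang{h_m,\lambda} > 0$ is compatible with making all other coroot pairings arbitrarily large.
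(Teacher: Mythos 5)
Your proof is correct and follows essentially the same path as the paper's: reduce to a module $V(\lambda)$ with $\lambda \gg \xi$, use S$7_l$ and S$4_l$ to commute the Kashiwara operators past $\hat\pi_\lambda$ modulo $q$, invoke the already-known inverse property of $\te_i$ and $\tf_i$ on the crystal $B(\lambda)$, and conclude by the injectivity of $\hat\pi_\lambda$ at that weight. You have simply spelled out the intermediate steps (and tracked the height bookkeeping) that the paper compresses into one line, which in fact clarifies a typo in the paper's display. The only minor overstatement is that you insist on picking $\lambda$ typical; $\lambda \in \tPp$ (or any $\lambda\in P^{++}$) with $\lambda\gg\xi$ already suffices, since typicality plays no role in the bijectivity of $\hat\pi_\lambda$ used here.
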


\begin{proof}
Let $b\in B(\infty)_{-\xi}$.
Pick $\lambda\in \tPp$ with $\lambda\gg \xi$.
We consider the case $\xi\in Q^+(l)$ and $\te_i b\neq 0$, as 
the other case follows by a similar argument.
Then since $\hat \pi_\lambda$ is a bijection, 
$\hat \pi_\lambda(\te_i b)=\te_i \hat\pi_{\lambda}(b)\neq 0$,
and so $\hat\pi_{\lambda}(b)=\hat\pi_{\lambda}(b)=\hat\pi_{\lambda}(\tf_i\te_ib)$
thus $\tf_i\te_ib=b$. 
\end{proof}

\begin{cor}[S$6_l$]
Let $\xi\in Q^+(l)$.
Then for any $\lambda\in P^{++}$, $\hat\pi_\lambda$ induces a bijection between
$\set{b\in B(\infty)_{-\xi}\mid \pi_\lambda(b)\neq 0}$ and 
$B(\lambda)_{\lambda-\xi}$.
\end{cor}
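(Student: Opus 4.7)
The plan is to establish the statement as the final step of the current round of the grand-loop induction, since surjectivity onto $B(\lambda)_{\lambda-\xi}\cup\set0$ is already contained in the corollary proving (S$2_l$) and (S$3_l$). Thus it suffices to show that $\hat\pi_\lambda$ is injective on $\set{b\in B(\infty)_{-\xi}\mid \hat\pi_\lambda(b)\neq 0}$. I would prove this by a secondary induction on the height $l$, which meshes with the outer grand loop.

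The base case $\xi=0$ is immediate, since $B(\infty)_0=\set{1+qL(\infty)}$. For the inductive step, assume $\xi\neq 0$ and suppose $b_1,b_2\in B(\infty)_{-\xi}$ satisfy $\hat\pi_\lambda(b_1)=\hat\pi_\lambda(b_2)\neq 0$. By Lemma \ref{cor: hwt conditions}(1), the only element of $L(\lambda)/qL(\lambda)$ killed by every $\te_i$ is (up to sign) $v_\lambda+qL(\lambda)$, which lives in weight $\lambda$; since $\xi\neq 0$, there must exist some $i\in I$ with $\te_i\hat\pi_\lambda(b_1)\neq 0$. Invoking (S$7_l$),
\[
\hat\pi_\lambda(\te_ib_1)=\te_i\hat\pi_\lambda(b_1)=\te_i\hat\pi_\lambda(b_2)=\hat\pi_\lambda(\te_ib_2)\neq 0.
\]
In particular $\te_ib_1,\te_ib_2\in B(\infty)_{-\xi+\alpha_i}$ are both nonzero, and $\xi-\alpha_i\in Q^+(l-1)$, so the inductive hypothesis (S$6_{l-1}$) yields $\te_ib_1=\te_ib_2$. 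Applying $\tf_i$ and using (S$8_l$) to invert $\te_i$ on $B(\infty)$ gives $b_1=\tf_i\te_ib_1=\tf_i\te_ib_2=b_2$, completing the injectivity.

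I do not expect any genuine obstacle here: all the nontrivial work has already been carried out in (S$7_l$) and (S$8_l$), which respectively guarantee the compatibility of $\hat\pi_\lambda$ with $\te_i$ and the invertibility of $\te_i$ on nonzero crystal elements. The essential conceptual ingredient is Lemma \ref{cor: hwt conditions}(1), which rules out ``fake highest weight vectors'' in $B(\lambda)$ and thereby guarantees that from any $b$ of weight $\neq \lambda$ we may strictly decrease the height by an $\te_i$ without losing the coincidence $\hat\pi_\lambda(b_1)=\hat\pi_\lambda(b_2)$. Once this is available, the argument is a standard down-induction on height inside the grand loop.
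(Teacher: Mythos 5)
Your proof is correct and follows essentially the same route as the paper: surjectivity from (S$2_l$)/(S$3_l$), and injectivity by applying some $\te_i$ to reduce to (S$6_{l-1}$), then recovering $b_1=b_2$ via (S$8_l$). The only nuance is that you choose $i$ so that $\te_i$ does not kill the \emph{image} $\hat\pi_\lambda(b_1)$ (citing Lemma~\ref{cor: hwt conditions}(1) on the absence of fake highest weight vectors) rather than just $\te_i b\neq 0$ as the paper's terse phrasing suggests; this small extra care removes the possibility that $\hat\pi_\lambda(\te_ib_1)$ vanishes, and is the cleaner way to make the ``hence $\hat\pi_\lambda(\te_i b)=\hat\pi_\lambda(\te_i b')\neq 0$'' step airtight.
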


\begin{proof}
Let $\lambda\in P^{++}$.
We already know that 
$\hat\pi_\lambda(B(\infty)_{-\xi})\setminus 0=B(\lambda)_{\lambda-\xi}$.
Now suppose $b,b'\in B(\infty)_{-\xi}$ such that 
$\hat\pi_{\lambda}(b)=\hat\pi_{\lambda}(b')\neq 0$.
Clearly, there is some $i\in I$ such that $\te_i b\neq 0$,
hence $\hat\pi_{\lambda}(\te_ib)=\hat\pi_{\lambda}(\te_ib')\neq 0$. 
But then by (S$6_{l-1}$), $\te_ib=\te_ib'$, so $b=\tf_i\te_ib=\tf_i\te_ib'=b'$.
\end{proof}

This finishes the induction, and thus Theorem \ref{thm:crystal} is proven.

\subsection{Some further properties of $L(\infty)$}

We can now deduce some properties of the crystal on $\Um$.
First, we note that $L(\infty)$ enjoys many favorable properties
with respect to the bilinear form on $\Um$.

\begin{prop}\label{prop:BilLat}
We have the following.
\begin{enumerate}
\item $(L(\infty),L(\infty))\subset A$ and hence $(-,-)$ descends to a 
$\Q$-valued bilinear form $(-,-)_0$ on $L(\infty)/qL(\infty)$.
\item $(\te_i u,v)_0=(u,\tf_iv)_0$ for $u,v\in L(\infty)/qL(\infty)$.
\item For any $b,b'\in B(\infty)$, $(b,b')_0=\delta_{b,b'}$.
In particular, $(-,-)_0$ is positive definite.
\item $L(\infty)=\set{x\in \Um\mid (x,L(\infty))\in A}=\set{x\in \Um\mid (x,x)\in A}$.
\item $\tau(L(\infty))=L(\infty)$.
\end{enumerate}
\end{prop}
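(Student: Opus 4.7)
The plan is to establish (1), (2), and (3) simultaneously by induction on $\height(\xi)$ for $\xi \in Q^+$, and then to deduce (4) and (5) as consequences. At height zero everything reduces to $(1,1)=1$. For the inductive step, the central tool I would establish is the approximate adjointness
\[
 (\tf_i u, v) \equiv (u, \te_i v) \pmod{q\cA}
\]
for $u,v$ in the appropriate weight spaces of $L(\infty)$. For the odd index $i=m$ this identity is in fact exact rather than approximate: by definition $\tf_m = F_m \cdot$ and $\te_m = e_m$, so it reduces immediately to $(F_m u, v) = (u, e_m v)$ from Proposition \ref{P:BilinearForm2}. For $i \in I_0$ I would follow the classical computation of \cite[\S3.5]{Ka}, decomposing $u = \sum_t F_i^{(t)} u_t$ with $e_i(u_t)=0$ and using the standard commutation of $e_i$ with $F_i^{(t)}$; since $p(i)=0$ here, the super grading plays no role.

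With the adjointness in hand, (2) at weight $\xi$ is immediate, and (1) at weight $\xi$ follows because any $x \in L(\infty)_{-\xi}$ is an $\cA$-combination of elements $\tf_{i_k} x_k$ with $x_k \in L(\infty)_{-\xi+\alpha_{i_k}}$, so $(x,y)$ reduces modulo $q\cA$ to pairings at smaller height, which lie in $\cA$ by induction. For (3): given $b, b' \in B(\infty)_{-\xi}$ with $\xi \neq 0$, choose $i$ with $b_0 := \te_i b \neq 0$, so that $b = \tf_i b_0$ by (S$8_l$). Then $(b, b')_0 = (b_0, \te_i b')_0$, which vanishes if $\te_i b' = 0$ (in which case $b \neq b'$), and otherwise equals $\delta_{b_0, \te_i b'} = \delta_{b, b'}$ by induction together with the injectivity of $\te_i$ on its nonzero values.

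For (4) the standard argument works once (1)-(3) are known. The dual lattice $L^\ast := \set{x \in \Um \mid (x, L(\infty)) \subset \cA}$ contains $L(\infty)$ by (1); for the reverse inclusion, given $x \in L^\ast_{-\xi}$ I lift $B(\infty)_{-\xi}$ to an $\cA$-basis of $L(\infty)_{-\xi}$ and use the orthonormality in (3) to produce $y \in L(\infty)_{-\xi}$ with $x - y \in qL^\ast_{-\xi}$, so that $L^\ast = L(\infty) + qL^\ast$ and Nakayama's lemma gives equality. The characterization via $(x,x) \in \cA$ then uses positive-definiteness of $(-,-)_0$: if $x \in q^{-N} L(\infty) \setminus q^{-N+1} L(\infty)$ for some $N \geq 1$, then the leading term of $(x,x)$ as $q \to 0$ is $q^{-2N}$ times a strictly positive rational, producing a pole of order $2N$ and contradicting $(x,x) \in \cA$. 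Finally, (5) follows from (4) together with the identity $(\tau x, \tau y) = (x,y)$ established in the lemma preceding Lemma \ref{lem:CHW3PBW}: for $x \in L(\infty)$ we have $(\tau x, \tau x) = (x,x) \in \cA$, so $\tau x \in L(\infty)$ by (4), and the reverse inclusion follows by applying $\tau$ again.

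The main technical obstacle is the approximate adjointness step in the even case: the computation is standard, but one must verify directly in the super setting that no spurious signs creep in through the decomposition $u = \sum F_i^{(t)} u_t$ when pairing with elements of arbitrary parity. Fortunately the odd case $i=m$, which might naively seem the most delicate, is forced to be trivial because $F_m^2 = 0$ collapses the decomposition to two terms and eliminates all sign ambiguity.
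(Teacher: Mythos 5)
Your proposal is correct and takes essentially the same route as the paper: induction on height via the approximate adjointness $(\tf_i u, v)\equiv(u,\te_i v)\pmod{q\cA}$, noting that the case $i=m$ is exact by definition of $\tf_m=F_m\cdot$, $\te_m=e_m$ and Proposition~\ref{P:BilinearForm2}, the case $i\in I_0$ follows Kashiwara, (3) and (4) are deduced as in \cite{Ka}, and (5) comes from (4) and the $\tau$-invariance of the form. The details you supply for (3) and (4) (using (S$8_l$), the dual-lattice/Nakayama argument, and the pole-order argument from positive-definiteness) match what the paper defers to \cite{Ka}.
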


\begin{proof}
First, let us prove $(L(\infty)_{-\xi},L(\infty)_{-\xi})\subset A$
for $\xi\in Q^+(l)$ by induction on $l$. In particular,
since $L(\infty)_{-\xi}=\sum \tf_i L(\infty)_{-\xi+\alpha_i}$,
it suffices to show that $(\tf_i u,v)\equiv (u,\te_i v)$ modulo $qL(\infty)$
for $u\in  L(\infty)_{-\xi+\alpha_i}$ and $v\in  L(\infty)_{-\xi}$.
We may further assume $u=F_i^{(x)}u_0$ and $v=F_i^{(y)}v_0$
for some $x,y\in \N$ and $u_0,v_0\in \Um$ with $e_i'(u_0)=e_i'(v_0)=0$.
Then the case $i\in I_0$ is virtually the same as in \cite{Ka}.
For $i=m$, it is even simpler: $(\tf_mu,v)=(F_mu,v)=(u,e_m'(v))=(u,\te_mv)$.
This proves (1) and (2).  Then (3) and (4) follow exactly as proven in \cite{Ka},
Finally, (5) follows from (4) and the $\tau$-invariance of $(-,-)$.
\if 0
For (3), let $b,b'\in B(\infty)_{-\xi}$ for some $\xi\in Q^+(l)$.
Then there is some $i\in I$ such that $b=\tf_i b''$, hence
$(b,b')_0=(b'',\te_i b')_0=\delta_{b'',\te_ib'}$.
In particular, $(b,b')_0=0$ unless $\te_i b=b''=\te_i b'$,
in which case $b=\tf_i b''=b'$ and $(b,b')=1$.

For (4), first we prove the intermediate claim that 
$L(\infty)=\set{x\in\Um\mid (x,L(\infty))\subset \cA}$.
Clearly, $L(\infty)\subset \set{x\in\Um\mid (x,L(\infty))\subset \cA}$.
Conversely, if $(x,L(\infty)\subset A$ for some $x\in \Um$,
then we can write $x=\sum_{r=1}^s a_r b_r$ where $a_r\in \Qq$
and $b_r\in L(\infty)$ such that $\set{b_r+qL(\infty)\mid 1\leq r\leq t}$ 
is a linearly independent subset of $B(\infty)$.
There exists some $t\in \Z_{>0}$ such that $a_r\in q^{-t}\cA$ for all $1\leq r\leq s$.
On the other hand, note that since $(b_r,b_{r'})\in \delta_{r,r'}+q\cA$,
we have 
$a_{r}=(b_{r},b_{r})^{-1}\parens{(x,b_r)-\sum_{r\neq r'} a_{r'}(b_{r'},b_{r})}
\in q^{1-t}\cA$.
In particular, we see that $a_r\in \cA$ for all $1\leq r\leq s$
and thus $x\in L(\infty)$.

Now clearly $L(\infty)\subset\set{x\in \Um\mid (x,x)\in \cA}$. 
Conversely, suppose $(x,x)\in \cA$.
Write  $x=\sum_{r=1}^s a_r b_r$ where $a_r\in \Qq$
and $b_r\in L(\infty)$ such that $\hat b_r=b_r+qL(\infty)\in B(\infty)$
and $\hat b_r\neq \hat b_{r'}$ for any $r\neq r'$.
There is some minimal $t\in \N$ such that $x\in q^{-t}L(\infty)$.
Suppose $t>0$. Then $(q^tx,q^tx)\in q\cA$, hence $v=q^tx+qL(\infty)$
satisfies $(v,v)_0=0$. But then since $(-,-)_0$ is positive definite,
$v=0$ so $q^tx\in qL(\infty)$ and thus $x\in q^{1-t}L(\infty)$.
This contradicts the minimality of $t$, hence $t=0$ and $x\in L(\infty)$.\fi
\end{proof}

Now let us compare the crystal on $\Um$ with the crystal on $K(\lambda)$.
To facilitate this, we need to refer to the odd PBW vectors. To that end,
let $I=\set{1<\ldots<m}$ be the standard
ordering on $I$. Then
for $\chi=\set{\alpha_1<\ldots<\alpha_k}\subset \Phi^+_1$,
let $F_\chi=F_{<,\alpha_1}\ldots F_{<,\alpha_k}$ and observe that $e_i'(F_\chi)=0$
for all $i\in I_0$.

In particular, let $\mathbf{W}$ be Kashiwara's Boson algebra for $\gl(m)$; that is, 
the $\Qq$-subalgebra of $\End_{\Qq}(\Um)$ generated by $e_i$, $f_i$ with $i\in I_0$.
Let us write $(L_{\gl(m)}(\infty), B_{\gl(m)}(\infty))$ for the crystal basis of $\UU_q^-(\gl(m))$.
Then by \cite[Remarks 3.4.10 and 3.5.1]{Ka}, $\Um$ is a direct sum of $\mathbf{W}$-modules isomorphic
to $\UU_q^-(\gl(m))$ and and $(L(\infty), B(\infty))$ is a direct sum of crystals
isomorphic to $(L_{\gl(m)}(\infty), B_{\gl(m)}(\infty))$. In fact, we can be more specific:
we see that as $\mathbf{W}$-modules, $\Um=\bigoplus_{\chi\subset \Phi^+_1} \Um_q(\gl(m))F_\chi$,
and with respect to this decomposition\footnote{Note that in \cite{Zou}, while a similar decomposition of the lattice is claimed,
the PBW vectors there are taken with respect to the opposite order. As a result,
$F_mF_{m-1}$ is not included in the lattice, causing it to not
be closed under all the operators. This is not a problem here, and as we shall see
any PBW basis maps onto the crystal basis modulo $q$.}
$L(\infty)=\bigoplus L_\chi$ where $L_\chi$ is the sublattice 
generated by $F_\chi$ under $\tf_i$ for $i\in I_0$.

\begin{thm}\label{thm:KacCompat}
	Let $\lambda\in P^+$, and let $\pi^K_\lambda:\Um\rightarrow K(\lambda)$ be
	the projection map. Then $\pi^K_\lambda(L(\infty))=L^K(\lambda)$.
	Moreover, let $\hat\pi^K_\lambda:L(\infty)/qL(\infty)\rightarrow L^K(\lambda)/qL^K(\lambda)$
	be the induced projection modulo $q$. Then $\hat \pi^K_\lambda$  induces a bijection between
	$\set{b\mid b\in B(\infty)\text{ such that } \pi^K_\lambda(b)\neq 0}$ and $B^K(\lambda)$.
\end{thm}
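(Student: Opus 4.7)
The approach is to induct on $\height(\xi)$ for $\xi \in Q^+$, mirroring the proof of Theorem~\ref{thm:crystal} but with the Kac module $K(\lambda)$ replacing the simple module $V(\lambda)$ and the condition $\lambda \in P^{++}$ relaxed to $\lambda \in P^+$. At each weight level I would establish both $\pi^K_\lambda(L(\infty)_{-\xi}) = L^K(\lambda)_{\lambda-\xi}$ and the bijection
\[
\{b \in B(\infty)_{-\xi} : \pi^K_\lambda(b) \neq 0\} \;\overset{\hat\pi^K_\lambda}{\longleftrightarrow}\; B^K(\lambda)_{\lambda-\xi}.
\]

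The key technical ingredient is a Kac-module analog of Lemma~\ref{lem:kash op commute with proj}: for $x \in \Um_{-\xi}$ and $i \in I$,
\[
\pi^K_\lambda(\tf_i x) \equiv \tf_i^K \pi^K_\lambda(x) \pmod{q L^K(\lambda)},
\]
together with an analogous congruence for $\te_i$ and $\te_i^K$. The case $i = m$ is in fact an exact equality: $\tf_m = F_m = \tf_m^K$ and $\pi^K_\lambda$ is $\Um$-linear, while $\te_m = e_m$ on $\Um$ descends to $\te_m^K = e_m$ on $K(\lambda) = \Um/I_\lambda$, where $e_m(I_\lambda) \subseteq I_\lambda$ follows from the twisted Leibniz rule together with $e_m(F_j) = 0$ for $j \in I_0$. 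The case $i \in I_0$ reduces to the $\UU_q(\gl(m))$-setting, since both sides depend only on the $\gl(m)$-module structure, which $\pi^K_\lambda$ respects by Lemma~\ref{lem:Kac facts}(2); so the mod-$q$ congruence follows from Kashiwara's classical result on projections onto $\gl(m)$ Verma-type quotients.

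Granted these commutations, the induction follows the template of Section~\ref{sec:crystal}. The base case $\xi = 0$ is $\pi^K_\lambda(\cA\cdot 1) = \cA k_\lambda = L^K(\lambda)_\lambda$. For the inductive step, using $L(\infty)_{-\xi} = \sum_i \cA \tf_i L(\infty)_{-\xi+\alpha_i}$ together with the commutation relation, one obtains $L^K(\lambda)_{\lambda-\xi} \subseteq \pi^K_\lambda(L(\infty)_{-\xi}) + q L^K(\lambda)_{\lambda-\xi}$, whence the desired equality by Nakayama's lemma. The induced map $\hat\pi^K_\lambda$ then commutes with the Kashiwara operators (exactly for $i=m$ and modulo $q$ for $i \in I_0$), so it carries $B(\infty)_{-\xi}$ into $B^K(\lambda)_{\lambda-\xi} \cup \{0\}$ and restricts to a surjection onto $B^K(\lambda)_{\lambda-\xi}$; injectivity on the nonzero part follows by the same $\te_i$-pullback argument as in (S$6_l$), applying $\te_i$ for some $i$ with $\te_i b \neq 0$ to reduce to the inductive bijection at lower weight.

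The main anticipated obstacle is the mod-$q$ $\te_i$-commutation for $i \in I_0$: the $F_i$-string decomposition $x = \sum F_i^{(t)} u_t$ in $\Um$ must be shown to project cleanly to the $F_i$-string decomposition in $K(\lambda)$, which is delicate because terms with $t > \ang{h_i,\lambda}$ behave pathologically after quotienting by $I_\lambda$. As in the proof of Lemma~\ref{lem:kash op commute with proj}, however, these ``bad'' terms should contribute only to $q L^K(\lambda)$ at the weight levels relevant to the induction. Since here $\lambda$ is merely dominant ($\lambda \in P^+$) rather than strictly in $P^{++}$, this requires more careful bookkeeping than in the $V(\lambda)$ case; one may also need to exploit compatibility with the crystal on $V(\lambda+\mu)$ for auxiliary $\mu \in P^{++}$ to transfer the classical Kashiwara bounds.
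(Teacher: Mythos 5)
Your high-level plan—establishing $\pi^K_\lambda(\tf_i x) \equiv \tf_i^K\pi^K_\lambda(x)$ modulo $q$ and then concluding $\pi^K_\lambda(L(\infty)) = L^K(\lambda)$ by Nakayama's lemma, with the bijection on bases following from the operator-commutation—is the same strategy the paper pursues. The case $i=m$ is also handled identically (exact equality from the definitions). However, your treatment of the $i \in I_0$ case is where the argument has a genuine gap. You assert that this "reduces to the $\UU_q(\gl(m))$-setting" and that the congruence "follows from Kashiwara's classical result on projections onto $\gl(m)$ Verma-type quotients," but you do not explain the reduction, and you flag it yourself as the main obstacle, worrying about pathological $F_i$-string terms and suggesting one might need to transfer bounds via auxiliary $V(\lambda+\mu)$. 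None of that is needed, and it signals the correct route isn't visible to you.

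The missing ingredient is the structural decomposition established just before the theorem: writing $\mathbf{W}$ for Kashiwara's boson algebra of $\gl(m)$, one has $\Um = \bigoplus_{\chi\subset\Phi_1^+}\Um_q(\gl(m))F_\chi$ as $\mathbf{W}$-modules (where $F_\chi$ is the ordered product of odd PBW root vectors for $\chi$, satisfying $e_i'(F_\chi)=0$ for all $i\in I_0$), with the crystal lattice decomposing accordingly as $L(\infty)=\bigoplus_\chi L_\chi$. This decomposition lets you write any $y\in\Um$ as $y=xF_\chi$ with $x\in\Um_q(\gl(m))$, and then $\tf_i(xF_\chi)=(\tf_i x)F_\chi$ for $i\in I_0$. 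On the other side, $K(\lambda)$ decomposes as a $\UU_q(\gl(m))$-module into irreducibles $\bigoplus_s V(\lambda'_s)$ with a matching crystal decomposition $L^K(\lambda)=\bigoplus_s L(\lambda'_s)$. Composing $\pi^K_\lambda$ with projection onto the $s$-th summand reduces the required congruence to Kashiwara's \cite[($C_l.6$)]{Ka} applied componentwise, which is a clean, finished argument. In particular, the paper's proof is direct rather than an induction on $\height(\xi)$; once the two decompositions are in hand, the Kashiwara congruence holds at all weights simultaneously and the ``delicate bookkeeping'' you anticipate never arises. Without this decomposition, your claim that both sides ``depend only on the $\gl(m)$-module structure'' is not justified: $\pi^K_\lambda$ is merely $\gl(m)$-equivariant, and equivariance alone does not give compatibility of crystal lattices unless the lattices themselves decompose compatibly with the $\gl(m)$-action.
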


\begin{proof}
	Observe that since $\pi^K_\lambda(1)=k_\lambda$, if
	$\pi_\lambda^K(\tilde f_i y)\equiv \tilde f_i \pi_\lambda^K(y)$ 
	modulo $qL(\lambda)$ for $i\in I$ and $y\in \Um$ then 
	$\pi^K_\lambda(L(\infty))=L^K(\lambda)$.
	This is obvious from the definition
	when $i=m$. When $i\in I_0$, suppose $y\in \Um$.
	Without loss of generality, we can assume $y=xF_\chi\in \Um_q(\gl(m)) F_\chi$
	for some $\chi\subset \Phi_1^-$ and $x\in \Um_q(\gl(m))$. Then
	$\tf_i(xF_\chi)=(\tf_ix)F\chi$ hence 
	$\pi^K_\lambda(\tf_i(xF_\chi))=(\tf_ix)\pi^K_\lambda(F_\chi)$.
	
	Now, as a $U_q(\gl(m))$-module we have a decomposition into irreducibles
	$K(\lambda)\cong\bigoplus_{s=1}^n V(\lambda;\lambda'_s)$ 
	and by construction and the tensor product rule, we have corresponding
	decompositions of the crystal basis:
	$L^K(\lambda)\cong \bigoplus L(\lambda'_s)$ and
	$B^K(\lambda)=\coprod B(\lambda'_s)$.	
	Let $\pi_\lambda^s$ be the composition of $\pi_\lambda^K$ with projection
	on the $s$th component of this direct sum. 
	Then by \cite[($C_l.6$)]{Ka},
	 $(\tf_ix)\pi_\lambda^s(F_\chi)\equiv \tf_i\pi_\lambda^s(xF_\chi)$
	 modulo $qL(\lambda'_s)$ for each $s$. In particular,
	 $\pi^K_\lambda(\tf_i(xF_\chi))\equiv\tf_i\pi^K_\lambda(xF_\chi)$
	 modulo $qL^K(\lambda)$.
	 
	 This proves that $\pi_\lambda^K$ preserves the lattice.
	 It is then easy to see that $\hat\pi_\lambda^K$ induces the desired bijection on the bases,
	 as it does so on each $\gl(m)$-component of the crystals.
\end{proof}

Note that Theorem \ref{thm:KacCompat} implies a slight refinement of 
Proposition \ref{prop:KwResults}.

\begin{cor}
Let $\lambda\in P^+$. Then we have
$$L^K(\lambda)=\sum \cA \tilde f^K_{i_1}\ldots \tilde f^K_{i_t} k_\lambda,\quad \text{with the sum being over } t\geq 0, i_1,\ldots, i_t\in I;$$
$$B^K(\lambda)=\set{\tilde f^K_{i_1}\ldots \tilde f^K_{i_t} k_\lambda+qL(\lambda)\mid t\geq 0, i_1,\ldots, i_t\in I}\setminus 0.$$
Moreover, Proposition \ref{prop:KwResults} (1)-(3) also hold for 
$\lambda\in P^{++}$.
\end{cor}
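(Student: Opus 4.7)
The plan is to transport the description of $(L(\infty), B(\infty))$ in Theorem \ref{thm:crystal} through the projection $\pi^K_\lambda$ using Theorem \ref{thm:KacCompat}, and then further through $\pi^{K,V}_\lambda$ to handle the extension of Proposition \ref{prop:KwResults}(1)--(3). The real content has already been done; what remains is bookkeeping plus one application of Nakayama's lemma.

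First I would establish the description of $L^K(\lambda)$. The inclusion $\supseteq$ is trivial since $L^K(\lambda)$ is a crystal lattice. For the reverse, recall from the proof of Theorem \ref{thm:KacCompat} that $\pi^K_\lambda(\tf_i y) \equiv \tf^K_i \pi^K_\lambda(y) \pmod{qL^K(\lambda)}$ for every $i \in I$ and $y \in \Um$ (the case $i = m$ is an equality, and the case $i \in I_0$ is the congruence established there). Iterating gives
\[
\pi^K_\lambda(\tf_{i_1}\cdots \tf_{i_t}\cdot 1) \equiv \tf^K_{i_1}\cdots\tf^K_{i_t} k_\lambda \pmod{qL^K(\lambda)}.
\]
Let $M$ denote the $\cA$-submodule of $L^K(\lambda)$ generated by the elements $\tf^K_{i_1}\cdots\tf^K_{i_t} k_\lambda$. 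Since the elements $\tf_{i_1}\cdots \tf_{i_t}\cdot 1$ $\cA$-span $L(\infty)$ by Theorem \ref{thm:crystal}, and since $\pi^K_\lambda(L(\infty)) = L^K(\lambda)$, the congruence above shows $L^K(\lambda) \subseteq M + qL^K(\lambda)$. Nakayama's lemma then yields $M = L^K(\lambda)$.

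Next, for the description of $B^K(\lambda)$, I use the bijection from Theorem \ref{thm:KacCompat}: every element of $B^K(\lambda)$ has the form $\hat\pi^K_\lambda(b)$ for some $b \in B(\infty)$ with $\pi^K_\lambda(b) \neq 0$, and writing $b = \tf_{i_1}\cdots \tf_{i_t}\cdot 1 + qL(\infty)$ (Theorem \ref{thm:crystal}), the same congruence gives $\hat\pi^K_\lambda(b) = \tf^K_{i_1}\cdots\tf^K_{i_t} k_\lambda + qL^K(\lambda)$. Conversely, any such nonzero element is the image of $\tf_{i_1}\cdots \tf_{i_t}\cdot 1 + qL(\infty) \in B(\infty)$ under $\hat\pi^K_\lambda$ and thus lies in $B^K(\lambda)$.

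Finally, for the extension of Proposition \ref{prop:KwResults}(1)--(3) to $\lambda \in P^{++}$, the key is the factorization $\pi_\lambda = \pi^{K,V}_\lambda \circ \pi^K_\lambda$. Applying $\pi^{K,V}_\lambda$ to $L^K(\lambda) = \pi^K_\lambda(L(\infty))$ and using $\pi_\lambda(L(\infty)) = L(\lambda)$ from Theorem \ref{thm:crystal} yields (1). Then (2) follows by passing to $L/qL$ and combining the surjection $\hat\pi^K_\lambda(B(\infty)\setminus\{\pi^K_\lambda = 0\}) = B^K(\lambda)$ with the analogous statement $\hat\pi_\lambda(B(\infty)\setminus\{\pi_\lambda = 0\}) = B(\lambda) \cup \{0\}$ from Theorem \ref{thm:crystal}. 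For (3), note that if $\hat\pi^{K,V}_\lambda(\hat\pi^K_\lambda(b)) = \hat\pi^{K,V}_\lambda(\hat\pi^K_\lambda(b'))$ is a nonzero element of $B(\lambda)$, then $\hat\pi_\lambda(b) = \hat\pi_\lambda(b') \neq 0$, so $b = b'$ by the bijection in Theorem \ref{thm:crystal}; hence $\hat\pi^K_\lambda(b) = \hat\pi^K_\lambda(b')$, proving that $\hat\pi^{K,V}_\lambda$ is injective on the set of elements of $B^K(\lambda)$ with nonzero image. The only step with any substance is the Nakayama argument in Step 1; everything else is formal consequence of the already-established compatibilities.
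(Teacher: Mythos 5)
Your proof is correct and matches the argument the paper leaves implicit (no explicit proof is given for this corollary; it is presented as a direct consequence of Theorem \ref{thm:KacCompat}). You have correctly identified that the key ingredients are the mod-$q$ congruence $\pi^K_\lambda(\tf_i y) \equiv \tf^K_i \pi^K_\lambda(y)$ from the proof of Theorem \ref{thm:KacCompat} (iterated, then combined with Nakayama applied weight-by-weight), the description of $L(\infty)$ and $B(\infty)$ from Theorem \ref{thm:crystal}, and the factorization $\pi_\lambda = \pi^{K,V}_\lambda \circ \pi^K_\lambda$ for the extension of Proposition \ref{prop:KwResults} from $\tP^+$ to $P^{++}$; this is precisely the intended unpacking.
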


\subsection{The integral form and globalization}

Finally, we turn to the construction of canonical bases. For this, we first need
to produce an integral form of our lattices.

Recall the integral form $\Uint$ defined in Section \ref{subsec:quagrp}.
Let $\Lint(\infty)=L(\infty)\cap \Umint$. Note that
$\Umint$ is closed under $e_i(x)$ for all $i\in I$; indeed, in the case
$i\in I_0$ this follows from properties of Kashiwara's boson algebra,
and if $i=m$ this is trivial to verify.
In particular, let $i\in I$  and write 
$u=\sum F_i^{(t)} u_t\in \Umint$ where $u_t\in \Um$ with $e_i'(u_t)=0$.
Then it is easy to verify that all $u_t\in \Umint$ as well, hence $\Umint$ and thus $\Lint(\infty)$ 
is closed under $\te_i$ and $\tf_i$
for all $i\in I$.

Now observe 
\begin{equation}
B(\infty)\subset \Lint(\infty)/q\Lint(\infty)\subset L(\infty)/qL(\infty).
\end{equation}
Recall $\cA_\Z$ is the $\Z$-subalgebra of $\Qq$ generated by $q$ and $\frac{1}{1-q^{2t}}$
for $t\geq 1$. Let $K_\Z$ be the subalgebra generated by $\cA_\Z$ and $q^{-1}$.
Then we have $\cA_\Z=\cA\cap K_\Z$. On the other hand,
$(\Umint,\Umint)\subset K_\Z$ hence $(\Lint(\infty),\Lint(\infty))\subset \cA_\Z$.
Therefore, since $\cA_\Z/q\cA_\Z=\Z$, we see that the specialization of the bilinear form
$(-,-)_0$ is $\Z$-valued on $\Lint(\infty)$.

Then the following lemma immediately follows from this discussion and
Proposition \ref{prop:BilLat}.

\begin{lem}
We have that $\Lint(\infty)/q\Lint(\infty)$ is a free $\Z$-module with basis $B(\infty)$.
Moreover, \[B(\infty)\cup -B(\infty)=\set{u\in \Lint(\infty)/q\Lint(\infty)\mid (u,u)_0=1}.\]
\end{lem}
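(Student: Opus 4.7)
The plan is to leverage the orthonormality of $B(\infty)$ with respect to $(-,-)_0$ (from Proposition \ref{prop:BilLat}(3)) together with the fact, established just above the lemma, that the form is $\Z$-valued on $\Lint(\infty)/q\Lint(\infty)$. The first assertion reduces to showing that when an element of $\Lint(\infty)/q\Lint(\infty)$ is expanded in the $\Q$-basis $B(\infty)$, all coefficients are integers; the second assertion is a short consequence of the first together with the fact that the only way to write $1$ as a sum of squares of integers is $1 = (\pm 1)^2$.

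For the first step I would proceed as follows. We already have $B(\infty) \subset \Lint(\infty)/q\Lint(\infty)$, so it suffices to show that every $u \in \Lint(\infty)/q\Lint(\infty)$ lies in the $\Z$-span of $B(\infty)$. Since $B(\infty)$ is a $\Q$-basis of $L(\infty)/qL(\infty)$ by Theorem \ref{thm:crystal}, we may write $u = \sum_{b \in B(\infty)} c_b\, b$ with $c_b \in \Q$ (only finitely many nonzero). Using the orthonormality $(b, b')_0 = \delta_{b,b'}$ from Proposition \ref{prop:BilLat}(3), pairing both sides with $b$ gives $c_b = (u, b)_0$. But by the paragraph preceding the lemma, $(\Lint(\infty), \Lint(\infty)) \subset \cA_\Z$, and the reduction of $\cA_\Z$ mod $q$ is $\Z$, so $(u, b)_0 \in \Z$. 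Hence each $c_b \in \Z$, showing that $B(\infty)$ spans $\Lint(\infty)/q\Lint(\infty)$ over $\Z$. Linear independence over $\Z$ is automatic from linear independence over $\Q$ in $L(\infty)/qL(\infty)$.

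For the second step, suppose $u \in \Lint(\infty)/q\Lint(\infty)$ satisfies $(u,u)_0 = 1$. By the first part, $u = \sum_b c_b b$ with $c_b \in \Z$. Orthonormality then yields
\[
1 = (u,u)_0 = \sum_{b \in B(\infty)} c_b^2.
\]
Since the $c_b$ are integers and only finitely many are nonzero, exactly one $c_b$ equals $\pm 1$ and the rest vanish, giving $u \in B(\infty) \cup -B(\infty)$. The reverse inclusion is immediate from orthonormality.

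There is no genuine obstacle here: the work was done in setting up the integral form and verifying that it is preserved by $\te_i, \tf_i$, and in establishing the almost-orthogonality (in particular, $\Z$-valuedness on $\Lint(\infty)/q\Lint(\infty)$) of $(-,-)_0$. The only subtle point to be careful about is making sure that $B(\infty)$ really does consist of classes of elements already in $\Lint(\infty)$, which follows from the display $B(\infty) \subset \Lint(\infty)/q\Lint(\infty) \subset L(\infty)/qL(\infty)$ established just before the lemma.
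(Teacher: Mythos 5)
Your proof is correct and spells out exactly the argument the paper intends: the paper states the lemma "immediately follows from this discussion and Proposition \ref{prop:BilLat}" without writing out the details, and your expansion — extract integer coefficients via orthonormality and the $\Z$-valuedness of $(-,-)_0$ on $\Lint(\infty)/q\Lint(\infty)$, then observe that $\sum c_b^2 = 1$ with $c_b\in\Z$ forces a single $c_b=\pm 1$ — is precisely that argument.
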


Note that Lemma \ref{lem:CHW3PBW} immediately implies the following.

\begin{cor}\label{cor:prec cb}
Let $\prec$ be a total order on $I$.
Then $\cB(\prec)\subset \Lint(\infty)$, and moreover 
\[\cB(\prec)+q\Lint(\infty)\subset B(\infty)\cup -B(\infty).\]
\end{cor}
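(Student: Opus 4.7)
The plan is to combine the almost-orthogonality of $\cB(\prec)$ from Lemma \ref{lem:CHW3PBW} with the two characterizations already at hand: Proposition \ref{prop:BilLat}(4), which identifies $L(\infty)$ with $\{x \in \Um \mid (x,x) \in \cA\}$, and the lemma immediately preceding the corollary, which identifies $B(\infty) \cup -B(\infty)$ as the set of elements of $\Lint(\infty)/q\Lint(\infty)$ with self-pairing equal to $1$ under $(-,-)_0$.

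For the first inclusion, fix $b_a \in \cB(\prec)$. Almost-orthogonality gives $(b_a, b_a) \in 1 + q\cA_\Z \subset \cA$, so by Proposition \ref{prop:BilLat}(4) we conclude $b_a \in L(\infty)$. Since $\cB(\prec)$ is by construction an $\bbA$-basis of $\Umint$, we also have $b_a \in \Umint$. Combining, $b_a \in \Umint \cap L(\infty) = \Lint(\infty)$, which establishes $\cB(\prec) \subset \Lint(\infty)$.

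For the second inclusion, reduce the almost-orthogonality statement modulo $q$. Let $\hat b_a$ denote the image of $b_a$ in $\Lint(\infty)/q\Lint(\infty)$; then $(\hat b_a, \hat b_a)_0$ equals the class of $(b_a, b_a)$ in $\cA_\Z/q\cA_\Z = \Z$, which by almost-orthogonality is $1$. Applying the preceding lemma, $\hat b_a \in B(\infty) \cup -B(\infty)$, which is exactly the containment $\cB(\prec) + q\Lint(\infty) \subset B(\infty) \cup -B(\infty)$.

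I do not anticipate any real obstacle: once almost-orthogonality of $\cB(\prec)$ is in hand (the substantive content of Lemma \ref{lem:CHW3PBW}), the corollary is a formal two-step consequence. It is worth emphasizing, however, that the argument genuinely requires almost-orthogonality rather than just bar-invariance and $q$-unitriangularity, since those two properties alone do not pin an element down to $\pm B(\infty)$; almost-orthogonality is the essential bridge linking the PBW construction of \cite{CHW3} to the crystal basis produced in Theorem \ref{thm:crystal}.
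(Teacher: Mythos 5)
Your proof is correct and spells out exactly what the paper treats as immediate from Lemma \ref{lem:CHW3PBW} together with Proposition \ref{prop:BilLat}(4) and the lemma preceding the corollary: almost-orthogonality places $b_a$ in $L(\infty)$, membership in $\Umint$ then places it in $\Lint(\infty)$, and reducing $(b_a,b_a)\in 1+q\cA_\Z$ modulo $q$ pins $\hat b_a$ to $B(\infty)\cup -B(\infty)$. The emphasis at the end — that almost-orthogonality, not just bar-invariance and unitriangularity, is the load-bearing property — is a fair and correct observation.
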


Now let $\lambda\in P^{++}$ and $\mu\in P^+$. Set $\Vint(\lambda)=\Umint v_\lambda$ and 
$\Kint(\mu)=\Umint k_\mu$.
Then $\Vint(\lambda)$ (resp. $\Kint(\mu)$) is a $\Uint$-submodule of $V(\lambda)$ (resp. $K(\mu)$). 
Furthermore, define
$\Lint(\lambda)=\Vint(\lambda)\cap L(\lambda)$ and
$\Lint^K(\mu)=\Kint(\mu)\cap L^K(\mu)$. We define a bar involution on $V(\lambda)$
(resp. $K(\mu)$)
by $\bar{uv_\lambda}=\bar u v_\lambda$ (resp. $\bar{uk_\mu}=\bar u k_\mu$) for $u\in \Um$. Then $\Kint(\mu)$ and 
$\Vint(\lambda)$ are stable under the bar involution.
Note that $\pi_\lambda(\Lint(\infty))=\Lint(\lambda)$ (resp. $\pi_\mu^K(\Lint(\mu))=\Lint^K(\mu)$)
and $B(\lambda)\subset \Lint(\lambda)/q\Lint(\lambda)$ (resp. $B^K(\mu)\subset \Lint^K(\mu)/q\Lint^K(\mu)$).

We now have the necessary ingredients to construct Kashiwara's globalization.

\begin{thm}\label{thm:Globalization}
For every $\xi\in Q^+$, $\mu\in P^+$, and $\lambda\in P^{++}$, there are isomorphisms
\[G:\Lint(\infty)_{-\xi}/q\Lint(\infty)_{-\xi}
\rightarrow \Umint_{-\xi}\cap\Lint(\infty)\cap\bar{\Lint(\infty)}.\]
\[G_{\mu}^K:\Lint^K(\mu)_{\mu-\xi}/q\Lint^K(\mu)_{\mu-\xi}
\rightarrow \Kint(\mu)_{\mu-\xi}\cap\Lint^K(\mu)\cap\bar{\Lint^K(\mu)}.\]
\[G_\lambda:\Lint(\lambda)_{\lambda-\xi}/q\Lint(\lambda)_{\lambda-\xi}
\rightarrow \Vint(\lambda)_{\lambda-\xi}\cap\Lint(\lambda)\cap\bar{\Lint(\lambda)}.\]
We call $\cB=G(B(\infty))$ (resp. $\cB(\lambda)=G_\lambda(B(\lambda))$, $\cB^K(\mu)=G^K_\mu(B^K(\mu))$) 
the canonical basis of $\Um$
(resp. the canonical basis of $V(\lambda)$). Furthermore, we observe that:
\begin{enumerate}
\item $\Umint_{-\xi}\cap \Lint(\infty)=\bigoplus_{b\in B(\infty)_{-\xi}}\Z[q]G(b)$
and $\Umint_{-\xi}=\bigoplus_{b\in B(\infty)_{-\xi}}\bbA G(b)$;
\item $\Kint(\mu)_{\mu-\xi}\cap \Lint^K(\mu)
=\bigoplus_{b\in B(\mu)_{\mu-\xi}}\Z[q]G_\mu(b)$
and $\Kint(\mu)_{\mu-\xi}
=\bigoplus_{b\in B(\mu)_{\mu-\xi}}\bbA G_\mu(b)$;
\item $\Vint(\lambda)_{\lambda-\xi}\cap \Lint(\lambda)
=\bigoplus_{b\in B(\lambda)_{\lambda-\xi}}\Z[q]G_\lambda(b)$
and $\Vint(\lambda)_{\lambda-\xi}
=\bigoplus_{b\in B(\lambda)_{\lambda-\xi}}\bbA G_\lambda(b)$;
\item $G_\lambda(\pi_\lambda(b))=\pi_\lambda(G(b))$, $G_\mu^K(\pi^K_\mu(b))=\pi^K_\lambda(G(b))$, and $G_\lambda(\pi^{K,V}_\lambda(b))=\pi^{K,V}_\lambda(G^K_\lambda(b))$ ;
\item $\bar{G(b)}=G(b)$ for any 
$b\in \Lint(\infty)/q\Lint(\infty)$.
\end{enumerate}
\end{thm}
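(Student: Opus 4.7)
The plan is to deduce the globalization theorem from the existence of the bar-invariant bases $\cB(\prec)$ produced in Corollary~\ref{cor:prec cb}, so fix any total order $\prec$ on $I$ (say the standard one). I first show that the specialization map $\cB(\prec) \to L(\infty)/qL(\infty)$, $b \mapsto b + qL(\infty)$, is injective with image contained in $B(\infty) \cup -B(\infty)$. The containment is Corollary~\ref{cor:prec cb}. For injectivity, if $b_1,b_2 \in \cB(\prec)$ with $b_1 - b_2 \in qL(\infty)$, then Proposition~\ref{prop:BilLat}(1) gives $(b_1 - b_2, b_1 - b_2) \in q^2 \cA$, whereas the almost-orthogonality in Lemma~\ref{lem:CHW3PBW} yields $(b_1-b_2,b_1-b_2) \in 2(1-\delta_{b_1,b_2}) + q\cA_\Z$, forcing $b_1=b_2$. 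Since both $\cB(\prec)_{-\xi}$ and $B(\infty)_{-\xi}$ have cardinality $\dim_{\Qq}\Um_{-\xi}$, the specialization selects exactly one representative of each pair $\{b,-b\}$. For each $b \in B(\infty)$ I then define $G(b)$ to be the unique element of $\cB(\prec) \cup -\cB(\prec)$ specializing to $b$, and extend $G$ $\Z$-linearly to $L(\infty)/qL(\infty)$.

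By construction $G(b) \in \Umint \cap \Lint(\infty) \cap \overline{\Lint(\infty)}$, all three properties being inherited from $\cB(\prec)$. Linear independence of $\{G(b)\}$ is immediate from the $\bbA$-basis property. For surjectivity onto the target, let $v \in \Umint_{-\xi} \cap \Lint(\infty) \cap \overline{\Lint(\infty)}$ and expand $v = \sum_b c_b G(b)$ with $c_b \in \bbA$. I claim $c_b \in \Z[q]$: otherwise pick $N \geq 1$ maximal so that some $c_b$ has a $q^{-N}$-term; then $q^N c_b \in \Z[q]$ for every $b$, so reducing modulo $qL(\infty)$ yields $q^N v \equiv \sum_b (q^N c_b)|_{q=0}\, b$, a nonzero element of $L(\infty)/qL(\infty)$ by maximality of $N$; this contradicts $q^N v \in q^N \Lint(\infty) \subset qL(\infty)$. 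Next, $\bar v = v$ together with $\overline{G(b)} = G(b)$ forces $c_b = \bar c_b$, and $\Z[q] \cap \Z[q^{-1}] = \Z$ gives $c_b \in \Z$. This same decomposition establishes property~(1) in both its $\Z[q]$ and $\bbA$ forms, while (5) is built into the definition.

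For the Kac module and simple module versions I use the $\Um$-linear, bar-equivariant projections $\pi^K_\mu \colon \Um \to K(\mu)$ and $\pi_\lambda = \pi^{K,V}_\lambda \circ \pi^K_\lambda \colon \Um \to V(\lambda)$. By Theorems~\ref{thm:crystal} and~\ref{thm:KacCompat} each maps $\Umint$ onto $\Kint(\mu)$ (resp.\ $\Vint(\lambda)$), carries $\Lint(\infty)$ onto $\Lint^K(\mu)$ (resp.\ $\Lint(\lambda)$), and induces bijections $\{b \in B(\infty) : \pi^K_\mu(b) \neq 0\} \leftrightarrow B^K(\mu)$ and $\{b : \pi_\lambda(b) \neq 0\} \leftrightarrow B(\lambda)$. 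I then define $G^K_\mu(\pi^K_\mu(b)) := \pi^K_\mu(G(b))$ and $G_\lambda(\pi_\lambda(b)) := \pi_\lambda(G(b))$ for those $b \in B(\infty)$ with nonzero image. Applying the same $\bbA$-to-$\Z[q]$-to-$\Z$ descent to the bar-invariant subsets $\pi^K_\mu(\cB(\prec)) \setminus \{0\}$ of $\Kint(\mu)$ and $\pi_\lambda(\cB(\prec)) \setminus \{0\}$ of $\Vint(\lambda)$ yields the bijectivity of $G^K_\mu$ and $G_\lambda$ together with properties (2) and (3), while (4) is manifest from the definitions.

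The main obstacle I foresee is verifying that $\pi^K_\mu$ and $\pi_\lambda$ carry $\cB(\prec)$ onto an (signed) $\bbA$-basis of the relevant integral form, i.e.\ that the intersection of $\ker \pi^K_\mu$ (resp.\ $\ker \pi_\lambda$) with $\Umint$ is spanned as an $\bbA$-module by precisely those $b \in \cB(\prec)$ with $\pi^K_\mu(b) = 0$ (resp.\ $\pi_\lambda(b) = 0$). For the Kac module, Lemma~\ref{lem:Kac facts}(2) reduces this to $I_\mu \cap \Umint = \bigoplus_{b : \pi^K_\mu(b) = 0} \bbA\, b$, which follows from a weight-by-weight rank count using the crystal bijection of Theorem~\ref{thm:KacCompat}. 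For $V(\lambda)$ the analogous statement for $\ker \pi^{K,V}_\lambda$ rests on the polynomial crystal structure of Theorem~\ref{thm:BKK} combined with $\pi^{K,V}_\lambda(L^K(\lambda)) = L(\lambda)$ from Proposition~\ref{prop:KwResults}. Once these module-level compatibilities are in place, the theorem follows.
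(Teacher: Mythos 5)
Your approach matches the paper's at the structural level: both deduce the globalization directly from the existence of a $\prec$-canonical basis $\cB(\prec)$ (Corollary~\ref{cor:prec cb}) lying in $\Umint\cap\Lint(\infty)\cap\bar{\Lint(\infty)}$ and reducing, up to sign, to $B(\infty)$ modulo $q$. The difference is that the paper simply invokes \cite[Lemma~7.1.1(ii)]{Ka} for the triple-lattice isomorphism, whereas you unwind the content of that lemma by hand (the $\bbA\to\Z[q]\to\Z$ coefficient descent), which is a reasonable, slightly more self-contained exposition. For the modules the paper uses the same observation you identify and closes the argument with Nakayama's lemma: the nonzero images of $\cB(\prec)$ under $\pi^K_\mu$ (resp.\ $\pi_\lambda$) reduce mod $q$ to the crystal basis, hence span the lattice by Nakayama and form an $\bbA$-basis of the integral module by rank count.

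Two small points to tighten. First, in the coefficient descent you write that $\bar v=v$ forces $c_b=\bar c_b$; but $v\in\Umint\cap\Lint(\infty)\cap\bar{\Lint(\infty)}$ need not be bar-invariant. The correct step is: since $v\in\bar{\Lint(\infty)}$, $\bar v=\sum_b\bar c_b\,\overline{G(b)}=\sum_b\bar c_b\,G(b)\in\Lint(\infty)$, so the same $\Z[q]$-argument gives $\bar c_b\in\Z[q]$, i.e.\ $c_b\in\Z[q^{-1}]$, whence $c_b\in\Z[q]\cap\Z[q^{-1}]=\Z$. Second, the claim that specialization picks exactly one representative of each $\{b,-b\}$ does not follow from injectivity plus equal cardinalities alone; you also need that the image of $\cB(\prec)_{-\xi}$ is $\Q$-linearly independent in $L(\infty)_{-\xi}/qL(\infty)_{-\xi}$ (otherwise the image could hit both $b$ and $-b$ while missing some $b'$). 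This follows from almost-orthonormality: if both $b$ and $-b$ were hit, say by $b_1\ne b_2\in\cB(\prec)$, then $b_1+b_2\in qL(\infty)$ would give $(b_1+b_2,b_1+b_2)\in q^2\cA$, while almost-orthonormality forces $(b_1+b_2,b_1+b_2)\in 2+q\cA_\Z$, a contradiction. With these two fixes your proof is sound and equivalent to the paper's.
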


\begin{proof}
Observe that for any total order $\prec$ on $I$, the $\prec$-canonical basis
generates a $\Z$-submodule $Z=\Z\cB(\prec)_{-\xi}$ of $\Umint_{-\xi}$ 
such that $\bbA Z=\Umint_{-\xi}$. 
Moreover, by Corollary \ref{cor:prec cb} and because 
$\cB(\prec)_{-\xi}$ is a bar-invariant basis of $\Umint_{-\xi}$,
clearly 
$Z\subset \Umint\cap \Lint(\infty)\cap \bar{\Lint(\infty)}$
and we see that the projections $Z\rightarrow \Lint(\infty)_{-\xi}/q\Lint(\infty)_{-\xi}$
and $Z\rightarrow \bar{\Lint(\infty)_{-\xi}}/q^{-1}\bar{\Lint(\infty)_{-\xi}}$
are injective. Then applying \cite[Lemma 7.1.1 (ii)]{Ka},
we obtain the isomorphism $G$. An entirely similar argument
applies to the modules, using instead the fact that the nonzero images of $\cB(\prec)$ under the
projection is
equal to the crystal basis modulo $q$, up to possible signs, and hence lifts to a basis
of the lattice, and thus the module, by Nakayama's lemma. The statements (1)-(5) follow
from the same arguments as in \cite[Section 7]{Ka}.
\end{proof}

\begin{rmk}
We remark that the result can be proven
independently of the results of \cite{CHW3} by mimicking the proof in \cite[Section 7]{Ka}.
In particular, we can also show directly that if $G(b)\equiv_q \tf_i^n b$, then $G(b)\in F_i^{(n)}\Um$.
However, this will not be necessary here (and indeed, will follow from the PBW realization
of the crystal and canonical bases).
\end{rmk}

\section{Braid Operators}\label{sec: braid}

We now aim to show that the canonical basis arising from the crystal structure on $\Um$
is compatible with the PBW bases of \cite{CHW3}. To do this, we need to reinterpret
the PBW bases using analogues of Lusztig's braid operators. As mentioned in
the introduction, this requires some shifting of perspective, as now these are
not necessarily automorphisms but rather a network of isomorphisms between quantum enveloping algebras
associated to different choices of simple roots. 

\subsection{Perspective}\label{subsec:perspective}

To clarify the definitions in the following sections, it helps to view Lusztig's braid operators 
in a particular way (which no doubt is well known to experts, but nevertheless we find to be
obscured in the standard definition of these maps).
To be concrete, let's take the example of $\UU_q(\gl(3))$. Usually, we define the
this algebra with generators $E_1,E_2,F_1,F_2, q^h$ for $h\in P^\vee$
subject to the usual relations, which one might call the {\em agnostic} 
presentation as we make no definitive choice of simple roots. 
We then define the braid operator $T_2$ as an automorphism lifting
the action of the reflection $s_2=s_{\alpha_2}$ on the weight data;
that is, it is an automorphism which sends the $\mu$-weight space
to the $s_2(\mu)$ weight space, and the $q^h$ to $q^{s_2(h)}$.
Explictly, we have e.g. 
\begin{equation}\label{eq:gl3braid}
T_2(E_2)=-K_2^{-1}F_2,\quad  T_2(E_1)=E_1E_2-q^{-1}E_2E_1,\quad T_1(q^{\alpha_2^\vee})=q^{-\alpha_2^\vee},\quad T_1(q^{\alpha_1^\vee})=q^{\alpha_1^\vee+\alpha_2^\vee}.
\end{equation} 
For the sake of argument
let us consider two formally different {\em gnostic} versions of this algebra
which commit to a choice of simple roots:
\begin{enumerate}
\item $\UU_1$ is the $\Qq$ algebra on generators 
\[X_{\epsp_1-\epsp_2},\quad X_{\epsp_2-\epsp_1},\quad  
X_{\epsp_2-\epsp_3},\quad  X_{\epsp_3-\epsp_2},\quad 
q^h \text{ for }h\in P^\vee,\]
satisfying the relations of $\UU_q(\gl(3))$ with the replacements 
$E_1\leftrightarrow X_{\epsp_1-\epsp_2}$, $E_2\leftrightarrow X_{\epsp_2-\epsp_3}$
$F_1\leftrightarrow X_{\epsp_2-\epsp_1}$, $F_2\leftrightarrow X_{\epsp_3-\epsp_2}$.
In particular, note that the weights satisfy $|X_{\alpha}|=\alpha$.
\item $\UU_2$ is the $\Qq$ algebra on generators 
\[X_{\epsp_1-\epsp_3},\quad X_{\epsp_3-\epsp_1},\quad  
X_{\epsp_3-\epsp_2},\quad  X_{\epsp_2-\epsp_3},\quad 
q^h \text{ for }h\in P^\vee,\]
satisfying the relations of $\UU_q(\gl(3))$ with the replacements 
$E_1\leftrightarrow X_{\epsp_1-\epsp_3}$, $E_2\leftrightarrow X_{\epsp_3-\epsp_2}$
$F_1\leftrightarrow X_{\epsp_3-\epsp_1}$, $F_2\leftrightarrow X_{\epsp_2-\epsp_3}$.
Again, $|X_{\alpha}|=\alpha$.
\end{enumerate}
Then we can think of $T_2$ as a weight-preserving isomorphism $\UU_1\rightarrow\UU_2$
which is the identity on $q^h$ for $h\in P^\vee$: translating \eqref{eq:gl3braid} 
into this notation, we see that
\[T_2(X_{\epsp_2-\epsp_3})=-q^{\epsp_2^\vee-\epsp_3^\vee}X_{\epsp_2-\epsp_3},\quad
T_2(X_{\epsp_1-\epsp_2})=X_{\epsp_1-\epsp_3}X_{\epsp_3-\epsp_2}-q^{-1}X_{\epsp_3-\epsp_2}
X_{\epsp_1-\epsp_3},\] 
\[
T_2(q^{\epsp_2^\vee-\epsp_3^\vee})=q^{-(\epsp_3^\vee-\epsp_2^\vee)}
=q^{\epsp_2^\vee-\epsp_3^\vee},\quad 
T_1(q^{\epsp_1^\vee-\epsp_2^\vee})=q^{\epsp_1^\vee-\epsp_3^\vee}q^{\epsp_3^\vee-\epsp_2^\vee}
=q^{\epsp_1^\vee-\epsp_2^\vee}.
\]

Now in the case of $\UU_q(\gl(m|1))$, the situation is similar. The crucial
difference is that we cannot formally identify different agnostic Chevalley-Serre-Yamane 
presentations of the algebra like we can in the classical case: indeed,
interpreting \eqref{eq:gl3braid} in terms of $\UU_q(\gl(2|1))$,
an obvious problem  is that $E_1^2$ is nonzero and yet
$(E_1E_2-q^{-1}E_2E_1)^2=0$. Instead, we must treat $T_2$ in this example
(and more generally, any odd braid operator) as an isomorphism
to a different presentation of the same quantum enveloping algebra: 
one with generators $E_1,E_2$ satisfying $E_1^2=E_2^2=0$,
coming from the choice of simple roots where both are isotropic.

In the following, we will make several simplifications for the sake of controlling
notation and to make the construction as similar to Lusztig's as possible.
To that end, we will work with the quantum enveloping algebras associated to $\fsl(m|1)$ 
rather than $\gl(m|1)$ and we will stick
to the agnostic presentations; this will allow us to avoid some tedious notation.
In particular, note that the different quantum enveloping algebras will have {\bf different} sets
of simple roots, despite the common labeling of generators, 
so it may be helpful to think of them as being different algebras rather
than different presentations of the same algebra.

\subsection{The Cartan data orbit}

To that end, we say an $I\times I$ matrix $A$ is a generalized Cartan matrix 
(or {\em GCM} for short) of type 
$\gl(m|1)$ if $A=[A_{ij}]_{i,j\in I}$ satisfies
\begin{enumerate}
\item $A_{ii}\in\set{0,2}$ for all $i\in I$;
\item There exists $i\in \set{0,\ldots, m}$ such that $A_{ii}=A_{i+1,i+1}=0$
	and $A_{jj}=2$ for all $j\in I$ with $j\neq i,i+1$. (Here, we include $a_{00}=0$ and $a_{m+1,m+1}=0$
	for convenience.)
\item $A_{ij}=A_{ji}=0$ for $j\neq i\pm 1$;
\item If $i,i+1\in I$, then 
$A_{i,i+1}=A_{i+1,i}=-1$ if $a_{i,i}=2$ or $A_{i+1,i+1}=2$; otherwise, $A_{i,i+1}=A_{i+1,i}=1$.
\end{enumerate}
Note that any such matrix $A$ is associated to a positive system of roots $\Phi^+(A)\subset \Phi$
and choice of simple roots $\Pi(A)\subset \Phi^+(A)$ in the sense that, writing $\Pi(A)=\set{\alpha_i^A\mid i\in I}$,
$(\alpha_i^A,\alpha_j^A)=A_{ij}$.
Associated to $A\in \cA$ is a parity function $p_A:I\rightarrow \Z/2\Z$, explicitly given
by $p_A(i)=p(\alpha_i^A)=1-A_{ii}/2$. In particular, we define
$I_{A,1}=\set{i\in I\mid A_{ii}=0}$ to be the odd roots relative to $A$.
Finally, observe that $I_{A,1}$ completely determines the matrix; indeed,
for $i,j,k\in I$ with $j=i\pm 1$ and $k\neq i,i\pm 1$, we have
\begin{equation}\label{eq:parity and matrix entries}
X_{ii}=1+(-1)^{p(i)},\quad X_{i}=(-1)^{1-p(i)p(j)},\quad X_{ik}=0.
\end{equation}

It will be convenient in the following to introduce the following shorthand notation.
We will say $i,j\in I$ are {\em connected} if $i=j\pm 1$,
and write $i\sim j$. Likewise, we say {\em not connected} 
if $i\neq j, j\pm 1$, and write $i\nsim j$. Note that given $i,j\in I$, we either
have $i\sim j$, $i\nsim j$, or $i=j$.

Let $\cA$ be the collection of GCMs of type $\gl(m|1)$.
We can write $\cA=\set{A^{t}\mid 0\leq t\leq m}$ where $A^t$ is the unique
GCM with $a_{tt}=a_{t+1,t+1}=0$; in particular, note that $A^m$ is the GCM
associated to the standard root system $\Pi$ defined in Section \ref{subsec:root data}.
There is an action of $F_2(I)$ on $\cA$ given by $i\cdot A^t=A^t$ for $i\neq t,t+1$,
$t\cdot A^t=A^{t-1}$, $(t+1)\cdot A^t=A^{t+1}$.
Note that this action satisfies 
\[i\cdot j\cdot A^t=j\cdot i\cdot A^t\]
\[i\cdot(i+1)\cdot i\cdot A^t=(i+1)\cdot i\cdot (i+1)\cdot A^t\]
for all $j\neq i,i+1$ and $0\leq t\leq m$.
In particular, the $F_2(I)$-action factors through $S_{m+1}$, the symmetric
group on $m+1$ letters.

\begin{example}
In the case $m=4$, then $\cA$ with its $S_5$-action is
described in Figure \ref{fig:gl(4|1) and S_5 action}.
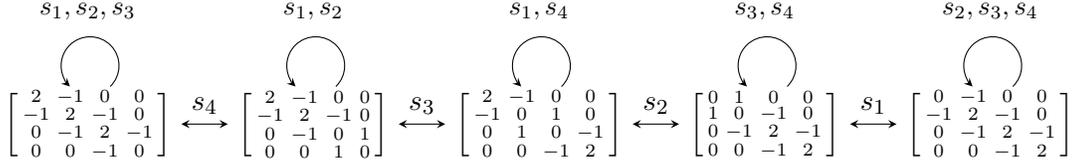
\begin{figure}
\begin{tikzpicture}
\draw (0,0) node (1) {\small 
	$\left[\begin{smallmatrix}2 &-1&0&0\\-1&2&-1&0\\0&-1&2&-1\\0&0&-1&0\end{smallmatrix}\right]$};
\draw (3,0) node (2) {\small 
	$\left[\begin{smallmatrix}2 &-1&0&0\\-1&2&-1&0\\0&-1&0&1\\0&0&1&0\end{smallmatrix}\right]$};
\draw (6,0) node (3) {\small 
	$\left[\begin{smallmatrix}2 &-1&0&0\\-1&0&1&0\\0&1&0&-1\\0&0&-1&2\end{smallmatrix}\right]$};
\draw (9,0) node (4) {\small 
	$\left[\begin{smallmatrix}0 &1&0&0\\1&0&-1&0\\0&-1&2&-1\\0&0&-1&2\end{smallmatrix}\right]$};
\draw (12,0) node (5) {\small 
	$\left[\begin{smallmatrix}0 &-1&0&0\\-1&2&-1&0\\0&-1&2&-1\\0&0&-1&2\end{smallmatrix}\right]$};
\draw[stealth-stealth] (1.east) -- (2.west) node[midway, above] {$s_4$};
\draw[stealth-stealth] (2.east) -- (3.west) node[midway, above] {$s_3$};
\draw[stealth-stealth] (3.east) -- (4.west) node[midway, above] {$s_2$};
\draw[stealth-stealth] (4.east) -- (5.west) node[midway, above] {$s_1$};
\draw[-stealth] (.3,.5) arc(-46:226:1em); 
\draw (0,1.5) node{\small $s_1,s_2,s_3$};
\draw[-stealth] (3.3,.5) arc(-46:226:1em); 
\draw (3,1.5) node{\small $s_1,s_2$};
\draw[-stealth] (6.3,.5) arc(-46:226:1em); 
\draw (6,1.5) node{\small $s_1,s_4$};
\draw[-stealth] (9.3,.5) arc(-46:226:1em); 
\draw (9,1.5) node{\small $s_3,s_4$};
\draw[-stealth] (12.3,.5) arc(-46:226:1em); 
\draw (12,1.5) node{\small $s_2,s_3,s_4$};
\end{tikzpicture}
\caption{The set $\cA$ for $\gl(4|1)$, with arrows indicating
	the nontrivial actions of the simple reflections of $S_{5}$. }\label{fig:gl(4|1) and S_5 action}	
\end{figure}

\end{example}

\begin{rmk}\label{rem:symm group action}
Essentially, the $S_{m+1}$-action on $\cA$ corresponds to the $S_{m+1}$-action
on $\Phi$ induced by $\sigma\cdot \epsilon_j=\epsilon_{\sigma(j)}$
as in the $\gl(m+1)$ case. The crucial difference between the $\gl(m+1)$ and $\gl(m|1)$ cases is 
that the permutations moving $\epsp_m$ change the GCM:
indeed, for the $m=2$ case, the transposition $(2,3)$ applied to the standard
simple roots $\epsp_1-\epsp_2,\epsp_2-\epsp_3$ yields the simple roots
$\epsp_1-\epsp_3,\epsp_3-\epsp_2$, both of which are isotropic!
\end{rmk}

Note that we can easily detect how this action changes parities, which
in turn allows us to determine the matrix entries.
Indeed, let $i,j,k\in I$ with $i$ connected to $j$, and $i$ not connected to $k$.
Then for any $X\in\cA$,
\begin{equation}\label{eq:parity and involutions}
p_{i\cdot X}(i)=p_X(i),\quad p_{i\cdot X}(j)=p_X(j)+p_X(i),\quad p_{i\cdot X}(k)=p_X(k).
\end{equation}
We also observe the following fact.

\begin{lem}
Recall that $A^m$ is the GCM with $A^m_{ii}=2-2\delta_{i,m}$.
Let $\sigma\in S_{m+1}$. Then $\sigma\cdot A^m=A^t$ for $t<m$ if and only if
$l(s_{t+1}\sigma)<l(\sigma)$.
\end{lem}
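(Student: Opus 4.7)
I would approach this by analyzing the action through the coset space $S_{m+1}/W_m$, where $W_m := \langle s_1,\ldots,s_{m-1}\rangle \cong S_m$ is the stabilizer of the letter $m+1$. The key observation is that $W_m$ fixes $A^m$: for $i<m$ the condition $i \neq m, m+1$ holds, so $s_i\cdot A^m = A^m$ by the defining rule of the action. Consequently $\sigma\cdot A^m$ depends only on the left coset $\sigma W_m$, and these cosets are in bijection with $\{1,\ldots,m+1\}$ via $\sigma W_m \mapsto \sigma(m+1)$.

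Next, I would compute the action on minimal-length coset representatives. The minimal representative with $\sigma(m+1) = j$ is $\sigma_j := s_j s_{j+1}\cdots s_m$ (with $\sigma_{m+1}=e$), of length $m+1-j$. A telescoping argument applying $s_k\cdot A^k = A^{k-1}$ at each stage gives $\sigma_j\cdot A^m = A^{j-1}$. Combining this with the previous step shows that $\sigma\cdot A^m = A^t$ (for $t<m$) if and only if $\sigma(m+1) = t+1$.

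Finally, I would translate this coset condition into the length condition using the standard symmetric-group identity $l(s_k\sigma)<l(\sigma)\iff \sigma^{-1}(k) > \sigma^{-1}(k+1)$. The forward direction is then immediate: if $\sigma(m+1)=t+1$, then $\sigma^{-1}(t+1) = m+1$ is the maximum possible value, so $\sigma^{-1}(t+1)>\sigma^{-1}(t+2)$ automatically, yielding $l(s_{t+1}\sigma)<l(\sigma)$. For the converse, I would invoke the unique parabolic factorization $\sigma = \sigma^J \sigma_J$ with $\sigma^J$ the minimal coset representative of $\sigma W_m$, $\sigma_J \in W_m$, and $l(\sigma) = l(\sigma^J)+l(\sigma_J)$; direct inspection of the one-line notation of $\sigma_j$ shows that $s_k$ is a left descent of $\sigma_j$ precisely when $k=j$, so a left descent of $\sigma$ at $s_{t+1}$ forces $\sigma^J = \sigma_{t+1}$, whence $\sigma(m+1) = t+1$. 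The main obstacle is this last step: one must carefully propagate the left descent of $\sigma$ through the parabolic factorization, ensuring it is detected by $\sigma^J$ rather than being absorbed into the $W_m$-part, and this is where the parabolic length-additivity $l(\sigma)=l(\sigma^J)+l(\sigma_J)$ is essential.
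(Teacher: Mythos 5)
Your forward (``only if'') argument is correct and is essentially the same as the paper's: both identify the stabilizer of $A^m$ as the parabolic subgroup $W_m=\langle s_1,\ldots,s_{m-1}\rangle$, use the minimal coset representatives $\sigma_j=s_js_{j+1}\cdots s_m$, and read off the descent; the paper packages this via the $\epsilon$-$\delta$ sequence picture, but the content coincides.

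The converse (``if'') direction, however, has a genuine gap, precisely at the step you flagged as the obstacle. The assertion ``a left descent of $\sigma$ at $s_{t+1}$ forces $\sigma^J=\sigma_{t+1}$'' needs the left descent of $\sigma$ to be seen by $\sigma^J$, and the parabolic length-additivity $l(\sigma)=l(\sigma^J)+l(\sigma_J)$ does \emph{not} give this. What can happen is that $s_{t+1}\sigma^J$ is longer than $\sigma^J$ but lies in the same coset, i.e.\ $s_{t+1}\sigma^J=\sigma^J s'$ for some $s'\in\{s_1,\ldots,s_{m-1}\}$; then $s_{t+1}$ may be a left descent of $\sigma$ (the descent being absorbed as the left descent $s'$ of $\sigma_J$) without being a left descent of $\sigma^J$. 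Concretely, take $m=2$ and $\sigma=s_1s_2s_1$, the longest element of $S_3$: here $\sigma^J=s_1s_2=\sigma_1$ and $\sigma_J=s_1$. Both $s_1$ and $s_2$ are left descents of $\sigma$ (indeed $s_2\sigma=s_1s_2$ has length $2<3$), yet $\sigma^J=\sigma_1\neq\sigma_2$, and one checks directly that $\sigma\cdot A^2=A^0\neq A^1$. So the implication $l(s_2\sigma)<l(\sigma)\Rightarrow\sigma\cdot A^2=A^1$ fails.

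This is not a repairable gap: the ``if'' direction of the lemma is false as stated, as the same example shows. On a careful reading, the paper's own proof only establishes the forward implication (its final sentence asserts ``if $\sigma\cdot A^m=A^t$, then $l(s_{t+1}\sigma)<l(\sigma)$'' with no converse), and the subsequent Corollary \ref{cor:red words and odd roots} only needs that direction. So the correct formulation is the one-directional statement, which your forward argument already establishes; the converse should simply be dropped.
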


\begin{proof}
We can restate the problem as follows. To each $A^t$, we associate it's $\epsilon$-$\delta$ sequence (cf. \cite[Section 1.3.2]{CW});
that is, a word $w_t=a_1\ldots a_{m+1}$ in the letters $\set{\epsilon,\delta}$ such that
$a_{t+1}=\epsp$ and $a_s=\delta$ for all $s\neq t+1$.
Let $\mathcal{ED}$ be the set of these sequences. 
The symmetric group $S_{m+1}$ acts naturally on words in $\mathcal{ED}$ via
$\sigma\cdot a_1\ldots a_{m+1}=a_{\sigma(1)}\ldots a_{\sigma(m+1)}$;
and note that for each $w\in \mathcal{ED}$, $\sigma\cdot w\in \mathcal ED$.
In particular, we see that as $S_{m+1}$-sets, $\mathcal{ED}$ is isomorphic to $\cA$
via $w_t\leftrightarrow A^{t}$.

On the other hand, note that any $\sigma\in S_{m+1}$ can be written
in the form $\sigma=s_is_{i+1}\ldots s_m\varsigma$ where $\varsigma\in S_m$
and $s_i=(i,i+1)$ are the simple reflections.
In particular, since $S_m={\rm Stab}_{S_{m+1}}(w_m)$, observe that $\sigma(w_{m})=w_i$ with $i<m$ if and only if it has a reduced
expression with $s_{i+1}$ as a left factor;
in other words, if $\sigma(w_{m})=w_i$, then $l(s_{i+1}\sigma)<l(\sigma)$.

Translating back to $\cA$, we see that if $\sigma\cdot A^m=A^t$, then $l(s_{t+1}\sigma)<l(\sigma)$.
\end{proof}

\begin{cor}\label{cor:red words and odd roots}
Let $\sigma\in S_{m+1}$
such that $l(\sigma s_{i+1})>l(\sigma)$ for some $i\in I$.
Set $X=\sigma^{-1}\cdot A^m$. Then $I_{X,1}\neq\set{i,i+1}$.
\end{cor}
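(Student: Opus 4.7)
The plan is to reduce this corollary to a direct application of the preceding lemma by exploiting the symmetry of the length function under inversion. First, I would identify precisely which element of $\cA$ corresponds to the condition $I_{X,1}=\{i,i+1\}$. From the description of $\cA$, the matrix $A^t$ has exactly $\{t,t+1\}\cap I$ as its set of odd indices; in particular, $I_{A^t,1}=\{i,i+1\}$ forces $t=i$ with $1\leq i\leq m-1$. Thus the statement $I_{X,1}\neq \{i,i+1\}$ is equivalent to $\sigma^{-1}\cdot A^m\neq A^i$, and it suffices to prove: if $\sigma^{-1}\cdot A^m=A^i$ for some $i<m$, then $l(\sigma s_{i+1})<l(\sigma)$.

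Next, I would apply the previous lemma with $\sigma^{-1}$ in place of $\sigma$: if $\sigma^{-1}\cdot A^m=A^i$ with $i<m$, that lemma immediately gives $l(s_{i+1}\sigma^{-1})<l(\sigma^{-1})$. Using the standard facts $l(\sigma^{-1})=l(\sigma)$ and $l(s_{i+1}\sigma^{-1})=l((\sigma s_{i+1})^{-1})=l(\sigma s_{i+1})$, this translates to $l(\sigma s_{i+1})<l(\sigma)$, as required.

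Taking the contrapositive then yields the corollary: if $l(\sigma s_{i+1})>l(\sigma)$, then $\sigma^{-1}\cdot A^m$ cannot equal $A^i$, so $I_{X,1}\neq\{i,i+1\}$. There is essentially no obstacle here beyond careful bookkeeping; the content of the statement is entirely inherited from the previous lemma through the involution $\sigma\mapsto\sigma^{-1}$, which exchanges left and right descents.
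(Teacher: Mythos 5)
Your proof is correct and follows essentially the same approach as the paper, which simply says ``By the previous lemma, $X\neq A^i$, hence the result follows by definition.'' You spell out the implicit step the paper leaves to the reader, namely that applying the lemma to $\sigma^{-1}$ and using $l(s_{i+1}\sigma^{-1})=l(\sigma s_{i+1})$ and $l(\sigma^{-1})=l(\sigma)$ converts the left-descent condition in the lemma into the right-descent hypothesis of the corollary.
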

\begin{proof}
  By the previous lemma, $X\neq A^{i}$, hence the result follows by definition.
\end{proof}
Lastly, it will be helpful in several places to note that if $i$ and $j$ are connected, then
\begin{equation}\label{eq:qint to parity}
\frac{q^{-A_{ij}}-q^{A_{ij}}}{q-q^{-1}}=(-1)^{p_A(i)p_A(j)}
\end{equation}

\subsection{A family of quantum enveloping algebras}
Now for $X\in \cA$, we will now associate a quantum enveloping algebra $\UU(X)$
with generators $E_i=E_{X,i}$, $F_i=F_{X,i}$, and $K_i^{\pm 1}=K_{X,i}^{\pm 1}$ for $i\in I$,
and have parity $p=p_X$ given by $p(E_i)=p(F_i)=p_X(i)$ and $p(K_i)=0$.
(For ease of reading, we will drop the $X$ subscript when the $X$ is clear from context.)
These generators satisfy the relations
\begin{align} \label{eq:UX Ks}
&K_{i}K_{j}=K_{j}K_{i},\qquad K_{i}K_{i}^{-1}=1
&&\text{ for } i,j\in I;\\
\label{eq:UX weights}
&K_{i}E_{j}K_{i}^{-1}=q^{X_{ij}}E_{j},\quad
K_{i}F_{j}K_{i}^{-1}=q^{-X_{ij}}F_{j}
&&\text{ for } i,j\in I;\\
\label{eq:UX commutator}
&E_{i}F_{j}-(-1)^{p_X(i)p_X(j)}F_{j}E_{i}=\delta_{ij}\frac{K_{i}-K_{i}^{-1}}{q-q^{-1}},
&&\text{ for } i,j\in I\\
\label{eq:UX nilpotent}
&E_{i}^2= F_{i}^2=0
&&\text{ if } i\in I_1;\\
\label{eq:UX disc serre}
&E_{i}E_{j}=E_{j}E_{i},\quad F_{i}F_{j}=F_{j}F_{i}
&& \text{ if } i\nsim j\in I;\\
\label{eq:UX even serre E}
&E_{i}^2E_{j}-(q+q^{-1})E_{i}E_{j}E_{i}+E_{j}E_{i}^2=0
&& \text{ if } i\sim j\in I \text{ and } p(i)=0;\\
\label{eq:UX even serre F}
&F_{i}^2F_{j}-(q+q^{-1})F_{i}F_{j}F_{i}+F_{j}F_{i}^2=0
&& \text{ if } i\sim j\in I \text{ and } p(i)=0;\\
\label{eq:UX odd serre E}
&\begin{aligned}
[2]E_{i}E_{j}E_{k}E_{i}&=(-1)^{p(j)}(E_{i}E_{k}E_{i}E_{j}+E_{j}E_{i}E_{k}E_{i})\\
&\hspace{1em}+(-1)^{p(k)}(E_{k}E_{i}E_{j}E_{i}+E_{i}E_{j}E_{i}E_{k})
\end{aligned}
&& \begin{aligned}&\text{ if } i,j,k\in I\text{ such that } j\sim i,\\
&\ i\sim k, j\neq k, \text{ and }p(i)=1;
\end{aligned}\\
\label{eq:UX odd serre F}
&\begin{aligned}
[2]F_{i}F_{j}F_{k}F_{i}&=(-1)^{p(j)}(F_{i}F_{k}F_{i}F_{j}+F_{j}E_{i}F_{k}F_{i})\\
&\hspace{1em}+(-1)^{p(k)}(F_{k}F_{i}F_{j}F_{i}+F_{i}F_{j}F_{i}F_{k})
\end{aligned}
&& \begin{aligned}&\text{ if } i,j,k\in I\text{ such that } j\sim i,\\
&\ i\sim k, j\neq k, \text{ and }p(i)=1.
\end{aligned}
\end{align}

Note that $\UU(X)$ has a natural $Q$-grading given by $|E_i|=\alpha_i^X=-|F_i|$, $|K_i|=0$.
We note that $(\alpha_i^X,\alpha_i^Y)=X_{ij}$, so $K_i u K_i^{-1}=q^{(\alpha_i^X,|u|)_X} u$. 

The algebra $\UU(X)$ has several standard properties of quantum enveloping algebras.
In particular, it has a triangular decomposition 
\[\UU(X)=\Um(X)\otimes \Uz(X)\otimes \Up(X)=\Up(X)\otimes \Uz(X)\otimes \Um(X),\]
where $\Um(X)$ (resp. $\Uz(X)$, $\Up(X)$) is the subalgebra generated by the $F_i$ ($K_i$, $E_i$) for $i\in I$.
We also define $\UU^{\geq 0}(X)=\Uz(X)\Up(X)$ and $\UU^{\leq 0}=\Uz(X)\Um(X)$. Moreover, $\Um(X)$
has quantum differentials $e_i=e_{X,i}, \bar e_i=\bar e_{X,i}=\barmap\circ e_i\circ \barmap:\Um(X)\rightarrow \Um(X)$ 
for $i\in I$ satisfying
\[e_i(F_j)=\delta_{ij},\qquad e_i(xy)=e_i(x)y+(-1)^{p_X(i)p_X(x)}q^{(\alpha_i^X,|x|)}xe_i(y),\]
\[E_{i}x-(-1)^{p(i)p(x)}xE_i=\frac{K_i\bar e_i(x)-K_i^{-1}e_i(X)}{q-q^{-1}}\]

We also observe that there are a number of important automorphisms of $\UU(X)$.
Indeed, we define maps $\tau,\barmap$ satisfying
\[\tau(E_i)=E_i,\quad \tau(F_i)=F_i,\quad \tau(K_i)=(-1)^{p(i)}K_i^{-1},\quad \tau(xy)=\tau(y)\tau(x).\]
\[\rho(E_i)=F_i,\quad \rho(F_i)=E_i,\quad \rho(K_i)=K_i,\quad \rho(xy)=\rho(y)\rho(x).\]
\[\bar{E_i}=E_i,\quad \bar{F_i}=F_i,\quad \bar{K_i}=K_i^{-1},\quad \bar q=q^{-1}\quad \bar{xy}=\bar x\ \bar y.\]
Equipped with this family of algebras, we now define analogues of Lusztig's braid operators.

\begin{thm}\label{thm:braids}
	Let $i\in I$ and $X\in\cA$, $e=\pm 1$, and set $Y=i\cdot X$.
	There exists a $\Qq$-linear algebra isomorphisms 
	$T_{i,e}', T''_{i,e}: \UU(X)\rightarrow \UU(Y)$ satisfying
	\[T_{i,e}'(E_{X,j})=\begin{cases}-(-1)^{p_Y(i)}K_{Y,i}^{-e}F_{Y,i}&\text{ if }j=i;\\
	E_{Y,j}E_{Y,i}-(-1)^{p_Y(i)p_Y(j)}q^{eY_{ij}}E_{Y,i}E_{Y,j}&\text{ if }j \sim i;\\
	E_{Y,j}&\text{otherwise;}
	\end{cases}\]
	\[T_{i,e}'(F_{X,j})=\begin{cases}-(-1)^{p_Y(i)}E_{Y,i}K^e_{Y,i}&\text{ if }j=i;\\
	F_{Y,i}F_{Y,j}-(-1)^{p_Y(i)p_Y(j)}q^{-eY_{ij}} F_{Y,j}F_{Y,i}&\text{ if }j \sim i;\\
	F_{Y,j}&\text{otherwise;}
	\end{cases}\]
	\[T_{i,e}'(K_{X,j})=\begin{cases}(-1)^{p_Y(i)}K_{Y,i}^{-1}&\text{ if }j=i;\\
	(-1)^{p_Y(i)p_Y(j)}K_{Y,i}K_{Y,j}&\text{ if }j \sim i;\\
	K_{Y,j}&\text{otherwise;}
	\end{cases}\]
	and
	\[T_{i,e}''(E_{X,j})=
	\begin{cases}-F_{Y,i}K_{Y,i}^{e}&\text{ if }j=i;\\
	E_{Y,i}E_{Y,j}-(-1)^{p_Y(i)p_Y(j)}q^{eY_{ij}}E_{Y,j}E_{Y,i}&\text{ if }j  \sim i;\\
	E_{Y,j}&\text{otherwise;}
	\end{cases}\]
	\[T_{i,e}''(F_{X,j})=
	\begin{cases}-K^{-e}_{Y,i}E_{Y,i}&\text{ if }j=i;\\
	F_{Y,j}F_{Y,i}-(-1)^{p_Y(i)p_Y(j)}q^{-eY_{ij}} F_{Y,i}F_{Y,j}&\text{ if }j \sim i;\\
	F_{Y,j}&\text{otherwise;}
	\end{cases}\]
	\[T_{i,e}''(K_{X,j})=
	\begin{cases}(-1)^{p_Y(i)}K_{Y,i}^{-1}&\text{ if }j=i;\\
	(-1)^{p_Y(i)p_Y(j)}K_{Y,i}K_{Y,j}&\text{ if }j \sim i;\\
	K_{Y,j}&\text{otherwise;}
	\end{cases}\]
\end{thm}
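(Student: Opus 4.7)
The plan is to verify, for each of $T'_{i,e}$ and $T''_{i,e}$, that the assignment on generators extends to a well-defined algebra homomorphism $\UU(X)\to \UU(Y)$ by checking all the defining relations \eqref{eq:UX Ks}–\eqref{eq:UX odd serre F} of $\UU(X)$ are satisfied by the images, and then to exhibit an explicit inverse. Because all relations involve at most three generators, the verification reduces to a case analysis indexed by the relative position of $i$ to the other indices appearing ($j=i$, $j\sim i$, or $j\nsim i$) and by the parities $p_Y(i)$ and $p_Y(j)$. I will treat $T'_{i,e}$ in detail; the argument for $T''_{i,e}$ is parallel (in fact $T''_{i,e}=\rho\circ T'_{i,e}\circ\rho$ after twisting by $\tau$ or $\barmap$, which gives a free sanity check). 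Throughout I will use \eqref{eq:parity and involutions} to convert $p_Y$-parities to $p_X$-parities, and \eqref{eq:qint to parity} to handle the $(-1)^{p_Y(i)p_Y(j)}$ signs that appear when $j\sim i$.

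The routine cases are: the Cartan relations \eqref{eq:UX Ks}–\eqref{eq:UX weights}, which reduce to an elementary computation on exponents using the bilinear identity $(\alpha_i^X,\alpha_j^X)=X_{ij}$ and the identity $Y_{ij}+Y_{ii}\delta_{j\sim i}\cdot(\text{sign})=X_{ij}$ coming from the definition of the $S_{m+1}$-action on $\cA$; the nilpotency relation \eqref{eq:UX nilpotent} at indices $\neq i$ which is preserved trivially; and the disconnected Serre relation \eqref{eq:UX disc serre}, which needs checking only when one of $j,k$ equals $i$ (and then follows from a direct two-term commutation). The commutator relation \eqref{eq:UX commutator} is the first substantive check: for $j=i$ one computes $T'_{i,e}(E_iF_i-(-1)^{p_X(i)}F_iE_i)$ using $T'_{i,e}(E_i)=-(-1)^{p_Y(i)}K_{Y,i}^{-e}F_{Y,i}$ and $T'_{i,e}(F_i)=-(-1)^{p_Y(i)}E_{Y,i}K_{Y,i}^e$, reducing everything via \eqref{eq:UX weights}–\eqref{eq:UX commutator} in $\UU(Y)$; for $j\sim i$ one expands $T'_{i,e}(E_j)T'_{i,e}(F_j)$ and cancels using the $\UU(Y)$-commutator at the pair $(i,j)$; and for $j\nsim i$ it is immediate.

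The heart of the proof is the verification of the Serre relations \eqref{eq:UX even serre E}–\eqref{eq:UX odd serre F} when the braid index $i$ touches the relation. There are three regimes to handle: (a) $i$ is even and the Serre relation is classical — this reduces essentially to Lusztig's calculation \cite[\S 39.2]{LuBook}, but with the sign $(-1)^{p_Y(j)}$ twists tracked carefully; (b) $i$ is even but $\alpha_j^X$ or $\alpha_k^X$ is odd, in which case a standard quantum Serre identity in $\UU(Y)$ yields the result after accounting for the sign change in $p_Y(j)=p_X(j)+p_X(i)\cdot\delta_{j\sim i}$; and (c) $i$ itself is odd. Regime (c) is the main obstacle: here the original Serre relation at $i$ is the four-term odd relation \eqref{eq:UX odd serre E}/\eqref{eq:UX odd serre F}, and its image under $T'_{i,e}$ must coincide with a mixture of \eqref{eq:UX nilpotent} at $i$ in $\UU(Y)$ and even/odd Serre relations at $j,k$. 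I will handle this by expanding both sides as polynomials in $E_{Y,i},E_{Y,j},E_{Y,k}$, using $E_{Y,i}^2=0$ whenever $p_Y(i)=1$ to kill redundant terms, and matching the resulting multilinear expressions coefficient-by-coefficient in $q$. It will be convenient to do this computation once inside a free superalgebra modulo only the Cartan relations, since all the sign bookkeeping becomes uniform.

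Finally, to promote the homomorphism $T'_{i,e}$ to an isomorphism, I will verify the identity
\[
T''_{i,-e}\circ T'_{i,e} = \mathrm{id}_{\UU(X)}
\quad\text{and}\quad
T'_{i,e}\circ T''_{i,-e} = \mathrm{id}_{\UU(Y)},
\]
which reduces, by the fact that both sides are algebra homomorphisms and $\UU(X)$ is generated by $E_i,F_i,K_i^{\pm 1}$, to a finite list of generator-by-generator computations: for $j=i$ the composition produces double signs $(-1)^{2p_Y(i)}=1$ and $K$-conjugation factors that cancel because $e+(-e)=0$; for $j\sim i$ the composition generates a nested commutator $[[E_{j},E_i]_{q^{eY_{ij}}},E_i]_{q^{-eY_{ij}}}$ in $\UU(X)$ (up to a parity sign) which evaluates to $(1-(-1)^{p_X(i)}q^{eY_{ij}}q^{-eY_{ij}})E_j$, i.e.\ to $E_j$ when $p_X(i)=0$ and, when $p_X(i)=1$, collapses after using $E_{X,i}^2=0$; and for $j\nsim i$ the map is the identity. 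This completes the construction and establishes that the $T'_{i,e}$ and $T''_{i,e}$ are the desired $\Qq$-linear algebra isomorphisms.
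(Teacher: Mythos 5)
Your overall strategy matches the paper's: verify that the defining relations \eqref{eq:UX Ks}--\eqref{eq:UX odd serre F} hold for the images of the generators via a case analysis indexed by whether $j=i$, $j\sim i$, or $j\nsim i$ (together with the relevant parities), then show the map is an isomorphism by producing an explicit inverse. The paper further reduces to the single case $T'_{i,1}$ by exploiting the conjugation relations $T'_{i,e}=\tau T''_{i,e}\tau$ and $T'_{i,-e}=\barmap T'_{i,e}\barmap=\rho T'_{i,e}\rho$, which you note only as a ``sanity check''; using them as an actual reduction cuts the case work in half and is worth adopting.

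However, there is a concrete error in your inverse formula. You propose to show $T''_{i,-e}\circ T'_{i,e}=\mathrm{id}_{\UU(X)}$, but under the paper's normalization the inverse of $T'_{i,e}$ is $T''_{i,e}$ with the \emph{same} sign $e$, not $-e$. A direct check on $E_{X,i}$ shows the issue: writing $T'_{i,e}(E_{X,i})=-(-1)^{p_Y(i)}K_{Y,i}^{-e}F_{Y,i}$, one has
\[
T''_{i,-e}\bigl(T'_{i,e}(E_{X,i})\bigr)
= (-1)^{p_Y(i)}\bigl((-1)^{p_X(i)}\bigr)^{-e}\,K_{X,i}^{e}\cdot K_{X,i}^{e}\,E_{X,i}
= K_{X,i}^{2e}\,E_{X,i}\neq E_{X,i},
\]
whereas
\[
T''_{i,e}\bigl(T'_{i,e}(E_{X,i})\bigr)
= (-1)^{p_Y(i)}\bigl((-1)^{p_X(i)}\bigr)^{-e}\,K_{X,i}^{e}\cdot K_{X,i}^{-e}\,E_{X,i}
= E_{X,i},
\]
using $p_Y(i)=p_X(i)$. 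So the correct claim is $T''_{i,e}=(T'_{i,e})^{-1}$, and your last step should be revised accordingly. (This sign convention differs from Lusztig's, which is likely the source of the confusion.)

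One further caution: you suggest verifying the odd Serre relation ``once inside a free superalgebra modulo only the Cartan relations,'' but the computation necessarily uses the (even and odd) Serre relations of $\UU(Y)$ itself --- for example, rewriting $E_kE_\ell E_kE_i - E_\ell E_kE_iE_k - E_kE_iE_kE_\ell$ via \eqref{eq:UX even serre E} is what allows the cancellation. The coefficient-by-coefficient matching you describe is the right technique, but it must be carried out modulo the full relation ideal of $\UU(Y)$, not just the Cartan part.
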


\begin{rmk}\label{rmk:braidthm}
	Let us make two observations about this theorem.
	\begin{enumerate}
		\item Note that the maps $T_{i,e}'$ and $T_{i,e}''$ are parity-preserving. In fact, they are also weight-preserving
		in the sense of Section \ref{subsec:perspective}. This will be convenient to keep in mind for transporting coefficients 
		between the algebras associated to different $X$'s.
		\item We note that Theorem \ref{thm:braids} and the proof can be generalized to
		$\gl(m|n)$; 
		one only has to check a few additional cases. However, we don't need this generality,
		and as the proof for our special case is already quite involved,
		we do not do this here.
	\end{enumerate}
\end{rmk}

In the remainder of this subsection, we will prove Theorem \ref{thm:braids} and then verify
that the maps satisfy the braid relations in Lemmas \ref{lem:disc braid rel} and \ref{lem:conn braid rel}.
To do so succinctly, first observe that
\begin{equation}\label{eq:braids and invols}
T_{i,e}'=\tau T_{i,e}''\tau,\qquad T_{i,-e}'=\barmap\ T_{i,e}'\ \barmap=\rho T_{i,e}' \rho,\qquad T_{i,e}''=(T_{i,e}')^{-1};
\end{equation}
it is easy to verify these identities on the generators (for the $j$ connected
to $i$ case, see Lemma \ref{lem:root transpositions}).\
In particular, to prove the desired results in the case of $T_{i,1}'$,
as all the remaining cases follow immediately from \eqref{eq:braids and invols}

We need to verify that the images of the generators of $\UU(X)$ 
under $T_{i,1}'$ satisfy the relations 
\eqref{eq:UX Ks}-\eqref{eq:UX odd serre F}.
In the case of \eqref{eq:UX Ks}, this is trivial.
The relation \eqref{eq:UX weights} essentially follows from the construction,
but in any case it is easy to verify directly.
In order to keep the proof of the remaining relations digestible,
we will break up the verification into lemmas.

Since the calculations
get quite involved, we will drop the subscripts on the generators of $\UU(Y)$ 
for the sake of readability. 
First, we check the relation \eqref{eq:UX nilpotent}.
	
\begin{lem}
If $i,j\in I$ such that $p_X(j)=1$, then 
\[T_{i,1}'(E_{X,j})^2=T_{i,1}'(F_{X,j})^2=0.\]
\end{lem}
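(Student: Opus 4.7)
The plan is to verify that, in $\UU(Y)$ with $Y=i\cdot X$, the prescribed images of the odd generators actually square to zero. By the identities in \eqref{eq:braids and invols} it is enough to treat $T_{i,1}'$; the cases for $T_{i,-1}'$ and $T_{i,\pm 1}''$ will follow by applying bar and $\tau$. The argument for $T_{i,1}'(F_{X,j})^2=0$ is entirely parallel to the one for $T_{i,1}'(E_{X,j})^2=0$ (swap the roles of the Serre relations and of the $K_i^{\pm 1}$ factors), so I outline only the latter, splitting on the position of $j$ relative to $i$.

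The two easy cases are $j=i$ and $j\nsim i$ with $j\neq i$. When $j=i$, the hypothesis $p_X(j)=1$ together with \eqref{eq:parity and involutions} gives $p_Y(i)=1$, so $Y_{ii}=0$ and $F_i^2=0$ in $\UU(Y)$; using \eqref{eq:UX weights} to commute $K_i^{-1}$ past $F_i$ costs only the trivial factor $q^{-Y_{ii}}=1$, whence $(K_i^{-1}F_i)^2=K_i^{-2}F_i^2=0$. When $j\nsim i$ and $j\neq i$, the formula gives $T_{i,1}'(E_{X,j})=E_j$, while \eqref{eq:parity and involutions} gives $p_Y(j)=p_X(j)=1$, so \eqref{eq:UX nilpotent} in $\UU(Y)$ is exactly the claim.

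The substantive case is $j\sim i$. Using \eqref{eq:parity and involutions} together with the rule that $Y_{ij}=-1$ whenever at least one of $Y_{ii},Y_{jj}$ equals $2$, a short check shows that in either sub-case $p_X(i)=0$ or $p_X(i)=1$ (both compatible with $p_X(j)=1$) one has $(-1)^{p_Y(i)p_Y(j)}=1$ and $Y_{ij}=-1$, so the image simplifies to the single formula
\[T_{i,1}'(E_{X,j})=E_jE_i-q^{-1}E_iE_j\qquad\text{in }\UU(Y).\]
Expanding the square gives
\[\bigl(T_{i,1}'(E_{X,j})\bigr)^2=E_jE_iE_jE_i+q^{-2}E_iE_jE_iE_j-q^{-1}E_jE_i^2E_j-q^{-1}E_iE_j^2E_i,\]
and exactly one of the two middle terms dies by the available nilpotency: if $p_X(i)=0$ then $p_Y(j)=1$ kills the $E_j^2$-term and the even Serre relation \eqref{eq:UX even serre E} is available for $i$; if $p_X(i)=1$ then $p_Y(i)=1$ kills the $E_i^2$-term and the even Serre relation is available for $j$.

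The main obstacle is now to show that the remaining combination $E_jE_iE_jE_i-E_iE_jE_iE_j$ vanishes in $\UU(Y)$, since no direct Serre-like relation asserts this. The trick is to multiply the available even Serre relation on the \emph{left} and on the \emph{right} by the nilpotent odd generator and discard the terms containing its square. In the sub-case $p_X(i)=0$, this procedure applied to $E_i^2E_j-(q+q^{-1})E_iE_jE_i+E_jE_i^2=0$ produces two expressions for $E_jE_i^2E_j$, namely $(q+q^{-1})E_iE_jE_iE_j$ and $(q+q^{-1})E_jE_iE_jE_i$; cancelling $(q+q^{-1})$ gives the desired identity $E_iE_jE_iE_j=E_jE_iE_jE_i$. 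Substituting this back shows the square vanishes, and the sub-case $p_X(i)=1$ (and the $F$-analogue throughout) follows by the symmetric argument using the Serre relation for $j$.
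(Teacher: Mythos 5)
Your proposal is correct and follows essentially the same route as the paper: expand $(E_jE_i - q^{-1}E_iE_j)^2$, observe that exactly one of $E_i^2, E_j^2$ vanishes in $\UU(Y)$, and cancel the remaining terms via the even Serre relation multiplied by the nilpotent odd generator. The only differences are cosmetic — you spell out the $j=i$ case (which the paper folds into the "not connected" case), and you organize the final cancellation by first deriving $E_iE_jE_iE_j = E_jE_iE_jE_i$ rather than rewriting both quadruple products as $E_\bullet E_\bullet^{(2)}E_\bullet$ as the paper does.
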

\begin{proof}	
	This is trivial if $i$ is not connected to $j$, so suppose $i$ is connected to $j$.
	In this case, \eqref{eq:parity and involutions} implies 
	$p(j)=1+p_X(i)=1+p(i)$ hence $p(i)p(j)=0$ and $Y_{ij}=-1$. Then
	$T_{i,1}'(E_{X,j})=E_{j}E_{i}-q^{Y_{ij}}E_{i}E_{j}$, and so
	\begin{align*}
	T_{i,1}'(E_{X,j})^2&=(E_{j}E_{i}-q^{-1}E_{i}E_{j})^2\\
	&=E_{j}E_{i}E_{j}E_{i}-q^{-1}E_{j}E_{i}^2E_{j}-q^{-1}E_{i}E_{j}^2E_{i}+q^{-2}E_{i}E_{j}E_{i}E_{j}.
	\end{align*}
	Now observe that either $E_{j}^2=0$ or $E_{i}^2=0$; let's assume $E_{i}^2=0$ as the
	computation is similar in the other case.
	Then using the identity $E_{i}E_{j}E_{i}E_{j}=E_{i}E_{j}^{(2)}E_{i}$
	(which follows from the Serre relation for $i,j$)
	\begin{align*}
	T_{i,1}'(E_{X,j})^2&=E_{i}E_{j}^{(2)}E_{i}-q^{-1}(q+q^{-1})E_{i}E_{j}^{(2)}E_{i}+q^{-2}E_{i}E_{j}^{(2)}E_{i}=0.
	\end{align*}
	The statement for $F_{X,j}$ is proved similarly.
\end{proof}

Next, we verify the commutation relation \eqref{eq:UX commutator} in two steps.
\begin{lem}
If $i,j,k\in I$ with $j\neq k$, then
\[T_{i,1}'(E_{X,j})T_{i,1}'(F_{X,k})=(-1)^{p_X(j)p_X(k)}T_{i,1}'(F_{X,k})T_{i,1}'(E_{X,j}).\]
\end{lem}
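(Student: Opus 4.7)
The plan is as follows. First, by the identities in~\eqref{eq:braids and invols}, the remaining three braid maps $T_{i,-1}'$, $T_{i,1}''$, $T_{i,-1}''$ are obtained from $T_{i,1}'$ by post-composing with $\tau$, $\barmap$, or $\rho$, each of which preserves the form of the supercommutation being checked (sending the relation $T(E_{X,j})T(F_{X,k}) = (-1)^{p_X(j)p_X(k)} T(F_{X,k})T(E_{X,j})$ to an equivalent one); so it suffices to prove the identity for $T = T_{i,1}'$.

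From there, I would proceed by a case analysis on the positions of $j$ and $k$ relative to $i$, since $T$ fixes $E_{X,\ell}$ and $F_{X,\ell}$ whenever $\ell\neq i$ and $\ell\nsim i$. In the easy cases, where at least one of $j,k$ is neither equal to $i$ nor connected to $i$, the corresponding generator is fixed by $T$ and can be moved past the other using the weight relations~\eqref{eq:UX weights} together with the original commutation~\eqref{eq:UX commutator} in $\UU(Y)$. Here parities are handled via~\eqref{eq:parity and involutions}, with the crucial observation that two disconnected simple roots in any $X\in\cA$ cannot both be odd, so the sign $(-1)^{p_X(j)p_X(k)}$ collapses or transforms exactly as required. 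The remaining cases --- one of $j,k$ equal to $i$ with the other connected to $i$, or both $j,k$ connected to $i$ (forcing $\{j,k\}=\{i-1,i+1\}$) --- will require substituting the explicit bracket formulas from Theorem~\ref{thm:braids}, expanding the resulting product, and then simplifying using~\eqref{eq:UX commutator}, the nilpotent relation~\eqref{eq:UX nilpotent}, and the appropriate Serre relations~\eqref{eq:UX even serre F} or~\eqref{eq:UX odd serre F} in $\UU(Y)$. Throughout, the identity~\eqref{eq:qint to parity} will be the key tool for converting residual $q$-powers into parity signs.

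The main obstacle I expect is the case $\{j,k\}=\{i-1,i+1\}$, where both $T(E_{X,j})$ and $T(F_{X,k})$ are two-term bracket expressions, so their product expands into a four-term sum mixing $E$'s and $F$'s of indices $i, i-1, i+1$. When $p_Y(i)=1$, reducing this sum should require the odd Serre relation~\eqref{eq:UX odd serre F} in an essential way, since straightforward commutation alone will not collapse the extra cross terms that arise after moving an $E_i$ past an $F_i$ and producing a $K_i - K_i^{-1}$ contribution. Verifying that the target parity sign $(-1)^{p_X(j)p_X(k)}$ emerges cleanly from the computation --- given that $p_X(\ell)=p_Y(\ell)+p_Y(i) \pmod{2}$ for $\ell\sim i$ by~\eqref{eq:parity and involutions}, while $p_X(i)=p_Y(i)$ --- will be the most delicate piece of bookkeeping, and the sub-case $p_Y(i)=0$ versus $p_Y(i)=1$ may have to be handled separately to keep the signs transparent.
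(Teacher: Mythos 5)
Your case analysis (trivial when one of $j,k$ is neither $i$ nor adjacent to $i$; then treat $j=i$, $k=i$, and $\{j,k\}=\{i-1,i+1\}$ separately) is the paper's approach, and the reduction to $T'_{i,1}$ via the involutions in \eqref{eq:braids and invols} is also how the paper organizes the whole theorem. However, your expectation of where the difficulty lies is inverted, and in particular the appeal to the odd Serre relation is misplaced. In the case $\{j,k\}=\{i-1,i+1\}$, the paper's calculation collapses by direct commutation alone: one expands the product of the two bracket expressions, recognizes that every cross term contains $E_iF_i - (-1)^{p(i)}F_iE_i = \frac{K_i - K_i^{-1}}{q-q^{-1}}$ sandwiched between $E_j$ and $F_k$, and pushes the $K_i^{\pm 1}$ through $E_j$ and $F_k$ via \eqref{eq:UX weights}, producing $q$-powers ($q^{\pm 2Y_{ij}}$, $q^{\mp 2Y_{ik}}$) that cancel identically. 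No Serre relation of either kind is needed here, and there is no split into subcases by $p_Y(i)$. Moreover, since $j$ and $k$ are not adjacent to each other, at most one can be odd in any GCM of type $\gl(m|1)$, so $p_X(j)p_X(k)=p_Y(j)p_Y(k)=0$ automatically --- the sign bookkeeping you flagged as the most delicate is actually vacuous.

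The relations you listed are indeed used, but only in the other nontrivial case, namely $j=i$ with $k$ adjacent (or symmetrically $k=i$ with $j$ adjacent). There $T'_{i,1}(E_{X,i}) = -(-1)^{p_Y(i)}K_{Y,i}^{-1}F_{Y,i}$, the product expands into $F_i^2F_k$, $F_iF_kF_i$, $F_kF_i^2$ terms multiplied by $K_i^{-1}$, and these vanish by the nilpotent relation \eqref{eq:UX nilpotent} when $p_Y(i)=1$ or by the even Serre relation \eqref{eq:UX even serre F} when $p_Y(i)=0$. The odd Serre relation \eqref{eq:UX odd serre F} does not appear in the proof of this lemma at all --- it only enters the later lemmas verifying the images of the Serre relations themselves. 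So the plan would get you to the answer, but you would be reaching for machinery that is not needed, and the place you anticipate being stuck is the place where everything cancels cleanly.
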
	
\begin{proof}
	Let $c_{jk}=T_{i,1}'(E_{X,j})T_{i,1}'(F_{X,k})-(-1)^{p_X(j)p_X(k)}T_{i,1}'(F_{X,k})T_{i,1}'(E_{X,j})$.
	We want to prove that $c_{jk}=0$ for all $j\neq k$.
	First, observe that if either $j$ or $k$ is not connected to $i$, then
	the statement is trivially true; indeed, if $j$ is not connected to $i$,
	then $T_{i,1}'(E_{X,j})=E_j$ and $p(j)=p_X(j)$. On the other hand, 
	$T_{i,1}'(F_{X,k})$ is a polynomial in the elements $K_i$, $E_i$, $F_k$, and $F_i$
	with $p(T_{i,1}'(F_{X,k}))=p_X(k)$. Since $E_j$ supercommutes with all of these
	elements, the statement in this case follows.
	
	The remaining cases are when $j$ and $k$ are both either equal to or connected to $i$.
		
	First, suppose $k$ is connected to $j=i$.
	Then we have
	\begin{align*}
	c_{ik}&=(-K_{i}^{-1}F_{i})(F_{i}F_{k}-(-1)^{p(i)p(k)}q^{-Y_{ik}}F_{k}F_{i})\\
	&\hspace{1em}-(-1)^{p_X(i)p_X(k)}(F_{i}F_{k}-(-1)^{p(i)p(k)}q^{-Y_{ik}}F_{j}F_{i})(-K_{i}^{-1}F_{i})\\
	&=-K_{i}^{-1}(F_{i}^2F_{k}-(-1)^{p(i)p(k)}q^{-Y_{ik}}F_{i}F_{k}F_{i}\\
	&\hspace{1em}-(-1)^{p(i)p(k)+p(i)}q^{-Y_{ik}-Y_{ii}}F_{i}F_{k}F_{i}+(-1)^{p(i)}q^{-2Y_{ik}-Y_{ii}}F_{k}F_{i}^2
	\end{align*}
	Now there are two subcases. If $p_X(i)=p(i)=1$, then $Y_{ii}=0$ and $F_{Y,i}^2=0$ whence
	\[c_{ik}=(-1)^{p(k)}K_{i}^{-1}(q^{-Y_{ik}}-q^{-Y_{ii}-Y_{ik}})F_{i}F_{k}F_{i}=0.\]
	If $p_X(i)=p(i)=0$, then $Y_{ii}=2$ and $Y_{ik}=-1$, so we see that
	\[c_{ik}=K_{i}^{-1}(F_{i}^2F_{k}-[2]F_{i}F_{k}F_{i}+F_{k}F_{i}^2)=0.\]
	We note the case $j$ is connected to $k=i$ is entirely similar.
	
	Now suppose $j$ and $k$ are connected to $i$. Note that then $p_X(j)p_X(k)=p(j)p(k)=0$;
	moreover, if $p(i)=1$, then either $p(j)=1$ or $p(k)=1$, hence
	$p(i)p(j)+p(i)p(k)=p(i)$.
	Then we have
	\begin{align*}
	c_{ik}
	&=(E_{j}E_{i}-(-1)^{p(i)p(j)}q^{Y_{ij}}E_{i}E_{j})(F_{i}F_{k}-(-1)^{p(i)p(k)}q^{-Y_{ik}}F_{k}F_{i})\\
	&\hspace{1em}-(F_{i}F_{k}-(-1)^{p(i)p(k)}q^{-Y_{ik}}F_{k}F_{i})(E_{j}E_{i}-(-1)^{p(i)p(j)}q^{Y_{ij}}E_{i}E_{j})\\
	&=E_j(E_iF_i-(-1)^{p(i)}F_iE_i)F_k-q^{Y_{ij}}(E_iF_i-(-1)^{p(i)}F_iE_i)E_jF_k\\
	&\hspace{1em}-q^{-Y_{ik}}E_jF_k(E_iF_i-(-1)^{p(i)}F_iE_i)+q^{Y_{ij}-Y_{ik}}F_k(E_iF_i-(-1)^{p(i)}F_iE_i)E_j\\
	&=\frac{1}{q-q^{-1}}E_j((1-q^{2Y_{ij}}-1+q^{2Y_{ij}})K_i-(1-1-q^{-2Y_{ik}}+q^{-2Y_{ik}})K_i^{-1})F_k =0.
	\end{align*}
	
This finishes the proof.
\end{proof}

Next, we check \eqref{eq:UX commutator} in the case $j=k$.

\begin{lem}
If $i,j\in I$, then
\[T_{i,1}'(E_{X,j})T_{i,1}'(F_{X,j})-(-1)^{p_X(j)}T_{i,1}'(F_{X,j})T_{i,1}'(E_{X,j})
=\frac{T_{i,1}'(K_{X,j})-T_{i,1}'(K_{X,j})^{-1}}{q-q^{-1}}.\]
\end{lem}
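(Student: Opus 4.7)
The plan is to split the verification into three cases according to the position of $j$ relative to $i$: (i) $j \neq i$ and $j \nsim i$, (ii) $j = i$, and (iii) $j \sim i$. The first case is essentially trivial: then $T_{i,1}'(E_{X,j}) = E_j$, $T_{i,1}'(F_{X,j}) = F_j$, $T_{i,1}'(K_{X,j}) = K_j$, and parities are preserved ($p_Y(j) = p_X(j)$), so the identity is exactly relation \eqref{eq:UX commutator} in $\UU(Y)$.

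For the case $j = i$, one substitutes the formulas for $T_{i,1}'(E_{X,i})$, $T_{i,1}'(F_{X,i})$, $T_{i,1}'(K_{X,i})$ and computes directly:
\[T_{i,1}'(E_{X,i})T_{i,1}'(F_{X,i}) = K_i^{-1}F_i E_i K_i, \qquad T_{i,1}'(F_{X,i})T_{i,1}'(E_{X,i}) = E_i K_i \cdot K_i^{-1}F_i = E_i F_i.\]
Using the defining relation $E_i F_i - (-1)^{p_Y(i)} F_i E_i = (K_i - K_i^{-1})/(q-q^{-1})$ together with $K_i^{-1} F_i E_i K_i = q^{Y_{ii}} F_i E_i$ and $p_X(i) = p_Y(i)$, a few lines of rearrangement yield $(-1)^{p_Y(i)}(K_i^{-1} - K_i)/(q-q^{-1})$, which matches $(T_{i,1}'(K_{X,i}) - T_{i,1}'(K_{X,i})^{-1})/(q-q^{-1})$.

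The real work is in case (iii), $j \sim i$. Here one expands the product
\[T_{i,1}'(E_{X,j})T_{i,1}'(F_{X,j}) = (E_j E_i - (-1)^{p_Y(i)p_Y(j)} q^{Y_{ij}} E_i E_j)(F_i F_j - (-1)^{p_Y(i)p_Y(j)} q^{-Y_{ij}} F_j F_i)\]
and similarly for the opposite product. Since $i \neq j$, $E_i$ supercommutes with $F_j$ and $E_j$ supercommutes with $F_i$, so one can collect terms and apply the defining commutators $[E_i, F_i]_{p(i)} = (K_i - K_i^{-1})/(q-q^{-1})$ and $[E_j, F_j]_{p(j)} = (K_j - K_j^{-1})/(q-q^{-1})$. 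The supercommutation signs arising from moving $E_j$ past $F_i$ combine with the $(-1)^{p_Y(i)p_Y(j)}$ factors and the identity \eqref{eq:qint to parity} to arrange the $F_i E_i$ and $F_j E_j$ terms to cancel when one subtracts $(-1)^{p_X(j)}$ times the opposite product; meanwhile the $K$-terms pick up matching $q^{\pm Y_{ij}}$ powers that combine to give exactly $((-1)^{p_Y(i)p_Y(j)} K_i K_j - (-1)^{p_Y(i)p_Y(j)} K_i^{-1} K_j^{-1})/(q - q^{-1})$, as required.

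The main obstacle in case (iii) is bookkeeping: the parities in $\UU(X)$ and $\UU(Y)$ differ via $p_Y(j) = p_X(j) + p_X(i)$ (by \eqref{eq:parity and involutions}), and the parity prefactor $(-1)^{p_X(j)}$ on the right-hand product must be converted into a $\UU(Y)$-parity expression. The sub-case split on whether $p(i)$ is $0$ or $1$ (and correspondingly whether $Y_{ij} = -1$ or $Y_{ij} = \pm 1$) should be handled by consistent use of \eqref{eq:qint to parity}, which converts the relevant $q$-factors to signs. Once this is done, the cancellation of the $F_i E_i$ and $F_j E_j$ terms is forced by the algebra and the identity drops out.
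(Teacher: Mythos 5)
Your decomposition into cases (i) $j \nsim i$, (ii) $j = i$, (iii) $j \sim i$ is exactly the paper's decomposition, and the strategy in case (iii) --- expand both products, move $E_i$ past $F_j$ and $E_j$ past $F_i$ using the supercommutation rule, reduce via the defining commutators for $i$ and $j$, and then cancel the "cross" $K$-terms using \eqref{eq:qint to parity} --- is the same as what the paper carries out in its $z_1, z_2, z_3, z_4$ computation.

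One concrete slip in case (ii): the identity you cite, $K_i^{-1}F_iE_iK_i = q^{Y_{ii}}F_iE_i$, is false. Since $F_iE_i$ is a weight-zero element, $K_i^{-1}F_iE_iK_i = F_iE_i$ with no $q$-power at all. If one actually used your stated identity with $p_Y(i)=0$ (so $Y_{ii}=2$), the residual $(q^2-1)E_iF_i$ term would not drop out and the computation would not yield the answer you wrote. With the correct identity $K_i^{-1}F_iE_iK_i = F_iE_i$, however, the remaining rearrangement $F_iE_i - (-1)^{p_X(i)}E_iF_i = -(-1)^{p_Y(i)}(K_i - K_i^{-1})/(q-q^{-1})$ does go through and matches $(-1)^{p_Y(i)}K_i^{-1} - (-1)^{p_Y(i)}K_i$ divided by $q-q^{-1}$, i.e. $T_{i,1}'(K_{X,i}) - T_{i,1}'(K_{X,i})^{-1}$ over $q - q^{-1}$. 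So your stated conclusion is correct but the cited intermediate step is not.

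Two smaller remarks on case (iii). First, the sub-case split on $p(i) \in \{0,1\}$ you anticipate is not actually needed: the paper's bookkeeping with $z_1,\dots,z_4$ goes through uniformly, with \eqref{eq:qint to parity} absorbing the $q^{\pm Y_{ij}}$ factors without branching. Second, saying the $F_iE_i$ and $F_jE_j$ terms "cancel" is a bit imprecise: what happens is that $E_iF_i$ and $F_iE_i$ (and likewise $E_jF_j$ and $F_jE_j$) collect via the commutator relation into the $K$-terms $c_i = (K_i-K_i^{-1})/(q-q^{-1})$ and $c_j$, each multiplied by $(-1)^{p_Y(i)p_Y(j)}K_j$ and $(-1)^{p_Y(i)p_Y(j)}K_i^{-1}$ respectively, and then the cross products $K_jK_i^{-1}$ and $-K_i^{-1}K_j$ cancel, leaving $(-1)^{p_Y(i)p_Y(j)}(K_jK_i - K_j^{-1}K_i^{-1})/(q-q^{-1})$. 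Your stated target is correct; the mechanism is cancellation among $K$-products, not of the $FE$-terms themselves.
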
	
\begin{proof}
	Observe that the statement is trivially true for $j$ not connected or equal to $i$,
	and if $j=i$ then the statement is easy to verify.
	Therefore, let us assume that $j$ is connected to $i$.
	
	Let $c_{jj}=T_{i,1}'(E_{X,j})T_{i,1}'(F_{X,j})-(-1)^{p_X(j)p_X(j)}T_{i,1}'(F_{X,j})T_{i,1}'(E_{X,j})$.
	Then the statement follows by verifying that $c_{jj}=(-1)^{p(i)p(j)}\frac{K_iK_j-K_i^{-1}K_j^{-1}}{q-q^{-1}}$.
	To that end, we first do some preliminary computations.
	First, recall that $p_X(j)=p_Y(i)+p_Y(j)$. Then
	\begin{align*}
	z_1&=E_{j}E_{i}F_{i}F_{j}-(-1)^{p_X(j)}F_{i}F_{j}E_{j}E_{i}\\
	&=E_{j}F_{j}\frac{q^{-Y_{ij}}K_{i}-q^{Y_{ij}}K_{i}^{-1}}{q-q^{-1}}+(-1)^{p_Y(i)}F_{i}E_{i}
	\frac{q^{Y_{ij}}K_{j}-q^{-Y_{ij}}K_{j}^{-1}}{q-q^{-1}}
	\end{align*}
	\begin{align*}
	z_2&=E_{j}E_{i}F_{j}F_{i}-(-1)^{p_X(j)}F_{j}F_{i}E_{j}E_{i}\\
	&=(-1)^{p_Y(i)p_Y(j)}E_{j}F_{j}\frac{K_{i}-K_{i}^{-1}}{q-q^{-1}}+(-1)^{p_Y(i)+p_Y(i)p_Y(j)}F_{i}E_{i}
	\frac{K_{j}-K_{j}^{-1}}{q-q^{-1}}
	\end{align*}
	\begin{align*}
	z_3&=E_{i}E_{j}F_{i}F_{j}-(-1)^{p_X(j)}F_{i}F_{j}E_{i}E_{j}\\
	&=(-1)^{p_Y(i)p_Y(j)}E_{i}F_{i}\frac{K_{j}-K_{j}^{-1}}{q-q^{-1}}+(-1)^{p_Y(j)+p_Y(i)p_Y(j)}F_{j}E_{j}
	\frac{K_{i}-K_{i}^{-1}}{q-q^{-1}}
	\end{align*}
	\begin{align*}
	z_4&=E_{i}E_{j}F_{j}F_{i}-(-1)^{p_X(j)}F_{j}F_{i}E_{i}E_{j}\\
	&=E_{i}F_{i}\frac{q^{-Y_{ij}}K_{j}-q^{Y_{ij}}K_{j}^{-1}}{q-q^{-1}}+(-1)^{p_Y(j)}F_{j}E_{j}
	\frac{q^{Y_{ij}}K_{i}-q^{-Y_{ij}}K_{i}^{-1}}{q-q^{-1}}
	\end{align*}
	Now observe that
	\begin{align*}
		c_{jj}&=z_1-(-1)^{p_Y(i)p_Y(j)}q^{-Y_{ij}}z_2-(-1)^{p_Y(i)p_Y(j)}q^{Y_{ij}}z_3+z_4\\
		&=E_{j}F_{j}\frac{q^{-Y_{ij}}-q^{Y_{ij}}}{q-q^{-1}}K_{i}^{-1}
		-(-1)^{p_Y(j)}F_{j}E_{j}\frac{q^{-Y_{ij}}-q^{Y_{ij}}}{q-q^{-1}}K_{i}^{-1}\\
		&+E_{i}F_{i}\frac{q^{-Y_{ij}}-q^{Y_{ij}}}{q-q^{-1}}K_{j}
		-(-1)^{p_Y(i)}F_{i}E_{i}\frac{q^{-Y_{ij}}-q^{Y_{ij}}}{q-q^{-1}}K_{j}\\
		&=(-1)^{p_Y(i)p_Y(j)}\frac{K_{j}K_{i}-K_{j}^{-1}K_{i}^{-1}}{q-q^{-1}}.
	\end{align*}
	This finishes the proof.
\end{proof}
	
Now we need to verify the various Serre relations.
The next lemma checks the relation \eqref{eq:UX disc serre}.

\begin{lem}
If $i,j,k\in I$ such that $j\nsim k$, then
\[T_{i,1}'(E_{X,j})T_{i,1}'(E_{X,k})=T_{i,1}'(E_{X,k})T_{i,1}'(E_{X,j}).\]
\end{lem}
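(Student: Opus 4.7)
The plan is to carry out a case analysis according to the positions of $j$ and $k$ relative to $i$. The condition $j\nsim k$ means $j\neq k$ and $|j-k|\neq 1$, so at most one of $j, k$ can equal $i$ and at most one can be adjacent to $i$ if the other equals $i$. The configurations that remain are: (A) neither $j$ nor $k$ lies in $\{i-1,i,i+1\}$; (B) exactly one of $j,k$ (say $j$) lies in $\{i-1,i,i+1\}$; or (C) both lie in $\{i-1,i,i+1\}$, in which case neither can be $i$ (else the other, being in $\{i-1,i+1\}$, would be adjacent to it) and consequently $\{j,k\}=\{i-1,i+1\}$.

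Case (A) is immediate since both images equal $E_j$ and $E_k$, which commute by \eqref{eq:UX disc serre}. For case (B), $T_{i,1}'(E_{X,k})=E_k$ and $T_{i,1}'(E_{X,j})$ is a polynomial in the generators $E_i,E_j,F_i,K_i^{\pm 1}$. One verifies that $E_k$ strictly commutes with each of these: with $E_i$ and $E_j$ by \eqref{eq:UX disc serre} (using $k\nsim i$ and $k\nsim j$), with $K_i^{\pm 1}$ because $Y_{ik}=0$, and with $F_i$ by \eqref{eq:UX commutator}. For the last, the super-sign is trivial because the definition of $\cA$ forces the odd indices of $Y$ to form at most an adjacent pair, so $p_Y(i)p_Y(k)=1$ would require $k$ adjacent to $i$, contradicting $k\nsim i$.

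The substantive case is (C). Writing $a=(-1)^{p_Y(i)p_Y(j)}q^{Y_{ij}}$ and $b=(-1)^{p_Y(i)p_Y(k)}q^{Y_{ik}}$, I would expand
\[
(E_jE_i-aE_iE_j)(E_kE_i-bE_iE_k) \quad \text{and} \quad (E_kE_i-bE_iE_k)(E_jE_i-aE_iE_j),
\]
use $E_jE_k=E_kE_j$ (valid by \eqref{eq:UX disc serre} since $j\nsim k$) to reorder the non-$i$-adjacent factors, and reduce the difference to a polynomial identity in the quartic words $E_\ell E_iE_mE_i$, $E_iE_\ell E_iE_m$, and $E_i^2E_\ell E_m$ for $\{\ell,m\}=\{j,k\}$.

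The hard part will be verifying this residual identity, and it splits on the parity $p_Y(i)$. When $p_Y(i)=0$, one has $Y_{ij}=Y_{ik}=-1$, and the classical quadratic Serre relations \eqref{eq:UX even serre E} for the pairs $(i,j)$ and $(i,k)$ allow one to rewrite $E_i^2E_\ell$ as $[2]E_iE_\ell E_i-E_\ell E_i^2$; the factor $[2]=q+q^{-1}$ then absorbs the cross terms. When $p_Y(i)=1$, one has $E_i^2=0$ by \eqref{eq:UX nilpotent}, which annihilates all words containing $E_i^2$, and the surviving quartic identity is precisely the odd Serre relation \eqref{eq:UX odd serre E} tailored to an odd node $i$ with connected neighbors $j,k$; the sign factors $(-1)^{p_Y(j)}, (-1)^{p_Y(k)}$ appearing there match the signs in $a$ and $b$ via \eqref{eq:qint to parity}. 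In both subcases, the bookkeeping of signs using \eqref{eq:parity and matrix entries} and \eqref{eq:qint to parity} is the only real subtlety, and the desired commutation follows.
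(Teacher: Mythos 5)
Your proposal is correct and follows essentially the same route as the paper: reduce to the substantive configuration where both $j,k$ are adjacent to $i$, then split on $p_Y(i)$, using the even Serre relations when $i$ is even and the odd Serre relation \eqref{eq:UX odd serre E} together with $E_i^2=0$ when $i$ is odd. You are somewhat more careful than the paper in spelling out case (B) --- where exactly one of $j,k$ lies in $\{i-1,i,i+1\}$, and in particular where that index equals $i$ so that $F_i$ and $K_i^{\pm1}$ enter --- which the paper folds into ``trivial'' without comment; your observation that strict (not merely super-) commutation of $E_k$ with $F_i$ follows because $i,k$ cannot both be odd when nonadjacent is the right reason and is worth recording.
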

\begin{proof} Again, as the $E$ and $F$ cases
	are similar, we will prove this for $E$. We need to verify that $T_{i,1}'(E_{X,j})T_{i,1}'(E_{X,k})=T_{i,1}'(E_{X,k})T_{i,1}'(E_{X,j})$
	when $k\nsim j$. Well, if neither $j$ nor $k$ is connected to $i$, this is trivial,
	so suppose $j\sim i\sim k$.
	Moreover, observe that if $p_X(i)=0$, then one or less of $j,k$ are odd, 
	and so the calculation is formally identical to the same identity for
	$U_q(\fsl(4))$; therefore, we may as well assume $p_X(i)=1$.
	Then either $p_X(j)=0$ or $p_X(k)=0$, and as the roles of $j$ and $k$ are symmetric,
	we can assume $p_X(j)=0$ without loss of generality.
	
	Then note that $p_X(k)=1$, and so $p(j)=1=p(i)$ and $p(k)=0$. 
	Therefore we see that it suffices to show that
	$T_{i,1}'(E_{X,j})=E_{j}E_i+qE_{i}E_{j}$ and
	$T_{i,1}'(E_{X,k})=E_{k}E_i-q^{-1}E_{i}E_{k}$ commute. 
	Well,
	\[(E_{j}E_i+qE_{i}E_{j})(E_{k}E_i-q^{-1}E_{i}E_{k})
	=E_{j}E_iE_{k}E_i+qE_iE_{j}E_{k}E_i-E_{i}E_{j}E_{i}E_{k}\]
	\[(E_{k}E_i-q^{-1}E_{i}E_{k})(E_{j}E_i+qE_{i}E_{j})
	=E_{k}E_iE_{j}E_i-q^{-1}E_iE_{j}E_{k}E_i-E_{i}E_{k}E_{i}E_{j}\]
	The difference of these two equations is zero by \eqref{eq:UX odd serre E}.
	
\end{proof}	

Next up are \eqref{eq:UX even serre E} and \eqref{eq:UX even serre F}.
\begin{lem}
If $i,j,k\in I$ such that $j\sim k$ and $p_X(j)=0$. Then
\[T_{i,1}'(E_{X,j})^{2}T_{i,1}'(E_{X,k})-[2]T_{i,1}'(E_{X,j})T_{i,1}'(E_{X,k})T_{i,1}'(E_{X,j})+T_{i,1}'(E_{X,k})T_{i,1}'(E_{X,j})^{2}=0,\]
\[T_{i,1}'(F_{X,j})^{2}T_{i,1}'(F_{X,k})-[2]T_{i,1}'(F_{X,j})T_{i,1}'(F_{X,k})T_{i,1}'(F_{X,j})+T_{i,1}'(F_{X,k})T_{i,1}'(F_{X,j})^{2}=0.\]
\end{lem}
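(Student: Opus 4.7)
The plan is to reduce the $F$-Serre identity to the $E$-Serre identity, and then to verify the $E$-Serre relation for $T'_{i,1}$ by case analysis. For the reduction, the relations in \eqref{eq:braids and invols} give $\rho(T'_{i,e}(x)) = T'_{i,-e}(\rho(x))$, so applying the anti-involution $\rho$ (which swaps $E$'s and $F$'s) to the $E$-Serre identity for $T'_{i,-1}$ yields the $F$-Serre identity for $T'_{i,1}$; the Serre expression $A^2B - [2]ABA + BA^2$ is invariant under an anti-involution since $[2]$ is central and $\rho$ simply reverses the order. In turn, the identity for $T'_{i,-1}$ follows from that of $T'_{i,1}$ by conjugating with the bar involution, which preserves the Serre expression since $\bar{[2]}=[2]$. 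Thus it suffices to prove the $E$-Serre relation for $T'_{i,1}$.

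I then distinguish cases based on how $i$ relates to $j$ and $k$. When $i \neq j,k$ and $i \nsim j, i \nsim k$, the operator $T'_{i,1}$ fixes both $E_{X,j}$ and $E_{X,k}$; by \eqref{eq:parity and involutions} we have $p_Y(j) = p_X(j) = 0$ with $j \sim k$ in $Y$, so the identity is exactly the even Serre relation \eqref{eq:UX even serre E} in $\UU(Y)$. When $i \in \{j,k\}$, say $i = j$ by symmetry, necessarily $p_X(i) = 0$ so $p_Y(i) = 0$, and the verification takes place in the rank-$2$ subalgebra of $\UU(Y)$ generated by the data at $i$ and $k$. This subalgebra is of type $\UU_q(\fsl_3)$ if $p_X(k) = 0$ (the classical Lusztig braid identity) or of type $\UU_q(\fsl(2|1))$ if $p_X(k) = 1$ (a direct super-analog of Lusztig's computation).

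When $i$ is adjacent to exactly one of $j,k$, say $i \sim j$, $i \neq k$, $i \nsim k$, the indices $i,j,k$ are three consecutive integers with $j$ in the middle. Here $T'_{i,1}(E_{X,k}) = E_k$ commutes with $E_i$ by \eqref{eq:UX disc serre}, while $T'_{i,1}(E_{X,j})$ is a two-term $q$-commutator of $E_j$ and $E_i$. Since $p_Y(j) = p_X(j) + p_X(i) = p_X(i)$ by \eqref{eq:parity and involutions}, the subcase $p_X(i) = 0$ reduces, after expansion of $A^2B - [2]ABA + BA^2$ and use of $E_iE_k = E_kE_i$, to the even Serre relation \eqref{eq:UX even serre E} for $E_{Y,j}$ and $E_{Y,k}$ in $\UU(Y)$. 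In the subcase $p_X(i) = 1$, however, one has $p_Y(i) = p_Y(j) = 1$, so $E_j$ and $E_i$ are now both odd in $\UU(Y)$, and one must instead invoke the odd Serre relation \eqref{eq:UX odd serre E} together with the nilpotency $E_i^2 = E_j^2 = 0$ of \eqref{eq:UX nilpotent}.

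The main obstacle is precisely this parity-flip subcase of the third case: $j$, which is even in $X$, becomes odd in $Y$, so the four-term odd Serre relation must be combined with the nilpotency relations, and the sign bookkeeping arising from the factors $(-1)^{p_Y(i)p_Y(j)}$ and $q^{Y_{ij}} = q$ is the most delicate part of the verification. Once this rank-$3$ identity is checked directly, the remaining cases assemble to give the $E$-Serre relation for $T'_{i,1}$ in full generality, and the $F$-version follows by the reduction above.
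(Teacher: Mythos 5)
Your reduction of the $F$-Serre identity to the $E$-case via $\rho$ and the bar involution is valid (and more explicit than the paper's ``due to similarity of the arguments''). More significantly, your case analysis correctly surfaces a case the paper's proof does not treat: the paper's cases are (i) neither $j$ nor $k$ adjacent to or equal to $i$, (ii) $j\sim k\sim i$ with $j\neq i$ (i.e.\ $i\sim k$, $i\nsim j$), (iii) $k=i$, and (iv) $j=i$, but the case $i\sim j$, $i\nsim k$, $i\neq k$ is missing. You flag precisely this as the delicate one, and you are right that the subcase $p_X(i)=1$ genuinely requires invoking \eqref{eq:UX odd serre E} together with $E_i^2=E_j^2=0$: there $T'_{i,1}(E_{X,j})$ is a $q$-commutator of two odd generators sitting in the squared slot of the Serre expression, and the factoring trick the paper uses in case (ii) (where $E_j$ is fixed by $T'_{i,1}$ and commutes freely with $E_i$) is unavailable.

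However, both of your ``by symmetry'' appeals are unsound, because the hypothesis $p_X(j)=0$ together with the fact that it is $E_j$ occupying the squared position in the Serre expression breaks the $j\leftrightarrow k$ symmetry. Concretely, if $i=j$ then automatically $p_X(i)=0$ and $p_Y(i)=0$, giving the $\fsl_3$/$\fsl(2|1)$ dichotomy you describe; but if $i=k$ then one may have $p_X(i)=p_X(k)=1$, forcing $p_Y(i)=p_Y(j)=1$, so the rank-$2$ computation takes place in a quantum group with two adjacent \emph{odd} simple roots (the $Y_{ii}=Y_{jj}=0$, $Y_{ij}=1$ datum). This is exactly the paper's second subcase of $k=i$ and is not covered by your ``say $i=j$''. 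Similarly, ``$i$ adjacent to exactly one of $j,k$'' splits into $i\sim k,\,i\nsim j$ (paper's case (ii), trivial factoring) and $i\sim j,\,i\nsim k$ (the missing case, hard), and these are structurally different. A complete argument must enumerate these subcases separately, inserting your mechanism for the new $i\sim j$ case and retaining the paper's explicit computations for $k=i$ with $p_X(i)=1$ and for $j=i$.
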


\begin{proof}
	Again, due to similarity of the arguments, we will prove this only for the $E$'s.
	If neither $j$ nor $k$ is connected to $i$, this is trivial.
	Otherwise, we must check case by case.
	Suppose first that $j\sim k\sim i$ and $j\neq i$. Then observe that  
	$T_{i,1}'(E_{X,j})=E_j$ and $T_{i,1}'(E_{X,k})=E_{k}E_{i}-(-1)^{p_Y(i)p_Y(k)}q^{Y_{k,i}}E_{i}E_k$.
	Then the Serre relation follows by observing that $E_j$ and $E_{i}$ commute,
	so if $S_{jk}=E_j^2E_k-[2]E_jE_kE_j+E_{k}E_j^2=0$, then
	\begin{align*}
	T_{i,1}'(E_j)^{2}T_{i,1}'(E_k)&-[2]T_{i,1}'(E_j)T_{i,1}'(E_k)T_{i,1}'(E_j)+T_{i,1}'(E_k)T_{i,1}'(E_j)^{2}\\
	&=S_{jk}E_{i}-(-1)^{p_Y(i)p_Y(k)}q^{Y_{k,k+1}}E_{i}S_{jk}=0
	\end{align*}
	
	The remaining cases are when we have $k=i$ or $j=i$. It will be convenient to
	observe the following commutation relation: if $\ell\sim i$, then 
	using \eqref{eq:UX commutator} and \eqref{eq:qint to parity},
	we observe that
	\begin{align*}
	T_{i,1}'(E_{X,\ell})T_{i,1}'(E_{X,i})
	&=q^{Y_{ii}+Y_{i\ell}}(-1)^{p(i)+p(i)p(\ell)}T_{i,1}'(E_{X,i})T_{i,1}'(E_{X,\ell})-(-1)^{p(i)+p(i)p(\ell)}q^{Y_{ii}+Y_{i\ell}}E_{\ell}.
	\end{align*}
	
	First, suppose $k=i$.
	Note that either $p_X(i)=0$ and hence $p(j)=p(i)=0$, or
	$p_X(i)=1=p(i)=p(j)$. The first case is a $\UU_q(\fsl(3))$ calculation, which is known. 
	Therefore we may assume $p(i)=p(j)=1$, which implies that $Y_{ii}=0$, and $Y_{ij}=1$.
	Note that
	\begin{align*}
	T_{i,1}'(E_{X,j})E_j&=q^{-1}E_j T_{i,1}'(E_{X,j})
	\end{align*}
	Then we compute that 
	\begin{align*}
	T_{i,1}'(E_{X,j})^2T_{i,1}'(E_{X,i})
	&=qT_{i,1}'(E_{X,j})T_{i,1}'(E_{X,i})T_{i,1}'(E_{X,j})
	-E_{j}T_{i,1}'(E_{X,j})\\
	&=q^{2}T_{i,1}'(E_{X,i})T_{i,1}'(E_{X,j})^2
	-(q^{2}+1)E_{j}T_{i,1}'(E_{X,j})\\
	\end{align*}
	\begin{align*}
	T_{i,1}'(E_{X,j})T_{i,1}'(E_{X,i})T_{i,1}'(E_{X,j})
	&=qT_{i,1}'(E_{X,i})T_{i,1}'(E_{X,j})^2-qE_{j}T_{i,1}'(E_{X,j}).
	\end{align*}
	It is now easy to verify that \[T_{i,1}'(E_{X,j})^2T_{i,1}'(E_{X,i})-(q+q^{-1})T_{i,1}'(E_{X,j})T_{i,1}'(E_{X,i})T_{i,1}'(E_{X,j})+T_{i,1}'(E_{X,i})T_{i,1}'(E_{X,j})^2=0.\]
	
	The last case is when $j=i$; in particular, note that
	$p(i)=0$ hence $Y_{ii}=2$ and $Y_{ik}=-1$.
	Now we observe that
	\begin{align*}
	T_{i,1}'(E_{X,i})E_k&=qE_k T_{i,1}'(E_{X,i})
	\end{align*}
	In this case,
	\begin{align*}
	T_{i,1}'(E_{X,k})T_{i,1}'(E_{X,i})^2
	&=qT_{i,1}'(E_{X,i})T_{i,1}'(E_{X,k})T_{i,1}'(E_{X,i})
	-T_{i,1}'(E_{X,i})E_{k}\\
	&=q^{2}T_{i,1}'(E_{X,i})^2T_{i,1}'(E_{X,k})
	-(q^{2}+1)T_{i,1}'(E_{X,i})E_{k}\\
	\end{align*}
	\begin{align*}
	T_{i,1}'(E_{X,i})T_{i,1}'(E_{X,k})T_{i,1}'(E_{X,i})
	&=qT_{i,1}'(E_{X,i})^2T_{i,1}'(E_{X,j})-qT_{i,1}'(E_{X,i})E_{k}.
	\end{align*}
	Then we see that
	\[T_{i,1}'(E_{X,i})^2T_{i,1}'(E_{X,k})-(q+q^{-1})T_{i,1}'(E_{X,i})T_{i,1}'(E_{X,k})T_{i,1}'(E_{X,i})
	+T_{i,1}'(E_{X,k})T_{i,1}'(E_{X,i})^2=0.\]

\end{proof}	

Finally, we must verify \eqref{eq:UX odd serre E} and \eqref{eq:UX odd serre F}.
As these relations are so long to state, it will help to introduce the following notation.
For $t\in \set{0,1}$,
let $S_t(x_1,x_2, x_3)\in \Qq\ang{x_1,x_2,x_3}$ be the polynomial in three non-commuting
variables given by 
\[S_t(x_1,x_2,x_3)=[2]x_2x_1x_3x_2-(-1)^t(x_2x_3x_2x_1+x_1x_2x_3x_2
-x_2x_1x_2x_3-x_3x_2x_1x_2).\]
Then for instance, we see that \eqref{eq:UX odd serre E} is the statement that
$S_{p(i)}(E_{i},E_j,E_{k})=0$ for $i\sim j\sim k$ with $i\neq k$.
\begin{lem}
Let $i,j,k,\ell\in I$ with $j\sim k\sim \ell$, $j\neq \ell$, and $p_X(k)=1$. Then
\[S_{p_X(j)}(T_{i,1}'(E_{X,j}),T_{i,1}'(E_{X,k}),T_{i,1}'(E_{X,\ell}))=
S_{p_X(j)}(T_{i,1}'(F_{X,j}),T_{i,1}'(F_{X,k}),T_{i,1}'(F_{X,\ell}))=0.\]
\end{lem}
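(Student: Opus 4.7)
Following the pattern of the preceding lemmas, the approach is direct case-by-case verification. By \eqref{eq:braids and invols}, applying $\rho$ interchanges the $E$ and $F$ identities, so it suffices to prove the $E$ version. Assume without loss of generality that $j=k-1$ and $\ell=k+1$. If $i\notin\{k-2,k-1,k,k+1,k+2\}$, then $T_{i,1}'$ acts on each of $E_{X,j},E_{X,k},E_{X,\ell}$ as the identity (up to the relabeling $E_{X,*}\mapsto E_{Y,*}$), and the identity in question reduces to \eqref{eq:UX odd serre E} applied in $\UU(Y)$. Five cases remain.

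In the \emph{edge} cases $i=k\pm 2$, the braid operator touches only a single endpoint, turning it into a $q$-commutator with $E_i$; the middle $E_{X,k}$ and the other endpoint are untouched. Since $E_i$ commutes on the nose with both $E_k$ and the untouched endpoint (as neither is connected to $i$), the $E_i$-dependence can be pulled out and collected, and the remaining expression in $E_j,E_k,E_\ell$ vanishes by \eqref{eq:UX odd serre E} in $\UU(Y)$. In the \emph{near-central} cases $i\in\{j,\ell\}$, by the evident symmetry it suffices to treat $i=j$: one endpoint becomes the Cartan-like element $-(-1)^{p_Y(i)}K_i^{-1}F_i$, the middle generator becomes a $q$-commutator of $E_k$ with $E_i$, and the other endpoint is untouched. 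Substitution followed by systematic use of \eqref{eq:UX commutator} together with the even and odd Serre relations in $\UU(Y)$ yields the identity after a calculation in the spirit of the preceding lemmas, with case subdivisions dictated by the possible values of $p_X(j)$ and $p_X(\ell)$ compatible with $p_X(k)=1$.

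The main obstacle is the \emph{central} case $i=k$. Here $E_{X,k}$ is replaced by $-K_k^{-1}F_k$, both endpoints are transformed into $q$-commutators with $E_k$, and by \eqref{eq:parity and involutions} the parities shift: $p_Y(j)=p_X(j)+1$ and $p_Y(\ell)=p_X(\ell)+1$. Consequently the parity label of the assumed relation does not match the naturally occurring label of any Serre relation in $\UU(Y)$, so a direct match is impossible. My plan is to expand $S_{p_X(j)}$ applied to the substituted expressions, move all $K_k$ factors to one side using \eqref{eq:UX weights}, absorb each $F_k$ into an adjacent $E_j$ or $E_\ell$ via \eqref{eq:UX commutator}, and carefully track the signs arising from the parity shift. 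The resulting polynomial expression in $E_j,E_k,E_\ell$ should then collapse to zero by a tailored linear combination of the Serre relations in $\UU(Y)$; identifying that combination and correctly normalizing the signs produced by the parity shift is the most delicate step.
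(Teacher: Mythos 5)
Your case decomposition ($i$ far from $\{j,k,\ell\}$, $i=k\pm2$, $i\in\{j,\ell\}$, $i=k$) is exactly the paper's, and your handling of the trivial and edge cases matches. Your $\rho$-shortcut for deducing the $F$-identities from the $E$-identities is a small deviation from the paper (which just says ``the $E$ and $F$ cases are similar'') and does actually work, though it is slightly glossed: a direct computation shows $\rho\bigl(S_t(x_1,x_2,x_3)\bigr)=S_{1-t}\bigl(\rho(x_3),\rho(x_2),\rho(x_1)\bigr)$, so applying $\rho$ to $S_{p_X(j)}(T_{i,-1}'E_j,\ldots)=0$ — which you get from the $T_{i,1}'$ statement via conjugation by $\barmap$ — produces the $F$-statement with $j$ and $\ell$ swapped and index $1-p_X(j)=p_X(\ell)$, which is exactly the $F$-version of the lemma for the complementary choice of $(j,\ell)$.

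The genuine gap is that the near-central case $i\in\{j,\ell\}$ and the central case $i=k$ are only outlined, not verified, and these are the entire substance of the lemma. In the paper, the near-central case already splits into two subcases by the value of $p(i)$ with two distinct Serre-relation manipulations, and the central case is an explicit expansion of all five monomials $x_{i\ell ij}, x_{ji\ell i}, x_{iji\ell}, x_{\ell iji}, x_{ij\ell i}$ in terms of $K_k^{-1}F_k$ and the $E$'s, with the crucial feature that the pure-$E$ ``error terms'' produced by commuting $K_k^{-1}F_k$ past the quantum commutators cancel in the signed sum. Your observation that the parity label $p_X(j)$ of the target relation does not match the ambient Serre label $p_Y(j)=p_X(j)+1$ in $\UU(Y)$ is accurate and correctly identifies why no short reduction is possible, but ``identifying that combination and correctly normalizing the signs... is the most delicate step'' is precisely the part that constitutes the proof and is not carried out. (A minor further imprecision: you propose to ``absorb each $F_k$ into an adjacent $E_j$ or $E_\ell$'' via the commutator relation, but $F_k$ supercommutes with $E_j$ and $E_\ell$ for $j,\ell\ne k$; the nontrivial commutators are with the internal $E_k$ factors appearing inside $T_{k,1}'(E_j)$ and $T_{k,1}'(E_\ell)$.) As it stands, this is a correct plan that mirrors the paper's structure but leaves the main computation unfinished.
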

\begin{proof}
Again, the $E$ and $F$ cases are similar, so we will only prove the $E$ case.
We also note that necessarily $p(j)p(\ell)=0$.
Furthermore, if none of $j,k,\ell$ are connected or equal to $i$, then
this is trivial to verify.

First suppose $k$ is not connected or equal to $i$,
but $j$ or $\ell$ is connected to $i$; without loss of generality, we can assume $j$ is connected
to $i$. Since $p_X(k)=p(k)=1$, in this case we necessarily have $p(i)=0$ and $Y_{ij}=-1$.
Then $T_{i,1}'(E_{k,X})=E_k$, $T_{i,1}'(E_\ell)=E_\ell$, $T_{i,1}'(E_{X,j})=E_jE_i-q^{-1}E_iE_j$.
Since $E_i$ commutes with $E_k,E_\ell$, we see that
\[S_{p(j)}(T_{i,1}'(E_{X,j}),T_{i,1}'(E_{X,k}),T_{i,1}'(E_{X,\ell}))
=S_{p(j)}(E_{j},E_k,E_{\ell})E_i-q^{-1}E_iS_{p(j)}(E_{j},E_k,E_{\ell})=0.\]

Next, suppose $k\sim i$ and assume wlog that $j=i$.
Observe that since $p(k)=1+p(i)$, $p(i)p(k)=0$ and $Y_{ik}=-1$.
Then $T_{i,1}'(E_{k,X})=E_kE_i-q^{-1}E_iE_k$, $T_{i,1}'(E_\ell)=E_\ell$, 
and $T_{i,1}'(E_{X,i})=-(-1)^{p(i)}K_i^{-1}F_i$.
Note the identities
\[T_{i,1}'(E_{X,i})T_{i,1}'(E_{X,\ell})=T_{i,1}'(E_{X,\ell})T_{i,1}'(E_{X,i}),\]
\begin{align*}
T_{i,1}'(E_{X,k})T_{i,1}'(E_{X,i})&=(-1)^{p(i)}q^{Y_{ii}-1}T_{i,1}'(E_{X,i})T_{i,1}'(E_{X,k})
-(-1)^{p(i)}q^{Y_{ii}-1}E_{j}.
\end{align*}

Now let $x_{abcd}=T_{i,1}'(E_{X,a})T_{i,1}'(E_{X,b})T_{i,1}'(E_{X,c})T_{i,1}'(E_{X,d})$.
Then
\[x_{k\ell ki}
=(-1)^{p(i)}q^{Y_{ii}-1}x_{ki\ell k}-(-1)^{p(i)}q^{Y_{ii}-1}T_{i,1}'(E_{X,k})E_{\ell}E_{k}.\]
\[x_{ik\ell k}
=(-1)^{p(i)}q^{1-Y_{ii}}x_{ki\ell k}+E_{j}E_{\ell}T_{i,1}'(E_{X,k}).\]
\[x_{kik\ell}
=T_{i,1}'(E_{X,k})E_kE_\ell.\]
\[x_{\ell kik}
=E_{\ell}T_{i,1}'(E_{X,j})E_j\]
We want to show that 
$x_{k\ell ki}+x_{ik\ell k}-x_{kik\ell}-x_{\ell kik}=(-1)^{p(i)}[2]x_{ki\ell k}$.
Now the calculation proceeds slightly differently depending on $p(i)$,
so we split into subcases.
Suppose first $p(i)=0$, and so $p(k)=1$, $Y_{ii}=2$, and $Y_{ik}=-1$.
Then we have

\[x_{k\ell ki}
=qx_{ki\ell k}-qE_kE_iE_{\ell}E_{k}+E_iE_kE_{\ell}E_k.\]
\[x_{ij\ell j}
=q^{-1}x_{ki\ell k}+E_{k}E_{\ell}E_kE_i-q^{-1}E_kE_iE_\ell E_k.\]
\[x_{kik\ell}
=E_kE_iE_kE_\ell.\]
\[x_{\ell kik}
=E_{\ell}E_kE_iE_k\]
In particular, we see that
\[x_{k\ell ki}+x_{ik\ell k}-x_{kik\ell}-x_{\ell kik}=[2]x_{ki\ell k}-S_{p(i)}(E_i, E_k, E_\ell)=[2]x_{ki\ell k}.\]

Now suppose that $p(i)=1$, in which case $p(k)=0$, $Y_{ii}=0$, and $Y_{ik}=-1$.
Then we have
\begin{align*}
x_{k\ell ki}
&=-q^{-1}x_{ki\ell k}+q^{-1}E_kE_iE_{\ell}E_{k}-q^{-2}E_iE_kE_{\ell}E_k.\\
x_{ik\ell k}
&=-qx_{ki\ell k}+E_{k}E_{\ell}E_kE_i-q^{-1}E_kE_iE_\ell E_k.\\
x_{kik\ell}
&=E_kE_iE_kE_\ell-q^{-1}E_iE_k^2E_\ell.\\
x_{\ell kik}
&=E_{\ell}E_kE_iE_k-q^{-1}E_iE_\ell E_k^2
\end{align*}
Now observe that using the Serre relation \eqref{eq:UX even serre E} repeatedly,
\[E_{k}E_{\ell}E_kE_i-E_\ell E_kE_iE_k-E_{k}E_{i}E_kE_\ell=-E_iE_k^{(2)}E_\ell -E_iE_\ell E_k^{(2)}=-E_iE_kE_\ell E_k;\]
Then in particular,
\[x_{k\ell ki}+x_{ik\ell k}-x_{kik\ell}-x_{\ell kik}
=-[2]x_{ki\ell k}-q^{-1}E_i(E_k^{2}E_\ell-[2]E_kE_\ell E_k+E_\ell E_k^{2})=-[2]x_{ki\ell k}.\]

Finally, suppose $k=i$. We can assume without loss of generality that 
$p(j)=0=p_X(\ell)$ and $p(\ell)=1=p_X(j)$, hence
$Y_{ii}=0$, $Y_{ij}=-1$, and $Y_{i\ell}=1$.
Then 
$T_{i,1}'(E_{X,j})=E_jE_i-q^{-1}E_iE_j$, 
$T_{i,1}'(E_\ell)=E_\ell E_i+qE_iE_\ell$, 
and $T_{i,1}'(E_{X,i})=K_i^{-1}F_i$, 
and we have the identities
\begin{align*}
T_{i,1}'(E_{X,j})T_{i,1}'(E_{X,i})&=-q^{-1}T_{i,1}'(E_{X,i})T_{i,1}'(E_{X,j})
+q^{-1}E_{j}.
\end{align*}
\begin{align*}
T_{i,1}'(E_{X,\ell})T_{i,1}'(E_{X,i})&=qT_{i,1}'(E_{X,i})T_{i,1}'(E_{X,\ell})
-qE_{\ell}.
\end{align*}
\begin{align*}
E_lT_{i,1}'(E_{X,i})&=qT_{i,1}'(E_{X,i})E_l
\end{align*}
\begin{align*}
E_jT_{i,1}'(E_{X,i})&=q^{-1}T_{i,1}'(E_{X,i})E_j
\end{align*}
Again, let $x_{abcd}=T_{i,1}'(E_{X,a})T_{i,1}'(E_{X,b})T_{i,1}'(E_{X,c})T_{i,1}'(E_{X,d})$.
Then
\begin{align*}
x_{i\ell ij}
&=K_i^{-1}F_i(-qE_jE_\ell E_i+ E_\ell E_iE_j),\\
x_{ji\ell i}
&=-qT_{i,1}'(E_{X,j})T_{i,1}'(E_{X,i})E_\ell=K_i^{-1}F_i(E_jE_iE_\ell-q^{-1}E_iE_jE_\ell)-E_jE_\ell,\\
x_{iji\ell}
&=K_i^{-1}F_i(q^{-1}E_jE_lE_i+E_jE_iE_\ell),\\
x_{\ell iji}
&=q^{-1}T_{i,1}'(E_{X,\ell})T_{i,1}'(E_{X,i})E_j=K_i^{-1}F_i(E_\ell E_i E_j+qE_i E_j E_\ell)-E_jE_\ell,\\
x_{ij\ell i}
&=qx_{iji\ell}-qK_i^{-1}F_i(E_jE_i-q^{-1}E_iE_j)E_\ell=K_i^{-1}F_i(E_jE_\ell E_i+E_iE_jE_\ell).
\end{align*}
Then we compute that
\begin{align*}
x_{i\ell ij}+x_{ji\ell i}-x_{iji\ell}-x_{\ell iji}&=K_i^{-1}F_i((-q-q^{-1})E_jE_\ell E_i+(-q-q^{-1}E_iE_jE_\ell)\\
&=-[2]x_{ji\ell j}.
\end{align*}
This finishes the proof
\end{proof}

We have now finished the proof that the maps $T_{i,e}'$ and $T_{i,e}''$ are algebra
isomorphisms for all $i\in I$ and $e=\pm 1$. Finally, we will show
that they satisfy the braid relations of type $A$.

\begin{lem}\label{lem:disc braid rel}
Let $i\nsim j\in I$, and $X\in\cA$.
Let $Y=i\cdot X$, $Y'=j\cdot X$, and $Z=j\cdot Y=i\cdot Y'$.
Then as maps $\UU(X)\rightarrow \UU(Z)$, $T_{i,e}'T_{j,e}'=T_{j,e}'T_{i,e}'$
and $T_{i,e}''T_{j,e}''=T_{j,e}''T_{i,e}''$.
\end{lem}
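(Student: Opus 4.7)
The plan is to verify each identity on the generators of $\UU(X)$, since both sides are algebra isomorphisms $\UU(X) \to \UU(Z)$ by Theorem \ref{thm:braids}. Using the relations \eqref{eq:braids and invols}, the $T''$ identity follows from the $T'$ identity via $\tau$-conjugation, and the $e = -1$ cases follow from the $e = 1$ cases via $\barmap$-conjugation; thus it suffices to establish $T'_{i,1}T'_{j,1} = T'_{j,1}T'_{i,1}$ on each generator. The $K$-case is immediate from the formulas, and the $F$-case is entirely analogous to the $E$-case, so I focus on $E_{X,k}$ for $k \in I$.

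I would split into cases according to how $k$ sits relative to $i$ and $j$. When $k$ is unrelated to both (i.e.\ $k \neq i,j$ and $k \nsim i, k \nsim j$), or related to only one, the verification is essentially trivial: $T'_{j,1}$ fixes $E_{X,k}$ and every generator appearing in $T'_{i,1}(E_{X,k})$ (or symmetrically), because by \eqref{eq:parity and involutions} the $j$-swap leaves untouched the parities and matrix entries at indices disconnected from $j$. The genuinely nontrivial case is when $k$ is a common neighbor of both $i$ and $j$; since $i \nsim j$, this forces $|i-j| = 2$ and $k = (i+j)/2$.

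In this remaining case, I would expand $T'_{j,1}T'_{i,1}(E_{X,k})$ and $T'_{i,1}T'_{j,1}(E_{X,k})$ using Theorem \ref{thm:braids}, obtaining sums over the six orderings of $E_{Z,i}, E_{Z,j}, E_{Z,k}$ (which collapse to four distinct monomials once one invokes $E_{Z,i}E_{Z,j} = E_{Z,j}E_{Z,i}$ from \eqref{eq:UX disc serre}), and then match coefficients term by term. After simplification, this reduces to verifying the two scalar identities $p_Y(i)p_Y(k) \equiv p_Z(i)p_Z(k) \pmod 2$ and $Y_{ik} = Z_{ik}$, together with the analogous identities obtained by swapping $(i, Y) \leftrightarrow (j, Y')$.

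The main obstacle is establishing these coefficient matchings, but it hinges on a single clean observation: when $|i - j| = 2$ one has $p_X(i)p_X(j) = 0$, because the set $I_{X,1}$ of odd indices consists of at most two consecutive integers and so cannot contain both $i$ and $j$. Granting this, tracking the parity changes via \eqref{eq:parity and involutions}, one computes $p_Z(k) - p_Y(k) = p_X(j)$ and $p_Z(i) = p_Y(i) = p_X(i)$, whence $p_Y(i)p_Y(k) - p_Z(i)p_Z(k) \equiv p_X(i)p_X(j) \equiv 0 \pmod 2$; the matrix-entry identity $Y_{ik} = Z_{ik}$ then follows from \eqref{eq:parity and matrix entries}, and the symmetric identities are immediate. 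Assembling these matches the four coefficients and completes the proof.
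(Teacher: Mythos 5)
Your proposal is correct and follows essentially the same route as the paper: reduce via \eqref{eq:braids and invols} to $T'_{i,1}$, check on generators, dispose of the cases where $k$ is connected or equal to at most one of $i,j$ by noting $T'_{j,1}$ fixes the relevant generators, and in the remaining case ($k$ connected to both $i$ and $j$) expand and match coefficients by establishing $p_Y(i)p_Y(k)=p_Z(i)p_Z(k)$, $Y_{ik}=Z_{ik}$, and the symmetric identities. The one place you go beyond the paper's write-up is in explicitly justifying \emph{why} $p_X(i)p_X(j)=0$: you observe that $I_{X,1}$ is a set of at most two consecutive indices, which cannot contain both $i$ and $j$ when $i\nsim j$. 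The paper's proof simply asserts the equivalent fact $p_Z(i)p_Z(j)=0$ without comment, so your remark is a useful clarification rather than a different approach. One cosmetic caveat: your phrase ``related to only one'' should be read as ``connected to or equal to exactly one,'' since $k=i$ (or $k=j$) also falls in the easy case; the paper handles this explicitly, and your argument works for it as well since $T'_{j,1}$ still fixes every generator in $T'_{i,1}(E_{X,i})$ and $p_Y(i)=p_Z(i)$.
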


\begin{proof}
Again, it suffices to prove this for $T_{i,1}'$, as the other cases follow from this one.
To verify that $T_{i,1}'T_{j,1}'(u)=T_{j,1}'T_{i,1}'(u)$ for all $u\in \UU(X)$,
it suffices to check in the cases $u=E_{X,k}, F_{X,k}, K_{X,k}$ for some $k\in I$.

Let $k\in I$. Note that if $k$ is not connected to $i$ or $j$,
then the statement is obvious. Suppose that $k$ is connected or equal to exactly
one of $i$ or $j$; without loss of generality, let's suppose
$k$ is connected or equal to $j$. Then for $L\in\set{E,F,K}$,
note that $T_{j,1}'(L_k)$ is a polynomial in $K_k^{\pm 1}$, $E_k$, $F_k$, $E_j$, and $F_j$,
all of which are ``fixed'' (i.e. mapped to the corresponding generator) 
by $T_{i,1}'$, hence $T_{i,1}'T_{j,1}'(L_k)=T_{j,1}'(L_k)=T_{j,1}'T_{i,1}'(L_k)$.

Finally, suppose $k$ is connected to both $i$ and $j$.
Then 
\begin{align*}
T_{i,1}'T_{j,1}'(E_k)&=T_{i,1}'(E_kE_j-(-1)^{p_{Y'}(j)p_{Y'}(k)}q^{Y'_{jk}}E_jE_k)\\
&=E_kE_iE_j-(-1)^{p_Z(i)p_Z(k)}q^{Z_{ik}}E_iE_kE_j-(-1)^{p_{Y'}(j)p_{Y'}(k)}q^{Y'_{jk}}E_jE_kE_i\\
&+(-1)^{p_{Y'}(j)p_{Y'}(k)+p_Z(i)p_Z(k)}q^{Y'_{jk}+Z_{ik}}E_iE_jE_k
\end{align*}
\begin{align*}
T_{j,1}'T_{i,1}'(E_k)&=T_{j,1}'(E_kE_i-(-1)^{p_{Y}(i)p_{Y}(k)}q^{Y_{ik}}E_iE_k)\\
&=E_kE_iE_j-(-1)^{p_Z(j)p_Z(k)}q^{Z_{jk}}E_jE_kE_i-(-1)^{p_{Y}(i)p_{Y}(k)}q^{Y_{ik}}E_iE_kE_j\\
&+(-1)^{p_{Y}(i)p_{Y}(k)+p_Z(j)p_Z(k)}q^{Y_{ik}+Z_{jk}}E_iE_jE_k
\end{align*}
Now, observe that $p_Y(i)=p_Z(i)$, $p_Y(k)=p_Z(k)+p_Z(j)$, and $p_Z(i)p_Z(j)=0$. 
Then one finds that $p_{Y}(i)p_{Y}(k)=p_{Z}(i)p_{Z}(k)$ and hence $Y_{ik}=Z_{ik}$.
Replacing $i$ with $j$ and $Y$ with $Y'$, we find $p_{Y'}(j)p_{Y'}(k)=p_Z(j)p_Z(k)$
and $Y'_{jk}=Z_{jk}$, and thus $T_{i,1}'T_{j,1}'(E_k)=T_{j,1}'T_{i,1}'(E_k)$. Similar arguments
prove that $T_{i,1}'T_{j,1}'(F_k)=T_{j,1}'T_{i,1}'(F_k)$ and $T_{i,1}'T_{j,1}'(K_k)=T_{j,1}'T_{i,1}'(K_k)$.
\end{proof}

\begin{lem}\label{lem:root transpositions}
	Let $i\sim j\in I$. 
	Then \[T_{i,e}'T_{j,e}'(E_{X,i})=T_{i,e}''T_{j,e}''(E_{X,i})=E_{Z,j},\] \[T_{i,e}'T_{j,e}'(F_{X,i})=T_{i,e}''T_{j,e}''(F_{X,i})=F_{Z,j},\text{ and}\]  \[T_{i,e}'T_{j,e}'(K_{X,i})=T_{i,e}''T_{j,e}''(K_{X,i})=K_{Z,j}.\]
\end{lem}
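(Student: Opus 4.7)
By the identities in \eqref{eq:braids and invols}, it suffices to prove the single statement $T_{i,1}'T_{j,1}'(u) = u'$ for $u \in \set{E_{X,i}, F_{X,i}, K_{X,i}}$ with corresponding $u' \in \set{E_{Z,j}, F_{Z,j}, K_{Z,j}}$; the remaining cases then follow by conjugating with the involutions $\tau$, $\barmap$, and $\rho$. Indeed, since $\tau$ fixes $E_{X,i}$ and $F_{X,i}$ and sends $K_{X,i}$ to $(-1)^{p_X(i)}K_{X,i}^{-1}$, the relation $T_{i,e}' = \tau T_{i,e}'' \tau$ transfers each $T'$ statement to the corresponding $T''$ statement (using the parity equality $p_Z(j) = p_X(i)$ for the $K$ case to cancel the incidental signs); similarly, $T_{i,-e}' = \barmap T_{i,e}' \barmap$ handles the sign of $e$, and applying $\rho$ swaps the $E$ and $F$ statements.

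For the $K$ case, $T_{j,1}'(K_{X,i}) = (-1)^{p_{Y'}(i)p_{Y'}(j)}K_{Y',i}K_{Y',j}$; subsequently applying $T_{i,1}'$ substitutes $K_{Y',i} \mapsto (-1)^{p_Z(i)}K_{Z,i}^{-1}$ and $K_{Y',j} \mapsto (-1)^{p_Z(i)p_Z(j)}K_{Z,i}K_{Z,j}$. Tracking parities via \eqref{eq:parity and involutions} gives $p_{Y'}(i) = p_X(i)+p_X(j)$, $p_{Y'}(j)=p_X(j)$, $p_Z(i) = p_X(i)+p_X(j)$, and $p_Z(j)=p_X(i)$, whence the accumulated sign factors telescope to $1$ and the product simplifies to $K_{Z,j}$.

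The heart of the argument is the $E$ case. Applying $T_{j,1}'$ yields
\[T_{j,1}'(E_{X,i}) = E_{Y',i}E_{Y',j} - (-1)^{p_{Y'}(i)p_{Y'}(j)}q^{Y'_{ij}}E_{Y',j}E_{Y',i}.\]
Subsequently applying $T_{i,1}'$ substitutes $E_{Y',i} \mapsto -(-1)^{p_Z(i)}K_{Z,i}^{-1}F_{Z,i}$ and $E_{Y',j} \mapsto E_{Z,j}E_{Z,i} - (-1)^{p_Z(i)p_Z(j)}q^{Z_{ij}}E_{Z,i}E_{Z,j}$, producing a four-term expression in $\UU(Z)$. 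Using the super-commutation relation
\[E_{Z,i}F_{Z,i} - (-1)^{p_Z(i)}F_{Z,i}E_{Z,i} = \frac{K_{Z,i}-K_{Z,i}^{-1}}{q-q^{-1}},\]
together with the supercommutativity of $E_{Z,j}$ with $F_{Z,i}$ (since $i \neq j$) and the weighting identity $K_{Z,i}E_{Z,j} = q^{Z_{ij}}E_{Z,j}K_{Z,i}$, the extraneous terms cancel pairwise and what remains is a single expression proportional to $E_{Z,j}(K_{Z,i}-K_{Z,i}^{-1})/(q-q^{-1})$ with a scalar coefficient depending on $Y'_{ij}$, $Z_{ij}$, and the various parities. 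Invoking \eqref{eq:qint to parity} to convert these $q$-factors into sign factors, the coefficient collapses to $1$, yielding $E_{Z,j}$.

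The main obstacle is managing the case analysis in parities: the pair $(p_X(i), p_X(j))$ can be $(0,0)$ (recovering the classical $\fsl_3$ braid relation), $(1,0)$ or $(0,1)$ (a mixed ``odd reflection'' case), or $(1,1)$ (both simple roots odd, as occurs for instance at the middle GCM in the $m=4$ example of Figure \ref{fig:gl(4|1) and S_5 action}). Rather than verify each case separately, we track signs uniformly via \eqref{eq:parity and involutions}; the identity \eqref{eq:qint to parity}, which converts $q$-powers indexed by matrix entries into signs indexed by parities, is precisely the tool that allows the final scalar to simplify cleanly across all parity configurations.
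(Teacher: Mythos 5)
Your proposal is correct and follows essentially the same route as the paper: the reduction to $T_{i,1}'$ via \eqref{eq:braids and invols} is exactly the reduction the paper states just after \eqref{eq:braids and invols}, and the core $E$-case computation—substitute the rank-one images, apply the supercommutator relation, then invoke \eqref{eq:qint to parity} to collapse the scalar—is the same calculation carried out in the paper's proof. The one small economy you introduce is deducing the $F$ statement from the $E$ statement by $\rho$-conjugation rather than verifying it directly; this is sound, though note your intermediate description ``proportional to $E_{Z,j}(K_{Z,i}-K_{Z,i}^{-1})/(q-q^{-1})$'' is slightly off — the prefactor $K_{Z,i}^{-1}$ coming from $T_{i,1}'(E_{Y,i})$ is what makes the $K$-dependence cancel entirely, leaving a pure scalar multiple of $E_{Z,j}$ before \eqref{eq:qint to parity} is applied.
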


\begin{proof}
	We have $T_{j,1}'(E_{X,i})=E_{Y,i}E_{Y,j}-(-1)^{p_Y(j)p_Y(i)}q^{Y_{ij}}E_{Y,j}E_{Y,i}$, so
	\begin{align*}
		T_{i,1}'T_{j,1}'(E_{X,i})&=(-(-1)^{p_Z(i)}K_{Z,i}^{-1}F_{Z,i})(E_{Z,j}E_{Z,i}-(-1)^{p_Z(i)p_Z(j)}q^{Z_{ij}}E_{Z,i}E_{Z,j})
		\\
		&\hspace{1em}+(-1)^{p_Z(i)p_Z(j)}q^{-Z_{ii}-Z_{ij}}(E_{Z,j}E_{Z,i}-(-1)^{p_Z(i)p_Z(j)}q^{Z_{ij}}E_{Z,i}E_{Z,j})(K_{Z,i}^{-1}F_{Z,i})\\
		&=(-1)^{p_Z(i)p_Z(j)}K_{Z,i}^{-1}(E_{Z,j}(E_{Z,i}F_{Z,i}-(-1)^{p_Z(i)}F_{Z,i}E_{Z,i})\\
		&\hspace{1em}-q^{Z_{ij}}(E_{Z,i}F_{Z,i}-(-1)^{p(i)}F_{Z,i}E_{Z,i})E_{Z,j})\\
		&=(-1)^{p_Z(i)p_Z(j)}K_{Z,i}^{-1}(\frac{q^{-Z_{ij}}K_{Z,i}-q^{Z_{ij}}K_{Z,i}^{-1}}{q-q^{-1}}
		-q^{Z_{ij}}\frac{K_{Z,i}-K_{Z,i}^{-1}}{q-q^{-1}})E_{Z,j}\\
		&=(-1)^{p_Z(i)p_Z(j)}\frac{q^{-Z_{ij}}-q^{Z_{ij}}
		}{q-q^{-1}}E_{Z,j}=E_{Z,j}
	\end{align*}
	Similarly, we have $T_{j,1}'(F_{X,i})=F_{Y,j}F_{Y,i}-(-1)^{p_Y(j)p_Y(i)}q^{-Y_{ij}}F_{Y,i}E_{F,j}$, so
	\begin{align*}
		T_{i,1}'T_{j,1}'(F_{X,i})&=(F_{Z,i}F_{Z,j}-(-1)^{p_Z(i)p_Z(j)}q^{-Z_{ij}}F_{Z,j}F_{Z,i})(-(-1)^{p_X(i)}E_{Z,i}K_{Z,i})
		\\
		&\hspace{1em}+(-1)^{p_Z(i)p_Z(j)}q^{Z_{ii}+Z_{ij}}(E_{Z,i}K_{Z,i})(F_{Z,i}F_{Z,j}-(-1)^{p_Z(i)p_Z(j)}q^{-Z_{ij}}F_{Z,j}F_{Z,i})\\
		&=(-1)^{p_Z(i)p_Z(j)}((E_{Z,i}F_{Z,i}-(-1)^{p_Z(i)}F_{Z,i}E_{Z,i})F_{Z,j}\\
		&\hspace{1em}-q^{-Z_{ij}}F_{Z,j}(E_{Z,i}F_{Z,i}-(-1)^{p(i)}F_{Z,i}E_{Z,i}))K_{Z,i}\\
		&=(-1)^{p_Z(i)p_Z(j)}F_{Z,j}(\frac{q^{-Z_{ij}}K_{Z,i}-q^{Z_{ij}}K_{Z,i}^{-1}}{q-q^{-1}}
		-q^{-Z_{ij}}\frac{K_{Z,i}-K_{Z,i}^{-1}}{q-q^{-1}})K_{Z,i}\\
		&=(-1)^{p_Z(i)p_Z(j)}\frac{q^{-Z_{ij}}-q^{Z_{ij}}
		}{q-q^{-1}}F_{Z,j}=F_{Z,j}
	\end{align*}
	Finally, note that we have $T_{i,1}'T_{j,1}'(K_{X,i})=(-1)^{p_{Y}(i)p_Y(j)+p_Z(i)+p_Z(i)p_Z(j)}K_{Y,j}$,
	and observe that $p_{Y}(i)p_Y(j)=p_Z(i)+p_Z(i)p_Z(j)$ since $Y=i\cdot Z$.
\end{proof}

\begin{lem}\label{lem:conn braid rel}
	Let $i,j\in I$ with $j=i\pm 1$ and $W\in\cA$, and set $X=i\cdot W$ and $X'=j\cdot W$,
	$Y=j\cdot X$ and $Y'=i\cdot X'$, and $Z=i\cdot Y=j\cdot Y'$.
	Then as maps $\UU(W) \rightarrow \UU(Z)$, we have
	$T_{i,e}'T_{j,e}'T_{i,e}'=T_{j,e}'T_{i,e}'T_{j,e}'$
	and $T_{i,e}''T_{j,e}''T_{i,e}''=T_{j,e}''T_{i,e}''T_{j,e}''$.
\end{lem}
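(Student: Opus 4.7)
The plan is to verify the equality of algebra isomorphisms $\UU(W)\to\UU(Z)$ on the generators $E_{W,k}, F_{W,k}, K_{W,k}$ for each $k\in I$. First, using the identities \eqref{eq:braids and invols}, it suffices to establish the $T'$ braid relation for $e=1$: the case $e=-1$ follows by conjugating with the bar involution, and the $T''$ braid relation follows by conjugating with $\tau$ (or, equivalently, by inverting the $T'$ relation). Moreover, applying the anti-involution $\rho$ reduces the identity on $F_{W,k}$ to the identity on $E_{W,k}$, while the $K_{W,k}$ case is a short check using the multiplicative $K$-part of Theorem \ref{thm:braids} and the fact that $p_Y(i)p_Y(j)=p_Z(i)+p_Z(i)p_Z(j)$ (a direct consequence of \eqref{eq:parity and involutions}).

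For $E_{W,k}$, I would split into three cases based on the position of $k$ relative to $\{i,j\}$. Since $i\sim j$ in type $A$, no third index $k$ is simultaneously adjacent to both $i$ and $j$, which keeps the case analysis finite. In Case 1, $k\neq i,j$ and $k\nsim i$, $k\nsim j$: each operator fixes the corresponding generator, so both compositions equal $E_{Z,k}$ immediately. In Case 2, $k$ is adjacent to exactly one of $i,j$; by the obvious symmetry we may take $k\sim i$, $k\nsim j$. Here I would expand $T_i'T_j'T_i'(E_{W,k})$ and $T_j'T_i'T_j'(E_{W,k})$ step-by-step using Theorem \ref{thm:braids}, producing in $\UU(Z)$ a sum of ordered cubic monomials in $E_{Z,i}, E_{Z,j}, E_{Z,k}$. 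Matching the two expressions reduces to applying the Serre relations \eqref{eq:UX even serre E} or \eqref{eq:UX odd serre E} in $\UU(Z)$ for the pair $(i,k)$ and $(i,j)$; the scalar bookkeeping is controlled by \eqref{eq:parity and matrix entries}, \eqref{eq:parity and involutions}, and \eqref{eq:qint to parity}.

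Case 3 ($k=i$ or, by $i\leftrightarrow j$ symmetry, $k=j$) is the main obstacle and the most delicate step. For $k=i$, the LHS collapses cleanly using Lemma \ref{lem:root transpositions}: since $T_i'T_j'(F_{X,i})=F_{Z,j}$ and $T_i'T_j'(K_{X,i})=K_{Z,j}$ (the $F$ and $K$ statements of that lemma), one obtains
\[
T_i'T_j'T_i'(E_{W,i}) \;=\; T_i'T_j'\bigl(-(-1)^{p_X(i)}K_{X,i}^{-e}F_{X,i}\bigr)\;=\;-(-1)^{p_X(i)}K_{Z,j}^{-e}F_{Z,j}.
\]
For the RHS, first expand $T_j'(E_{W,i})=E_{X',i}E_{X',j}-(-1)^{p_{X'}(i)p_{X'}(j)}q^{eX'_{ij}}E_{X',j}E_{X',i}$ and use the variant of Lemma \ref{lem:root transpositions} with $(i,j)$ swapped, namely $T_j'T_i'(E_{X',j})=E_{Z,i}$, to reduce $T_j'T_i'T_j'(E_{W,i})$ to an expression in $E_{Z,i}$ and $T_j'T_i'(E_{X',i})$. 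Computing the latter directly from the formulas (a short two-step calculation using the $i\sim j$ rules), and then simplifying the resulting bracket using the commutator relation \eqref{eq:UX commutator} together with the parity identities in \eqref{eq:parity and involutions}, should yield the same expression $-(-1)^{p_X(i)}K_{Z,j}^{-e}F_{Z,j}$.

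The hard part will be Case 3: as in the proof of Theorem \ref{thm:braids}, one must carefully manage the different parity functions $p_X,p_{Y},p_{Z}$ on the same index set, and the calculation splits into subcases according to $p_W(i)$ and $p_W(j)$ (equivalently, according to which of $I_{W,1}, I_{X,1}, I_{Z,1}$ contain $i,j$). By Corollary \ref{cor:red words and odd roots}, not every parity configuration occurs, which prunes the case list; the remaining cases mirror the analysis performed for the Serre relations in the proof of Theorem \ref{thm:braids} and can be pushed through by the same commutator identities. Once Case 3 is settled, combining with Cases 1 and 2 and the reductions above completes the verification of both braid relations.
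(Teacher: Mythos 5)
Your overall strategy matches the paper's: reduce via \eqref{eq:braids and invols} to the $T'_{i,1}$ case and to the $E_k$-generators, split according to how $k$ sits relative to $\set{i,j}$, and invoke Lemma \ref{lem:root transpositions} in the case $k\in\set{i,j}$. However, your predictions for the internal mechanics of the two nontrivial cases are off in ways worth correcting. In Case 2 (say $k\sim j$, $k\nsim i$), the match of $T'_{i,1}T'_{j,1}T'_{i,1}(E_k)$ against $T'_{j,1}T'_{i,1}T'_{j,1}(E_k)$ does \emph{not} require the rank-two Serre relations \eqref{eq:UX even serre E}/\eqref{eq:UX odd serre E}: after using only the commutation of the disconnected pair from \eqref{eq:UX disc serre}, both sides are $\Qq$-linear combinations of the same four monomials $E_kE_jE_i$, $E_iE_kE_j$, $E_jE_iE_k$, $E_iE_jE_k$, and the verification is a scalar-matching exercise driven by the parity coincidences $p_{X'}(j)p_{X'}(k)=p_Z(i)p_Z(j)$ and $p_Y(j)p_Y(k)=p_Z(j)p_Z(k)$ (hence $X'_{jk}=Z_{ij}$ and $Y_{jk}=Z_{jk}$) coming from \eqref{eq:parity and involutions} and \eqref{eq:parity and matrix entries}. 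In Case 3, your plan for the $T'_{j}T'_{i}T'_{j}$ side --- expand $T'_j(E_{W,i})$, apply the swapped version of Lemma \ref{lem:root transpositions} to one factor, compute $T'_jT'_i(E_{X',i})$ by hand, then collapse the bracket via \eqref{eq:UX commutator} --- effectively re-derives the content of Lemma \ref{lem:root transpositions} from scratch. The paper instead applies the lemma to the inner composition directly: since $T'_{i,1}T'_{j,1}(E_{W,i})=E_{Y',j}$, one has $T'_{j,1}T'_{i,1}T'_{j,1}(E_{W,i})=T'_{j,1}(E_{Y',j})=-(-1)^{p_Z(j)}K_{Z,j}^{-1}F_{Z,j}$, which coincides with your LHS computation because $p_X(i)=p_Z(j)$. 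Notice in particular that this requires no subdivision into parity subcases and no appeal to Corollary \ref{cor:red words and odd roots}; the single identity $p_X(i)=p_Z(j)$ from \eqref{eq:parity and involutions} suffices, so the complexity you anticipate in Case 3 does not materialize.
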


\begin{proof}
	Note that it suffices to check this on the generators.
	To reduce the clutter of notation in the proof, we will drop the $W,X,Y,Z$ subscripts
	on the generators whenever the ambient space of the elements is clear from context.
	We will also freely use the identities given by Lemma \ref{lem:root transpositions}
	in the following computations.
	
	First observe that if $k$ is not connected to $i$ or $j$, 
	$T_{i,1}'T_{j,1}'T_{i,1}'$ and $T_{j,1}'T_{i,1}'T_{j,1}'$ map $E_k\mapsto E_k$,
	$F_k\mapsto F_k$, and $K_k\mapsto K_k$ hence they agree on these generators.
	
	Now suppose that $k\notin\set{i,j}$ but is connected to $i$ or $j$;
	without loss of generality, assume $k$ is connected to $j$ but not $i$.
	Then using \eqref{eq:parity and involutions}
	we note the coincidences
	\begin{equation*}
	\tag{$\star$}
	\begin{aligned}
	p_{X'}(j)p_{X'}(k)&=p_Z(i)p_Z(j)+p_Z(i)p_Z(k)=p_Z(i)p_Z(j),\\
	p_{Y}(j)p_{Y}(k)&=p_Z(i)p_Z(k)+p_Z(j)p_Z(k)=p_Z(j)p_Z(k).
	\end{aligned}
	\end{equation*}
	Using \eqref{eq:parity and matrix entries}, this immediately implies
	\[\tag{$\star\star$}X'_{jk}=Z_{ij},\quad Y_{jk}=Z_{jk}.\]
	First, let us compare the images of $K_k$. We compute that
	\[T_{i,1}'T_{j,1}'T_{i,1}'(K_k)=T_{i,1}'T_{j,1}'(K_k)=T_{i,1}'((-1)^{p_{Y}(j)p_Y(k)}K_jK_k)
	=(-1)^{p_{Z}(i)p_Z(j)+p_{Y}(j)p_Y(k)}K_iK_jK_k,\]
	\[T_{j,1}'T_{i,1}'T_{j,1}'(K_k)=(-1)^{p_{X'}(j)p_{X'}(k)}T_{j,1}'T_{i,1}'(K_j)T_{j,1}'T_{i,1}'(K_k)
	=(-1)^{p_{X'}(j)p_{X'}(k)+p_{Z}(j)p_{Z}(k)}K_iK_jK_k.\]
	Comparing in view of ($\star$), we see that
	$T_{i,1}'T_{j,1}'T_{i,1}'(K_k)=T_{j,1}'T_{i,1}'T_{j,1}'(K_k)$.
	Next, we consider the images of $E_k$. We compute
	\begin{align*}
	T_{i,1}'T_{j,1}'T_{i,1}'(E_k)&=T_{i,1}'T_{j,1}'(E_k)\\
	&=E_k(E_jE_i-(-1)^{p_{Z}(i)p_{Z}(j)}q^{Z_{ij}}E_iE_j)\\
	&\hspace{2em}-(-1)^{p_{Y}(j)p_{Y}(k)}q^{Y_{jk}}(E_jE_i-q^{Z_{ij}}(-1)^{p_{Z}(i)p_{Z}(j)}E_iE_j)E_k\\
	&=E_kE_jE_i-(-1)^{p_{Z}(i)p_{Z}(j)}q^{Z_{ij}}E_iE_kE_j-(-1)^{p_{Y}(j)p_{Y}(k)}q^{Y_{jk}}E_jE_iE_k\\
	&\hspace{2em}
	+(-1)^{p_{Y}(j)p_{Y}(k)+p_{Z}(i)p_{Z}(j)}q^{Y_{jk}+Z_{ij}}E_iE_jE_k.
	\end{align*} 
	On the other hand,
	\begin{align*}
	T_{j,1}'T_{i,1}'T_{j,1}'(E_k)&=T_{j,1}'T_{i,1}'(E_k)T_{j,1}'T_{i,1}'(E_j)-q^{X'_{jk}}(-1)^{p_{X'}(j)p_{X'}(k)}T_{j,1}'T_{i,1}'(E_j)T_{j,1}'T_{i,1}'(E_k)\\
	&=T_{j,1}'(E_k)E_i-q^{X'_{jk}}(-1)^{p_{X'}(j)p_{X'}(k)}E_iT_{i,1}'(E_k)\\
	&=(E_kE_j-(-1)^{p_Z(j)p_Z(k)}q^{Z_{jk}}E_jE_k)E_i\\
	&\hspace{2em}-q^{X'_{jk}}(-1)^{p_{X'}(j)p_{X'}(k)}E_i(E_kE_j-(-1)^{p_Z(j)p_Z(k)}q^{Z_{jk}}E_jE_k)\\
	&=E_kE_jE_i-(-1)^{p_Z(j)p_Z(k)}q^{Z_{jk}}E_jE_iE_k-(-1)^{p_{X'}(j)p_{X'}(k)}q^{X'_{jk}}E_iE_kE_j\\
	&\hspace{2em}+(-1)^{p_{X'}(j)p_{X'}(k)+p_{Z}(j)p_{Z}(k)}q^{X'_{jk}+Z_{jk}}E_iE_jE_k
	\end{align*}
	Comparing coefficients in view of the above coincidences, we see
	that \[T_{i,1}'T_{j,1}'T_{i,1}'(E_k)=T_{j,1}'T_{i,1}'T_{j,1}'(E_k).\] A similar computation
	proves 
	\[T_{i,1}'T_{j,1}'T_{i,1}'(F_k)=T_{j,1}'T_{i,1}'T_{j,1}'(F_k).\]
	
	Finally, the last case is for $k=i,j$.
	We will prove the case $k=i$, as the $k=j$ case follows from reversing
	the roles of $i$ and $j$ in the following arguments.
	First, note that
	\[T_{i,1}'T_{j,1}'T_{i,1}'(K_i)=(-1)^{p_X(i)}T_{i,1}'T_{j,1}'(K_i^{-1})=(-1)^{p_X(i)}K_j^{-1}.\]
	\[T_{j,1}'T_{i,1}'T_{j,1}'(K_i)=T_{j,1}'(K_j)=(-1)^{p_Z(j)}K_j^{-1}.\]
	Note that \eqref{eq:parity and involutions} implies $p_X(i)=p_Y(i)+p_Y(j)=p_Z(j)$,
	thus $T_{i,1}'T_{j,1}'T_{i,1}'(K_i)=T_{j,1}'T_{i,1}'T_{j,1}'(K_i)$.
	Next, let us verify that $T_{i,1}'T_{j,1}'T_{i,1}'(E_i)= T_{j,1}'T_{i,1}'T_{j,1}'(E_i)$.
	Well, by the previous lemma, \[T_{i,1}'T_{j,1}'(E_i)=E_{j},\text{ hence }
	T_{j,1}'T_{i,1}'T_{j,1}'(E_i)=-(-1)^{p_{Z}(j)}K_{j}^{-1}F_{j}.\] 
	On the other hand, $T_{i,1}'(E_i)=-(-1)^{p_{X}(i)}K_i^{-1}F_i$, and so
	\[T_{i,1}'T_{j,1}'T_{i,1}'(E_i)=-T_{i,1}'T_{j,1}'(K_i^{-1})T_{i,1}'T_{j,1}'(F_i)=-(-1)^{p_{X}(i)}K_j^{-1}F_j.\]
	Then as before, $p_X(i)=p_Z(j)$ hence $T_{j,1}'T_{i,1}'T_{j,1}'(E_i)=T_{i,1}'T_{j,1}'T_{i,1}'(E_i)$. 
	Proving the identity $T_{i,1}'T_{j,1}'T_{i,1}'(F_i)= T_{j,1}'T_{i,1}'T_{j,1}'(F_i)$ proceeds similarly.
\end{proof}

\subsection{Constructing PBW bases}

Now we turn to the problem of constructing PBW bases using our braid operators.
To that end, let us now fix $T_i=T_{i,1}''$ and $T_{i}^{-1}=T_{i,1}'$.

Let $\omega_0\in S_{m+1}$ be the longest element and let $\bI$ be the set of
$\bi=(i_1,\ldots, i_N)\in I^N$ such that
$\omega_0=s_{i_1}\ldots s_{i_N}$ is a reduced expression.
There is an associated convex order on $\Phi^+(X)=\set{\beta_1<\ldots<\beta_N}$
where $\beta_t=s_{i_1}\ldots s_{i_{t-1}}(\alpha_{i_t}^X)$
(here, the action is as in the classical case, 
as described in Remark \ref{rem:symm group action}).
In particular, let $X_{\bi;t}=i_{t-1}\cdot \ldots\cdot i_2\cdot i_1\cdot X$ (so $X_{\bi;1}=X$).
Define the root vectors $F_{\bi; \beta_t}= T_{i_1}\ldots T_{i_{t-1}}(F_{X_{\bi;t},i_t})$.
The we define the set of PBW vectors (relative to $\bf i$) to be
\begin{equation}
\bB_\bi=\set{F_{\bi; \beta_1}^{(a_1)}\ldots F_{\bi;\beta_N}^{(a_N)}\mid a_1,\ldots, a_n
\in\N, a_s<2\text{ if }p(\beta_s)=1}.
\end{equation}

\begin{lem}\label{lem:braid moves and root vectors}
Let $\bi=(i_1,\ldots, i_N)\in \bI$.
\begin{enumerate}
\item Suppose there is an $1< t\leq N$ such that $i_{t-1}$ and $i_{t}$ are not connected.
Let $\bj=(j_1,\ldots, j_N)$ where $j_s=i_s$ for $s\neq t-1,t$, $j_t=i_{t-1}$ and $j_{t-1}=i_t$.
Let $\sigma=(t-1,t)\in S_{N+1}$.
Then $\bj\in \bI$, and if $F_{\bi;\beta_s}=F_{\bj;\beta_{\sigma(s)}}$ for $1\leq s\leq N$,
and thus $F_{\bi;\beta}=F_{\bj;\beta}$ for all $\beta\in \Phi^+$.
Moreover, $F_{\bi;\beta_{t-1}}F_{\bi;\beta_{t}}=F_{\bi;\beta_{t}}F_{\bi;\beta_{t+1}}$
so $\bB_\bi=\bB_\bj$.
\item Suppose there is an $1<t<N$ such that $i_t=i$ and $i_{t+1}=i_{t-1}=j$ connected to $i$.
Let $\bj=(j_1,\ldots, j_N)$ where $j_s=i_s$ for $s\neq t-1,\ t,\ t+1$, $j_t=j$ and $j_{t\pm 1}=i$.
Let $\sigma=(t-1,t+1)\in S_{N+1}$.
Then $\bj\in \bI$ and for $s\neq t$, $F_{\bi;\beta_s}=F_{\bj;\beta_{\sigma(s)}}$ and thus
$F_{\bi;\beta}=F_{\bj;\beta}$ for all $\beta\neq\beta_t$.
Furthermore, $\beta_{t-1}+\beta_{t+1}=\beta_t$.
and setting $Z=X_{\bi;t-1}$, we have
\[F_{\bi;\beta_t}=F_{\bi;\beta_{t+1}}F_{\bi;\beta_{t-1}}-(-1)^{p(\beta_{t-1})p(\beta_{t+1})}q^{(\beta_{t-1},\beta_{t+1})}F_{\bi;\beta_{t-1}}F_{\bi;\beta_{t+1}},\]
\item For any $i\in I$, $F_{\bi;\alpha_i^X}=F_i$.
\item For any $\beta\in \Phi^+$, $F_\beta\in \UU^-(X)$, the subalgebra
generated by the $F_{i}$.
\end{enumerate}
\end{lem}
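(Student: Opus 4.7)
The plan is to establish (1)--(4) in order, leveraging the algebra-isomorphism property of the $T_i$'s (Theorem \ref{thm:braids}), the braid relations (Lemmas \ref{lem:disc braid rel} and \ref{lem:conn braid rel}), and their weight- and parity-preservation (Remark \ref{rmk:braidthm}).

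For (1), the disconnected commutation $T_{i_{t-1}}T_{i_t}=T_{i_t}T_{i_{t-1}}$ and the fact that $T_{i_{t-1}}$ fixes $F_{i_t}$ when $i_{t-1}\nsim i_t$ (visible directly from Theorem \ref{thm:braids}) together yield $F_{\bi;\beta_s}=F_{\bj;\beta_{\sigma(s)}}$ for all $s$ by direct substitution into the defining composition. Since $F_{i_{t-1}}$ and $F_{i_t}$ commute by \eqref{eq:UX disc serre}, applying the algebra homomorphism $T_{i_1}\cdots T_{i_{t-2}}$ transports this commutation to $F_{\bi;\beta_{t-1}}F_{\bi;\beta_t}=F_{\bi;\beta_t}F_{\bi;\beta_{t-1}}$, whence $\bB_\bi=\bB_\bj$ follows by reordering the adjacent commuting pair inside each PBW monomial.

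For (2), the rank-two braid relation $T_jT_iT_j=T_iT_jT_i$ of Lemma \ref{lem:conn braid rel} gives $\bj\in\bI$ and the equalities $F_{\bi;\beta_s}=F_{\bj;\beta_{\sigma(s)}}$ for $s\neq t$. The formula for $F_{\bi;\beta_t}$ is obtained by expanding $T_j(F_i)$ via Theorem \ref{thm:braids} and applying the algebra homomorphism $T_{i_1}\cdots T_{i_{t-2}}$ termwise; the two resulting factors are identified as $F_{\bi;\beta_{t-1}}$ and $F_{\bi;\beta_{t+1}}$ using the type-$A_2$ Weyl-group identity $s_js_i(\alpha_j)=\alpha_i$, which simultaneously yields $\beta_{t-1}+\beta_{t+1}=\beta_t$. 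Because the braid operators preserve parities and weights, the coefficients $(-1)^{p(i)p(j)}$ and $q^{(\alpha_j^Z,\alpha_i^Z)}$ (with $Z=X_{\bi;t-1}$) emerging from $T_j(F_i)$ translate into $(-1)^{p(\beta_{t-1})p(\beta_{t+1})}$ and $q^{(\beta_{t-1},\beta_{t+1})}$ as claimed.

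Statement (3) reduces to the tautological case $i_1=i$, since any two reduced expressions for $\omega_0$ in $S_{m+1}$ are related by a finite sequence of the braid moves analyzed in (1) and (2), each of which preserves the root vector associated to a fixed simple root. For (4) we induct on $\height(\beta)$: simple roots are handled by (3), and for non-simple $\beta$ we select $\bi\in\bI$ so that $\beta=\beta_t$ occupies a rank-two braid position as in (2)---always available in type $A$ because every non-simple positive root $\alpha_{[p,r]}$ splits as $\alpha_{[p,k]}+\alpha_{[k+1,r]}$, realizable at consecutive positions of a reduced word for $\omega_0$---and then the formula from (2) expresses $F_\beta$ as a polynomial in $F_{\bi;\beta_{t\pm 1}}\in\UU^-(X)$ by induction. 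The main obstacle is the sign- and weight-bookkeeping in part (2), where parities of simple roots vary with the underlying datum $X_{\bi;s}$, but the parity-preservation of Remark \ref{rmk:braidthm} combined with \eqref{eq:parity and involutions} and \eqref{eq:parity and matrix entries} reduces these verifications to formal identities about inner products and parities in $P$.
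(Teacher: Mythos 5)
Parts (1)--(3) of your proposal are correct and follow essentially the same reduction as the paper's proof: apply $T_{i_1}\cdots T_{i_{t-2}}$ (or its inverse, as the paper does) to collapse the claims to elementary rank-two identities, and use the braid relations (Lemmas \ref{lem:disc braid rel}, \ref{lem:conn braid rel}) together with Lemma \ref{lem:root transpositions} for the $s \neq t$ root vectors and the weight/parity bookkeeping from Remark \ref{rmk:braidthm} for the coefficients. (Incidentally, you correctly write the commutation in (1) as $F_{\bi;\beta_{t-1}}F_{\bi;\beta_t}=F_{\bi;\beta_t}F_{\bi;\beta_{t-1}}$, whereas the statement in the paper has a typo with $\beta_{t+1}$ appearing.) Part (3) also matches the paper's argument precisely: the root vector at a simple root is unchanged by every braid move, because (2) only alters the middle root vector $\beta_t = \beta_{t-1} + \beta_{t+1}$, which cannot be simple.

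Your argument for (4), however, has a genuine gap. The lemma fixes $\bi$ at the outset, and (4) asserts that $F_{\bi;\beta}\in\UU^-(X)$ \emph{for that} $\bi$; this is needed in Corollary \ref{cor:levsoib} and Lemma 4.10, which treat an arbitrary $\bi\in\bI$. When you ``select $\bi\in\bI$ so that $\beta = \beta_t$ occupies a rank-two braid position,'' you are replacing the given reduced word by a new one, call it $\bi'$, and then proving $F_{\bi';\beta}\in\UU^-(X)$. But part (2) itself shows that a type-(2) braid move at position $t$ \emph{does change} the root vector at position $t$ --- indeed $F_{\bi;\beta_t}$ and $F_{\bj;\beta_t}$ are generally distinct (cf.\ $F_{12}\neq F_{21}$ in the $m=2$ example). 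So $F_{\bi';\beta}$ need not equal $F_{\bi;\beta}$, and your conclusion does not automatically transfer back to the original $\bi$. To repair this one must either (a) show that the sequence of braid moves connecting $\bi$ to the favorable $\bi'$ can always be chosen so that $\beta$ is never the middle root of a type-(2) move --- a nontrivial combinatorial assertion about reduced words in type $A$ that you do not address --- or (b) use a different inductive structure, as in Tingley's Lemma 3.2, which the paper cites. Separately, the claim that every non-simple $\alpha_{[p,r]}$ can be placed at a type-(2) braid position is correct in type $A$, but you assert it without proof; it is easy to verify via the permutation description of convex orders, and it would strengthen the argument to say a word about it.
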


In the cases of (1) and (2) of the lemma, we will say $\bi$ and $\bj$ are braid-connected.
It is well-known that in any Coxeter group, any reduced expression can be obtained
from a given one using braid moves. In particular, note that between
any $\bi,\bj\in \bI$, there is a sequence $\bi_0=\bi,\bi_1,\ldots, \bi_t=\bj$ of $\bi_s\in \bI$
such that $\bi_{s}$ and $\bi_{s+1}$ are braid connected.

\begin{proof}

For (1) and (2), it is well known that $\bj\in \bI$,
and the fact that the braid operators respect the braid relations
proves that the root vectors are the same for most roots. The remaining
statements are easily proved by applying $T_{i_{t-2}}^{-1}\ldots T_{i_1}^{-1}$
and observing that the claims follow from elementary rank 2 calculations.
In particular, the coefficient $(-1)^{p(\beta_{t-1})p(\beta_{t+1})}q^{(\beta_{t-1},\beta_{t+1})}$
can be expressed thusly from the observation that the maps $T_i$ are parity-preserving
and ``weight-preserving'' in the sense of Remark \ref{rmk:braidthm} and Section \ref{subsec:perspective}.

In particular, note that (2) implies that if $\bi$ and $\bj$ are braid
connected, then $F_{\bi;\alpha_i}=F_{\bj;\alpha_i}$ since $\alpha_i$
cannot be written as a sum of positive roots.
Then (3) immediately follows from (2), as there is a $\bj\in \bI$ 
with $\bj=(i,j_2,\ldots, j_N)$ which is connected to $\bi$ by a sequence of braid moves,
and $F_{\bi;\alpha_i}=F_{\bj;\alpha_i}=F_i$ by definition.

Finally, note that (4) follows by induction on the height of $\beta$ exactly
as in \cite[Lemma 3.2]{Tin}
\end{proof}

\begin{cor} 
The set $\bB_\bi$ is a basis of $\Um(X)$.
\end{cor}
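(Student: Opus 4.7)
The plan is to reduce the claim to a single reference reduced expression via braid moves, and for that reference expression invoke the PBW basis of \cite{CHW3} supplied by Lemma \ref{lem:CHW3PBW}. Since any two reduced expressions for the longest element $\omega_0\in S_{m+1}$ are connected by a sequence of commutation and braid moves, Lemma \ref{lem:braid moves and root vectors} provides precise control over how the PBW vectors change under such moves: in a commutation move the ordered monomials are literally the same (part (1)), and in a braid move the three affected root vectors transform by an explicit rank-two formula (part (2)).

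Granting this, my strategy has two steps: (i) prove $\bB_{\bi_0}$ is a basis for one carefully chosen $\bi_0\in\bI$, and (ii) show that if $\bi$ and $\bj$ are braid-connected, then $\bB_\bi$ is a basis if and only if $\bB_\bj$ is. For (ii), the commutation case is immediate since $\bB_\bi=\bB_\bj$. In the braid case, the formula
\[F_{\bi;\beta_t}=F_{\bi;\beta_{t+1}}F_{\bi;\beta_{t-1}}-(-1)^{p(\beta_{t-1})p(\beta_{t+1})}q^{(\beta_{t-1},\beta_{t+1})}F_{\bi;\beta_{t-1}}F_{\bi;\beta_{t+1}}\]
from Lemma \ref{lem:braid moves and root vectors}(2), together with its analogue for $\bj$ obtained by the same reasoning applied to the reversed local word, reduces the transfer to a rank-two straightening: any monomial in $\bB_\bj$ can be rewritten as a $\Qq$-linear combination of monomials in $\bB_\bi$ by substituting this identity into the local factor $F_{\bj;\beta_{t-1}}^{(a)}F_{\bj;\beta_t}^{(b)}F_{\bj;\beta_{t+1}}^{(c)}$ and expanding using the (known) relations among root vectors in a rank-two subalgebra. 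A dimension count in each weight space then promotes spanning to a genuine basis transfer.

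For (i), I would pick $\bi_0$ whose associated convex order on $\Phi^+$ matches an ordering $\prec$ on $I$ as in Lemma \ref{lem:CHW3PBW}, so that $\bB_{\bi_0}$ is parameterized by the same index set $\Z_{\geq 0}^{\Phi^+}$. Then by induction on $\height(\beta)$ I would check that the braid-defined root vectors $F_{\bi_0;\beta}$ agree with the root vectors $F_{\prec,\beta}$ of \cite{CHW3}; the inductive step uses exactly the rank-two formula of Lemma \ref{lem:braid moves and root vectors}(2), which is the analogue of the shuffle recursion $F_\beta=F_iF_{\beta-\alpha_i}-qF_{\beta-\alpha_i}F_i$ appearing in the proof of Lemma \ref{lem:CHW3PBW}. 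Lemma \ref{lem:CHW3PBW} then directly supplies $\bB_{\bi_0}$ as a basis of $\Um(X)$, completing the proof.

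The main obstacle is the matching in step (i): one must verify that the sign and $q$-power appearing in the braid formula coincide exactly with those in the shuffle-theoretic recursion of \cite{CHW3}, especially in cases where $\beta-\alpha_i$ and $\alpha_i$ have different parities. This is a finite rank-two check, but it is the essential bridge between the two constructions and is where the super signs must be handled with care.
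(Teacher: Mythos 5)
Your route is genuinely different from the paper's, which proves this corollary by adapting Tingley's elementary argument (``Elementary construction of Lusztig's canonical basis,'' Lemma 3.4) via a \emph{simultaneous} induction over the whole family $\set{\UU(X)}_{X\in\cA}$, independently of any comparison with \cite{CHW3}. The order of logic in the paper is important: it first establishes the PBW theorem for $\bB_\bi$ by this self-contained argument, \emph{then} derives the Levendorskii--Soibelman relations (Corollary \ref{cor:levsoib}), and only after that proves the identity $F_{<;\beta}=F_{\bi;\beta}$ comparing braid root vectors with shuffle root vectors. Your proposal inverts this, and that inversion creates a circularity you cannot escape as stated.

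Concretely, two gaps. First, your step (i) requires the matching $F_{\bi_0;\beta}=F_{\prec,\beta}$. The paper's proof of this identity invokes Corollary \ref{cor:levsoib}, whose statement already presupposes that $\bB_\bi$ spans (one writes $F_{\bi;\beta_s}F_{\bi;\beta_r}=\sum c_{\mathbf a}F_\bi^{\mathbf a}$). Your attempt to replace this with ``Lemma \ref{lem:braid moves and root vectors}(2) gives the recursion'' does not work: part (2) is a relation among three root vectors $\beta_{t-1},\beta_t,\beta_{t+1}$ appearing consecutively in the convex order at the site of a braid move, not a general recursion expressing $F_{\bi_0;\beta}$ in terms of lower-height root vectors; and identifying the decomposition it produces with the costandard Lyndon factorization used in \cite{CHW3} is precisely what the paper needs the L-S relations to do. You would need a direct computation of $T_{i_1}\cdots T_{i_{t-1}}(F_{i_t})$ for your reference $\bi_0$ and a from-scratch comparison with the shuffle definition, which is a substantial (if in principle finite) task that you have only sketched. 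Second, the corollary asserts $\bB_\bi$ is a basis of $\Um(X)$ for every $X\in\cA$, but Lemma \ref{lem:CHW3PBW} (the quantum-shuffle PBW theorem) only covers the standard $X=A^m$, where $\Um(X)=\Um$. Your proposal gives no mechanism to reach the non-standard $X$: the braid isomorphisms $T_i\colon \UU(X)\to\UU(i\cdot X)$ do not send $\Um(X)$ into $\Um(i\cdot X)$ (they move $F_i$ into the Cartan-positive part), so the basis property does not transport naively between the $\Um(X)$'s. This is precisely why the paper emphasizes the need for a simultaneous induction over all $X$.
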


\begin{proof}
This follows essentially the same proof as \cite[Lemma 3.4]{Tin}.
However, note that we essentially prove by simultaneous induction for the entire
family of algebras $\set{\UU(X)}_{X\in\cA}$.
\end{proof}

\begin{lem}
Let $\bi=(i_1,\ldots i_N)\in \bI$.
Let $1\leq r<s\leq N$. Then $T_{i_s}^{-1}T_{i_{s-1}}^{-1}\ldots T_{i_1}^{-1}(F_{\beta_r})\in \UU^{\geq 0}(X_{\bt;s+1})$.
\end{lem}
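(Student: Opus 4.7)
The plan is to telescope the long composition to a manageable expression, and then reduce to a super analog of a standard fact about inverse braid operators acting on simple root vectors, of the type in \cite[Chapter 40]{LuBook}.

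The first step is a straightforward cancellation. Using the definition $F_{\bi;\beta_r} = T_{i_1}\cdots T_{i_{r-1}}(F_{X_{\bi;r},i_r})$ together with $T_{i_j}^{-1}T_{i_j} = \mathrm{id}$, one obtains
\[T_{i_s}^{-1}\cdots T_{i_1}^{-1}(F_{\bi;\beta_r}) = T_{i_s}^{-1}\cdots T_{i_r}^{-1}(F_{X_{\bi;r},i_r}).\]
Theorem \ref{thm:braids} then gives $T_{i_r}^{-1}(F_{X_{\bi;r},i_r}) = -(-1)^{p_{X_{\bi;r+1}}(i_r)} E_{X_{\bi;r+1},i_r}K_{X_{\bi;r+1},i_r}$, which already lies in $\UU^{\geq 0}(X_{\bi;r+1})$. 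Since each braid operator sends the Cartan part of one algebra to the Cartan part of the next, matters reduce to the intermediate assertion that $T_{i_s}^{-1}\cdots T_{i_{r+1}}^{-1}(E_{X_{\bi;r+1},i_r}) \in \UU^+(X_{\bi;s+1})$.

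The second step is to isolate the following super analog of \cite[Proposition 40.1.3]{LuBook}: \emph{if $v = s_{j_1}\cdots s_{j_k}$ is a reduced expression in $S_{m+1}$ and $j \in I$ satisfies $\ell(s_j v) = \ell(v) + 1$, then $T_{j_k}^{-1}\cdots T_{j_1}^{-1}(E_{X,j})$ lies in $\UU^+$ of the terminal algebra.} The hypothesis applies to our situation because the subword $s_{i_r}s_{i_{r+1}}\cdots s_{i_s}$ of the given reduced expression for $\omega_0$ is itself reduced, so $v := s_{i_{r+1}}\cdots s_{i_s}$ and $j := i_r$ satisfy $\ell(s_j v) = \ell(v) + 1$. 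I would prove the lemma by induction on $k$: setting $v' = s_{j_2}\cdots s_{j_k}$, the Exchange Condition in $S_{m+1}$ forces $\ell(s_j v') = \ell(v') + 1$ as well (otherwise $\ell(s_j v) \leq \ell(v) - 1$, contradicting the hypothesis). By induction, $y := T_{j_k}^{-1}\cdots T_{j_2}^{-1}(E_{X,j}) \in \UU^+$ with positive weight $(v')^{-1}(\alpha_j)$, and the induction closes once one shows $T_{j_1}^{-1}(y) \in \UU^+$.

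The main obstacle is this last rank-one step. For even $j_1 \in I_0$, I would follow Lusztig's argument by decomposing $y = \sum_{a \geq 0} E_{j_1}^{(a)} y_a$ with $e_{j_1}^\tau(y_a) = 0$, applying $T_{j_1}^{-1}$ termwise using the explicit formulas in Theorem \ref{thm:braids}, and noting that the positivity of $s_{j_1}(\mu)$ (where $\mu$ is the weight of $y$) keeps the exponents of $E_{j_1}$ in the image non-negative, so every summand lands in $\UU^+$ of the target algebra. For odd $j_1 \in I_1$, the braid operator $T_{j_1}^{-1}$ is an isomorphism $\UU(X) \to \UU(j_1 \cdot X)$ rather than an endomorphism, which complicates the bookkeeping; however, the nilpotency $E_{j_1}^2 = 0$ truncates the decomposition to $a \in \{0,1\}$, and one verifies each of the two resulting terms directly using Theorem \ref{thm:braids}. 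Once this rank-one calculation is in hand, the intermediate assertion follows, completing the proof.
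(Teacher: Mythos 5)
Your first two steps — telescoping the composition and applying Theorem \ref{thm:braids} to produce a term $-(-1)^p E_{i_r}K_{i_r}$, then reducing to showing that $T_{i_s}^{-1}\cdots T_{i_{r+1}}^{-1}(E_{i_r})$ lies in $\UU^+$ of the target algebra — are correct and match the paper's opening move. From there you diverge, and your proposed induction has two genuine gaps.

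First, the Exchange Condition claim is false. If $v = s_{j_1}\cdots s_{j_k}$ is reduced with $\ell(s_jv) = \ell(v) + 1$ and $v' = s_{j_2}\cdots s_{j_k}$, it does not follow that $\ell(s_jv') = \ell(v') + 1$: take $m = 2$, $v = s_2s_1$, $j = 1$; then $\ell(s_1v) = 3 = \ell(v)+1$, yet $v' = s_1$ has $\ell(s_1v') = 0 = \ell(v')-1$. So the induction hypothesis need not apply to $v'$. (Stripping the outermost operator $T_{j_k}^{-1}$ instead does preserve the length hypothesis, since $s_jv$ reduced forces its prefix $s_js_{j_1}\cdots s_{j_{k-1}}$ to be reduced; but this fixes only one of the two problems.) Second — and this applies regardless of which end you strip — the rank-one claim is false as stated. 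Knowing only that $y\in\UU^+_\beta$ with $\beta>0$ and $s_{j_1}(\beta)>0$ does not give $T_{j_1}^{-1}(y)\in\UU^+$. In type $A_2$ with $\beta = \alpha_1+\alpha_2$, $T_1^{-1}(E_2E_1)$ has nonzero components involving $K_1^{\pm 1}$ and $F_1$, while $T_1^{-1}(E_1E_2 - q^{-1}E_2E_1) = E_2\in\UU^+$. The inductive invariant must therefore record \emph{which} element of the weight space you have — in essence, membership in $\ker e_{j_1}^\tau$, plus explicit control of how $T_{j_1}^{-1}$ acts on the decomposition $\UU^+ = \bigoplus_a E_{j_1}^{(a)}\ker e_{j_1}^\tau$, analogous to Lusztig's \S 38.1 and \S 40.2 — rather than merely that $y\in\UU^+$ with a prescribed weight. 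Your sketch neither establishes this machinery nor carries enough in the hypothesis to use it.

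The paper avoids all of this with a shortcut. After the same reduction, it rewrites the result (up to a sign) as $\omega(F_{i_r})K_{i_r}$ with $\omega = \barmap\circ\rho\circ\tau$, and observes that the conjugation identity $T_i^{-1}\omega = \omega T_i$ follows directly from \eqref{eq:braids and invols} and the pairwise commutativity of $\barmap,\rho,\tau$. Pushing $\omega$ through the remaining operators turns the whole expression into $\omega\bigl(T_{i_s}\cdots T_{i_{r+1}}(F_{i_r})\bigr)$ times a Cartan factor, and $T_{i_s}\cdots T_{i_{r+1}}(F_{i_r})$ is a PBW root vector for a rotated word $\bj\in\bI$, hence lies in $\UU^-$ by the already-established Lemma \ref{lem:braid moves and root vectors}(4). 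Since $\omega$ sends $\UU^-$ to $\UU^+$ and the braid operators preserve $\Uz$, this finishes the proof. In effect, the $\omega$-twist converts the $T^{-1}$'s into $T$'s so that the PBW lemma applies directly, making an independent super version of Lusztig's \S 40.1.3 unnecessary.
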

\begin{proof}
	Let $Y=X_{\bt;s+1}$.
	Note that $T_{i_r}^{-1}T_{i_{r-1}}^{-1}\ldots T_{i_1}^{-1}(F_{\beta_r})=-(-1)^{p(i_r)}E_{i_r}K_{i_r}=-\omega(F_{i_r})K_{i_r}$, where
	$\omega=\barmap\circ\rho\circ\tau$. Then
	\[T_{i_{s}}^{-1}\ldots T_{i_{r+1}}^{-1}\omega(F_i)=\omega(T_{i_s}\ldots T_{i_{r+1}}(F_{i_r})
	=\omega(F_{\bj; \beta_{s-r}}),\]
	where $\bj=(i_s,\ldots, i_1, i_N,\ldots, i_{s+1})\in \bI$;
	In particular, since Lemma \ref{lem:braid moves and root vectors}
	(4) says $F_{\bj; \beta_{s-r}}\in\Um(Y)$,
	it follows that $\sigma(F_{\bj; \beta_{s-r}})\in\Up(Y)$.
	Then the result follows by observing that $T_i^{\pm 1}(\Uz(Z))=\Uz(i\cdot Z)$ for all $i\in I$ 
	and $Z\in\cA$, and because the $T_i^{\pm 1}$ are algebra isomorphisms.
\end{proof}

\newcommand{\ba}{\mathbf{a}}
\newcommand{\bk}{\mathbf{k}}
\begin{cor} \label{cor:levsoib}
Let $\bi\in \bI$ and $1\leq r<s\leq N$.
 Write $F_{\bi;\beta_s}F_{\bi;\beta_r}=\sum c_\ba F_\bi^\ba$,
where $\ba=(a_1,\ldots, a_N)\in \N^N$ with $a_s<2$ if $p(\beta_s)=1$,
$c_\ba\in \Qq$,
and $F_\bi^\ba=F_{\bi;\beta_1}^{(a_1)}\ldots F_{\bi;\beta_N}^{(a_N)}$.
Then $c_{\ba}=0$ unless $a_t=0$ for $t<r$ or $t>s$.
Moreover, the coefficient of $F_{\bi;\beta_r}F_{\bi;\beta_s}$ is $(-1)^{p(\beta_r)p(\beta_s)}q^{-(\beta_r,\beta_s)}$.
\end{cor}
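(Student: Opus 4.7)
The plan is to transport the identity through braid operators into a related quantum enveloping algebra, where the claim will follow from the PBW structure on the positive part.

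The first step will be to reduce to the case $r = 1$. I would apply $T_{i_{r-1}}^{-1}\cdots T_{i_1}^{-1}: \UU(X) \to \UU(X_{\bi;r})$. By Lemmas \ref{lem:disc braid rel}--\ref{lem:conn braid rel} and Lemma \ref{lem:braid moves and root vectors}(3), this sends $F_{\bi;\beta_r}$ to $F_{i_r}$ and each $F_{\bi;\beta_t}$ for $t > r$ to the PBW vector in $\UU(X_{\bi;r})$ associated to the tail sequence $(i_r, i_{r+1}, \ldots, i_N)$ (extended arbitrarily to a reduced expression for $w_0$ in the Weyl group of $X_{\bi;r}$). This reduces the claim for $(r, s)$ in $\UU(X)$ to the analogous claim for $(1, s-r+1)$ in $\UU(X_{\bi;r})$, so from now on I assume $r = 1$ and $F_{\bi;\beta_1} = F_{i_1}$.

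For the main step, I would apply $\Phi := T_{i_s}^{-1}\cdots T_{i_1}^{-1}: \UU(X) \to \UU(X_{\bi;s+1})$. Then $\Phi(F_{\bi;\beta_s}) = T_{i_s}^{-1}(F_{i_s}) = -(-1)^{p(i_s)}E_{i_s}K_{i_s} \in \UU^{\geq 0}(X_{\bi;s+1})$, while the lemma immediately preceding the corollary gives $\Phi(F_{i_1}) \in \UU^{\geq 0}(X_{\bi;s+1})$. Hence $\Phi(F_{\bi;\beta_s}F_{i_1}) \in \UU^{\geq 0}(X_{\bi;s+1})$. The reversed sequence $(i_s, i_{s-1}, \ldots, i_1)$ is a reduced expression of length $s$ in the Weyl group of $X_{\bi;s+1}$, and the vectors $e_t := T_{i_s}^{-1}\cdots T_{i_{t+1}}^{-1}(E_{X_{\bi;t+1},i_t}) \in \Up(X_{\bi;s+1})$ for $1 \leq t \leq s$ form the corresponding family of positive PBW root vectors. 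Applying the $E$-$F$ swap $\rho$ to the preceding corollary's proof shows that ordered divided-power monomials in $e_1, \ldots, e_s$ are linearly independent in $\Up(X_{\bi;s+1})$. A direct Weyl-group calculation shows that the weight of $\Phi(F_{\bi;\beta_s}F_{i_1})$ in $X_{\bi;s+1}$ is the sum of the two positive roots $-s_{i_s}\cdots s_{i_1}(\beta_s) = \alpha_{i_s}^{X_{\bi;s+1}}$ and $-s_{i_s}\cdots s_{i_1}(\beta_1) = s_{i_s}\cdots s_{i_2}(\alpha_{i_1}^{X_{\bi;2}})$, which lies in the $\N$-span of the roots carried by the $e_t$. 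Hence the expansion of $\Phi(F_{\bi;\beta_s}F_{i_1})$ in $\UU^{\geq 0}(X_{\bi;s+1})$ involves only monomials in $e_1, \ldots, e_s$ and elements of $\Uz$. Applying $\Phi^{-1}$ and using that $F_{\bi;\beta_s}F_{i_1} \in \Um(X)$ forces the Cartan factors to cancel, giving the claimed support.

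For the leading coefficient, I would move $E_{i_s}$ past $\Phi(F_{i_1})$ via \eqref{eq:UX commutator}. Since $\Phi(F_{i_1}) \in \Up$, the super-commutator $E_{i_s}\Phi(F_{i_1}) - (-1)^{p(i_s)p(i_1)}\Phi(F_{i_1})E_{i_s}$ remains in $\Up$ and therefore contributes only strictly lower-weight terms to the $e_t$-expansion; the leading coefficient of $\Phi(F_{i_1})\cdot(-(-1)^{p(i_s)}E_{i_s}K_{i_s})$ in $-(-1)^{p(i_s)}E_{i_s}K_{i_s}\cdot \Phi(F_{i_1})$ is thus $(-1)^{p(i_s)p(i_1)}q^{-(\alpha_{i_s},|\Phi(F_{i_1})|)}$. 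Using $W$-invariance of the bilinear form together with the identity $s_{i_s}\cdots s_{i_1}(\beta_s) = -\alpha_{i_s}$, the exponent evaluates to $-(\beta_1, \beta_s)$; combined with the parity identifications $p(i_s) = p(\beta_s)$ and $p(i_1) = p(\beta_1)$ (invariance of parity under the Weyl group for $\gl(m|1)$), this matches the stated coefficient after pulling back by $\Phi^{-1}$. The principal technical obstacle is rigorously establishing the ``reverse'' PBW structure on $\Up(X_{\bi;s+1})$ invoked above---specifically, the linear independence of ordered divided-power monomials in the $e_t$---which amounts to rerunning the preceding corollary's argument in $\Up$ with $E$ and $F$ exchanged, routine but requiring care in transporting the construction through $\rho$.
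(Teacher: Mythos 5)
Your overall strategy---transport the identity through braid operators and then invoke PBW structure---is the same one the paper uses, but your execution of the ``support'' step has a genuine gap. The paper applies $T_{i_{r-1}}^{-1}\cdots T_{i_1}^{-1}$ and then exploits the explicit triangular decomposition of each transformed PBW monomial,
\[T_{i_t}^{-1}\ldots T_{i_1}^{-1}(F_\bi^\ba)=\omega(F_{\bj;\beta_{t}}^{(a_1)}\ldots F_{\bj;\beta_1}^{(a_t)})\,f_{a_1,\ldots, a_{t}}(K)\,F_{\bk;\beta_1}^{(a_{t+1})}\ldots F_{\bk;\beta_{N-t}}^{(a_N)},\]
together with linear independence of the $\Up$-factors, to see that membership of the target in $\Um$ forces $a_1=\cdots=a_{r-1}=0$ (and symmetrically for $t>s$). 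You instead push all the way to $\UU(X_{\bi;s+1})$, observe the image lies in $\UU^{\geq 0}$, and then argue that because its weight is in the $\N$-span of the roots carried by $e_1,\ldots,e_s$, ``the expansion involves only monomials in $e_1,\ldots,e_s$.'' That inference is false: a weight lying in the $\N$-span of the first $s$ roots of a convex order does not preclude other PBW monomials of the same weight. For instance, in type $A_3$ with the order $\alpha_1<\alpha_1+\alpha_2<\alpha_1+\alpha_2+\alpha_3<\alpha_2<\cdots$ and $s=2$, the weight $\alpha_1+(\alpha_1+\alpha_2)$ is in the $\N$-span of the first two roots yet is also $2\alpha_1+\alpha_2$, which is realized by a monomial in the first and fourth root vectors. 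To close the gap you should argue exactly as the paper does: the map $\ba\mapsto\Phi(F_\bi^\ba)$ produces, via (a), a unique $\Up\otimes\Uz\otimes\Um$ factorization of each basis element, and since $\Phi(F_{\bi;\beta_s}F_{\bi;\beta_1})$ has trivial $\Um$-component, all contributing $\ba$ must have $a_{s+1}=\cdots=a_N=0$. No weight argument is needed or correct here.

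A smaller slip: the preceding lemma only gives $\Phi(F_{i_1})\in\UU^{\geq 0}(X_{\bi;s+1})$, not $\Up$---there is a Cartan factor (this is visible in the lemma's proof, which produces $-\omega(F_{\bj;\beta_{s-r}})\cdot K$). Your leading-coefficient computation should account for that factor, and your phrase ``contributes only strictly lower-weight terms'' is not accurate since the commutator has the same weight; what you actually want is that it does not contribute to the ordered monomial $e_1^{(\ldots)}\cdots e_s^{(\ldots)}$ with the prescribed exponent pattern. The paper's version of this step is cleaner: applying $T_{i_r}^{-1}\cdots T_{i_1}^{-1}$ turns $F_{\bi;\beta_r}$ into a multiple of $E_{i_r}K_{i_r}^{\pm 1}$ and $F_{\bi;\beta_s}$ into some $y\in\Um$, and the identity $E_{i_r}y-(-1)^{p(i_r)p(y)}yE_{i_r}=\frac{K_{i_r}\bar e_{i_r}(y)-K_{i_r}^{-1}e_{i_r}(y)}{q-q^{-1}}$ plus the constraint $z\in\Um$ immediately forces $e_{i_r}(y)=0$ and pins down $c=(-1)^{p(i_r)p(y)}q^{(\alpha_{i_r},|y|)}$. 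That single-commutator computation is both more explicit and more economical than working through $\UU^{\geq 0}(X_{\bi;s+1})$.
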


\begin{proof}
This follows from a similar proof as in \cite[Lemma 3.5]{Tin};
namely, observe that from the proof of the previous lemma,
\[\tag{a}T_{i_t}^{-1}\ldots T_{i_1}^{-1}(F_\bi^\ba)=
\omega(F_{\bj;\beta_{t}}^{(a_1)}\ldots F_{\bj;\beta_1}^{(a_t)})f_{a_1,\ldots, a_{t}}(K_i;i\in I)F_{\bk;\beta_1}^{(a_{t+1})}\ldots F_{\bk;\beta_{N-t}}^{(N)},\]
where $f_{a_1,\ldots, a_{t}}$ is some polynomial in $|I|$ variables
with $f_{0,\ldots, 0}=1$
Furthermore, observe that the
$\omega(F_{\bj;\beta_{t}}^{(a_1)}\ldots F_{\bj;\beta_1}^{(a_t)})$
are linearly independent vectors in $\Up(X_{\bi;t+1})$.

By Lemma \ref{lem:braid moves and root vectors}(4), 
\[\tag{b}x=T_{i_{r-1}}^{-1}\ldots T_{i_1}^{-1}(F_{\bi;\beta_s}F_{\bi;\beta_r})\in \Um(X_{\bi;r}).\]
On the other hand, applying (a) to the right-hand side of $F_{\bi;\beta_s}F_{\bi;\beta_r}=\sum c_\ba F_\bi^\ba$ and using the triangular decomposition $\UU(X_{\bi;r})\cong\Up(X_{\bi;r})\otimes \Uz(X_{\bi;r})\otimes \Um(X_{\bi;r})$, 
we see that
\[\tag{c}x=\sum c_\ba
\omega(F_{\bj;\beta_1}^{(a_t)}\ldots F_{\bj;\beta_{t}}^{(a_1)})\otimes f_{a_1,\ldots, a_{r-1}}(K_i;i\in I)\otimes F_{\bk;\beta_1}^{(a_{t+1})}\ldots F_{\bk;\beta_{N-t}}^{(N)}.\]
In particular, the only way we can have $c_\ba\neq 0$ given (b) and
(c) is if $a_1=\ldots= a_{r-1}=0$. A similar argument proves
that $c_\ba\neq 0$ only if $a_{s+1}=\ldots=a_{N}=0$.

Lastly, let $c$ be the coefficient of $F_{\bi;\beta_r}F_{\bi;\beta_s}$. 
Then by the conditions on when $c_\ba\neq 0$, 
$z=T_{i_{r}}^{-1}\ldots T_{i_1}^{-1}(F_{\bi;\beta_s}F_{\bi;\beta_r}-cF_{\bi;\beta_r}F_{\bi;\beta_s})\in\Um(X_{\bi;r+1})$.
On the other hand, we have that $y=T_{i_{r}}^{-1}\ldots T_{i_1}^{-1}(F_{\bi;\beta_s})\in \Um(X_{\bi;r+1})$
and $T_{i_{r}}^{-1}\ldots T_{i_1}^{-1}(F_{\bi;\beta_r})=-K_{i_r}^{-1} E_{i_r}$, so
\begin{align*}\tag{d}
z
=cK_{i_r}^{-1} E_{i_r}y-yK_{i_r}^{-1} E_{i_r}
&=c\frac{\bar e_{i_r}(y)-K_i^{-2}e_{i_r}(y)}{q-q^{-1}}+((-1)^{p(i_r)p(y)}c-q^{(i_r,|y|)})K_{i_r}^{-1}y E_{i_r}.
\end{align*}
In particular, applying the triangular decomposition and the fact that $z\in \Um(X_{\bi;r+1})$, we see that
$e_{i_r}(y)=0$ and $c=(-1)^{p(i_r)p(y)}q^{(i_r,|y|)}$. Since the maps $T_i$ are parity- and weight-preserving
the result follows.
\end{proof}
Note that we extract the following corollary from the above argument.

\begin{cor}\label{cor:deriv and root}
	If $\bi=(i_1,\ldots, i_N)\in \bI$, then $e_{i_1}'(F_{\bi;\beta_r})=0$ for all $r>1$.
\end{cor}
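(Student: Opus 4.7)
The plan is to extract this corollary from the proof of Corollary \ref{cor:levsoib}; the paper itself hints at this by prefacing the statement with ``we extract the following corollary from the above argument.'' Recall that in the proof of Cor.~\ref{cor:levsoib}, the element $z = T_{i_r}^{-1}\cdots T_{i_1}^{-1}(F_{\bi;\beta_s}F_{\bi;\beta_r} - c F_{\bi;\beta_r}F_{\bi;\beta_s})$ was shown to lie in $\Um(X_{\bi;r+1})$, and expanding via the commutator identity $E_{i_r}y-(-1)^{p(i_r)p(y)}yE_{i_r}=(K_{i_r}\bar e_{i_r}(y)-K_{i_r}^{-1}e_{i_r}(y))/(q-q^{-1})$ with $y=T_{i_r}^{-1}\cdots T_{i_1}^{-1}(F_{\bi;\beta_s})$ produces a $K_{i_r}^{-2}e_{i_r}(y)$ summand; comparing against the triangular decomposition forces $e_{i_r}(y)=0$.

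My plan is to specialize this to the case where the ``outer'' index of Cor.~\ref{cor:levsoib} is $1$ and the ``inner'' index is relabeled as $r$. This directly yields $e_{i_1}(T_{i_1}^{-1}(F_{\bi;\beta_r}))=0$ for every $r>1$. Moreover, since $T_{i_1}^{-1}(F_{\bi;\beta_r})=T_{i_2}\cdots T_{i_{r-1}}(F_{X_{\bi;r},i_r})$ and $(i_2,\ldots,i_N)$ is a reduced expression for $s_{i_1}\omega_0$ in the Weyl groupoid action on $\cA$, an iteration of the proof of Lemma \ref{lem:braid moves and root vectors}(4) shows this element actually lies in $\Um(X_{\bi;2})$. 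Thus the vanishing $e_{i_1}(T_{i_1}^{-1}(F_{\bi;\beta_r}))=0$ is a genuine statement about a skew derivation acting on a negative-part element.

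The remaining step is to translate $e_{i_1}(T_{i_1}^{-1}(F_{\bi;\beta_r}))=0$ into $e_{i_1}'(F_{\bi;\beta_r})=0$. This amounts to checking that, on the relevant subspace, $e_{i_1}'$ and $T_{i_1}\circ e_{i_1}\circ T_{i_1}^{-1}$ agree up to a nonzero scalar. The verification can be performed on the generators $F_j$ by direct computation using the explicit formulas of Theorem \ref{thm:braids}, and then extended by the Leibniz rule, since both operators are twisted super-derivations with respect to the same twist and so are determined by their values on $\{F_j:j\in I\}$. Alternatively, and perhaps more cleanly, one can simply run the argument of Cor.~\ref{cor:levsoib} directly inside $\UU(X)$: apply the commutator identity to $E_{i_1}F_{\bi;\beta_r}-(-1)^{p(i_1)p(\beta_r)}F_{\bi;\beta_r}E_{i_1}$, rewrite using $F_{\bi;\beta_r}=T_{i_1}(u)$, and extract the vanishing of $e_{i_1}'(F_{\bi;\beta_r})$ from the triangular decomposition exactly as in Cor.~\ref{cor:levsoib}.

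The hard part, really the only work involved, will be the careful bookkeeping of signs and $q$-powers in the super setting — particularly when tracking how the $K_{i_1}^{\pm 1}$ factors emerging from $T_{i_1}^{-1}(E_{i_1})=-(-1)^{p_{X_{\bi;2}}(i_1)}K_{i_1}^{-1}F_{i_1}$ commute past $u$ — but no new conceptual ingredient beyond the techniques already developed in Section \ref{sec: braid} is required.
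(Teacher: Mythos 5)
You correctly identify that the paper offers no separate proof and expects the reader to extract the statement from the proof of Corollary~\ref{cor:levsoib}; and your specialization $r=1$ of that argument does give $e_{i_1}(T_{i_1}^{-1}(F_{\bi;\beta_s}))=0$ inside $\Um(X_{\bi;2})$. The gap you then flag --- that this must be translated into $e_{i_1}'(F_{\bi;\beta_s})=0$ in $\Um(X)$ --- is a genuine gap. But your proposed way to bridge it does not work. You claim that $e_{i_1}'$ and $T_{i_1}\circ e_{i_1}\circ T_{i_1}^{-1}$ ``are twisted super-derivations with respect to the same twist'' and hence can be matched up to scalar by checking on the generators $F_j$. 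This is false on two counts. First, the twists differ: the braid maps are weight-preserving in the sense of Section~\ref{subsec:perspective}, and the simple root attached to $i_1$ satisfies $\alpha_{i_1}^{X_{\bi;2}}=-\alpha_{i_1}^{X}$ (e.g.\ one reads this off from $T_{i_1}(F_{X_{\bi;2},i_1})=-K_{X,i_1}^{-1}E_{X,i_1}$, which has weight $\alpha_{i_1}^X$). So the conjugated derivation has Leibniz twist $q^{(\alpha_{i_1}^{X_{\bi;2}},\,\cdot\,)}=q^{-(\alpha_{i_1}^X,\,\cdot\,)}$, the opposite of that of $e_{X,i_1}$. Two twisted derivations with different twists are not related by a global scalar. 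Second, the domains are not aligned: the conjugated operator lives on $T_{i_1}(\Um(X_{\bi;2}))$, which is \emph{not} $\Um(X)$, and in particular $T_{i_1}(F_{X_{\bi;2},i_1})=-K_{X,i_1}^{-1}E_{X,i_1}\notin\Um(X)$, so ``checking on the generators $F_j$'' is not a well-posed comparison (the $F_{X,j}$ need not lie in the image, and the $T_{i_1}(F_{X_{\bi;2},j})$ need not lie in $\Um(X)$).

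Your alternative suggestion --- run the argument of Corollary~\ref{cor:levsoib} directly inside $\UU(X)$ --- is the better instinct, but as sketched it does not close the gap either: applying $T_{i_1}^{-1}$ to the commutator identity
$E_{i_1}F_{\bi;\beta_r}-(-1)^{p(i_1)p(\beta_r)}F_{\bi;\beta_r}E_{i_1}
=\bigl(K_{i_1}\bar e_{i_1}(F_{\bi;\beta_r})-K_{i_1}^{-1}e_{i_1}(F_{\bi;\beta_r})\bigr)/(q-q^{-1})$
produces $T_{i_1}^{-1}(e_{i_1}(F_{\bi;\beta_r}))$ and $T_{i_1}^{-1}(\bar e_{i_1}(F_{\bi;\beta_r}))$, elements of $\UU(X_{\bi;2})$ whose triangular decomposition you do not control, so the argument stalls. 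What is really needed to pass from $T_{i_1}^{-1}(F_{\bi;\beta_r})\in\Um(X_{\bi;2})$ to $e_{i_1}'(F_{\bi;\beta_r})=0$ is the Lusztig-type fact $\Um(X)\cap T_{i_1}(\Um(X_{\bi;2}))\subset\ker(e_{X,i_1})$ (cf.~\cite[\S 38.1--38.2]{LuBook} in the non-super case), which requires its own proof; alternatively one can argue by induction on $r$ using the now-established Levendorskii--Soibelman relation $F_{\bi;\beta_r}F_{i_1}-c_{1,r}F_{i_1}F_{\bi;\beta_r}=\sum_{\ba:a_1=0,\,a_t=0\ \forall t\ge r}c_\ba F_\bi^\ba$ and the Leibniz rule for $e_{i_1}'$. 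In short, the hard part is not, as you suggest, sign and $q$-power bookkeeping; there is a conceptual step missing that the paper has suppressed and that your proposal does not supply.
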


Lastly, let us note that the PBW bases constructed here agree with those constructed via quantum shuffles.

\begin{lem}
Let $\bi\in I$ and let $<=<_\bi$ be the associated total order on $I$ induced
by the convex order on $\Phi^+$. Let $F_{<;\beta}$ be the root vector
defined with respect to $<$ as in \cite{CHW3}.
Then $F_{<;\beta}=F_{\bi;\beta}$.
\end{lem}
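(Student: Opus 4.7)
The plan is to proceed by induction on $\height(\beta)$, matching the two constructions recursively. The base case $\height(\beta)=1$ is handled by Lemma \ref{lem:braid moves and root vectors}(3), which gives $F_{\bi;\alpha_i}=F_i$, and by direct evaluation of the associated length-one Lyndon word in the shuffle construction of \cite{CHW3}, which also yields $F_{<;\alpha_i}=F_i$.

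For the inductive step, I would exploit the fact that both constructions express a non-simple $F_\beta$ as an explicit $q$-(super)commutator of root vectors for a decomposition $\beta=\beta_1+\beta_2$ with $\beta_1,\beta_2\in\Phi^+$ and $\beta_1<_\bi\beta_2$. On the shuffle side, this decomposition is given by the costandard factorization $\bi^+(\beta)=\bi^+(\beta_1)\bi^+(\beta_2)$ from \cite[Prop.~4.16]{CHW3}; the shuffle bracket then produces a formula of the form $F_{<;\beta}=F_{<;\beta_1}F_{<;\beta_2}-c_{\beta_1,\beta_2}F_{<;\beta_2}F_{<;\beta_1}$ for a computable scalar $c_{\beta_1,\beta_2}\in\Qq$. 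On the braid side, by a standard Matsumoto-type argument I can find a reduced expression $\bj\in\bI$ braid-connected to $\bi$ such that, in the convex order $<_\bj$, the triple $\beta_1,\beta,\beta_2$ appears consecutively at positions $r-1,r,r+1$, with $(j_{r-1},j_r,j_{r+1})$ realizing the corresponding rank-$2$ sub-system. Lemma \ref{lem:braid moves and root vectors}(2) applied at this position gives
\[
F_{\bj;\beta}=F_{\bj;\beta_2}F_{\bj;\beta_1}-(-1)^{p(\beta_1)p(\beta_2)}q^{(\beta_1,\beta_2)}F_{\bj;\beta_1}F_{\bj;\beta_2}.
\]
By parts (1)--(2) of that lemma, a chain of braid moves preserves all root vectors attached to roots other than the one being ``swapped,'' so $F_{\bj;\beta_s}=F_{\bi;\beta_s}$ for $s=1,2$, and by induction $F_{\bi;\beta_s}=F_{<;\beta_s}$.

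The remaining task is to verify the coefficient identity $c_{\beta_1,\beta_2}=(-1)^{p(\beta_1)p(\beta_2)}q^{(\beta_1,\beta_2)}$. I expect this to be the main obstacle: it requires a careful unwinding of the normalization of the shuffle bracket of \cite{CHW3} in the super setting, where an extra sign from the bilinear pairing intervenes, and reconciling it with the weight- and parity-preserving structure of the braid maps noted in Remark \ref{rmk:braidthm}. Once this scalar match is established, the two recursions coincide term-by-term, the induction closes, and we conclude $F_{\bi;\beta}=F_{<;\beta}$ for every $\beta\in\Phi^+$. As a sanity check in a degenerate case, one can also appeal to Corollary \ref{cor:deriv and root} and the analogous $e'_{i_1}$-annihilation property enjoyed by the shuffle root vectors to pin down the normalization when $\beta\neq\alpha_{i_1}$.
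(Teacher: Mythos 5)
Your overall strategy (inducting on height and matching the two recursions term-by-term, using the costandard factorization on the shuffle side) is the same as the paper's, and your base case is handled identically. Where you diverge is on the braid side: you propose to find a reduced word $\bj$ braid-connected to $\bi$ in which $\beta_1,\beta,\beta_2$ are consecutive and then apply Lemma \ref{lem:braid moves and root vectors}(2), whereas the paper applies Corollary \ref{cor:levsoib} directly to the fixed word $\bi$, noting that the maximality of $\beta_r$ (which, via \cite[Theorem 28]{Lec}, is the same as the costandard factorization) kills all the intermediate LS terms so that $F_{\bi;\beta_s}F_{\bi;\beta_r}-(-1)^{p(\beta_r)p(\beta_s)}q^{-(\beta_r,\beta_s)}F_{\bi;\beta_r}F_{\bi;\beta_s}$ reduces to $F_{\bi;\beta}$ on the nose.

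Your route has two genuine gaps. First, the existence of a braid-connected $\bj$ in which $\beta_1,\beta,\beta_2$ appear consecutively as a rank-2 subword is asserted without proof; for an arbitrary decomposition this can fail, and even for the costandard/minimal-pair decomposition it requires an argument. Second, and more seriously, the claim that $F_{\bj;\beta_s}=F_{\bi;\beta_s}$ for $s=1,2$ after a \emph{chain} of braid moves is not justified: Lemma \ref{lem:braid moves and root vectors}(2) only guarantees that a \emph{single} type-2 braid move changes exactly the root vector attached to the middle root, and in a chain of moves the middle roots vary from step to step, so there is no a priori reason $\beta_1$ or $\beta_2$ is never a middle root. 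You would need to prove that a suitable chain of moves can be chosen that avoids touching $\beta_1,\beta_2$ — a nontrivial claim you do not establish. By contrast, Corollary \ref{cor:levsoib} already lives on $\bi$ itself, so the paper avoids all of this. Finally, the coefficient match you flag as "the main obstacle" is in fact not a serious difficulty: both the LS coefficient in Corollary \ref{cor:levsoib} and the shuffle-bracket coefficient from \cite[Prop.~4.11 and (5.1)]{CHW3} read off to $(-1)^{p(\beta_r)p(\beta_s)}q^{-(\beta_r,\beta_s)}$; the paper treats this as immediate from the weight- and parity-preserving nature of the braid maps.
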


\begin{proof}
This follows from a similar proof to that in \cite{Lec}. To wit, first note that
$F_{<;\alpha_i^X}=F_i=F_{\bi;\alpha_i^X}$.  More generally, 
suppose $\beta=\beta_r+\beta_s$ such that $\beta_r$ is maximal, and assume that by induction on the height,  $F_{\bi;\beta_r}=F_{<;\beta_r}$ and 
$F_{\bi;\beta_s}=F_{<;\beta_s}$. Then on one hand, letting $l_1=l(\beta_r)$ and $l_2=l(\beta_s)$ be the Lyndon
words associated to the roots, as in \cite[Theorem 28]{Lec} , we deduce that $l_1l_2=l(\beta)$ is the costandard factorization,
and thus by definition (cf. \cite[Proposition 4.11 and (5.1)]{CHW3}) \[F_{<;\beta}=F_{<;\beta_s}F_{<;\beta_r}-(-1)^{p(\beta_r)p(\beta_s)}q^{-(\beta_r,\beta_s)}F_{<;\beta_r}F_{<;\beta_s}.\]
On the other hand, Corollary \ref{cor:levsoib} together with the maximality of $\beta_r$ implies that 
\begin{align*}F_{\bi;\beta_r+\beta_s}&=F_{\bi;\beta_s}F_{\bi;\beta_r}-(-1)^{p(\beta_r)p(\beta_s)}q^{-(\beta_r,\beta_s)}F_{\bi;\beta_r}F_{\bi;\beta_s}\\
&=F_{<;\beta_s}F_{<;\beta_r}-(-1)^{p(\beta_r)p(\beta_s)}q^{-(\beta_r,\beta_s)}F_{<;\beta_r}F_{<;\beta_s}=F_{<;\beta}.
\end{align*} 
\end{proof}

\subsection{Comparison to the lattice}
Now let us consider the case of $X=A^m$, the unique GCM with $X_{mm}=0$,
which corresponds to the standard Dynkin diagram for $\gl(m|1)$.
In this case, $\UU(X)$ is a subalgebra of $\UU$ as defined in \S \ref{sec:gl(m|1)},
but $\Um(X)=\Um$.

\begin{lem}\label{lem:same q span}
Let $\bi,\bj\in \bI$ be connected by a sequence of braid moves. Then
$\Z[q]\bB_\bi=\Z[q]\bB_\bj$.
Moreover, for any $b_{1}\in \bB_\bi$, there is a $b_2\in \bB_\bj$ 
such that $b_1\equiv b_2$ modulo $q\Z[q]\bB_{\bi}$.
\end{lem}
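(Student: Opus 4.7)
The plan is to reduce the lemma to a rank-2 calculation by working one braid move at a time. Both conclusions are transitive in the obvious sense: composing lattice equalities or modulo-$q$ congruences along a chain of braid moves preserves them. Since any connection by braid moves decomposes into single moves, we may assume $\bi$ and $\bj$ differ by a single braid move. If the braid move is disconnected (Case (1) of Lemma \ref{lem:braid moves and root vectors}), then $\bB_\bi=\bB_\bj$ as sets (after relabeling via the transposition) and both conclusions are trivial. So assume the braid move is connected at positions $t-1,t,t+1$; by Lemma \ref{lem:braid moves and root vectors}(2) we have $F_{\bi;\beta_s}=F_{\bj;\beta_{\sigma(s)}}$ for $s\notin\{t-1,t,t+1\}$, where $\sigma=(t-1,t+1)$.

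Consequently, any $\bi$-PBW monomial factors as unchanged left factors in $\bj$-order, the middle triple
\[F_{\bi;\beta_{t-1}}^{(a_{t-1})}F_{\bi;\beta_t}^{(a_t)}F_{\bi;\beta_{t+1}}^{(a_{t+1})}=F_{\bj;\beta_{t+1}}^{(a_{t-1})}F_{\bi;\beta_t}^{(a_t)}F_{\bj;\beta_{t-1}}^{(a_{t+1})},\]
and unchanged right factors in $\bj$-order. The lemma therefore reduces to rewriting this middle triple as a $\Z[q]$-combination of $\bj$-standard monomials $F_{\bj;\beta_{t-1}}^{(a'_{t-1})}F_{\bj;\beta_t}^{(a'_t)}F_{\bj;\beta_{t+1}}^{(a'_{t+1})}$, with a distinguished term agreeing modulo $q$. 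This is a rank-2 computation in the subalgebra generated by $F_{\bi;\beta_{t-1}}$ and $F_{\bi;\beta_{t+1}}$. The sub-root system on $\{\beta_{t-1},\beta_t,\beta_{t+1}\}$ is either of type $A_2$ (both $\beta_{t\pm 1}$ even) or of type $\gl(2|1)$ (exactly one odd); the case of both odd is excluded because $\beta_{t-1}+\beta_{t+1}=\beta_t$ must be a root, which fails when two odd roots $\epsp_i-\epsp_{m+1}$ and $\epsp_j-\epsp_{m+1}$ are summed in $\gl(m|1)$. For type $A_2$, Lusztig's classical rank-2 transition gives the required $\Z[q]$-integrality and leading term $\pm 1$ modulo $q$. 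For type $\gl(2|1)$, the nilpotency of the odd root vectors truncates the PBW basis (odd exponents lie in $\{0,1\}$), so the transition can be verified by direct expansion using the defining relation $F_{\bi;\beta_t}=F_{\bi;\beta_{t+1}}F_{\bi;\beta_{t-1}}-cF_{\bi;\beta_{t-1}}F_{\bi;\beta_{t+1}}$ from Lemma \ref{lem:braid moves and root vectors}(2) and its $\bj$-analog.

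The main obstacle is $\Z[q]$-integrality in the super rank-2 case: a naive simultaneous inversion of the two defining relations for $F_{\bi;\beta_t}$ and $F_{\bj;\beta_t}$ introduces a factor $(1-c^2)^{-1}\notin\Z[q]$. This is circumvented by applying only one relation at a time, substituting in the appropriate direction and using the nilpotency of odd root vectors together with Corollary \ref{cor:levsoib} to terminate the expansion in finitely many steps. The leading term of the resulting expansion modulo $q$ picks out a unique $b_2\in\bB_\bj$ with $b_1\equiv b_2\pmod{q\Z[q]\bB_\bi}$, giving both parts of the lemma.
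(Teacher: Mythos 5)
Your proposal follows the same overall strategy as the paper: reduce to a single braid move, dismiss the disconnected case via Lemma \ref{lem:braid moves and root vectors}(1), and for a connected move localize the problem to the rank-2 transition involving $\beta_{t-1}, \beta_t, \beta_{t+1}$, then split according to the parities of the two outer roots. The one place you diverge is in how you exclude the possibility that both $\beta_{t-1}$ and $\beta_{t+1}$ are odd. The paper applies $T_{i_{t-2}}^{-1}\cdots T_{i_1}^{-1}$ to land in a genuine rank-2 algebra $\UU(\sigma^{-1}\cdot A^m)$ and then invokes the length-theoretic Corollary \ref{cor:red words and odd roots} (reduced expressions of $\omega_0$ do not visit the GCM with two odd simple roots at the relevant position); you instead argue directly with the positive roots of the standard system, observing that any two positive odd roots in $\Phi^+(A^m)$ are both of the form $\epsp_a-\epsp_{m+1}$, so their sum cannot be a root, contradicting $\beta_{t-1}+\beta_{t+1}=\beta_t$. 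Both arguments are valid and buy the same conclusion; yours avoids Corollary \ref{cor:red words and odd roots} and the symmetric-group bookkeeping at the cost of being special to $X=A^m$ (which is all the lemma needs). For the super rank-2 case your sketch of the $\Z[q]$-integrality via one-sided substitution is plausible but about as compressed as the paper's own appeal to \cite[\S 8]{CHW3}; neither is carried out in detail.
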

\begin{proof}
	It suffices to prove this in the case that $\bi$ and $\bj$ differ by a single braid move.
	If it is a braid move of the form $i,j\mapsto j,i$, then there is nothing to say
	by Lemma \ref{lem:braid moves and root vectors} (1). Suppose it is a braid move of the form
	$i_{t-1}i_ti_{t+1}\mapsto i_{t+1}i_ti_{t-1}$, where $i_{t\pm1}=i$ and $i_t=j$;
	Then \ref{lem:braid moves and root vectors} (2) implies
	that the only root vectors that change are the ones corresponding to the braid move,
	and applying $T_{i_{t-2}}^{-1}\ldots T_{i_{1}}^{-1}$ 
	it suffices to prove this in the rank 2 case; that is, to prove that in 
	$\UU(\sigma^{-1}\cdot A^m)$ where $\sigma=s_{i_1}\ldots s_{i_{t-2}}$, we have
	\[\tag{$\star$}\sum \Z[q]F_i^{(a)}T_i(F_j)^{(b)}T_iT_j(F_j)^{(c)}
	=\sum \Z[q]F_j^{(a)}T_j(F_i)^{(b)}T_jT_i(F_j)^{(c)}.\]
	Well, first observe that since $\bi=(i_1,\ldots, i_{t-2},i,j,i,\ldots)$ 
	and $\bj=(i_1,\ldots, i_{t-2},j,i,j,\ldots)$ 
	correspond to a reduced expressions
	of the longest element of $S_{m+1}$, $l(\sigma s_i)=l(\sigma s_j)>l(\sigma)$.
	Then by Corollary \ref{cor:red words and odd roots},
	at least one of $i$ or $j$ must be even, so without loss of generality
	we can assume that $i$ is even. If $j$ is also even, this follows
	from \cite[42.1.5]{LuBook}. Otherwise, this follows easily from \cite[\S 8]{CHW3};
	indeed, if we take the PBW basis with the opposite ordering, it is easy to see that
	it's $\Z[q]$ lattice agrees with the $\Z[q]$ lattice of the canonical basis therein
	by applying the anti-involution $\tau$ .
\end{proof}

\begin{thm}\label{thm:CBs equal}
	For any $\bi\in \bI$, $\bB_\bi+q\cL=B(\infty)$, thus in particular
	the canonical bases defined in \cite{CHW3} coincide and are equal to $\cB$.
\end{thm}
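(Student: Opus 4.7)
The plan is to show the inclusion $\bB_\bi + q\cL \subseteq B(\infty)$ for every $\bi \in \bI$; equality then follows from a cardinality count, since both $\bB_\bi$ and $B(\infty)$ are $\Z$-bases of $L(\infty)_{-\xi}/qL(\infty)_{-\xi}$ in each weight. As a first step, I establish the weaker containment $\bB_\bi + q\cL \subseteq B(\infty) \cup (-B(\infty))$: by the almost-orthonormality of Lemma \ref{lem:CHW3PBW}, $(F_\bi^\ba, F_\bi^\ba) \in 1 + q\cA_\Z$, so Proposition \ref{prop:BilLat}(4) places $F_\bi^\ba$ in $\Lint(\infty)$ with nonzero image in $\Lint(\infty)/q\Lint(\infty)$ of squared norm $1$ under $(\cdot,\cdot)_0$. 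Since $B(\infty) \cup (-B(\infty))$ is exactly the set of squared-norm-$1$ vectors in $\Lint(\infty)/q\Lint(\infty)$, the containment is immediate.

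The main task is to rule out the minus signs, which I accomplish by induction on the height of $\xi$ where $-\xi = |F_\bi^\ba|$. When $a_1 > 0$, I write $F_\bi^\ba = F_{i_1}^{(a_1)} y$ with $y = F_{\bi;\beta_2}^{(a_2)} \cdots F_{\bi;\beta_N}^{(a_N)}$. Corollary \ref{cor:deriv and root} gives $e_{i_1}(F_{\bi;\beta_r}) = 0$ for every $r > 1$; iterating the twisted Leibniz rule for the quantum differential $e_{i_1}$ (and using that any power of an element in $\ker e_{i_1}$ remains annihilated) yields $e_{i_1}(y) = 0$. Therefore $F_\bi^\ba = \tf_{i_1}^{a_1}(y)$ by the definition of the Kashiwara operator, and since $y$ has strictly smaller height the inductive hypothesis gives $y + q\cL = b \in B(\infty)$. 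Consequently
\[
F_\bi^\ba + q\cL \;=\; \tf_{i_1}^{a_1}(b) \;\in\; B(\infty) \cup \{0\},
\]
and the norm computation from the first paragraph rules out the zero value.

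The hardest part of the argument is the case $a_1 = 0$, where $y$ has the same height as $F_\bi^\ba$ and the preceding decomposition provides no reduction. To handle this I invoke Lemma \ref{lem:same q span}: since any two reduced expressions of $w_0 \in S_{m+1}$ are braid-connected, the $\Z[q]$-lattices $\Z[q]\bB_\bi$ coincide, and each $F_\bi^\ba$ is congruent modulo $q\cL$ to a unique $F_{\bi'}^{\ba'} \in \bB_{\bi'}$. The task reduces to locating some $\bi' \in \bI$ in which the PBW image has a positive leading exponent, at which point the preceding argument applies. I expect this reduction can be carried out by a secondary induction on $\min\{r : a_r > 0\}$, using braid moves of type $iji \leftrightarrow jij$ to transport the leading active root toward the head of the convex order; the bookkeeping of signs under these moves is the subtle point.

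Finally, for the closing statement $\cB(\prec) = \cB$: the basis $\cB(\prec)$ is bar-invariant and $q$-unitriangular with respect to $\bB_{\bi_\prec}$, so it lies in $\Umint \cap \Lint(\infty) \cap \overline{\Lint(\infty)}$ and, by the inclusion just established, reduces modulo $q\cL$ precisely to $B(\infty)$. The uniqueness of the globalization $G$ in Theorem \ref{thm:Globalization} then forces $\cB(\prec) = G(B(\infty)) = \cB$.
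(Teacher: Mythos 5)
Your route is genuinely different from the paper's, and it has a gap. The paper proves the reverse containment $B(\infty) \subseteq \bB_\bi + q\cL$: since $1 + q\cL \in \bB_\bi + q\cL$ and $B(\infty)$ is by definition generated from $1 + q\cL$ by applying $\tf$-sequences, it suffices to show that $\bB_\bi + q\cL$ is closed under each $\tf_i$, and this is uniform in $\ba$ --- either $i_1 = i$ and Corollary \ref{cor:deriv and root} gives $\tf_i F_\bi^\ba = F_\bi^{(a_1+1,\ldots)}$, or one moves to a $\bj$ with $j_1 = i$ via Lemma \ref{lem:same q span}. Equality then follows by counting. Your direction $\bB_\bi + q\cL \subseteq B(\infty)$ needs the extra norm step to handle signs and, more importantly, forces a case split on whether $a_1$ is positive; the paper's closure argument sidesteps both.

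Your $a_1 > 0$ step is correct, but the $a_1 = 0$ case is a real gap: you explicitly say you only ``expect'' the secondary induction on $\min\{r : a_r > 0\}$ to work and defer the bookkeeping of braid moves. Lemma \ref{lem:same q span} supplies a weight-preserving mod-$q$ bijection $\ba \mapsto \ba'$ between PBW exponent vectors for braid-connected words, but it does not say how the smallest nonzero index moves under a single braid move, so your proposed induction is not obviously well-founded. The gap is repairable by a different argument: since $F_\bi^\ba + q\cL = \pm b$ for some $b \in B(\infty)$ with $b \neq 1$, there is an $i$ with $\te_i b \neq 0$ (write $b = \tf_i b'$ with $b' \neq 0$ and use (S$8_l$)); choosing $\bj \in \bI$ with $j_1 = i$, the corresponding $F_\bj^{\ba'}$ cannot have $a_1' = 0$, for then Corollary \ref{cor:deriv and root} would place it in $\ker e_i$ and $\te_i$ would kill its image in $L(\infty)/qL(\infty)$, a contradiction. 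That is not what you propose, so as written the proof is incomplete. The closing paragraph about $\cB(\prec) = \cB$ via the globalization is fine.
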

\begin{proof}
	It suffices to show that $\bB_\bi+q\cL$ is closed under the action of the $\tf_i$,
	since $1+q\cL\in \bB_{\bi}+q\cL$. For this, fix $i\in I$ and note that $i$ appears
	in $\bi$. If $\bi=(i_1,\ldots, i_N)$ and $i_1=i$, then by Corollary \ref{cor:deriv and root}
	it follows that $\tf_i F_\bi^{(a_1,\ldots, a_N)}=F_\bi^{(a_1+1,\ldots, a_N)}$.
	Otherwise, there is some $\bj=(j_1,\ldots, j_N)$ connected to $\bi$ by a sequence of braid moves
	such that  $j_1=i$. Then for any $b_1\in \bB_\bi$, there is a $b_2\in \bB_\bj$ 
	such that $b_1\equiv b_2\in \bB_\bj$, and thus $\tf_i b_1\equiv \tf_ib_2\in \bB_i+q\cL$.
\end{proof}

Note that the anti-involution $\tau$ maps any PBW basis to the PBW basis associated to the
opposite ordering on simple roots, hence in particular we have the following 
straightforward consequence of the theorem.

\begin{cor}
The canonical basis $\cB(\infty)$ is invariant under the anti-involution $\tau$.
\end{cor}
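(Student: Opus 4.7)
The plan is to combine Theorem \ref{thm:CBs equal}, which identifies $\cB$ with the $\prec$-canonical basis $\cB(\prec)$ for every total order $\prec$, together with the $\tau$-behavior of PBW bases asserted just above the corollary. Fix $\bi \in \bI$ and let $\bi^{\mathrm{op}}$ denote the reversed word, which is again in $\bI$ and induces the opposite convex order on $\Phi^+$. The first step would be to verify carefully that $\tau(\bB_\bi) = \bB_{\bi^{\mathrm{op}}}$: since $\tau$ is an anti-involution fixing each $F_i$, it reverses products, and an induction on height using the symmetric commutation relation of Corollary \ref{cor:levsoib} shows that $\tau$ intertwines the two root-vector constructions.

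The second step is to transport the three characterizing properties of $\cB(\bi)$ from Lemma \ref{lem:CHW3PBW} through $\tau$. Bar-invariance is preserved because $\tau$ commutes with $\barmap$: both fix each $F_i$, $\tau$ is $\Qq$-linear and $\barmap$ is $\Q$-antilinear with $\bar q = q^{-1}$, so the identity $\tau \circ \barmap = \barmap \circ \tau$ is immediate on the generators $F_i$ and $q$, and therefore extends to all of $\Um$. The $q$-unitriangularity of $\cB(\bi)$ with respect to the PBW basis $\bB_\bi$ transforms via $\tau$ into $q$-unitriangularity of $\tau(\cB(\bi))$ with respect to $\tau(\bB_\bi) = \bB_{\bi^{\mathrm{op}}}$. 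Finally, almost-orthogonality with respect to $(-,-)$ is preserved because, as established in the lemma following Proposition \ref{P:BilinearForm2}, the bilinear form is $\tau$-invariant.

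Consequently $\tau(\cB(\bi))$ satisfies the characterizing properties of $\cB(\bi^{\mathrm{op}})$, and bar-invariance together with $q$-unitriangularity against a fixed PBW basis determines such a basis uniquely, so $\tau(\cB(\bi)) = \cB(\bi^{\mathrm{op}})$. By Theorem \ref{thm:CBs equal}, both $\cB(\bi)$ and $\cB(\bi^{\mathrm{op}})$ coincide with the canonical basis $\cB$, yielding $\tau(\cB) = \cB$. The only nontrivial point is the first step, $\tau(\bB_\bi) = \bB_{\bi^{\mathrm{op}}}$; this reduces, via the braid operators of Section \ref{sec: braid}, to a rank-two calculation matching the anti-involution $\tau$ with order reversal on the Lyndon/costandard side, which is where I expect to spend the bulk of the effort. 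Once this matching is confirmed, the remainder of the argument is purely formal.
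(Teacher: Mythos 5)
Your proposal is correct and follows essentially the same route as the paper: the paper's proof is the single observation that $\tau$ sends the PBW basis for an ordering to the PBW basis for the opposite ordering, so that Theorem \ref{thm:CBs equal} immediately forces $\tau(\cB)=\cB$. You have simply filled in the details the paper leaves implicit (that $\tau$ commutes with $\barmap$ on $\Um$, that it preserves the form, and the uniqueness characterization of $\cB(\prec)$), and you correctly flag $\tau(\bB_\bi)=\bB_{\bi^{\mathrm{op}}}$ as the one step requiring genuine work — which is precisely what the paper cites as the ``known'' ingredient.
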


\section{Examples}\label{sec:examples}

\subsection{Canonical bases for standard quantum $\gl(2|1)$}\label{subsec:CB(2|1)}
Let us consider our construction in the special case of $U_q^-(\gl(2|1))$.
First, we should compare the crystal lattice constructed here to the one in \cite{Zou}.
Therein, the author constructs a partial crystal structure on $U_q^-(\gl(m|1))$.
To do this, essentially they use the even Kashiwara operators to construct
a lattice by descending from the nilpotent PBW elements.
However, while this lattice is closed under $\te_m$, it is not closed
under $\tf_m$. (An upper crystal basis is also constructed which is closed under $\tf_m$, but not $\te_m$.)
We observe that the lattice defined therein when $m=2$ case is 
$(\bigoplus_t \cA F_1^{(t)})\oplus (\bigoplus_t \cA F_1^{(t)}F_2)\oplus (\bigoplus_t q\cA F_2F_1^{(t)})
\oplus (\bigoplus_t \cA F_2F_1^{(t)}F_2)$.

Now let us relate our crystal basis to the canonical 
basis from \cite{CHW3}. This canonical basis is constructed
from the PBW bases, as described in the following proposition.
\begin{prop}    \label{prp:CBU+}
$\Um$ admits the following canonical basis:
$$
b_{1^r}=F_1^{(r)}, \quad b_{1^{r}2}=F_1^{(r)}F_2, \quad b_{21^{r}}=F_2F_1^{(r)}, \quad
b_{21^{r}2}=F_2F_1^{(r+1)} F_2  \qquad (\forall
r\ge 0).
$$
Here, the subscripts are words in the alphabet $\set{1,2}$ and for $r\in \Z$, we set $1^r=\underbrace{1\ldots 1}_r$.
\end{prop}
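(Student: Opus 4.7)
The plan is to invoke the uniqueness of the canonical basis: by Lemma \ref{lem:CHW3PBW} and the remark following, any basis of $\Um$ which is simultaneously bar-invariant and $q$-unitriangular with respect to some PBW basis must coincide, up to signs, with the canonical basis $\cB(\prec)$; and by Theorem \ref{thm:CBs equal} this is $\cB$. So the strategy is just to verify these two properties for the proposed basis against the standard PBW basis, with ordering $\alpha_1 \prec \alpha_1+\alpha_2 \prec \alpha_2$, whose elements are $F_1^{(r)} F_{12}^{a} F_2^{b}$ with $r \in \N$ and $a,b \in \{0,1\}$, where $F_{12} = F_2F_1 - q F_1 F_2$.

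First, \emph{bar-invariance} is immediate: each of the elements $b_{1^r}$, $b_{1^r 2}$, $b_{21^r}$, $b_{21^r 2}$ is a polynomial in $F_1, F_2$ with integer coefficients, and the bar involution fixes $F_1, F_2$ and the integers, so the elements are trivially bar-invariant.

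Second, for \emph{$q$-unitriangularity}, the cases $b_{1^r} = F_1^{(r)}$ and $b_{1^r 2} = F_1^{(r)} F_2$ are themselves PBW elements, so nothing is to be checked. The main computation is to establish the commutation identity
\begin{equation*}
F_2 F_1^{(r)} \;=\; F_1^{(r-1)} F_{12} \;+\; q^{r} F_1^{(r)} F_2
\end{equation*}
for all $r\geq 1$, which I would prove by induction on $r$ using $F_2 F_1 = q F_1 F_2 + F_{12}$ together with the Serre relation $F_2 F_1^2 = [2] F_1 F_2 F_1 - F_1^2 F_2$. This identity shows $b_{21^r}$ differs from the PBW element $F_1^{(r-1)} F_{12}$ by the term $q^r F_1^{(r)} F_2$, which lies in the $q\Z[q]$-span of PBW elements. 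Multiplying the above identity on the right by $F_2$ and using $F_2^2 = 0$, we get
\begin{equation*}
b_{21^r 2} \;=\; F_2 F_1^{(r+1)} F_2 \;=\; F_1^{(r)} F_{12} F_2,
\end{equation*}
which is itself a PBW element, so unitriangularity is trivial there.

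Together with a weight-count argument (which matches the dimension of each weight space of $\Um$ since the proposed set is in unitriangular correspondence with the standard PBW basis), this shows the proposed set is a basis satisfying the defining conditions of $\cB(\prec)$, and therefore equals the canonical basis. The only nontrivial step is the inductive commutation calculation above; everything else is essentially formal.
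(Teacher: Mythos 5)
Your proposal is correct, and it fills in a proof that the paper itself omits: the paper states this proposition as recalled from \cite{CHW3} without rederiving it, so there is no internal proof to compare against. Your strategy is the standard one: verify bar-invariance and $q$-unitriangularity with respect to the standard PBW basis, then invoke uniqueness of a basis with those two properties (Lusztig's standard argument, cited in the paper's Remark after Lemma \ref{lem:CHW3PBW}), and identify the result with $\cB$ via Theorem \ref{thm:CBs equal}. The key commutation identity $F_2 F_1^{(r)} = F_1^{(r-1)} F_{12} + q^{r} F_1^{(r)} F_2$ is correct and does follow by the induction you describe (using the Serre relation and the auxiliary identity $F_{12}F_1 = q^{-1}F_1 F_{12}$), and the consequence $F_2 F_1^{(r+1)} F_2 = F_1^{(r)} F_{12} F_2$ then follows from $F_2^2=0$.

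One small inaccuracy in the write-up: the elements $F_1^{(r)}$ and the products built from them are not polynomials in $F_1,F_2$ with \emph{integer} coefficients --- the coefficient $1/[r]!$ is a non-integer rational function of $q$. Bar-invariance still holds, but the correct reason is that $\barmap$ fixes $F_1, F_2$ and every $[n]$, hence fixes $[r]!$ and therefore fixes $F_i^{(r)}$; the conclusion is unaffected. Also, the ``up to signs'' qualification in your uniqueness invocation is unnecessary: once the PBW basis is fixed, $q$-unitriangularity normalises the leading coefficient to $1$ and removes the sign ambiguity, so bar-invariance plus $q$-unitriangularity alone pins down the basis exactly, which is exactly what you go on to check.
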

Let us now consider how this canonical basis relates to the crystals constructed in this section.
Computing the action of the Kashiwara operators on the canonical basis, one easily
sees that the canonical basis lies in the crystal lattice (hence spans it). Indeed, we compute
that for $r\geq 0$, the action of the Kashiwara operators on the canonical basis elements of height $r$
are given by
\begin{align*}
\tf_1 b_{1^r}&=b_{1^{r+1}};&
\tf_2 b_{1^r}&=b_{21^r};\\
\te_1 b_{1^r}&=b_{1^{r-1}};& 
\te_2 b_{1^{r}}&=0;\\
\tf_1 b_{1^{r-1}2}&=b_{1^{r}2}; &
\tf_2 b_{1^{r-1}2}&=b_{21^{r-1}2};\\
\te_1 b_{1^{r-1}2}&=b_{1^{r-2}2}; &
\te_2 b_{1^{r-1}2}&=q^{r-1}b_{1^{r-1}};\\
\tf_1 b_{21^{r-1}}&=b_{21^{r}}+(q^{r-1}-q^r)b_{1^{r}2};&
\tf_2 b_{21^{r-1}}&=0;\\
\te_1 b_{21^{r-1}}&=b_{21^{r-2}}+(q^r-q^{r-1})b_{1^{r-2}2};&
\te_2 b_{21^{r-1}}&=b_{1^{r-1}};\\
\tf_1b_{21^{r-2}2}&=b_{21^{r-1}2};&
\tf_2 b_{21^{r-2}2}&=0;\\
\te_1 b_{21^{r-2}2}&=b_{21^{r-3}2};&
\te_2 b_{21^{r-2}2}&=b_{1^{r-2}2}-q^{r-2}b_{21^{r-2}}.
\end{align*}

\begin{figure}
\begin{tikzpicture}[scale=.7]
\draw (0,0) node{$b_{\emptyset}$};
\draw (-1,-2) node{$b_{1}$};
\draw (1,-2) node{$b_{2}$};
\draw (-2,-4) node{$b_{11}$};
\draw (0,-4) node{$b_{21}$};
\draw (2,-4) node{$b_{12}$};
\draw (-3,-6) node{$b_{111}$};
\draw (-1,-6) node{$b_{211}$};
\draw (1,-6) node{$b_{112}$};
\draw (3,-6) node{$b_{212}$};
\draw (-3,-8) node{$b_{1111}$};
\draw (-1,-8) node{$b_{2111}$};
\draw (1,-8) node{$b_{1112}$};
\draw (3,-8) node{$b_{2112}$};
\draw (-3,-10) node{$b_{11111}$};
\draw (-1,-10) node{$b_{21111}$};
\draw (1,-10) node{$b_{11112}$};
\draw (3,-10) node{$b_{21112}$};

\draw[blue,->, thick] (-.3,-.3) -- (-.9,-1.6);
\draw[red, dashed, ->, thick] (.2,-.3) -- (.8,-1.6);

\draw[blue,->, thick] (-1.3,-2.3) -- (-1.9,-3.6);
\draw[red, dashed, ->, thick] (-.8,-2.3) -- (-.2,-3.6);
\draw[blue,->, thick] (1.3,-2.3) -- (1.9,-3.6);

\draw[blue,->, thick] (-2.3,-4.3) -- (-2.9,-5.6);
\draw[red, dashed, ->, thick] (-1.8,-4.3) -- (-1.2,-5.6);
\draw[blue,->, thick] (-.3,-4.3) -- (-.9,-5.6);
\draw[blue,->, thick] (1.7,-4.3) -- (1.3,-5.6);
\draw[red, dashed, ->, thick] (2.3,-4.3) -- (2.9,-5.6);

\draw[blue,->, thick] (-3,-6.3) -- (-3,-7.7);
\draw[red, dashed, ->, thick] (-2.8,-6.3) -- (-1.4,-7.6);
\draw[blue,->, thick] (-1,-6.3) -- (-1,-7.7);
\draw[blue,->, thick] (1,-6.3) -- (1,-7.7);
\draw[red, dashed, ->, thick] (1.4,-6.3) -- (2.8,-7.6);
\draw[blue,->, thick] (3,-6.3) -- (3,-7.7);

\draw[blue,->, thick] (-3,-8.3) -- (-3,-9.7);
\draw[red, dashed, ->, thick] (-2.8,-8.3) -- (-1.4,-9.6);
\draw[blue,->, thick] (-1,-8.3) -- (-1,-9.7);
\draw[blue,->, thick] (1,-8.3) -- (1,-9.7);
\draw[red, dashed, ->, thick] (1.4,-8.3) -- (2.8,-9.6);
\draw[blue,->, thick] (3,-8.3) -- (3,-9.7);

\end{tikzpicture}
\caption{The crystal of $\UU^-_q(\gl(2|1))$ truncated to weights of height at most 5.}
\label{fig: gl21 crys}
\end{figure}
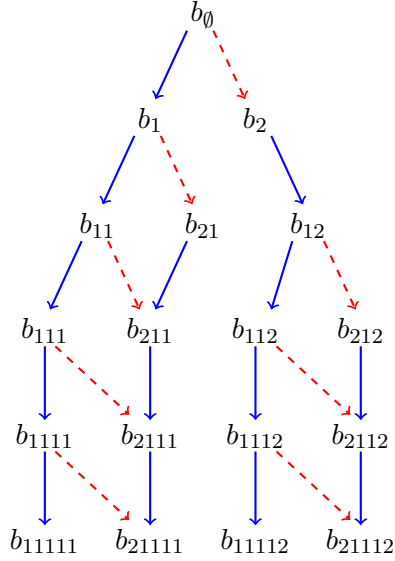
Furthermore, let us comment on the canonical basis of modules in this case.
Let $\lambda=a\epsp_1+b\epsp_2+c\epsp_3\in P^+$, so $a,b,c\in \Z$ with $a\geq b$.
Then there is a unique finite-dimensional simple module $V(\lambda)$ 
of highest weight $\lambda$. The compatibility of the canonical basis
with these modules is considered in \cite[\S 8]{CHW3}.
Specifically, for $\lambda\in P^+$, let $\cB(\lambda)=\set{uv_\lambda\mid u\in \cB}\setminus 0$;
note that for $\lambda\in\tPp$, this agrees with the definition
in Theorem \ref{thm:Globalization}.
Then in {\em loc. cit.}, it is shown that
\begin{enumerate}
\item $\cB(\lambda)$ is a basis for $V(\lambda)$ when $\lambda$ is typical;
\item $\cB(\lambda)$ is a basis for $V(\lambda)$ when $b=-c$;
\item $\cB(\lambda)$ is a basis for $V(\lambda)$ when $a=-c-1=b$; and
\item $\cB(\lambda)$ is linearly dependent in $V(\lambda)$ when $a=-c-1>b$.
\end{enumerate}
First, note that $K(\lambda)=V(\lambda)$ whenever $\lambda$ is typical,
so (1) follows from Theorem \ref{thm:Globalization} for $K(\lambda)$.
For (2), note that $V(\lambda)\in \catOint$ so the result
follows from Theorem \ref{thm:Globalization} for $V(\lambda)$ in this case.

It remains to comment on (3) and (4). 
Let $n=\ang{h_1,\lambda}=a-b$, and note that $\ang{h_2,\lambda}=b+c=b-a-1=-(n+1)$.
Then as observed in \cite{CHW3},
\[\cB(\lambda)=\set{F_1^{(r)}v_\lambda,\quad F_1^{(r+1)}F_2v_\lambda,\quad
F_2F_1^{(r)}v_\lambda\mid 0\leq r\leq n}\] 
is a spanning set, but we have the linear
dependence $F_2F_1^{(r)}v_\lambda=\frac{[n+1-r]}{[n+1]}F_1^{(r)}F_2v_\lambda$ 
for $1\leq r\leq n$ if $n>0$. However, if we consider the lattice $L=\cA \cB(\lambda)$,
since $\frac{[n+1-r]}{[n+1]}\in q^{r}\cA$, we observe that we can canonically
pick a basis for $V(\lambda)$; that is, the vectors
\[\cB'(\lambda)=\set{uv_\lambda\mid u\in \cB\text{ such that }uv_\lambda\notin qL}=\set{F_1^{(r)}v_\lambda, F_2v_\lambda, F_1^{(r+1)}F_2v_\lambda\mid 0\leq r\leq n}.\]

\begin{figure}[t]
\begin{tikzpicture}[scale=.7]
\draw (0,0) node{$v_\lambda$};
\draw (-1,-2) node{$F_1v_\lambda$};
\draw (1,-2) node{$F_2v_\lambda$};
\draw (-2,-4) node{$F_1^{(2)}v_\lambda$};
\draw (0,-4) node{$0$};
\draw (2,-4) node{$F_1F_2v_\lambda$};
\draw (-3,-6) node{$0$};
\draw (-1,-6) node{$0$};
\draw (1,-6) node{$F_1^{(2)}F_2v_\lambda$};
\draw (3,-6) node{$0$};
\draw (1,-8) node{$F_1^{(3)}F_2v_\lambda$};
\draw (3,-8) node{$0$};
\draw (1,-10) node{$0$};
\draw (3,-10) node{$0$};

\draw[blue,->, thick] (-.3,-.3) -- (-.9,-1.6);
\draw[red, dashed, ->, thick] (.2,-.3) -- (.8,-1.6);

\draw[blue,->, thick] (-1.3,-2.3) -- (-1.9,-3.6);
\draw[red, dashed, ->, thick, dotted] (-.8,-2.3) -- (-.2,-3.6);
\draw[blue,->, thick] (1.3,-2.3) -- (1.9,-3.6);

\draw[blue,->, thick, dotted] (-2.3,-4.3) -- (-2.9,-5.6);
\draw[red, dashed, ->, thick, dotted] (-1.8,-4.3) -- (-1.2,-5.6);
\draw[blue,->, thick] (1.7,-4.3) -- (1.3,-5.6);
\draw[red, dashed, ->, thick, dotted] (2.3,-4.3) -- (2.9,-5.6);

\draw[blue,->, thick] (1,-6.3) -- (1,-7.7);
\draw[red, dashed, ->, thick, dotted] (1.4,-6.3) -- (2.8,-7.6);

\draw[blue,->, thick, dotted] (1,-8.3) -- (1,-9.7);
\draw[red, dashed, ->, thick, dotted] (1.4,-8.3) -- (2.8,-9.6);

\end{tikzpicture}
\caption{The crystal of the atypical $\UU^-_q(\gl(2|1))$-module $V(2\epsp_1-3\epsp_4)$.}
\label{fig: gl21 crys atypical}
\end{figure}

We cannot define a crystal structure on $L$ as we have above.
Indeed, note that $L$ is not closed under the operator $\te_2$:
\[\te_2 F_1^{(r)}F_2v_\lambda=q^{-1}K_2E_2F_1^{(r)}F_2v_\lambda=-q^{-n-2+r}[n+1]F_1^{(r)}v_\lambda\in q^{r-2n-2}L.\] 
However, if we define $\te_2'=-q^{2(n+1)}\te_2$, then we see that $(L,\cB'(\lambda)+qL)$
is a crystal lattice under $\te_1, \tf_1, \te_2', \tf_2$.
Moreover this crystal is compatible with the one on $\Um$
modulo $q$; see Figure \ref{fig: gl21 crys atypical} and compare to Figure \ref{fig: gl21 crys}.

\subsection{Canonical bases for standard quantum $\gl(3|1)$}\label{subsec:CB(3|1)}
Next, let us explicitly construct the canonical basis in the case $m=3$.
We also want to produce some examples of canonical bases on atypical modules.
Both of these tasks are made easier by comparing canonical basis elements
to their corresponding PBW vectors modulo $q$,
hence we will write $x\equiv_q y$ if $x-y\in q\cL$.
On the other hand, each task is better suited by a particular PBW basis: either
that associated to the standard order $I=\set{1<2<3}$, or the opposite order
$I=\set{3<^{\rm op}2<^{\rm op}1}$.
To that end, we recall the associated PBW vectors
\[F_{12}=F_2F_1-qF_1F_2,\qquad F_{23}=F_3F_2-qF_2F_3;\qquad F_{123}=F_{3}F_{12}-qF_{12}F_3=F_{23}F_1-qF_1F_{23},\]
with respect to the standard order, whereas the opposite order yields the PBW vectors
\[F_{21}=F_1F_2-qF_2F_1,\qquad F_{32}=F_2F_3-qF_3F_2;\qquad F_{321}=F_{21}F_{3}-qF_3F_{21}=F_1F_{32}-qF_{32}F_1.\]
Note that
\[F_{21}=\tau(F_{12}),\quad F_{32}=\tau(F_{23}),\quad F_{321}=\tau(F_{123}),\]
and indeed in general we have $\BB(<)=\tau(\BB(<^{\rm op}))$.
It will be useful to note the identities
\[F_i^{(a)}F_jF_i^{(b)}=\bbinom{a+b-1}{b}F_i^{(a+b)}F_j+\bbinom{a+b-1}{b}F_jF_i^{(a+b)}\qquad i\in I_0,\ j=i\pm 1;\]
\[F_3F_2^{(a)}\equiv_q F_2^{(a-1)}F_{23};\quad F_2F_{123}=F_{123}F_2
\quad F_3F_{12}^{(a)}\equiv_qF_{12}^{(a-1)}F_{123};\quad F_3F_{23}=-qF_{23}F_3.\]
We also observe that $F_2^{(b)}F_3F_2F_3 F_2^{(a)}=F_3F_2^{(a+b+1)}F_3$,
and $F_3F_2F_1F_3F_2F_3$ is central in $\Um$.

\begin{thm}\label{thm:CB(3|1)}
For $x,y,z\in \set{0,1}$ and $a,b,c\in \Z_{\geq 0}$, let $u=u(x,y,z,a,b,c)\in \cB$
be the unique element equal to the PBW vector 
$F_3^xF_{32}^yF_{321}^zF_2^{(a)}F_{21}^{(b)}F_1^{(c)}$
modulo $q$.
Then
\[u=
\begin{cases}
F_3^x\ F_2^{(a+y)}\ F_1^{(b+c+z)}\ F_3^y\ F_2^{(b+z)}\ F_3^z
&\text{ if } c\geq a,\\
F_3^x F_2^{y} F_1^{(b+z)} F_3^y F_2^{(a+b+z)} F_3^z F_1^{(c)} 
&\text{ if } a> c\text{ and }y\leq x,\\
\displaystyle\sum_{t=0}^{b} (-1)^t \bbinom{a-c-1+t}{t}F_2^{(a+1+t)}F_1^{(b+c+z)}F_3F_2^{(b+z-t)}F_3^{z}
&\text{otherwise.}
\end{cases}\]

\end{thm}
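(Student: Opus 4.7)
The plan is to exploit the uniqueness built into Lemma~\ref{lem:CHW3PBW}: since by Theorem A and Theorem B we have $\cB = \cB(<^{\rm op})$, any element of $\cB$ is uniquely characterized by bar-invariance together with $q$-unitriangularity with respect to $\BB(<^{\rm op})$. Hence for each of the three cases in the theorem it would suffice to verify (i) bar-invariance of the proposed expression, and (ii) that it reduces modulo $q\Z[q]\BB(<^{\rm op})$ to the claimed PBW vector $F_3^x F_{32}^y F_{321}^z F_2^{(a)} F_{21}^{(b)} F_1^{(c)}$.

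For (i), in Cases 1 and 2 the proposed expression is a monomial whose factors are each divided powers $F_i^{(n)}$ or generators $F_j$, hence automatically bar-invariant. Case 3 requires more care: I would verify that the alternating sum is bar-invariant by observing that each summand $F_2^{(a+1+t)} F_1^{(b+c+z)} F_3 F_2^{(b+z-t)} F_3^z$ is already a bar-invariant monomial of the same shape, so $\barmap$ acts only on the scalar coefficients; those coefficients are themselves bar-invariant because $\overline{\bbinom{n}{k}} = \bbinom{n}{k}$.

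For (ii), I would expand each proposed product into the PBW basis using the identities collected just before the theorem: the divided-power commutator $F_i^{(a)} F_j F_i^{(b)} = \bbinom{a+b-1}{b}(F_i^{(a+b)}F_j + F_jF_i^{(a+b)})$, the congruences $F_3 F_2^{(a)} \equiv_q F_2^{(a-1)}F_{23}$ and $F_3 F_{12}^{(a)} \equiv_q F_{12}^{(a-1)}F_{123}$, the quasi-commutativity $F_3 F_{23} = -qF_{23}F_3$, the commutativity $F_2 F_{123} = F_{123}F_2$, and the centrality of $F_3F_2F_1F_3F_2F_3$. In Case 1 ($c \geq a$) the $F_1^{(c)}$ on the right is abundant enough that after moving each $F_3$ leftward and pairing it with an adjacent $F_2$ to form an $F_{32}$ or $F_{321}$, the remaining $F_1$'s assemble cleanly into $F_1^{(c)}$. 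Case 2 ($a>c$ with $y\leq x$) works symmetrically, with the surplus of $F_2$'s being absorbed into $F_2^{(a)}$ on the left of the PBW ordering rather than generating extra $F_{21}$'s on the right.

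The main obstacle will be Case 3 ($a > c$ and $y=1, x=0$), where no single monomial suffices: moving the lone $F_3$ leftward past a divided power $F_2^{(r)}$ via $F_3 F_2^{(r)} \equiv_q F_2^{(r-1)}F_{23}$ produces subleading PBW contributions that collide with the leading terms of neighbouring basis vectors, so those must be cancelled. The alternating prefactors $(-1)^t\bbinom{a-c-1+t}{t}$ are engineered to do precisely this, and the verification reduces to a Vandermonde-type $q$-binomial identity. I would prove this identity by induction on $b$, combining Pascal's recurrence for $\bbinom{a-c-1+t}{t}$ with the iterate of $F_3 F_2^{(r)} \equiv_q F_2^{(r-1)}F_{23}$ to express $F_2^{(r)} F_3 F_2^{(s)}$ in PBW form; once established, the identity simultaneously pins down bar-invariance of the sum and isolates the correct leading PBW term, completing the argument.
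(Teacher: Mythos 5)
Your characterization of $\cB$ (bar-invariance plus $q$-unitriangularity with respect to $\BB(<^{\rm op})$, valid once Theorems A and B identify $\cB$ with $\cB(<^{\rm op})$) is correct, and the overall plan is sound — but it is a genuinely different route from the paper's. The paper organizes the computation by the power $k$ of $F_3$: for $k=0,1$ it identifies the relevant weight spaces with (quotients of) $\UU_q^-(\fsl(3))$ and $\UU_q^-(\fsl(4))$ and reads off the canonical basis from Lusztig's and Xi's explicit type $A_2$/$A_3$ computations; the $k=2,3$ elements are then produced by applying $\tf_3$ and by right-multiplying the $k=0$ elements by $F_3F_2F_3$ and by the central element $F_3F_2F_1F_3F_2F_3$; finally $\tau$ is applied to convert from the standard to the opposite indexing. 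Your plan proves the displayed formulas directly, with no detour through $\tau$ or Xi's tables, which is conceptually cleaner; the cost is that all the work falls on a from-scratch straightening computation, and your write-up leaves the genuinely hard part (the Case 3 alternating-sum identity and its Pascal/Vandermonde verification) as a sketch. Two details to be careful about if you flesh this out: first, the displayed congruences before the theorem are congruences modulo $q\cL$, not exact identities, so to get the needed containment $u - F_{\mathbf a}\in q\Z[q]\BB(<^{\rm op})$ you should explicitly invoke integrality of $u$ together with Theorem~\ref{thm:Globalization}(1) (and the unitriangularity of $\cB$ versus $\BB$) to pass from $q\cL$ to $q\Z[q]\BB$; second, Case 2 is not literally symmetric to Case 1 — the Case 2 monomial carries a trailing $F_1^{(c)}$ with no counterpart in Case 1 — so it needs its own straightening argument rather than an appeal to symmetry.
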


Note that, in this theorem, the canonical basis elements are being indexed by PBW vectors
corresponding to the opposite order; this is the most useful description for determining
canonical basis elements for Kac modules. Of course, applying $\tau$ 
produces a similar description in terms of the standard PBW basis,
and it turns out that this is the easiest PBW basis for 
constructing the canonical basis elements.

\begin{proof}

We consider elements of weight $\mu=-i\alpha_1-j\alpha_2-k\alpha_3$
for $i,j,k\in \N$. Note that the $\mu$ weight space is nonzero only if $k\leq 3$. 
This makes it convenient to describe the canonical basis
on a case-by-case basis in terms of $k$. Furthermore, observe
that for $k\leq 1$, we can formally identify $\Um_\mu$ 
and $\UU_q^-(\fsl(4))_\mu$ 
(or more precisely, with the quotient $\parens{\UU_q^-(\fsl(4))/(F_3^2)}_\mu$). In particular,
for $k=0$ we have
$$\cB_\mu=\begin{cases}
\set{F_1^{(r)}F_2^{(j)}F_1^{(s)}\equiv_q F_1^{(r)}F_{12}^{(s)}F_2^{(j-s)}\mid j\geq r+s=i}&
\text{if } j\geq i;\\
\set{F_2^{(s)}F_1^{(i)}F_2^{(r)}\equiv_q F_1^{(i-s)}F_{12}^{(s)}F_2^{(r)}\mid i\geq r+s=j}&
\text{if } j\leq i;
\end{cases}$$
and for $k=1$, from essentially the same calculations as in 
\cite{Xi} we see that
$\cB_\mu$ contains the elements
\begin{align}
F_1^{(r)}F_2^{(j)}F_1^{(s)}F_3&\equiv_q F_1^{(r)}F_{12}^{(s)}F_2^{(j-s)}F_3,&\quad j\geq r+s=i\\
F_3F_1^{(r)}F_2^{(j)}F_1^{(s)}&\equiv_q 
\begin{cases}
F_1^{(r)}F_{12}^{(s-1)}F_{123}F_2^{(j-s)}\\
F_1^{(i)}F_2^{(j-1)}F_{23}\\
F_3
\end{cases}&
\begin{aligned}\text{ if }j\geq r+s=i\text{ and }s\geq 1,\\\text{ if }j\geq i=r\text{ and }j>s=0,\\\text{ if }j=i=r=s=0.
\end{aligned}\\
F_2^{(s)}F_1^{(i)}F_2^{(r)}F_3&\equiv_q F_1^{(i-s)}F_{12}^{(s)}F_2^{(r)}F_3, &i\geq r+s=j\\
F_3F_2^{(s)}F_1^{(i)}F_2^{(r)}&\equiv_q 
\begin{cases}
F_1^{(i-s)}F_{12}^{(s-1)}F_{123}F_2^{(r)}\\
F_1^{(i)}F_2^{(j-1)}F_{23}\\
F_3
\end{cases}&
\begin{aligned}\text{ if }i\geq r+s=j\text{ and }s\geq 1,\\\text{ if }i\geq j=s\text{ and }j>r=0,\\\text{ if }j=i=r=s=0.
\end{aligned}\\
F_2^{(s)}F_1^{(i)}F_3F_2^{(r)}&\equiv_q
F_1^{(i-s)}F_{12}^{(s)}F_2^{(r-1)}F_{23}&\text{ if } i+1\geq r+s= j \text{ and }r\geq 1
\end{align}
\begin{align}\begin{aligned}
\sum_{t=0}^{s} (-1)^t \bbinom{j-i-2+t}{t}F_2^{(s-t)}F_1^{(i)}&F_3F_2^{(j-s+t)}\equiv_q
F_1^{(i-s)}F_{12}^{(s)}F_2^{(j-s-1)}F_{23}\\
&\text{ if } j>i+1 \text{ and } 1\leq s\leq i
\end{aligned}\end{align}

Now consider the case $k=2$. 
Observe that applying $\tf_3$ to the $k=1$ canonical basis elements, we obtain the families
\begin{align}
F_3F_1^{(r)}F_2^{(j)}F_1^{(s)}F_3&\equiv_q \begin{cases}
F_1^{(r)}F_{12}^{(s-1)}F_{123}F_2^{(j-s)}F_3\\
F_1^{(i)}F_2^{(j-1)}F_{23}F_3
\end{cases}&
\begin{aligned}\text{ if }j\geq r+s=i\text{ and }s\geq 1,\\\text{ if }j\geq i=r\text{ and }j>s=0.
\end{aligned}\\
F_3F_2^{(s)}F_1^{(i)}F_2^{(r)}F_3&\equiv_q 
\begin{cases}
F_1^{(i-s)}F_{12}^{(s-1)}F_{123}F_2^{(r)}F_3\\
F_1^{(i)}F_2^{(j-1)}F_{23}F_3
\end{cases}&
\begin{aligned}\text{ if }i\geq r+s=j\text{ and }s\geq 1,\\\text{ if }i\geq j=s\text{ and }j>r=0.
\end{aligned}\\
F_3F_2^{(s)}F_1^{(i)}F_3F_2^{(r)}&\equiv_q 
F_1^{(i-s)}F_{12}^{(s-1)}F_{123}F_2^{(r-1)}F_{23}&\text{ if }i+1\geq r+s=j\text{ and }r,s\geq 1
\end{align}
\begin{align}\begin{aligned}
\sum_{t=0}^{s} (-1)^t \bbinom{j-i-2+t}{t}F_3F_2^{(s-t)}F_1^{(i)}&F_3F_2^{(j-s+t)}\equiv_q
F_1^{(i-s)}F_{12}^{(s-1)}F_{123}F_2^{(j-s-1)}F_{23}\\
&\text{ if } j>i+1 \text{ and } 1\leq s\leq i
\end{aligned}
\end{align}
This accounts for most of the canonical basis elements, except for most of those whose corresponding PBW elements
modulo $q$ has both $F_{23}$ and $F_3$ as factors.
However, since $F_{23}F_3=F_3F_2F_3$, it is clear that if $v$ is a canonical basis element from the $k=0$ case,
then $vF_3F_2F_3$ is a canonical basis element; indeed, note that
if $v=\sum_{\bf a} c_{\bf a}F^{\bf a}$ is the expression for $v$ in terms of the PBW basis,
then we obtain the PBW expansion $vF_3F_2F_3=\sum_{\bf a} c_{\bf a}F^{\bf a} F_{23}F_3$,
and the coefficients satisfy the same constraints. Therefore, we obtain the families
\begin{align}
F_1^{(r)}F_2^{(j-1)}F_1^{(s)}F_3F_2F_3&\equiv_q F_1^{(r)}F_{12}^{(s)}F_2^{(j-1-s)}F_{23}F_3& 
\text{ if }j-1\geq i=r+s,\\
F_2^{(s)}F_1^{(i)}F_3F_2^{(r)}F_3
&\equiv_q F_1^{(i-s)}F_{12}^{(s)}F_2^{(r-1)}F_{23}F_3,&\text{if } i+1\geq r+s=j.
\end{align}

Finally, for the case $k=3$, observe that $F_{123}F_{23}F_3=F_3F_2F_1F_3F_2F_3$,
which is central in $\Um$.
Since $F_2$ and $F_{123}$ commute, we again note that multiplying $k=0$ canonical basis
elements by $F_{123}F_{23}F_3$ on the right produces canonical basis elements, and thus 
the canonical basis elements are
\begin{align}
F_1^{(r)}F_2^{(j-2)}F_1^{(s)}F_3F_2F_1F_3F_2F_3&\equiv_q F_1^{(r)}F_{12}^{(s)}F_{123}F_2^{(j-1-s)}F_{23}F_3& 
\text{ if }j-2\geq r+s=i-1,\\
F_2^{(s)}F_1^{(i-1)}F_2^{(r)}F_3F_2F_1F_3F_2F_3
&\equiv_q F_1^{(i-1-s)}F_{12}^{(s)}F_{123}F_2^{(r)}F_{23}F_3,&\text{if } i-1\geq r+s=j-2.
\end{align}
This finishes the construction of the canonical basis elements. To obtain the description
in the statement of the theorem, we apply $\tau$ and reinterpret $r,s,i,j,k$ in
terms of the powers of the root vectors.
\end{proof}

\begin{figure}[h]
	\begin{tikzpicture}[scale=.7]
	\draw (0,-1+0) node (0){$\bullet$};
	\draw (1.2, -1+0.3) node (1) {$\bullet$};
	\draw (1.4, -1-0.3) node (2) {$\bullet$};
	\draw (2.5, -1+.9) node (11) {$\bullet$};
	\draw (2.7, -1+0.3) node (12) {$\bullet$};
	\draw (2.9, -1-0.3) node (21) {$\bullet$};
	\draw (3.1, -1-.9) node (22) {$\bullet$};
	\draw (3.7, -1+1.5) node (111) {$\bullet$};
	\draw (3.9, -1+.9) node (121) {$\bullet$};
	\draw (4.1, -1+0.3) node (212) {$\bullet$};
	\draw (4.3, -1-0.3) node (211) {$\bullet$};
	\draw (4.5, -1-.9) node (221) {$\bullet$};
	\draw (4.7, -1-1.5) node (222) {$\bullet$};
	\draw (4.9, -1+2.4) node (1111) {$\bullet$};
	\draw (5.1, -1+1.8) node (1211) {$\bullet$};
	\draw (5.3, -1+1.2) node (1212) {$\bullet$};
	\draw (5.5, -1+.6) node (2212) {$\bullet$};
	\draw (5.7, -1+0.0) node (2121) {$\bullet$};
	\draw (5.9, -1-.6) node (2111) {$\bullet$};
	\draw (6.1, -1-1.2) node (2211) {$\bullet$};
	\draw (6.3, -1-1.8) node (2221) {$\bullet$};
	\draw (6.5, -1-2.4) node (2222) {$\bullet$};
	
	\draw (-1, 4+0) node (3) {$\bullet$};
	\draw (.3, 4+.3) node (32) {$\bullet$};
	\draw (.5, 4-.3) node (31) {$\bullet$};
	\draw (1.5, 4+.9) node (322) {$\bullet$};
	\draw (1.7, 4+.3) node (321) {$\bullet$};
	\draw (1.9, 4-.3) node (312) {$\bullet$};
	\draw (2.1, 4-.9) node (311) {$\bullet$};
	\draw (2.7, 4+1.5) node (3222) {$\bullet$};
	\draw (2.9, 4+.9) node (3221) {$\bullet$};
	\draw (3.1, 4+.3) node (3211) {$\bullet$};
	\draw (3.3, 4-.3) node (3212) {$\bullet$};
	\draw (3.5, 4-.9) node (3121) {$\bullet$};
	\draw (3.7, 4-1.5) node (3111) {$\bullet$};
	
	\draw (.5, 6.5) node (323) {$\bullet$};
	\draw (1.8, 6.8) node (3232) {$\bullet$};
	\draw (2, 6.2) node (3231) {$\bullet$};
	
	\draw (5.7, 3) node (123) {$\bullet$};
	\draw (6.9, 3.3) node (1231) {$\bullet$};
	\draw (7.1, 2.7) node (2123) {$\bullet$};
	
	\draw (7.3, 1+0) node (23) {$\bullet$};
	\draw (8.5, 1+.3) node (231) {$\bullet$};
	\draw (8.7, 1-.3) node (232) {$\bullet$};
	\draw (9.7, 1+.9) node (2311) {$\bullet$};
	\draw (9.9, 1+.3) node (2312) {$\bullet$};
	\draw (10.1, 1-.3) node (2321) {$\bullet$};
	\draw (10.3, 1-.9) node (2322) {$\bullet$};
	
	\draw (6, 5.5) node (3123) {$\bullet$};
	
	
	
	\fill[color=white!80!black, path fading=east] (-.5,-1) -- (5,1.9) -- (5,-3.2) -- cycle;
	\draw[color=white!70!black, dashed] (-.5,-1) -- (5,1.9);
	\draw[color=white!70!black, dashed] (-.5,-1) -- (7,-4);
	
	\draw[blue,->, thick] (0.east) -- (1.west);
	\draw[green!50!black, densely dotted, ->, thick] (0.east) -- (2.west);
	begin
	\draw[blue,->, thick] (1.east) -- (11.west);
	\draw[green!50!black, densely dotted,->, thick] (1.east) -- (12.west);
	\draw[blue,->, thick] (2.east) -- (21.west);
	\draw[green!50!black, densely dotted,->, thick] (2.east) -- (22.west);
	
	\draw[blue,->, thick] (11.east) -- (111.west);
	\draw[green!50!black, densely dotted,->, thick] (11.east) -- (121.west);
	\draw[blue,->, thick] (12.east) -- (121.west);
	\draw[green!50!black, densely dotted,->, thick] (12.east) -- (212.west);
	\draw[blue,->, thick] (21.east) -- (211.west);
	\draw[green!50!black, densely dotted,->, thick] (21.east) -- (221.west);
	\draw[blue,->, thick] (22.east) -- (221.west);
	\draw[green!50!black, densely dotted,->, thick] (22.east) -- (222.west);
	
	\draw[blue,->, thick] (111.east) -- (1111.west);
	\draw[green!50!black, densely dotted,->, thick] (111.east) -- (1211.west);
	\draw[blue,->, thick] (121.east) -- (1211.west);
	\draw[green!50!black, densely dotted,->, thick] (121.east) -- (1212.west);
	\draw[blue,->, thick] (211.east) -- (2111.west);
	\draw[green!50!black, densely dotted,->, thick] (211.east) -- (2121.west);
	\draw[blue,->, thick] (212.east) -- (2121.west);
	\draw[green!50!black, densely dotted,->, thick] (212.east) -- (2212.west);
	\draw[blue,->, thick] (221.east) -- (2211.west);
	\draw[green!50!black, densely dotted,->, thick] (221.east) -- (2221.west);
	\draw[blue,->, thick] (222.east) -- (2221.west);
	\draw[green!50!black, densely dotted,->, thick] (222.east) -- (2222.west);

	\draw[red, dashed, ->, thick] (0.north) -- (3.south);
	\draw[red, dashed, ->, thick] (1.north) -- (31.south);
	\draw[red, dashed, ->, thick] (11.north) -- (311.south);
	\draw[red, dashed, ->, thick] (111.north) -- (3111.south);
	\draw[red, dashed, ->, thick] (212.north) -- (2123.south);
	\draw[red, dashed, ->, thick] (121.north) -- (1231.south);
	\draw[red, dashed, ->, thick] (12.north) -- (123.south);

	
	\fill[color=white!80!black, path fading=east] (-1.5,4) -- (4,6.5) -- (4,2) -- cycle;
	\draw[color=white!70!black, dashed] (-1.5,4) -- (4,6.5);
	\draw[color=white!70!black, dashed] (-1.5,4) -- (4,2);less than

	\draw[red, dashed, ->, thick] (0.north) -- (3.south);
	\draw[red, dashed, ->, thick] (1.north) -- (31.south);
	\draw[red, dashed, ->, thick] (11.north) -- (311.south);
	\draw[red, dashed, ->, thick] (111.north) -- (3111.south);
	
	\draw[blue,->, thick] (3.east) -- (31.west);
	\draw[green!50!black, densely dotted,->, thick] (3.east) -- (32.west);
	\draw[blue,->, thick] (31.east) -- (311.west);
	\draw[green!50!black, densely dotted,->, thick] (31.east) -- (312.west);
	\draw[blue,->, thick] (32.east) -- (321.west);
	\draw[green!50!black, densely dotted,->, thick] (32.east) -- (322.west);
	\draw[blue,->, thick] (311.east) -- (3111.west);
	\draw[green!50!black, densely dotted,->, thick] (311.east) -- (3121.west);
	\draw[blue,->, thick] (312.east) -- (3121.west);
	\draw[green!50!black, densely dotted,->, thick] (312.east) -- (3212.west);
	\draw[blue,->, thick] (321.east) -- (3211.west);
	\draw[green!50!black, densely dotted,->, thick] (321.east) -- (3221.west);
	\draw[blue,->, thick] (322.east) -- (3221.west);
	\draw[green!50!black, densely dotted,->, thick] (322.east) -- (3222.west);
	
	\draw[red, dashed, ->, thick] (32.east) -- (323.south);
	\draw[red, dashed, ->, thick] (321.east) -- (3231.south);
	\draw[red, dashed, ->, thick] (322.east) -- (3232.south);
	
	\draw[red, dashed, ->, thick] (312.east) -- (3123.west);
	
	\fill[color=white!80!black, path fading=east] (5.5,5.5) -- (6.5,5) -- (6.5,6) -- cycle;
	\draw[color=white!70!black, dashed] (5.5,5.5) -- (6.5,6);
	\draw[color=white!70!black, dashed] (5.5,5.5) -- (6.5,5);
	
	\fill[color=white!80!black, path fading=east] (5.2,3) -- (7.5,3.9) -- (7.5,2.4) -- cycle;
	\draw[color=white!70!black, dashed] (5.2,3) -- (7,3.7);
	\draw[color=white!70!black, dashed] (5.2,3) -- (7.5,2.4);

	\draw[blue,->, thick] (123.east) -- (1231.west);
	\draw[green!50!black, densely dotted,->, thick] (123.east) -- (2123.west);

	\draw[red, dashed, ->, thick] (2.east) -- (23.south west);
	\draw[red, dashed, ->, thick] (21.east) -- (231.south west);
	\draw[red, dashed, ->, thick] (22.east) -- (232.south west);
	\draw[red, dashed, ->, thick] (211.east) -- (2311.south west);
	\draw[red, dashed, ->, thick] (221.east) -- (2321.south west);
	\draw[red, dashed, ->, thick] (222.east) -- (2322.south west);
	
	\fill[color=white!80!black, path fading=east] (7,1) -- (11,2.7) -- (11,-.7) -- cycle;
	\draw[color=white!70!black, dashed] (7,1) -- (11,2.7);
	\draw[color=white!70!black, dashed] (7,1) -- (11,-.7);

	\draw[blue,->, thick] (23.east) -- (231.west);
	\draw[green!50!black, densely dotted,->, thick] (23.east) -- (232.west);
	\draw[blue,->, thick] (231.east) -- (2311.west);
	\draw[green!50!black, densely dotted,->, thick] (231.east) -- (2312.west);
	\draw[blue,->, thick] (232.east) -- (2321.west);
	\draw[green!50!black, densely dotted,->, thick] (232.east) -- (2322.west);
	
	\fill[color=white!80!black, path fading=east] (0,6.5) -- (2.3,7.4) -- (2.3,5.9) -- cycle;
	\draw[color=white!70!black, dashed] (0,6.5) -- (1.8,7.2);
	\draw[color=white!70!black, dashed] (0,6.5) -- (2.3,5.9);
	
	\draw[blue,->, thick] (323.east) -- (3231.west);
	\draw[green!50!black, densely dotted,->, thick] (323.east) -- (3232.west);

	\draw (0) node {$\bullet$};
	\draw (1) node {$\bullet$};
	\draw (2) node {$\bullet$};
	\draw (11) node {$\bullet$};
	\draw (12) node {$\bullet$};
	\draw (21) node {$\bullet$};
	\draw (22) node {$\bullet$};
	\draw (111) node {$\bullet$};
	\draw (121) node {$\bullet$};
	\draw (212) node {$\bullet$};
	\draw (211) node {$\bullet$};
	\draw (221) node {$\bullet$};
	\draw (222) node {$\bullet$};
	\draw (1111) node {$\bullet$};
	\draw (1211) node {$\bullet$};
	\draw (1212) node {$\bullet$};
	\draw (2212) node {$\bullet$};
	\draw (2121) node {$\bullet$};
	\draw (2111) node {$\bullet$};
	\draw (2211) node {$\bullet$};
	\draw (2221) node {$\bullet$};
	\draw (2222) node {$\bullet$};
	
	\draw (3) node {$\bullet$};
	\draw (31) node {$\bullet$};
	\draw (32) node {$\bullet$};
	\draw (311) node {$\bullet$};
	\draw (312) node {$\bullet$};
	\draw (321) node {$\bullet$};
	\draw (322) node {$\bullet$};
	\draw (3111) node {$\bullet$};
	\draw (3121) node {$\bullet$};
	\draw (3212) node {$\bullet$};
	\draw (3211) node {$\bullet$};
	\draw (3221) node {$\bullet$};
	\draw (3222) node {$\bullet$};
	
	\draw (323) node {$\bullet$};
	\draw (3231) node {$\bullet$};
	\draw (3232) node {$\bullet$};
	\draw (123) node {$\bullet$};
	\draw (1231) node {$\bullet$};
	\draw (2123) node {$\bullet$};
	
	\draw (23) node {$\bullet$};
	\draw (231) node {$\bullet$};
	\draw (232) node {$\bullet$};
	\draw (2311) node {$\bullet$};
	\draw (2312) node {$\bullet$};
	\draw (2321) node {$\bullet$};
	\draw (2322) node {$\bullet$};
	
	\draw (3123) node {$\bullet$};
	\end{tikzpicture}
	
	\caption{The crystal of $\UU^-_q(\gl(3|1))$ truncated to weights of height at most 4,
		arranged to highlight the copies of the crystal for $\UU^-_q(\fsl(3))$. }
\end{figure}
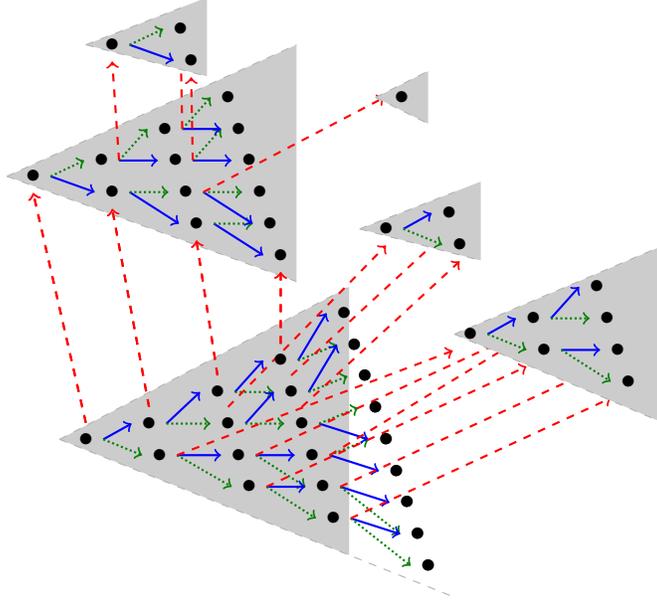

With this explicit description in hand, we can consider the compatibility of this basis
with the irreducible finite-dimensional modules $V(\lambda)$ for $\lambda\in P^+$. 
Of course, this is answered for $\lambda$ typical or fully dominant by 
Theorem \ref{thm:Globalization}, 
so it remains to consider the case when $\lambda$ is a dominant atypical weight which
is not fully dominant; explicitly, those weights $\lambda=a\epsp_1+b\epsp_2+c\epsp_3+d\epsp_4$
such that $a\geq b\geq c$ and $d=-b-1$ or $d=-a-2$.
At present, we do not know of a way to do this systematically,
but we will explicitly work out some examples.

\begin{example}
Let $\lambda=n\epsp_1-(n+2)\epsp_4$ for $n\in \Z_{\geq 0}$.
Then $\dim V_\VV(\lambda)=\binom{n+2}{2}$ has the canonical basis given by $F_2^{(a)}F_1^{(b)}v_{\lambda}$,
where $0\leq a\leq b\leq n$,
and we deduce that $K(\lambda)$ has the canonical basis elements
\[F_2^{(a)}F_1^{(b)}1_\lambda,\qquad F_3F_2^{(a)}F_1^{(b)}1_\lambda,
\qquad F_2^{(a+1)}F_1^{(b)}F_31_\lambda,\qquad F_2^{(a)}F_1^{(b+1)}F_2F_31_\lambda\]
\[F_3F_2^{(a+1)}F_1^{(b)}F_31_\lambda,\qquad
F_3F_2^{(a)}F_1^{(b+1)}F_2F_31_\lambda,\qquad
F_2^{(a+1)}F_1^{(b+1)}F_3F_2F_31_\lambda,\]
\[F_2^{(a)}F_1^{(b)}F_3F_2F_1F_3F_2F_31_\lambda,\]
where again $0\leq a\leq b\leq n$.
On the other hand, one can compute that $\dim V(\lambda)=2n^2+8n+7$ (cf. \cite{VdJ}
for the formula in the classical limit\footnote{This formula still holds in the
quantum case by arguments similar to those in \cite[Section 2.6]{Kw14};
to wit, it makes sense to take a classical limit of a Kac module to get the Kac module
of $\gl(m|1)$, and in this limit the maximal submodule of $K(\lambda)$ is still a submodule.
This gives us an upper bound $2n^2+4n+1$ on the dimension of this submodule, and on the
other hand we shall produce $2n^2+4n+1$ linearly independent vectors in this submodule.}). 
In particular,
we have $\dim N(\lambda)=2n^2+4n+1$ basis vectors
to remove to get a basis of $V(\lambda)$. 

Well, first one finds that we have the linear dependencies
\[F_3F_2^{(a+1)}F_1^{(b+1)}v_\lambda=\frac{[n+1-a]}{[n+2]}F_2^{(a+1)}F_1^{(b+1)}F_3v_\lambda-\frac{[n-b]}{[n+2]}F_2^{(a)}F_1^{(b+1)}F_2F_3v_\lambda\]
for $0\leq a\leq b\leq n-1$.
Consequently, we deduce that
\[F_3F_2^{(a+1)}F_1^{(b+1)}F_3v_\lambda=\frac{[n-b]}{[n+1-a]}F_3F_2^{(a)}F_1^{(b+1)}F_2F_3v_\lambda\]
for $0\leq a\leq b\leq n-1$. On the other hand, observe that 
\[F_3F_2^{(a+1)}F_1^{(b+1)}F_3v_\lambda=F_2^{(a)}F_3F_2F_1^{(b+1)}F_3v_\lambda=\frac{[n-b]}{[n+1]}F_2^{(a)}F_1^{(b+1)}F_3F_2F_3v_\lambda,\]
hence
\[F_3F_2^{(c+1)}F_1^{(c)}F_3v_\lambda=\frac{[n+1-c]}{[n+1]}F_2^{(c)}F_1^{(c)}F_3F_2F_3v_\lambda,\]
\[F_3F_2^{(a)}F_1^{(b+1)}F_2F_3v_\lambda=\frac{[n+1-a]}{[n+1]}F_2^{(a)}F_1^{(b+1)}F_3F_2F_3v_\lambda\]
for $1\leq a\leq b\leq n$ and $1\leq c\leq n$ (the case $b=n$ being a similar computation).
We also note that $F_2^{(a)}F_1^{(b)}F_3F_2F_1F_3F_2F_3v_\lambda=0$
for $0\leq a\leq b\leq n$.

In particular, note that we have shown that $3\binom{n+1}{2}+\binom{n+2}{2}+n=2n^2+4n+1$ of the canonical
basis vectors in $K(\lambda)$ are $q\cA$-linear combinations of canonical basis elements, so we have the canonical
basis
\begin{align*}
\cB(\lambda)&=\set{bv_\lambda\mid b\in\cB,\quad bv_\lambda\notin q\cA \cB v_\lambda}\\
&=\set{
	\begin{array}{c}
	F_2^{(a)}F_1^{(b)}v_\lambda,\quad
	F_1^{(c)}F_3v_\lambda, \\
	F_2^{(a+1)}F_1^{(b)}F_3v_\lambda,\quad
	F_2^{(a)}F_1^{(b+1)}F_2F_3v_\lambda,\\
	F_1^{(d)}F_3F_2F_3v_\lambda,\quad
	F_2^{(a+1)}F_1^{(b+1)}F_3F_2F_3v_\lambda
	\end{array}
	\mid \begin{array}{c}0\leq a\leq b\leq n\\0\leq c\leq n\\0\leq d\leq n+1\end{array}}
\end{align*}
We note that this basis seems compatible with the crystal structure on $K(\lambda)$,
though as in the $\gl(2|1)$ case one would need to modify the operator $\te_3$ to obtain
an actual crystal structure; for instance, if $L=\cA\cB(\lambda)$ and $B=\cB(\lambda)+qL$,
then $(L,B)$ is a crystal basis of $V(\lambda)$ where we replace $\te_3$ with
$\te_3'=-q^{-2\epsp_3^\vee} \te_3$.
See Figure \ref{fig:gl(3|1)vecind} for an
example in the case $n=1$.
\end{example}

\begin{example}
Consider the weight $\mu=n\epsp_1-\epsp_4$, in which case we have $\dim V(\mu)=3n^2+8n+4$. 
By similar considerations to the previous example,
we can show that $F_3F_2F_31_\mu$ is singular and generates $N(\mu)$, 
and we have the canonical basis
\begin{align*}
\cB(\mu)&=\set{bv_\mu\mid b\in\cB,\quad bv_\mu\neq 0}\\
&=\set{
	\begin{array}{c}
	F_2^{(a)}F_1^{(b)}v_\mu,
	F_3F_2^{(a)}F_1^{(b)}v_\mu,\\
	F_2^{(a+1)}F_1^{(b)}F_3v_\mu,
	F_2^{(a)}F_1^{(b+1)}F_2F_3v_\mu,\\
	F_3F_2^{(c+1)}F_1^{(d)}F_3v_\mu,\\
	F_3F_2^{(e)}F_1^{(f+1)}F_2F_3v_\mu.
	\end{array}
	\mid
	\begin{array}{c} 0\leq a\leq b\leq n, \\ 0\leq c\leq d\leq n,\quad d>0,\\
	1\leq e\leq f\leq n.
	\end{array}}.
\end{align*} 
We note that $E_3 F_3F_2F_1v_\mu=0$ and thus $\te_3 \tf_3 F_2F_1v_\mu=0$, 
so once again we fail to have a crystal basis with respect to the previously defined
Kashiwara operators.

\end{example}

\begin{figure}[t]
\begin{tikzpicture}
	\draw (0,0) node (0){\small $1_\lambda$};
	\draw (0,-1.2) node (1){\small $F_11_\lambda$};
	\draw (0,-2.4) node (21){\small $F_2F_11_\lambda$};
	\draw (0,-3.6) node[draw, inner sep=3pt] (321){\small $F_3F_2F_11_\lambda$};
	
	\draw (1.5,-0) node (3){\small $F_31_\lambda$};
	\draw (1.5,-1.2) node (13){\small $F_1F_31_\lambda$};
	\draw (1.5,-2.4) node (213){\small $F_2F_1F_31_\lambda$};
	\draw (3,-3.6) node (2213){\small $F_2^{(2)}F_1F_31_\lambda$};
	\draw (3.5,0) node (23){\small $F_2F_31_\lambda$};
	\draw (3.5,-1.2) node (123){\small $F_1F_2F_31_\lambda$};
	\draw (3.5,-2.4) node (1123){\small $F_1^{(2)}F_2F_31_\lambda$};
	\draw (5.7,-3.6) node (12213){\small $F_1F_2^{(2)}F_1F_31_\lambda$};
	
	\draw (6.5,0) node (323){\small $F_3F_2F_31_\lambda$};
	\draw (6.5,-1.2) node (1323){\small $F_1F_3F_2F_31_\lambda$};
	\draw (6.5,-2.4) node[draw, inner sep=3pt] (11323){\small $F_1^{(2)}F_3F_2F_31_\lambda$};
	\draw (9.5,-1.2) node (21323){\small $F_2F_1F_3F_2F_31_\lambda$};
	\draw (9.5,-2.4) node[draw, inner sep=3pt] (211323){\small $F_2F_1^{(2)}F_3F_2F_31_\lambda$};
	\draw (9,-3.6) node[draw, inner sep=3pt] (2211323){\small $F_2^{(2)}F_1^{(2)}F_3F_2F_31_\lambda$};
	
	\draw (1.5,-4.8) node[draw, inner sep=3pt] (3213){\small $F_3F_2F_1F_31_\lambda$};
	\draw (4.5,-4.8) node[draw, inner sep=3pt] (32213){\small $F_3F_2^{(2)}F_1F_31_\lambda$};
	\draw (7.5,-4.8) node[draw, inner sep=3pt] (132213){\small $F_1F_3F_2^{(2)}F_1F_31_\lambda$};
	
	\draw (13,-1.2) node[draw, inner sep=3pt] (321323){\small $F_3F_2F_1F_3F_2F_31_\lambda$};
	
	\draw (13,-2.4) node[draw, inner sep=3pt] (1321323){\small $F_1F_3F_2F_1F_3F_2F_31_\lambda$};
	
	\draw (13,-3.6) node[draw, inner sep=3pt] (21321323){\small $F_2F_1F_3F_2F_1F_3F_2F_31_\lambda$};
	
	\draw[blue,thick, ->] (0) -- (1);
	\draw[red, dashed, thick, ->] (0) -- (3);
	
	\draw[blue,thick, ->] (0) -- (1);
	\draw[red, dashed, thick, ->] (0) -- (3);
	
	\draw[green!50!black, densely dotted,thick, ->] (1) -- (21);
	\draw[red, dashed, thick, ->] (1) -- (13);
	
	\draw[green!50!black, densely dotted,thick, ->] (3) -- (23);
	\draw[blue,thick, ->] (3) -- (13);
	
	\draw[red, dashed, thick, ->] (23) -- (323);
	\draw[blue,thick, ->] (23) -- (123);
	
	\draw[green!50!black, densely dotted,thick, ->] (13) -- (213);
	
	\draw[red, dashed, thick, ->] (21) -- (321);
	
	\draw[blue,thick, ->] (323) -- (1323);
	
	\draw[blue,thick, ->] (123) -- (1123);
	\draw[red, dashed, thick, ->] (123) -- (1323);
	
	\draw[green!50!black, densely dotted,thick, ->] (213) -- (2213);
	\draw[red, dashed, thick, ->] (213) -- (3213);
	
	\draw[green!50!black, densely dotted,thick, ->] (1323) -- (21323);
	\draw[blue,thick, ->] (1323) -- (11323);
	
	\draw[green!50!black, densely dotted,thick, ->] (1123) -- (12213);
	\draw[red, dashed, thick, ->] (1123) -- (11323);
	
	\draw[blue,thick, ->] (2213) -- (12213);
	\draw[red, dashed, thick, ->] (2213) -- (32213);
	
	\draw[green!50!black, densely dotted,thick, ->] (3213) -- (32213);
	
	\draw[red, dashed, thick, ->] (21323) -- (321323);
	\draw[blue,thick, ->] (21323) -- (211323);
	
	\draw[green!50!black, densely dotted,thick, ->] (11323) -- (211323);
	
	\draw[red, dashed, thick, ->] (12213) -- (132213);
	
	\draw[blue,thick, ->] (32213) -- (132213);
	
	\draw[blue,thick, ->] (321323) -- (1321323);
	
	\draw[green!50!black, densely dotted,thick, ->] (211323) -- (2211323);
	\draw[red, dashed, thick, ->] (211323) -- (1321323);
	
	\draw[green!50!black, densely dotted,thick, ->] (1321323) -- (21321323);
	
	\draw[red, dashed, thick, ->] (2211323) -- (21321323);
	\draw[green!50!black, densely dotted,thick, ->] (1) -- (21);
	\draw[red, dashed, thick, ->] (1) -- (13);
	
	\draw[green!50!black, densely dotted,thick, ->] (3) -- (23);
	\draw[blue,thick, ->] (3) -- (13);
	
	\draw[red, dashed, thick, ->] (23) -- (323);
	\draw[blue,thick, ->] (23) -- (123);
	
	\draw[green!50!black, densely dotted,thick, ->] (13) -- (213);
	
	\draw[red, dashed, thick, ->] (21) -- (321);
	
	\draw[blue,thick, ->] (323) -- (1323);
	
	\draw[blue,thick, ->] (123) -- (1123);
	\draw[red, dashed, thick, ->] (123) -- (1323);
	
	\draw[green!50!black, densely dotted,thick, ->] (213) -- (2213);
	\draw[red, dashed, thick, ->] (213) -- (3213);
	
	\draw[green!50!black, densely dotted,thick, ->] (1323) -- (21323);
	\draw[blue,thick, ->] (1323) -- (11323);
	
	\draw[green!50!black, densely dotted,thick, ->] (1123) -- (12213);
	\draw[red, dashed, thick, ->] (1123) -- (11323);
	
	\draw[blue,thick, ->] (2213) -- (12213);
	\draw[red, dashed, thick, ->] (2213) -- (32213);
	
	\draw[green!50!black, densely dotted,thick, ->] (3213) -- (32213);
	
	\draw[red, dashed, thick, ->] (21323) -- (321323);
	\draw[blue,thick, ->] (21323) -- (211323);
	
	\draw[green!50!black, densely dotted,thick, ->] (11323) -- (211323);
	
	\draw[red, dashed, thick, ->] (12213) -- (132213);
	
	\draw[blue,thick, ->] (32213) -- (132213);
	
	\draw[blue,thick, ->] (321323) -- (1321323);
	
	\draw[green!50!black, densely dotted,thick, ->] (211323) -- (2211323);
	\draw[red, dashed, thick, ->] (211323) -- (1321323);
	
	\draw[green!50!black, densely dotted,thick, ->] (1321323) -- (21321323);
	
	\draw[red, dashed, thick, ->] (2211323) -- (21321323);
\end{tikzpicture}
\caption{The crystal associated to the $\UU_q(\gl(3|1))$-modules $K(\lambda)$ 
for $\lambda=\epsp_1-3\epsp_4$. The boxed elements
are those basis elements which are zero modulo $q$ in the simple quotient $V(\lambda)$}\label{fig:gl(3|1)vecind}
\end{figure}
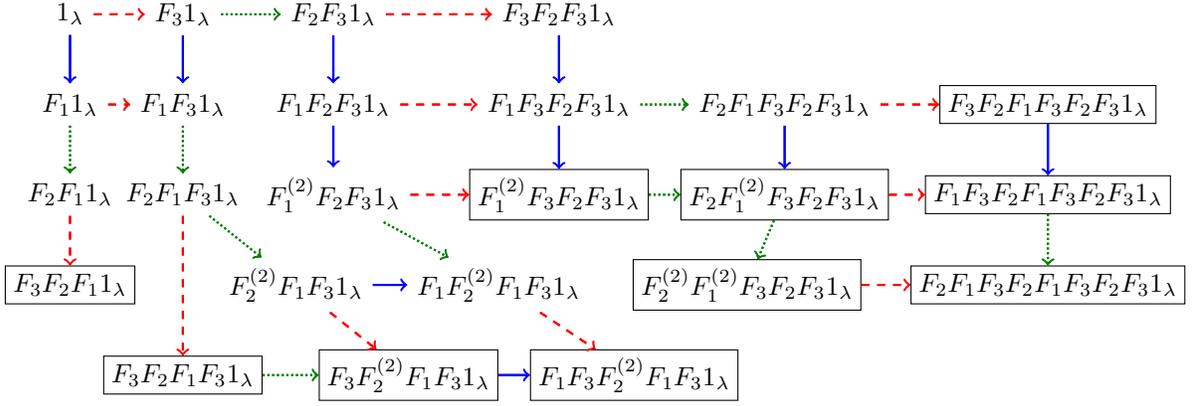

\begin{example}
Let us consider one more example. Let 
$\lambda_1=\epsp_1+\epsp_2-2\epsp_4$ and $\lambda_2=\epsp_1+\epsp_2-3\epsp_4$. 
Then for $\lambda=\lambda_1$ or $\lambda_2$, $K(\lambda)$ is a $24$-dimensional 
$\UU$-module.
In particular, letting $1_\lambda$
denote the highest weight vector of $K(\lambda)$, we can explicitly describe
the crystal and canonical bases; see Figure 
\ref{fig:gl(3|1)dualvecind}.
\begin{figure}[h]
\begin{tikzpicture}[scale=.76]
	\draw (0,0) node (0){\small $()$};
	\draw (0,-1.2) node (2){\small $(2)$};
	\draw (0,-2.4) node (12){\small $(12)$};
	
	\draw (-2,-1.2) node (32){\small $(32)$};
	\draw (-2,-2.4) node (132){\small $(323)$};
	\draw (-2,-3.6) node (2132){\small $(2132)$};
	\draw (-2,-4.8) node (32132){\small $(32132)$};
	
	\draw (3,-0) node (3){\small $(3)$};
	\draw (3,-1.2) node (23){\small $(23)$};
	\draw (1.7,-2.4) node (223){\small $(2^23)$};
	\draw (4,-2.4) node (123){\small $(123)$};
	\draw (3,-3.6) node (1223){\small $(12^23)$};
	\draw (3,-4.8) node (11223){\small $(1^22^23)$};
	
	\draw (7.5,-1.8) node (323){\small $(323)$};
	\draw (6,-3) node (3223){\small $(32^23)$};
	\draw (6,-4.2) node (13223){\small $(132^23)$};
	\draw (6,-5.4) node (113223){\small $(1^232^23)$};
	\draw (8.5,-3) node (1323){\small (1323)};
	\draw (8.5,-4.2) node (21323){\small $(21323)$};
	\draw (8.5,-5.4) node (221323){\small $(2^21323)$};
	\draw (7.5,-6.6) node (2112323){\small $(21^22323)$};
	
	\draw (13,-4.2) node (321323){\small (321323)};
	
	\draw (13,-5.4) node (3213232){\small (3213232)};
	
	\draw (13,-6.6) node (32132312){\small (32132312)};
	
	\draw[green!50!black, densely dotted,thick, ->] (0) -- (2);
	\draw[green!50!black, densely dotted,thick, ->] (3) -- (23);
	\draw[green!50!black, densely dotted,thick, ->] (23) -- (223);
	\draw[green!50!black, densely dotted,thick, ->] (132) -- (2132);
	\draw[green!50!black, densely dotted,thick, ->] (123) -- (1223);
	\draw[green!50!black, densely dotted,thick, ->] (323) -- (3223);
	\draw[green!50!black, densely dotted,thick, ->] (1323) -- (21323);
	\draw[green!50!black, densely dotted,thick, ->] (21323) -- (221323);
	\draw[green!50!black, densely dotted,thick, ->] (113223) -- (2112323);
	\draw[green!50!black, densely dotted,thick, ->] (321323) -- (3213232);
	\draw[blue,thick, ->] (2) -- (12);
	\draw[blue,thick, ->] (32) -- (132);
	\draw[blue,thick, ->] (23) -- (123);
	\draw[blue,thick, ->] (223) -- (1223);
	\draw[blue,thick, ->] (323) -- (1323);
	\draw[blue,thick, ->] (1223) -- (11223);
	\draw[blue,thick, ->] (3223) -- (13223);
	\draw[blue,thick, ->] (13223) -- (113223);
	\draw[blue,thick, ->] (221323) -- (2112323);
	\draw[blue,thick, ->] (3213232) -- (32132312);
	\draw[red, dashed,thick, ->] (0) -- (3);
	\draw[red, dashed,thick, ->] (2) -- (32);
	\draw[red, dashed,thick, ->] (12) -- (132);
	\draw[red, dashed,thick, ->] (23) -- (323);
	\draw[red, dashed,thick, ->] (223) -- (3223);
	\draw[red, dashed,thick, ->] (123) -- (1323);
	\draw[red, dashed,thick, ->] (2132) -- (32132);
	\draw[red, dashed,thick, ->] (1223) -- (13223);
	\draw[red, dashed,thick, ->] (11223) -- (113223);
	\draw[red, dashed,thick, ->] (21323) -- (321323);
	\draw[red, dashed,thick, ->] (221323) -- (3213232);
	\draw[red, dashed,thick, ->] (2112323) -- (32132312);

\end{tikzpicture}
\caption{The crystal associated to the $\UU_q(\gl(3|1))$-modules $K(\epsp_1+\epsp_2+d\epsp_4)$, $d\in\Z$. Here, $(i_1^{k_1}i_2^{k_2}\ldots i_t^{k_t})$ is shorthand for $F_{i_1}^{(k_1)}\ldots F_{i_t}^{(k_t)}1_\lambda$.}\label{fig:gl(3|1)dualvecind}
\end{figure}

First, let us compute a basis for $V(\lambda_1)$; we compute, again
using \cite{VdJ}, that $\dim V(\lambda_1)=9$.
Now in $K(\lambda_1)$, observe that $F_2F_31_{\lambda_1}-[2]F_3F_21_{\lambda_1}$
is a singular vector. In particular, in $V(\lambda_1)$ we have the identity
$F_3F_2v_{\lambda_1}=\frac{q}{1+q^2}F_2F_3v_{\lambda_1}$, which further implies
\[F_1F_3F_2v_{\lambda_1}=\frac{q}{1+q^2}F_1F_2F_3v_{\lambda_1},\quad
F_2F_1F_3F_2v_{\lambda_1}=\frac{q}{1+q^2}F_1F_2^{(2)}F_3v_{\lambda_1},\]
\[F_3F_2F_3v_{\lambda_1}=0,\quad F_3F_2F_1F_3F_2v_{\lambda_1}=0,\]
Thus, in this case, we see that have the canonical basis
\[
\cB(\lambda_1)=\set{xv_{\lambda_1}\mid x\in \cB(\infty)\text{ and }xv_{\lambda_1}\notin q\cL(\lambda_1)}=\set{\begin{array}{c}
v_{\lambda_1}, \\
F_2v_{\lambda_1}, \quad F_3v_{\lambda_1},\\
F_1F_2v_{\lambda_1}, \quad F_2F_3v_{\lambda_1},\\
F_2^{(2)}F_3v_{\lambda_1}, \quad F_1F_2F_3v_{\lambda_1}, \\
F_1F_2^{(2)}F_3v_{\lambda_1},\quad F_1^{(2)}F_2^{(2)}F_3v_{\lambda_1}
\end{array}
}\]

Similarly, we compute that $\dim V(\lambda_2)=20$. Well, in $K(\lambda_2)$, we have the unique singular vector 
\[F_2F_1F_3F_2F_31_{\lambda_2}-[2]F_1F_3F_2^{(2)}F_31_{\lambda_2}+[3]F_3F_2F_1F_3F_21_{\lambda_2},\]
hence in particular in $V(\lambda_2)$ we have the identity
\[F_3F_2F_1F_3F_2v_{\lambda_2}=
\frac{q+q^3}{1+q^2+q^4}F_1F_3F_2^{(2)}F_3v_{\lambda_2}-\frac{q^2}{1+q^2+q^4}F_2F_1F_3F_2F_3v_{\lambda_2}.\]
This further implies $F_3F_2F_1F_3F_2F_3v_{\lambda_2}=0$, and we find
we have the basis
\begin{align*}
\cB(\lambda_2)&=\set{xv_{\lambda_2}\mid x\in \cB(\infty)\text{ and }xv_{\lambda_2}\notin q\cL(\lambda_2)}\\&=\set{\begin{array}{c}
v_{\lambda_2}, \\
F_2v_{\lambda_2}, \quad F_3v_{\lambda_2},\\
F_1F_2v_{\lambda_2}, \quad F_2F_3v_{\lambda_2}, \quad F_3F_2v_{\lambda_2},\\
F_1F_3F_2v_{\lambda_2},\quad F_1F_2F_3v_{\lambda_2}, \quad F_3F_2F_3v_{\lambda_2},\quad F_2^{(2)}F_3v_{\lambda_2},\\
F_2F_1F_3F_2v_{\lambda_2}, \quad F_1F_2^{(2)}F_3v_{\lambda_2}, \quad
\quad F_3F_2^{(2)}F_3v_{\lambda_2},\quad F_1F_3F_2F_3v_{\lambda_2},\\
F_1^{(2)}F_2^{(2)}F_3v_{\lambda_2},\quad F_1F_3F_2^{(2)}F_3v_{\lambda_2},\quad F_2F_1F_3F_2F_3v_{\lambda_2},\\
F_1^{(2)}F_3F_2^{(2)}F_3v_{\lambda_2}, F_2^{(2)}F_1F_3F_2F_3v_{\lambda_2},\\
F_2F_1^{(2)}F_3F_2^{(2)}F_3v_{\lambda_2}
\end{array}
}.
\end{align*}
\end{example}

\subsection{The other rank 2 case}\label{subsec:CB(1|1|1)}
In Theorem \ref{thm:CBs equal}, we assume that we are working with the standard Borel
associated to the GCM $A^m$, but what about a canonical basis in other cases? The main
difference is that Corollary \ref{cor:deriv and root} doesn't apply, 
so to relate the lattices spanned by different PBW bases, 
there is an additional rank 2 case of comparing the lattices spanned by the
two PBW bases is associated to the GCM $A=\begin{bmatrix}0&1\\1&0\end{bmatrix}$;
note that $\Um(A)\cong \Qq\ang{F_1,F_2}/(F_1^2, F_2^2)$.

In this case, we have the two reduced expressions $\bi=(1,2,1)$ and $\bj=(2,1,2)$, 
which yield the root vectors $F_{\bi;\alpha_1+\alpha_2}=T_2(F_1)=F_2F_1+q^{-1}F_1F_2$
and $F_{\bj;\alpha_1+\alpha_2}=T_1(F_2)=F_1F_2+q^{-1}F_2F_1$,
thus the PBW bases are given by 
\[\BB_\bi=\set{F_1^a(F_2F_1+q^{-1}F_1F_2)^{(b)}F_2^c\mid a,c\in\set{0,1}, b\geq 0}\]
\[\BB_\bj=\set{F_2^c(F_1F_2+q^{-1}F_2F_1)^{(b)}F_1^a\mid a,c\in\set{0,1}, b\geq 0}.\]
In particular, observe that it is more natural to consider the $\Z[q^{-1}]$-span
of these bases then the $\Z[q]$-span; indeed, $F_2F_1\notin \Z[q]\BB_\bi$, 
so Lemma \ref{lem:same q span} doesn't hold in this case. Of course,
note that using the $\Z[q^{-1}]$ span for this case
also invalidates Lemma \ref{lem:same q span} for the full rank case; this is the
simplest instance of the obstruction caused by the chirality in $q$ mentioned in
\S 1.

One might hope to still construct a canonical basis by reinterpreting 
the lemma in this rank 2 case with $q$ replaced everywhere by $q^{-1}$. 
This is not unreasonable, since it is easy to verify that the $\Z[q,q^{-1}]$ 
span of the PBW bases agree and satisfy triangularity under the bar involution
(cf. \cite[Lemmas 7.6, 7.7]{CHW3} or \cite[Theorem 5.1]{Tin})
so standard bar-invariant bases exist. However, even the $\Z[q^{-1}]$-span
of the PBW bases are different: for instance, note that \[(\bB_\bi)_{2\alpha_1+2\alpha_2}=\set{F_1F_2F_1F_2, \frac{F_2F_1F_2F_1}{[2]}+q^{-2}\frac{F_1F_2F_1F_2}{[2]}},\]
\[(\bB_\bj)_{2\alpha_1+2\alpha_2}=\set{F_2F_1F_2F_1, \frac{F_1F_2F_1F_2}{[2]}+q^{-2}\frac{F_2F_1F_2F_1}{[2]}},\]
hence $F_1F_2F_1F_2\in q\Z[q^{-1}]\bB_\bj$.

Nevertheless, there is a natural choice of basis for $\Um$: 
\[\cB=\set{F_1^a(F_2F_1)^bF_2^c\mid a,c\in\set{0,1},b\in\Z_{\geq 0}}
=\set{F_2^a(F_1F_2)^bF_1^c\mid a,c\in\set{0,1},b\in\Z_{\geq 0}}.\] 
This is a bar-invariant basis of $\Um$ which is also trivially 
a basis of the Lusztig integral form, since for any $i\in I$, 
we have $F_i^{(n)}=\delta_{1,n} F_i$. Moreover, $\cB$ has a natural
crystal structure (where $\tf_i x= F_i x$ and $\te_i x=\bar e_i(x)$),
and is congruent modulo $q$ to the {\em unnormalized} PBW bases\footnote{Here, for consistency with earlier discussions of canonical/crystal bases, we switch to the bar-conjugate
version of the PBW bases.}
$\set{F_i^a(F_jF_i+qF_iF_j)^{b}F_j^c\mid a,c\in\set{0,1}, b\geq 0}$ for $\set{i,j}=I$.

Moreover, $\cB$ even satisfies some compatibilities with 
finite-dimensional weight modules. Note that in this case, the simple roots are
$\alpha_1=\epsp_1-\epsp_3$ and $\alpha_2=\epsp_3-\epsp_2$, so the coroots
are $h_1=\epsp_1^\vee+\epsp_3^\vee$ and $h_2=-\epsp_3^\vee-\epsp_2^\vee$.
For $\lambda=a\epsp_1+b\epsp_2+c\epsp_3\in P$, define $M(\lambda)$ to be
the Verma module of highest weight $\lambda$ as usual, and let $1_\lambda$
denote a highest weight vector.

\begin{lem}
Let $\lambda=a\epsp_1+b\epsp_2+c\epsp_3\in P$ and define $\lambda(i)=\ang{h_i,\lambda}$. 
The singular vectors (up to constant multiple) in $M(\lambda)$,
other than $1_\lambda$, are as follows.
\begin{enumerate}
\item $F_11_\lambda$ if and only if $\lambda(1)=0$;
\item $F_21_\lambda$ if and only if $\lambda(2)=0$;
\item $[\lambda(1)](F_1F_2)^{\lambda(1)+\lambda(2)}1_\lambda-[\lambda(2)](F_2F_1)^{\lambda(1)+\lambda(2)}1_\lambda$
if and only if $\lambda(1)+\lambda(2)> 0$.
\end{enumerate}
In particular, the unique simple quotient $V(\lambda)$ of $M(\lambda)$
is finite dimensional if and only if $\lambda(1)=\lambda(2)=0$ or
$\lambda(1)+\lambda(2)> 0$
\end{lem}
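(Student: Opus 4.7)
The plan is to exploit the strong nilpotency $F_1^2=F_2^2=0$ to reduce every computation to a small explicit basis of each weight space. Only alternating words in the $F_i$'s are nonzero, so the weight space $M(\lambda)_{\lambda-n\alpha_1-m\alpha_2}$ has dimension $0$ when $|n-m|>1$, dimension $1$ when $|n-m|=1$ or $n=m=0$ (spanned by the unique alternating monomial), and dimension $2$ when $n=m=N\geq 1$ (spanned by $u_N=(F_1F_2)^N1_\lambda$ and $v_N=(F_2F_1)^N1_\lambda$). Every nontrivial singular vector must therefore lie in one of these small weight spaces, and each case can be handled by direct computation.

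For the height-$1$ cases, the super-commutator relation $E_iF_j+F_jE_i=\delta_{ij}(K_i-K_i^{-1})/(q-q^{-1})$ gives $E_jF_i1_\lambda=0$ for $j\neq i$ and $E_iF_i1_\lambda=[\lambda(i)]1_\lambda$, yielding cases (1) and (2); a similar short calculation rules out singular vectors in the $1$-dimensional weight spaces at heights $(N+1,N)$ and $(N,N+1)$ for $N\geq 1$. For case (3), the plan is to compute by induction on $N$ the closed formulas
\[E_1u_N=[\lambda(1)-N]x_N,\quad E_1v_N=-[\lambda(1)]x_N,\quad E_2u_N=-[\lambda(2)]y_N,\quad E_2v_N=[\lambda(2)-N]y_N,\]
where $x_N=F_2(F_1F_2)^{N-1}1_\lambda$ and $y_N=F_1(F_2F_1)^{N-1}1_\lambda$; the induction is easy because $F_i^2=0$ collapses most of the intermediate terms, causing the recursions to stabilize at $N=1$. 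The existence of a singular vector $\alpha u_N+\beta v_N$ then reduces to the vanishing of the $2\times 2$ determinant $[\lambda(1)-N][\lambda(2)-N]-[\lambda(1)][\lambda(2)]$. Clearing denominators and writing $s=q^{\lambda(1)+\lambda(2)-2N}$, this becomes the quadratic identity $s+s^{-1}=q^{\lambda(1)+\lambda(2)}+q^{-\lambda(1)-\lambda(2)}$, whose only integer solutions are $N=0$ and $N=\lambda(1)+\lambda(2)$; the latter lies in $N\geq 1$ precisely when $\lambda(1)+\lambda(2)>0$, and the resulting kernel is spanned by $([\lambda(1)],-[\lambda(2)])$ as claimed.

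For the finite-dimensionality statement, I would then argue by weight-space accounting. If $\lambda(1)=\lambda(2)=0$, the two singular vectors from cases (1) and (2) force $F_i 1_\lambda=0$ in $V(\lambda)$ for $i=1,2$, so $V(\lambda)$ is $1$-dimensional. If $N=\lambda(1)+\lambda(2)>0$, the case-(3) singular vector generates a submodule which kills every weight space $\lambda-n\alpha_1-m\alpha_2$ with $n,m\geq N$; the remaining weights of $V(\lambda)$ form the finite set $\{(n,m):|n-m|\leq 1,\ \min(n,m)\leq N\}$, each having finite-dimensional multiplicity, so $V(\lambda)$ is finite-dimensional (a careful count gives $\dim V(\lambda)=4N$). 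If neither condition holds, then either there are no nontrivial singular vectors (and $V(\lambda)=M(\lambda)$ is infinite-dimensional), or only a case-(1) or case-(2) singular vector exists; in the latter edge cases (e.g.\ $\lambda(1)=0$ with $\lambda(2)<0$), a short direct check shows the quotient still has nonzero weight spaces along an infinite anti-diagonal, hence is infinite-dimensional. The main obstacle will be the inductive derivation of the closed forms for $E_iu_N$ and $E_iv_N$; while each step is elementary, the unexpected stabilization of the recursion at $N=1$ is the key computational observation that makes the argument clean.
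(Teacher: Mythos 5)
Your analysis of the singular vectors is correct and follows essentially the same route as the paper: identify the small weight spaces forced by $F_1^2=F_2^2=0$, compute the action of $E_i$ on the alternating monomials (your closed formulas for $E_iu_N$, $E_iv_N$ are exactly the specializations of the general formulas the paper records), and solve the resulting $2\times 2$ system via the determinant identity $[\lambda(1)-N][\lambda(2)-N]=[\lambda(1)][\lambda(2)]$. The reduction to $s+s^{-1}=q^{\lambda(1)+\lambda(2)}+q^{-(\lambda(1)+\lambda(2))}$ and the kernel computation are both right.

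The gap is in the finite-dimensionality argument when $\lambda(1)+\lambda(2)>0$ but exactly one of $\lambda(1),\lambda(2)$ is zero. In that case the case-(3) vector $w=[\lambda(1)]u_N-[\lambda(2)]v_N$ collapses to a scalar multiple of the pure monomial $(F_1F_2)^N1_\lambda$ or $(F_2F_1)^N1_\lambda$, and then $\Um w$ is annihilated by one of the $F_i$ at the top; consequently $\Um w$ covers only one side of the anti-diagonal and \emph{does not} kill every weight space with $n,m\geq N$. For instance, for $\lambda(1)=1,\ \lambda(2)=0$ one has $w=F_1F_21_\lambda$ (which already lies in $\Um F_21_\lambda$), and $M(\lambda)/(\Um F_21_\lambda)$ is still infinite-dimensional; the maximal submodule is strictly larger than the submodule generated by the singular vectors of $M(\lambda)$, because a further singular vector appears only in the quotient (here $F_1F_2F_11_\lambda$). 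So the weight-space accounting as written breaks down, and it also invalidates the parenthetical count $\dim V(\lambda)=4N$: that formula holds only when both $\lambda(1)$ and $\lambda(2)$ are nonzero; when exactly one is zero the dimension is $2N+1$ (e.g.\ $\dim V(\lambda)=3$ for $\lambda(1)=1,\lambda(2)=0$). To close the gap, you should handle the cases $\lambda(i)=0$, $\lambda(j)>0$ separately: pass first to $M(\lambda)/(\Um F_i1_\lambda)$, identify the unique singular vector $F_j(F_iF_j)^{\lambda(j)}1_\lambda$ appearing there, and quotient once more. (It is worth noting the paper's own proof quietly asserts that $N(\lambda)$ is generated by the singular vectors of $M(\lambda)$ and states a basis formula without justification; both are similarly off in these boundary cases, so the lemma's conclusion is right but the published argument glosses over exactly the same subtlety.)
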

\begin{proof}
Observe that
\[E_1F_1^{x}(F_2F_1)^y F_2^z1_\lambda=\delta_{x,1}[\lambda(1)-y-z](F_2F_1)^yF_2^z1_\lambda
+\delta_{z,0}(-1)^{x+1}[\lambda(1)]F_1^x(F_2F_1)^{y-1}F_21_\lambda\]
\[E_2F_1^{x}(F_2F_1)^yF_2^z1_\lambda=\delta_{x,0}[\lambda(2)-y]F_1(F_2F_1)^{y-1}F_2^z1_\lambda
+\delta_{z,1}(-1)^x[\lambda(2)]F_1^x(F_2F_1)^{y}1_\lambda.\]
Observe that $(F_2F_1)^yF_21_\lambda$ is singular if and only if
$\lambda(2)=0$ and $y=0$, and likewise $F_1(F_2F_1)^y1_\lambda$ is singular if and only if
$\lambda(1)=0$ and $y=0$.
Furthermore,
$$c(F_1F_2)^y1_\lambda+d(F_2F_1)^{y}1_\lambda$$ is singular
if and only if there is some $y\geq 0$ such that
\[c[\lambda(1)-y]=d[\lambda(1)]\text{ and }c[\lambda(2)]=d[\lambda(2)-y].\]
In particular, these equations are satisfied when $c=d=0$;
$c\neq 0$, $d=0=\lambda(2)$, and $y=\lambda(1)\geq 0$; 
$d\neq 0$, $c=0=\lambda(1)$, and $y=\lambda(2)\geq 0$;
or $c\neq 0$ and $d\neq 0$, and 
$$[\lambda(1)-y][\lambda(2)-y]=[\lambda(1)][\lambda(2)],$$
which holds if and only if $y=\lambda(1)+\lambda(2)\geq 0$.
In the case $c\neq 0$ and $d\neq 0$, we see that
\[c[\lambda(2)]=-d[\lambda(1)].\]
This finishes the proof of the classification of singular vectors.

Recall that $N(\lambda)$ is the maximal submodule of $M(\lambda)$,
and $V(\lambda)=M(\lambda)/N(\lambda)$ is the simple quotient.
Let us denote by $v_\lambda$ the image of $1_\lambda$ in $V(\lambda)$.
Note that $N(\lambda)$ contains, and is generated by, 
all singular vectors other than (nonzero multiples of) $1_\lambda$.
We see that when $\lambda(1)=\lambda(2)=0$, 
$F_11_\lambda, F_21_\lambda\in N(\lambda)$
hence $V(\lambda)=M(\lambda)/N(\lambda)=\Qq v_\lambda$ is one-dimensional.
Further, note that if $\lambda(1)+\lambda(2)<0$ or $\lambda(1)=-\lambda(2)$,  then
$1_\lambda$ is the only singular vector, hence $N(\lambda)=0$ and $V(\lambda)$
is infinite-dimensional.
Finally, if $\lambda(1)+\lambda(2)>0$, then we have
\begin{equation}
V(\lambda)=\bigoplus_{\substack{0\leq x\leq 1;\\
0\leq y< \lambda(1)+\lambda(2)\\0\leq z\leq \delta_{\lambda(j),0};}} \Qq F_i^x(F_jF_i)^yF_j^zv_\lambda
\text{ where } I=\set{i,j} \text{ and }\lambda(i)\neq 0.
\end{equation}
\end{proof}

In particular, we observe the following corollary.

\begin{cor} Let
$L(\lambda)=L(\infty)v_\lambda$, where $L(\infty)$ is the crystal lattice of $\Um$.
Then $V(\lambda)$ has the canonical basis
\[\cB(\lambda)=\set{bv_\lambda\mid b\in \cB\text{ such that }bv_\lambda\notin qL(\lambda)}.\]
\end{cor}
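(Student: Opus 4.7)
Plan: The argument is case-by-case based on $N := \lambda(1) + \lambda(2)$, computing which $bv_\lambda$ for $b \in \cB$ lie in $qL(\lambda) := qL(\infty)v_\lambda$ and identifying the remainder with an explicit basis of $V(\lambda)$ given by the previous lemma. Throughout, I exploit the fact that $\cB \cdot v_\lambda$ is by definition an $\cA$-spanning set for $L(\lambda)$, so once a subset of $\cB(\lambda)$ is shown to be linearly independent modulo $qL(\lambda)$ and to match $\dim V(\lambda)$, Nakayama's lemma applied to $L(\lambda)$ promotes it to a $\Qq$-basis of $V(\lambda)$.

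When $N = 0$, the previous lemma gives $\dim V(\lambda) = 1$, and the singular vectors $F_1 1_\lambda, F_2 1_\lambda$ force $F_1 v_\lambda = F_2 v_\lambda = 0$; hence $bv_\lambda = 0$ for all $b \in \cB$ with $b \neq 1$, and $\cB(\lambda) = \{v_\lambda\}$ is trivially a basis. When $N > 0$, by the manifest $F_1 \leftrightarrow F_2$ symmetry of $\cB$ I may assume $\lambda(1) \geq \lambda(2) \geq 0$. The singular vector from the lemma yields the identity $[\lambda(1)](F_1F_2)^N v_\lambda = [\lambda(2)](F_2F_1)^N v_\lambda$ in $V(\lambda)$. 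Using $v_q([k]) = 1 - k$, one sees that $(F_1F_2)^N v_\lambda = \frac{[\lambda(2)]}{[\lambda(1)]}(F_2F_1)^N v_\lambda \in q^{\lambda(1)-\lambda(2)}\cA \cdot (F_2F_1)^N v_\lambda$, which lies in $qL(\lambda)$ whenever $\lambda(1) > \lambda(2)$. The relations $F_1^2 = F_2^2 = 0$ then propagate this, so that any $bv_\lambda$ with $b \in \cB$ containing the sub-word $(F_1F_2)^N$ or a longer $(F_2F_1)^{N+t}$ factor (equivalently, $b$ has ``too many'' alternations) lies in $qL(\lambda)$. The mixed case $\lambda(2) = 0 < \lambda(1)$ is treated similarly, using additionally the singular vector $F_2 1_\lambda$ to kill any $b$ ending in $F_2$.

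Combining these observations, the cardinality of $\cB(\lambda)$ matches $\dim V(\lambda)$ as computed in the previous lemma, and the weight-space decomposition there directly yields linear independence of $\cB(\lambda)$ modulo $qL(\lambda)$, since each weight space is spanned by at most two alternating words and the identified relations account for all coincidences. Nakayama's lemma then promotes this to a basis of $V(\lambda)$. The main technical hurdle is the boundary case $\lambda(1) = \lambda(2)$, where $\frac{[\lambda(2)]}{[\lambda(1)]} \equiv 1 \pmod{q\cA}$ so that $(F_1F_2)^N v_\lambda \equiv (F_2F_1)^N v_\lambda$ modulo $qL(\lambda)$; here one must verify that exactly one of the two endpoint representatives is eliminated by the definition of $\cB(\lambda)$ (rather than both surviving or both disappearing), which amounts to tracking the precise $q\cA$-coefficient appearing in the singular vector relation. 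Once this is checked, the description of $\cB(\lambda)$ as canonical basis is complete.
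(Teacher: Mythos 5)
Your overall strategy — track the $q$-adic valuation of the coefficient in the singular-vector relation, then use a weight-space/Nakayama argument — is the right kind of reasoning for this corollary, and the paper itself supplies no proof (the corollary is stated as an observation following from the lemma). That said, there are several concrete gaps.

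First, your case $N=0$ is not what the lemma asserts. The lemma says $\dim V(\lambda)=1$ only when $\lambda(1)=\lambda(2)=0$; the case $\lambda(1)=-\lambda(2)\neq 0$ has $N=0$ but $V(\lambda)=M(\lambda)$ infinite-dimensional, and you never treat this (nor $N<0$). Those cases are trivial — $\pi_\lambda$ is an isomorphism and $\cB(\lambda)$ is just the image of the full basis $\cB$ — but the claim as you state it ("the previous lemma gives $\dim V(\lambda)=1$") is false as written and the omission should be flagged.

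Second, the reduction ``WLOG $\lambda(1)\geq\lambda(2)\geq 0$'' is not justified. The $F_1\leftrightarrow F_2$ symmetry only lets you arrange $\lambda(1)\geq\lambda(2)$; nothing forces $\lambda(2)\geq 0$. When $\lambda(2)<0<\lambda(1)$ with $N>0$, the valuation of $[\lambda(2)]/[\lambda(1)]$ is $N=\lambda(1)+\lambda(2)$ (since $v_q([k])=1-|k|$), not $\lambda(1)-\lambda(2)$. The conclusion $(F_1F_2)^N v_\lambda\in qL(\lambda)$ still holds, but your stated exponent computation does not cover this case, so the argument as written has a hole.

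Third, your discussion of the boundary case $\lambda(1)=\lambda(2)>0$ is directed at the wrong issue. You say one must ``verify that exactly one of the two endpoint representatives is eliminated''; but when $\lambda(1)=\lambda(2)$ the ratio $[\lambda(2)]/[\lambda(1)]$ is exactly $1$ (not merely $\equiv 1\pmod{q\cA}$), so $(F_1F_2)^N v_\lambda=(F_2F_1)^N v_\lambda$ on the nose — neither is eliminated, they contribute a single element to the \emph{set} $\cB(\lambda)$. This is precisely the failure of bijectivity that the paper highlights in the remark immediately after the corollary; so this case is not a hurdle at all, and a correct proof should note the coincidence rather than try to show one survives and one doesn't.

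Finally, the claim that ``the identified relations account for all coincidences'' is too brisk. The crux is the diagonal weights $(y,y)$, each of which carries two $\cB$-words. You should spell out: for $1\leq y<N$ both images survive and span a $2$-dimensional weight space; at $y=N$ exactly one survives (or both survive and coincide when $\lambda(1)=\lambda(2)$); for $y>N$ both vanish, by propagating the singular relation through $F_1^2=F_2^2=0$. The propagation step does work once one writes $(F_2F_1)^N v_\lambda$ as a nonzero scalar multiple of $(F_1F_2)^N v_\lambda$ (or vice versa) and then kills $F_1^2$ or $F_2^2$, but you only gesture at it. The case $\lambda(2)=0$ needs separate treatment since $[\lambda(2)]=0$ kills the relation; there the vanishing of $F_2 v_\lambda$ must be used instead, as you briefly note.
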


We note that in the case $\lambda(1)=\lambda(2)>0$,
$(F_1F_2)^{\lambda(1)+\lambda(2)}v_\lambda=(F_2F_1)^{\lambda(1)+\lambda(2)}v_\lambda$;
in particular, in contrast to the results of Section \ref{subsec:CB(2|1)},
the projection $\pi_\lambda:\Um\rightarrow V(\lambda)$ does not restrict
to a bijection between $\set{b\in\cB\mid bv_\lambda\notin qL(\lambda)}$ and $\cB(\lambda)$.

\begin{rmk}
As a final remark, let us observe a trend in the results of Section \ref{sec:examples}. 
It is unfortunately not the case in general that the finite-dimensional irreducible modules
inherit a basis from the canonical basis of a half quantum $\gl(m|1)$, since the modules
with atypical highest weight generally will have linear dependencies.  Nevertheless,
we can often canonically remove the redundant elements 
by taking only the nonzero elements in the quotient
$L/qL$, where $L$ is the $\cA$-lattice generated by the images of canonical basis elements.
It would be interesting to realize this as some sort of crystal basis construction, though,
as we have noted in the examples, one would need a different definition of Kashiwara operators.
\end{rmk}

\end{document}